\documentclass[10pt,a4paper,twoside,reqno]{amsart}
\usepackage[a4paper,top=3cm,bottom=3cm,inner=2.5cm,outer=2.5cm]{geometry}
\usepackage[utf8]{inputenc}
\usepackage[english]{babel}

\usepackage{xcolor}

\usepackage{amssymb,mathtools,amsmath,amsthm,stmaryrd}
\usepackage[all,cmtip,2cell]{xy}

\usepackage[draft=false, hidelinks, linkcolor = blue]{hyperref}
\usepackage{cleveref}
\usepackage{tensor}
\usepackage{bbm}
\usepackage{url}
\usepackage{bookmark}
\usepackage{footmisc}
\usepackage{quiver}
\usepackage{adjustbox}
 
\usepackage{graphicx}
\usepackage{lmodern}
\usepackage{enumitem}
\usepackage{array}

\def\defthm#1#2#3#4{
  \newtheorem{#1}[theorem]{#3}
  \newtheorem*{#1*}{#3}
  \newtheorem{#2}[theorem]{#4}
  \newtheorem*{#2*}{#4}
  \crefname{#1}{#3}{#4}
  \crefname{#2}{#4}{#4}  
}

\newtheoremstyle{mythm}%
{10pt}
{}
{\itshape}
{}
{\bf}
{.}
{.5em}
{}%

\newtheoremstyle{mydef}%
{10pt}
{3pt}
{}
{}
{\bf}
{.}
{.5em}
{}%

\newtheoremstyle{myrmk}%
{10pt}
{3pt}
{}
{}
{\bf}
{.}
{.5em}
{}%

\theoremstyle{mythm}
\newtheorem{theorem}{Theorem}[section]
\newtheorem*{theorem*}{Theorem}

\defthm{corollary}{corollaries}{Corollary}{Corollaries}
\defthm{lemma}{lemmata}{Lemma}{Lemmata}
\defthm{proposition}{propositions}{Proposition}{Propositions}
\defthm{axiom}{axioms}{Axiom}{Axioms}
\defthm{propdef}{props/defs}{Proposition/Definition}{Prositions/Definitions}
\defthm{defcor}{defscors}{Corollary/Definition}{Corollaries/Definitions}
\defthm{conjecture}{conjectures}{Conjecture}{Conjectures}

\theoremstyle{mydef}
\defthm{definition}{definitions}{Definition}{Definitions}

\theoremstyle{myrmk}
\defthm{acknowledgment}{acknowledgments}{Acknowledgment}{Acknowledgments}
\defthm{remark}{remarks}{Remark}{Remarks}
\defthm{motivation}{motivations}{Motivation}{Motivations}
\defthm{example}{examples}{Example}{Examples}
\defthm{question}{questions}{Question}{Questions}
\defthm{observation}{observations}{Observation}{Observations}
\defthm{claim}{claims}{Claim}{Claims}
\defthm{notation}{notations}{Notation}{Notations}
\defthm{terminology}{terminologies}{Terminology}{Terminologies}
\defthm{construction}{constructions}{Construction}{Constructions}
\defthm{conclusion}{conclusions}{Conclusion}{Conclusions}
\defthm{warning}{warnings}{Warning}{Warnings}

\newtheorem*{replemmax}{\reptitle}
 {\end{replemmax}}

\newtheorem*{repthmx}{\reptitle}
 {\end{repthmx}}

\newtheorem*{repcorx}{\reptitle}
 {\end{repcorx}}

\newcommand{\sprime}{^{\prime}}

\newcommand{\pbs}{\scalebox{1.5}{\rlap{$\cdot$}$\lrcorner$}}

\newcommand*{\twocat}{2\text{-Cat}}

\newcommand*{\hotwocat}{2\text{-Cat}^{\times}}

\newcommand{\xdblrightarrow}[2][]{%
  \xrightarrow[#1]{#2}\mathrel{\mkern-14mu}\rightarrow
}

\begin{document}
\title[The higher algebra and geometry of bicategories]{The higher algebra and geometry of monoidal bicategories} 

\author[R. Stenzel] {Raffael Stenzel}

\begin{abstract}
We show that braided, sylleptic and symmetric monoidal bicategories are precisely the $\mathsf{E}_k$-monoids in the cartesian monoidal
$(\infty,1)$-category of bicategories for respective integers $k$. To manage the underlying computations, we use the geometry of the
little cubes operads to mediate between the 2-dimensional algebra underlying the former and the $\infty$-categorical algebra underlying the 
latter.

%
\end{abstract}

\date{\today}

\maketitle

\section{Introduction}

Categorification is the program of generalising the study of algebraic structures defined on sets --- that is, the $0$-dimensional algebra of 
monoids, groups, rings etc.\ --- 
to the study of algebraic structures defined on (potentially higher dimensional) categories. It plays a major role across algebra, topology, 
geometry, physics and logic, particularly in the fields of topological quantum field theories, K-theory and representation theory 
\cite{baezdolan_cat, schommerpries_thesis, lurieha}. The central basic notions in this program are the corresponding instances of a monoid 
together with their corresponding instantiations of commutativity and invertibility.
In this context, the foundation of 1-dimensional algebra --- that is, the theory of (commutative) monoidal categories --- has 
become an essential machinery that underpins a plethora of mathematical constructions that arose in the past decades \cite{joyalstreet}. 
However, various algebraic structures arising in the context of topology and geometry are often naturally higher dimensional, and certain 
important features of these structures cannot be captured in this way. Thus, Kapranov and Voevodsky
\cite{kapranovvoevodskyI,kapranovvoevodskyII} introduced a theory of (commutative) monoidal 2-dimensional categories with some of those 
applications in mind. This consequently led to a long arc of research in 2-dimensional algebra from many different angles
\cite{johnsonyau,fioregambinohyland}.
On the other side of the spectrum, a theory of $\infty$-categorified algebra has more recently been developed following the operadic 
tradition of topology \cite{may_loop, lurieha}. 

This paper constitutes the foundation for an at least -- and very possibly precisely -- two part series on the $\infty$-categorical algebra 
of bicategories. The ultimate aim of this series is to use the results of higher algebra as pivotally developed in \cite{lurieha} to give 
fairly straightforward proofs of otherwise computationally long-winded results in low dimensional category theory. Thus, in a sense, this 
project is an application of topological algebra in the tradition of K-theory, opposed to algebraic topology in the tradition of homological 
algebra going the other way round. Here we make crucial use of the rather basic but essential fact that all objects in 2-dimensional algebra 
are defined by imposing the existence of invertible higher cells only; they are hence captured by the $\infty$-category of 
bicategories.\footnote{Here, ``$\infty$'' is to be read ``$(\infty,1)$'' as usual. On that note it should be added that Day and Street 
\cite{daystreet} have set up their definitions to capture lax morphisms and
non-invertible transformations between 2-dimensional algebraic objects as well. These however exceed the scope of $\infty$-category theory, 
and would require an $(\infty,2)$-categorical treatment instead. This aspect of 2-dimensional algebra will hence not be subject of this 
paper.}

The main contribution of this paper is the identification of the $\infty$-categorified 
algebra of \cite{lurieha} when evaluated in the cartesian monoidal $\infty$-category $\mathrm{BiCat}^{\times}$ of bicategories, and the
2-dimensional algebra of \cite{daystreet} and \cite{crans_centers} --- in turn extending \cite{kapranovvoevodskyI,kapranovvoevodskyII} --- to 
the extent specified in Theorem~\ref{thm_main}.
Against the background of the ``tortuous history of symmetric monoidal bicategories'' \cite{schommerpries_thesis}, it hence serves as a 
proof that the algebraic definitions of braidings, syllepses and symmetries on monoidal bicategories as they ultimately have been defined in 
\cite{baezneuchl} are the intended ones. Indeed, the following theorem has been 
expected to hold for some time, see e.g.\ \cite{haugseng_hanotes}, but any argument towards its merit has been missing to date.

\begin{theorem}\label{thm_main}
There are the following bijections between sets of respective equivalence classes:
\begin{enumerate}
\item $\{$monoidal bicategories$\}\cong\{\mathsf{E}_1$-algebras in $\mathrm{BiCat}^{\times}\}$;
\item $\{$braided monoidal bicategories$\}\cong\{\mathsf{E}_2$-algebras in $\mathrm{BiCat}^{\times}\}$;
\item $\{$sylleptic monoidal bicategories$\}\cong\{\mathsf{E}_3$-algebras in $\mathrm{BiCat}^{\times}\}$;
\item $\{$symmetric monoidal bicategories$\}\cong\{\mathsf{E}_{\infty}$-algebras in $\mathrm{BiCat}^{\times}\}$.
\end{enumerate}
These bijections are natural with respect to the obvious forgetful maps from bottom to top. The same applies to the corresponding 
notions of morphisms between them.
\end{theorem}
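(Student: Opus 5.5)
The plan is to pass through the $(2,1)$-categorical truncation. The key observation is that $\mathrm{BiCat}$, the $\infty$-category of bicategories, is a $2$-category: all its mapping spaces are $1$-truncated (they are groupoids of pseudofunctors and pseudonatural equivalences). It is moreover the Duskin nerve, or homotopy coherent nerve, of the $(2,1)$-category of bicategories, pseudofunctors and invertible icons or pseudonatural equivalences. Hence $\mathsf{E}_k$-algebras in $\mathrm{BiCat}^{\times}$ only see the $3$-truncation $\tau_{\le 3}\mathsf{E}_k$, or rather the homotopy $(3,1)$-operad. Since the target is $2$-truncated and the colour-set condition adds one more level, $\mathrm{Alg}_{\mathsf{E}_k}(\mathrm{BiCat}^\times)\simeq \mathrm{Alg}_{h_3\mathsf{E}_k}(\mathrm{BiCat}^\times)$. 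Because $\mathsf{E}_k(n)\simeq\mathrm{Conf}_n(\mathbb{R}^k)$ is $(k-2)$-connected, the stabilization $\mathsf{E}_3\to\mathsf{E}_4\to\cdots\to\mathsf{E}_\infty$ becomes an equivalence after this truncation. This explains why items (3) and (4) are not symmetric to the rest: it suffices to treat $k=1,2,3$ and to show $\mathsf{E}_3\to\mathsf{E}_\infty$ induces an equivalence on algebras in a $2$-category (cf.\ Lurie's result that $\mathsf{E}_k\to\mathsf{E}_\infty$ is an equivalence on $n$-categories for $k\ge n+2$, here with $n=2$ in the $(n,1)$ sense, giving $k\ge 4$). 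I expect the paper will use sharper connectivity to get the threshold at $k=3$ for the symmetric case, but I would establish at least $k\geq 4$ this way. Then I would separately compare algebraic syllepses satisfying the extra symmetry axiom with $\mathsf{E}_4$-data, since ``symmetric'' in the Day--Street/Crans sense is a property of a syllepsis.

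For each $k$, I would produce an explicit small presentation of $h_3\mathsf{E}_k$ as a $(3,1)$-operad (a $\mathrm{Gpd}$-enriched $2$-operad up to one more level) by generators and relations. For $\mathsf{E}_1$, the fundamental $2$-groupoids of $\mathsf{E}_1(n)$ are contractible, so $\mathsf{E}_1$ is the terminal nonsymmetric operad, i.e.\ $\mathrm{Assoc}$. Its algebras in a $(2,1)$-category, computed as homotopy coherent algebras in the Duskin nerve, then unfold to tensor, associator, unitors, pentagonator, and the Gordon--Power--Street axioms; this is the classical coherence for tricategories with one object. For $\mathsf{E}_2$ the relevant data are the fundamental $2$-groupoids of configuration spaces of the plane. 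Their $\pi_1$ is the pure braid group, so a presentation is built from the braided monoidal data: the braiding $R$ as the half-twist path in $\mathrm{Conf}_2(\mathbb{R}^2)$, the hexagonators as $2$-cells from the relations of the braid group, and the Kapranov--Voevodsky and Baez--Neuchl axioms as fillers of the $2$-spheres in $\mathrm{Conf}_3,\mathrm{Conf}_4$. Here I would use the Fox--Neuwirth/Salvetti cellular models of $\mathrm{Conf}_n(\mathbb{R}^k)$, whose low-dimensional cells correspond exactly to the $k$-fold monoidal generators. For $\mathsf{E}_3$, $\mathrm{Conf}_2(\mathbb{R}^3)\simeq S^2$ provides the syllepsis as the $2$-cell between $R$ and $R^{-1}$ coming from the two hemispheres.

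The comparison itself would be a strictification step. I would show that the $(2,1)$-category of such algebras, with equivalence classes on both sides, is equivalent to algebras over a cellular $\mathrm{Gray}$-operad or a $\mathrm{Cat}$-operad model whose pseudo-algebras in the $2$-category of bicategories are by definition the Day--Street structures. This uses the general theorem that homotopy coherent algebras in a nerve of a model-enriched category are equivalent to homotopy (pseudo) algebras over a cofibrant replacement, after which a cofibrant replacement can be chosen as the cellular one above. Naturality with respect to forgetful maps and the statement for morphisms come for free once both sides are identified as mapping groupoids.

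I expect the main obstacle to be verifying that the cellular presentation of $\Pi_{\le2}\mathrm{Conf}_n(\mathbb{R}^k)$, operadically for all $n$ simultaneously, has exactly the relations imposed by the published axioms, no more and no fewer. In particular, this means showing that the listed $2$-cells generate and the listed $3$-dimensional axioms kill $\pi_2$ and relevant $\pi_3$ classes, for example in $\mathrm{Conf}_4(\mathbb{R}^3)$. I would attack this via the Fulton--MacPherson operad and its stratification, or equivalently the Salvetti complex, checking only arities up to $4$ and then using operadic generation to handle higher arities.
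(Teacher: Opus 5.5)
Your plan has a genuine gap, and its foundation is off by one level. $\mathrm{BiCat}$ is a $3$-category, not a $2$-category. Its mapping spaces are the $2$-groupoids of pseudofunctors, pseudonatural equivalences \emph{and} invertible modifications; icons do not compute them. With $1$-truncated mapping spaces there would be no room for a pentagonator or hexagonators at all, so your own unfolding of the $\mathsf{E}_1$ case already contradicts the truncation claim.

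Hence $\mathsf{E}_k$-algebras in $\mathrm{BiCat}^{\times}$ see exactly $\tau_{\le 2}\mathrm{Conf}_n(\mathbb{R}^k)$. Since $\pi_2\mathrm{Conf}_2(\mathbb{R}^3)\cong\mathbb{Z}$ survives this truncation (that class is the syllepsis), the tower does not stabilise at $\mathsf{E}_3$. If $\mathrm{Alg}_{\mathsf{E}_\infty}(\mathrm{BiCat}^{\times})\to\mathrm{Alg}_{\mathsf{E}_3}(\mathrm{BiCat}^{\times})$ were an equivalence, items (3) and (4) would force every syllepsis to be symmetric, which is false. No sharper connectivity argument exists either. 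Baez--Dolan--Lurie needs $k\ge n+1$ for an $n$-category, not $n+2$, so with $n=3$ one gets exactly $\mathsf{E}_4\simeq\mathsf{E}_\infty$; your value $4$ comes from two cancelling errors. The actual content of (4) is that the $(-1)$-truncated space of $\mathsf{E}_4$-extensions of a given $\mathsf{E}_3$-algebra is inhabited exactly when the symmetry axiom holds. Your ``separate comparison'' supplies no method for this.

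The more serious gap is the comparison step. An operad whose pseudo-algebras are ``by definition'' the Day--Street, Baez--Neuchl or Crans structures is just the presentation by those data and axioms. That this presentation models $\tau_{\le 2}\mathsf{E}_k$ operadically in all arities is a coherence theorem of the strength of \cite{gurski_braid,gurskiosorno_sym}, i.e.\ essentially the statement itself; you name it as the main obstacle but do not prove it.

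Your proposed tools do not close it:
\begin{enumerate}
\item Fox--Neuwirth or Salvetti cells are cell structures on (models of) the individual spaces $\mathrm{Conf}_n(\mathbb{R}^k)$, not a quasi-free operad. So ``check arities up to $4$, then use operadic generation'' is unjustified; already the pentagonator axiom is the generating $3$-cell $K_5$ in arity $5$.
\item Rectification yields strict algebras over a cofibrant operad in $2\text{-Cat}$ or $\mathrm{Gray}$. These would still have to be matched with Day--Street's semi-strict structures.
\end{enumerate}

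The idea you are missing, which the paper uses, is to avoid presenting $\mathsf{E}_k$ altogether:
\begin{enumerate}
\item By Additivity and the Eckmann--Hilton homotopy of Proposition~\ref{prop_eh}, an $\mathsf{E}_{k+1}$-extension of a fixed $\mathsf{E}_k$-algebra is an $\mathsf{E}_1$-structure in $\mathrm{Alg}_{\mathsf{E}_k}$ on the same multiplication.
\item The truncation estimates of Section~\ref{sec_sub_iniseg} show that in a $3$-category only a small initial segment of this structure matters. They also show that the successive fibres are h-groupoids, h-sets and h-propositions.
\item That segment is matched with Day--Street's own characterisations: braidings as monoidal structures on $m$ making the identity associator monoidal, syllepses as braidings on the monoidal $2$-functor $m$, and symmetry as $(m,u_v)$ being sylleptic. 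The missing block is produced by diagonally reflecting the reverse braiding or syllepsis, and the converse comes from semi-strictification over cofibrant Gray monoids.
\end{enumerate}
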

The corresponding notions of equivalence that these sets are quotients of are in each case the natural ones, and will be made precise in the 
body of the paper. To put the theorem in perspective, we recall that monoidal categories are precisely the $\mathsf{E}_1$-algebras in the 
cartesian monoidal $\infty$-category $\mathrm{Cat}^{\times}$ of categories. Joyal and Street \cite[Section 5]{joyalstreet} showed that 
braidings on a monoidal category $\mathcal{C}^{\otimes}$ stand in 1-1 correspondence to monoidal structures on the multiplication
$m\colon\mathcal{C}\times\mathcal{C}\rightarrow\mathcal{C}$ of $\mathcal{C}^{\otimes}$. As observed in
\cite[Example 5.1.2.4]{lurieha}, this leads to a 1-1 correspondence between braided 
monoidal categories and $\mathsf{E}_2$-algebras in $\mathrm{Cat}^{\times}$. As subsequently observed in loc.\ cit., this 1-1 correspondence 
further restricts to an equivalence between braided monoidal categories whose braiding is symmetric 
and $\mathsf{E}_{\infty}$-algebras in $\mathrm{Cat}^{\times}$. Theorem~\ref{thm_main} is hence a higher dimensional generalization of 
\cite[Example 5.1.2.4]{lurieha}, where the results of Joyal and Street about monoidal categories \cite{joyalstreet} are replaced 
by the corresponding results of Day and Street about monoidal 2-categories \cite{daystreet}.

In brief, its proof proceeds 
as follows. In one direction, we observe that every (sylleptic) braiding on a Gray monoid defines one half of an initial segment of 
a corresponding $\mathsf{E}_{k}$-monoid in $\mathrm{BiCat}^{\times}$ directly by way of the results of Day and Street. We then show that 
first algebraically reversing and then geometrically reflecting these (sylleptic) braidings yields another half of an initial 
segment. These two can be pasted together to yield a full $\mathsf{E}_{k}$-monoid. In the other direction, we use an Eckmann--Hilton type 
corollary of the Additivity Theorem together with a series of semi-strictification results which use both homotopical and algebraic 
techniques.

\paragraph{Outline}
A development of the necessary $\infty$-categorical tools is given in Section~\ref{sec_infty}. Here, we remind the reader of various facts 
about (non-unital) $\mathsf{E}_k$- and $\mathsf{A}_n$-algebras, give an Eckmann--Hilton type argument for the $\infty$-operads $\mathsf{E}_k$ 
(Proposition~\ref{prop_eh}), and define the diagonal reflection of an $\mathsf{E}_k$-algebra (Section~\ref{sec_sub_revmon_infty}). In 
Section~\ref{sec_sub_iniseg} we define the $\infty$-category of initial segments of an $\mathsf{E}_k$-algebra in a symmetric monoidal
$\infty$-category $\mathcal{C}^{\otimes}$, and give precise bounds for the homotopy type of the space of full $\mathsf{E}_k$-algebra extensions
of an initial segment provided that $\mathcal{C}$ is an $n$-category. 
Section~\ref{sec_modbicat} recalls the 2-dimensional algebra of bicategories in terms of Gray monoids, defines the reverse of a braiding 
and that of a syllepsis in this context, and discusses their associated homotopy theory. Section~\ref{sec_translation} combines the results 
of the prior two sections to give a proof of Theorem~\ref{thm_main}.

\paragraph{Related Work}
Gurski and Gurski--Osorno \cite{gurski_braid,gurskiosorno_sym} provide a series of coherence results for braided monoidal 
bicategories and for symmetric monoidal bicategories, respectively. As an application, the authors show that every 
$\mathsf{E}_k$-space gives rise to a monoidal (braided/sylleptic/symmetric) bigroupoid by way of a homotopy-bigroupoid trifunctor
$\Pi\colon\mathrm{Top}^{\times}\rightarrow\mathrm{BiCat}^{\times}$ for respective integers $k$. They also show that every symmetric 
monoidal bicategory gives rise to a special $\Gamma$-bicategory, that is, essentially, an $\mathsf{E}_{\infty}$-monoid in the tricategory
$\mathrm{BiCat}^{\times}$. We note that these applications regarding the functor $\Pi$ arise as corollaries of Theorem 1.1 as well.

\paragraph{Future work} One of many natural follow-up questions that ensue is whether the center $Z(\mathcal{C})$ of a 
monoidal bicategory $\mathcal{C}$ as defined (in algebraic manner across many pages) by Baez--Neuchl \cite{baezneuchl} is 
indeed the center thereof in the cartesian monoidal $\infty$-category $\mathrm{BiCat}^{\times}$, defined in \cite[Section 5.3.1]{lurieha} 
by way of a slick universal property. Both are essentially constructed to be a moduli space for braidings on $\mathcal{C}$. The
1-categorical version of this equivalence is again shown in \cite[Example 5.3.1.18]{lurieha}. 

\paragraph{Possible alternative approaches}
The proof presented in this paper is an amalgamation of algebraic, homotopical, combinatorial and geometric arguments. One proof that lends 
itself more naturally to category theorists might proceed as follows. The tricategory of monoidal bicategories is 
the tricategory of pseudomonoids in the tricategory of bicategories. Analogously, one ought to be able to show that 
the higher category of braided (sylleptic/symmetric) bicategories is the higher category of pseudomonoids in the higher category of 
(braided/sylleptic) monoidal bicategories. Then one ought to be able to further construct a
well-behaved nerve functor from the higher category of (monoidal) higher categories in which all cells of dimension $n>1$ are invertible to 
the $\infty$-category of (monoidal) $\infty$-categories. This nerve ought to map the higher category of pseudomonoids in 
any monoidal higher category with invertible higher cells to the $\infty$-category of $\mathsf{E}_1$-algebras in its associated nerve.
The usual properties of nerve functors then should imply the theorem. This strategy is not pursued
primarily given
the amount of bulky machinery that needs to be developed as part of the strategy, especially as ``higher'' may well exceed three dimensions
depending on the implementation.

\begin{acknowledgments*}
This paper was written as part of a project initiated by Nicola Gambino; without his impetus and continuous support the paper would not have 
come to be. The larger project ultimately has been inspired by suggestions of Clark Barwick and James Cranch. The author 
would like to thank John Bourke for his insightful comments, Amar Hadzihasanovic for his kind invite to Tallinn to exchange ideas, and 
Adrian Miranda and Mike Shulman for many helpful discussions. This material is based upon work supported by the US Air Force Office for 
Scientific Research under award number FA9550-21-1-0007, as well as upon work supported by the Air Force Office of Scientific Research under 
award number FA9550-21-1-0009.
\end{acknowledgments*}

\section{Monoidal $(\infty,1)$-categories and iterated associative structures}\label{sec_infty}

Section~\ref{sec_sub_prelims} is a very brief reminder of the theory of $\infty$-operads. Section~\ref{sec_sub_comm} discusses the
$\infty$-operads $\mathsf{E}_k$, their Additivity Theorem and an Eckmann--Hilton type corollary. Section~\ref{sec_sub_assoc} discusses 
(non-unital) $\mathsf{A}_m$-algebras, and Section~\ref{sec_sub_revmon_infty} introduces the diagonal reflection of an
$\mathsf{E}_k$-algebra in a symmetric monoidal $\infty$-category $\mathcal{C}^{\otimes}$. Section~\ref{sec_sub_iniseg} introduces the 
notion of an initial segment of an $\mathsf{E}_{k+1}$-algebra, and gives a precise bound for the homotopy type of the space of extensions 
to a full $\mathsf{E}_{k+1}$-algebra structure thereof whenever $\mathcal{C}$ is an $n$-category. In Section~\ref{sec_sub_iniseg_app} we 
apply the results of the prior section to small $n$. This recovers some known criteria for the construction of $\mathsf{E}_{k+1}$-algebras 
in the case of $n=1,2$, and yields the criteria which we will apply to the cartesian $3$-category of bicategories in 
Section~\ref{sec_translation}.

\subsection{Preliminaries}\label{sec_sub_prelims}
\paragraph{$\infty$-Operads}

The notion of an $\infty$-operad generalises the notion of a (coloured) symmetric multicategory in essentially the same way as the notion 
of an $\infty$-category generalises that of a category. More precisely, let $\mathrm{Fin}_{\ast}$ denote the category of finite pointed 
sets and base-point preserving functions. We consider $\mathrm{Fin}_{\ast}$ as an $\infty$-category. We recall that an $\infty$-operad
$\mathcal{O}\rightarrow\mathrm{Fin}_{\ast}$ is defined to be a morphism of simplicial sets that satisfies the Segal condition, and that 
further exhibits cocartesian lifts to all inert maps in $\mathrm{Fin}_{\ast}$ \cite[Definition 2.1.1.10]{lurieha}. Every such morphism
$\mathcal{O}\rightarrow\mathrm{Fin}_{\ast}$ is in particular an isofibration, and so the domain $\mathcal{O}$ of an $\infty$-operad is 
always an $\infty$-category. The latter is sometimes referred to as the $\infty$-category of operators associated to the $\infty$-operad. 
We often will refer to an $\infty$-operad simply by way of its $\infty$-category of operators.
The definition implies that the fibers $\mathcal{O}(\langle n\rangle)$ of an $\infty$-operad are canonically equivalent to the $n$-ary 
product $\prod_{1\leq i\leq n}\mathcal{O}(\langle 1\rangle)$; in particular, an element in $\mathcal{O}(\langle n\rangle)$ is essentially 
an $n$-tuple of elements in $\mathcal{O}(\langle 1\rangle)$. Given an $n$-tuple $\vec{x}\in\mathcal{O}(\langle n\rangle)$ and a single 
object $y\in\mathcal{O}([1])$, the definition further allows to think of the mapping space $\mathcal{O}(\vec{x},y)$ as the space of 
multi-morphisms from $\vec{x}$ to $y$ in the $\infty$-operad $\mathcal{O}$. For general objects $\vec{y}\in\mathcal{O}(\langle m\rangle)$, 
the mapping space $\mathcal{O}(\vec{x},\vec{y})$ simply decomposes into the product of the mapping spaces $\mathcal{O}(\vec{x},y_i)$. In  
particular, whenever the $\infty$-operad is single coloured, i.e.\ whenever $\mathcal{O}(\langle 1\rangle)$ is contractible with center of 
contraction $x$, then the mapping space $\mathcal{O}((x,\dots,x),x)$ may be thought to consist of the $n$-ary operations associated to the
$\infty$-operad.

We further recall that a symmetric monoidal $\infty$-category $\mathcal{C}^{\otimes}\twoheadrightarrow\mathrm{Fin}_{\ast}$ is an
$\infty$-operad that exhibits cocartesian lifts to all morphisms in
$\mathrm{Fin}_{\ast}$ \cite[Example 2.1.2.18]{lurieha}. In particular, a symmetric monoidal $\infty$-category is a cocartesian fibration
$\mathcal{C}^{\otimes}\twoheadrightarrow\mathrm{Fin}_{\ast}$. By way of the $\infty$-categorical Grothendieck construction, it can be 
equivalently described as a Segal object $\mathrm{Fin}_{\ast}\rightarrow\mathrm{Cat}_{\infty}$ with a contractible $\infty$-category of 
objects. Essentially, that is, as an $\infty$-category object in $\mathrm{Cat}_{\infty}$ with a single object and commutative composition.
The underlying $\infty$-category of a symmetric monoidal $\infty$-category $\mathcal{C}^{\otimes}$ is given by the fiber
$\mathcal{C}:=\mathcal{C}^{\otimes}(\ast)$ over the unit $\ast\in\mathrm{Fin}_{\ast}$.

\begin{example}\label{exple_cartmon}
Every $\infty$-category $\mathcal{C}$ with finite products gives rise to a cartesian symmetric 
monoidal $\infty$-category $\mathcal{C}^{\times}\twoheadrightarrow\mathrm{Fin}_{\ast}$ \cite[Section 2.4.1]{lurieha}. In particular, there 
is a cartesian monoidal $\infty$-category $\mathrm{Cat}_{\infty}^{\times}$ of $\infty$-categories. Furthermore, every finite product 
preserving functor $F\colon\mathcal{C}\rightarrow\mathcal{D}$ between $\infty$-categories with finite products induces a symmetric monoidal 
functor $F\colon\mathcal{C}^{\times}\rightarrow\mathcal{D}^{\times}$ \cite[Corollary 2.4.1.8]{lurieha}.
\end{example}

\paragraph{Algebras for $\infty$-operads}

Given an $\infty$-operad $\mathcal{O}$, the class of cocartesian morphisms in $\mathcal{O}$ over inert maps in $\mathrm{Fin}_{\ast}$ is 
called the class of inert morphisms in $\mathcal{O}$. Given another $\infty$-operad $\mathcal{V}$, one may consider the full
sub-$\infty$-category $\mathrm{Alg}_{\mathcal{O}}(\mathcal{V})\subseteq\mathrm{Fun}_{\mathrm{Fin}_{\ast}}(\mathcal{O},\mathcal{V})$ spanned 
by those morphisms of simplicial sets over $\mathrm{Fin}_{\ast}$ that preserve inert morphisms. This $\infty$-category is often referred to 
as the $\infty$-category of morphisms of $\infty$-operads from $\mathcal{O}$ to $\mathcal{V}$, or equivalently, as the $\infty$-category of 
$\mathcal{O}$-algebras in $\mathcal{V}$. 

\begin{remark}
The class of $\infty$-operads is the class of fibrant objects in the slice category
$(\mathrm{S}^+/(\mathrm{Fin}_{\ast},\mathrm{Inert}))$ of marked simplicial sets equipped with the model structure
$\mathrm{Op}_{\infty}$ for $\infty$-operads \cite[Section 2.1.4]{lurieha}. Then, $\mathcal{O}$-algebras in $\mathcal{V}$ are simply 
morphisms between the fibrant objects $\mathcal{O}$ and $\mathcal{V}$ in this model category.
\end{remark}

\begin{proposition}\label{prop_bv}
Let $\mathcal{C}^{\otimes}$ be a symmetric monoidal $\infty$-category and $\mathcal{V}$ be an $\infty$-operad. Then the $\infty$-category
$\mathrm{Alg}_{\mathcal{V}}(\mathcal{C}^{\otimes})$ inherits a symmetric monoidal structure from that of $\mathcal{C}^{\otimes}$.
\end{proposition}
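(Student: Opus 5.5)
The plan is to realise the symmetric monoidal structure on $\mathrm{Alg}_{\mathcal{V}}(\mathcal{C}^{\otimes})$ as a pointwise one, following \cite[Section 3.2.4]{lurieha}. The key device is the Boardman--Vogt tensor product $\mathcal{V}\otimes\mathrm{Comm}$ together with the identification of $\mathrm{Comm}$-algebras in $\mathrm{Cat}_\infty^{\times}$ with symmetric monoidal $\infty$-categories. Concretely, I would construct a map of $\infty$-operads $\mathrm{Alg}_{\mathcal{V}}(\mathcal{C})^{\otimes}\rightarrow\mathrm{Fin}_{\ast}$ and show it is a cocartesian fibration.

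\textbf{Step 1: the candidate total space.} Define $\mathrm{Alg}_{\mathcal{V}}(\mathcal{C})^{\otimes}$ by the universal property that a map $K\rightarrow\mathrm{Alg}_{\mathcal{V}}(\mathcal{C})^{\otimes}$ over $\mathrm{Fin}_{\ast}$ is the same as a map $K\times\mathcal{V}\rightarrow\mathcal{C}^{\otimes}$ lying over the smash product $\wedge\colon\mathrm{Fin}_{\ast}\times\mathrm{Fin}_{\ast}\rightarrow\mathrm{Fin}_{\ast}$, subject to an inertness condition. The condition is that for each vertex $k\in K$ the restriction $\{k\}\times\mathcal{V}\rightarrow\mathcal{C}^{\otimes}$ sends inert morphisms to inert morphisms. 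This is the bifunctor formalism underlying the Boardman--Vogt tensor product \cite[Section 2.2.5]{lurieha}. Over $\langle n\rangle$, the fibre is then by construction $\mathrm{Alg}_{\mathcal{V}}(\mathcal{C}^{\otimes}_{\langle n\rangle})$, and the Segal condition for $\mathcal{C}^{\otimes}$ identifies this with $\mathrm{Alg}_{\mathcal{V}}(\mathcal{C}^{\otimes})^{\times n}$. This gives the Segal condition for the new object.

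\textbf{Step 2: cocartesian lifts.} For a map $\alpha\colon\langle n\rangle\rightarrow\langle m\rangle$, I would produce a lift by postcomposing a $\mathcal{V}$-algebra in $\mathcal{C}^{\otimes}_{\langle n\rangle}$ with the cocartesian pushforward $\alpha_!\colon\mathcal{C}^{\otimes}_{\langle n\rangle}\rightarrow\mathcal{C}^{\otimes}_{\langle m\rangle}$. Informally, this tensors algebras pointwise as $(A\otimes B)(v)=A(v)\otimes B(v)$. The lax-monoidality structure needed for $\alpha_!\circ A$ to again be a $\mathcal{V}$-algebra comes from the symmetric monoidal structure of $\mathcal{C}$, via the shuffle isomorphisms $\bigotimes_i\bigotimes_j\simeq\bigotimes_j\bigotimes_i$. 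I would then verify that the resulting edge is cocartesian. Since cocartesianness can be checked fibrewise on mapping spaces, this reduces to the corresponding property of the cocartesian edges of $\mathcal{C}^{\otimes}$.

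\textbf{Main obstacle.} The crux is Step 2: showing that $\alpha_!\circ A$ is again inert-preserving, and doing so coherently rather than just on objects, including the case of active maps. Coherently, this amounts to the statement that $\mathcal{C}^{\otimes}$, being a commutative algebra object, is in particular a $\mathcal{V}\otimes\mathrm{Comm}$-algebra. I would try to sidestep the hands-on coherence check by invoking \cite[Proposition 3.2.4.3]{lurieha}, which gives exactly that $\mathrm{Alg}_{\mathcal{V}}(\mathcal{C})^{\otimes}\rightarrow\mathrm{Fin}_{\ast}$ is a symmetric monoidal $\infty$-category. Alternatively, I would use the equivalence $\mathrm{Alg}_{\mathcal{V}}(\mathrm{Alg}_{\mathrm{Comm}}(-))\simeq\mathrm{Alg}_{\mathrm{Comm}}(\mathrm{Alg}_{\mathcal{V}}(-))$ coming from the symmetry of $\otimes_{BV}$ and $\mathrm{Comm}\otimes\mathcal{V}\simeq\mathcal{V}\otimes\mathrm{Comm}$. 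This would transport the commutative algebra structure of $\mathcal{C}^{\otimes}$ in $\mathrm{Cat}_\infty^{\times}$ to one on $\mathrm{Alg}_{\mathcal{V}}(\mathcal{C}^{\otimes})$, using that $\mathrm{Alg}_{\mathcal{V}}(-)$ preserves finite products.
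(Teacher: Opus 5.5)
Your proposal is correct and follows the paper's route: the paper's proof is only the citation \cite[Example 3.2.4.4]{lurieha}, which is exactly the construction and \cite[Proposition 3.2.4.3]{lurieha} you invoke, specialised to the smash-product bifunctor $\mathcal{V}\times\mathrm{Fin}_{\ast}\rightarrow\mathrm{Fin}_{\ast}$. One small slip: the fibre over $\langle n\rangle$ consists of $\mathcal{V}$-algebras in the pullback of $\mathcal{C}^{\otimes}$ along $(-)\wedge\langle n\rangle$, not in the fibre $\mathcal{C}^{\otimes}_{\langle n\rangle}$, though your identification of it with $\mathrm{Alg}_{\mathcal{V}}(\mathcal{C}^{\otimes})^{\times n}$ is right.
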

\begin{proof}
\cite[Example 3.2.4.4]{lurieha}.
\end{proof}

\begin{example}\label{exple_bv}
Proposition~\ref{prop_bv} allows to construct $\infty$-categories of algebras iteratively whenever we start with a symmetric monoidal
$\infty$-category $\mathcal{C}^{\otimes}$. That is to say, given $\infty$-operads $\mathcal{O}$ and $\mathcal{V}$, we may construct
the $\infty$-category $\mathrm{Alg}_{\mathcal{O}}(\mathrm{Alg}_{\mathcal{V}}(\mathcal{C}^{\otimes}))$ of $\mathcal{O}$-algebras 
in $\mathcal{V}$-algebras in $\mathcal{C}^{\otimes}$. Moreover, the $\infty$-category of $\infty$-operads comes equipped with a closed 
symmetric monoidal structure itself \cite[Section 2.2.5]{lurieha}. The corresponding tensor product
$\mathcal{O}\otimes_{\mathrm{BV}}\mathcal{V}$ of two $\infty$-operads $\mathcal{O}$ and $\mathcal{V}$ is an $\infty$-operadic version of 
the (derived) Boardman-Vogt tensor product. It induces a natural equivalence
\[\mathrm{Alg}_{\mathcal{O}}(\mathrm{Alg}_{\mathcal{V}}(\mathcal{C}^{\otimes}))\simeq\mathrm{Alg}_{\mathcal{O}\otimes_{\mathrm{BV}}\mathcal{V}}(\mathcal{C}^{\otimes})\]
of symmetric monoidal $\infty$-categories.
\end{example}

\begin{example}
Central to this paper are the little cubes $\infty$-operads $\mathsf{E}_n$ for $0\leq n \leq \infty$. We refer the reader to 
\cite[Definition 5.1.0.2]{lurieha} for a precise definition. Given a 
symmetric monoidal $\infty$-category $\mathcal{C}^{\otimes}$, the $\infty$-category $\mathrm{Alg}_{\mathsf{E}_1}(\mathcal{C}^{\otimes})$ is 
(essentially) the $\infty$-category of associative and unital monoids in $\mathcal{C}^{\otimes}$. The $\infty$-operad $\mathsf{E}_{\infty}$ 
is given by the $\infty$-category $\mathrm{Fin}_{\ast}$ itself (or rather by the identity to itself). The $\infty$-category
$\mathrm{Alg}_{\mathsf{E}_{\infty}}(\mathcal{C}^{\otimes})$ is the $\infty$-category of commutative, associative and unital monoids in
$\mathcal{C}^{\otimes}$. We will say more about the intermediate integers $1<n<\infty$ below.
\end{example}

\begin{notation}
Let $\mathcal{C}^{\otimes}$ be a symmetric monoidal $\infty$-category. Whenever $\mathcal{C}^{\otimes}$ is cartesian, 
$\mathsf{E}_k$-algebras in $\mathcal{C}^{\otimes}$ are commonly referred to as $\mathsf{E}_k$-monoids. Furthermore, generally, the
$\infty$-category $\mathrm{Alg}_{\mathsf{E}_k}(\mathcal{C}^{\otimes})$ is 
itself again symmetric monoidal again by Proposition~\ref{prop_bv}. In particular, given an $\mathsf{E}_k$-algebra $A$ in
$\mathcal{C}^{\otimes}$, we may consider powers $A^{\otimes n}$ in $\mathrm{Alg}_{\mathsf{E}_k}(\mathcal{C}^{\otimes})$ thereof. 
If by $m\colon A_1\otimes A_1\rightarrow A_1$ we denote the underlying multiplication of $A$ on its underlying object $A_1$ in
$\mathcal{C}$, then we will denote the induced multiplication of the $\mathsf{E}_k$-algebra $A^{\otimes n}$ by
$m^{\otimes}\colon A^{\otimes n}_1\otimes A^{\otimes n}_1\rightarrow A^{\otimes n}_1$.
\end{notation}

\begin{remark}\label{rem_algfib}
Let $\mathcal{C}^{\otimes}$ be a symmetric monoidal $\infty$-category. Any monomorphism
$\iota\colon\mathcal{O}\hookrightarrow\mathcal{V}$ of $\infty$-operads induces an isofibration
\[\iota^{\ast}\colon\mathrm{Alg}_{\mathcal{V}}(\mathcal{C}^{\otimes})\twoheadrightarrow\mathrm{Alg}_{\mathcal{O}}(\mathcal{C}^{\otimes})\]
of $\infty$-categories. In particular, whenever we are given a $\mathcal{V}$-algebra $A$ in $\mathcal{C}^{\otimes}$ and we are given an 
equivalence $f\colon\iota^{\ast}(A)\simeq B$ of $\mathcal{O}$-algebras in $\mathcal{C}^{\otimes}$, then $B$ can be extended to a
$\mathcal{V}$-algebra $\bar{B}$ together with an equivalence $\bar{f}\colon A\rightarrow\bar{B}$ such that $\iota^{\ast}(\bar{f})=f$.
This holds more generally for every inclusion $\iota\colon (S,E)\hookrightarrow (T,F)$ of marked simplicial sets over $\mathrm{Fin}_{\ast}$. 
We will apply this repeatedly (and sometimes implicitly) to inclusions of the form $B_k\hookrightarrow \mathsf{E}_k$ to extend a concrete set of 
partial data to a full $\mathsf{E}_k$-algebra structure provided we know that this data is equivalent to the partial data of another $\mathsf{E}_k$-algebra.
\end{remark}

\begin{example}
The $\infty$-category $\mathrm{SMonCat}_{\infty}:=\mathrm{Alg}_{\mathsf{E}_{\infty}}(\mathrm{Cat}_{\infty}^{\times})$ of
$\mathsf{E}_{\infty}$-monoids in the cartesian monoidal $\infty$-category $\mathrm{Cat}_{\infty}^{\times}$ is (equivalent to) the
$\infty$-category of symmetric monoidal $\infty$-categories. Indeed, its objects are the morphisms of $\infty$-operads from
$\mathrm{Fin}_{\ast}$ to $\mathrm{Cat}_{\infty}^{\times}$ by definition, and so they present symmetric monoidal
$\infty$-categories by way of the $\infty$-categorical Grothendieck construction as referred to above. One may think of morphisms in
$\mathrm{SMonCat}_{\infty}$ as the strong symmetric monoidal functors between symmetric monoidal $\infty$-categories. In contrast, general 
morphisms of $\infty$-operads between symmetric monoidal $\infty$-categories are the lax symmetric monoidal functors.
\end{example}

\begin{example}\label{exple_moncat}
Similarly, the $\infty$-category $\mathrm{MonCat}_{\infty}:=\mathrm{Alg}_{\mathsf{E}_{1}}(\mathrm{Cat}_{\infty}^{\times})$ of $\mathsf{E}_1$-monoids 
in the cartesian monoidal $\infty$-category $\mathrm{Cat}_{\infty}^{\times}$ is (equivalent to) the $\infty$-category of monoidal
$\infty$-categories. The morphisms in $\mathrm{MonCat}_{\infty}=\mathrm{Alg}_{\mathsf{E}_1}(\mathrm{Cat}_{\infty}^{\times})$ are again the 
strong monoidal functors between monoidal $\infty$-categories (rather than their lax versions).
To relate the definition of a monoidal $\infty$-category to a notion of monoid in $\mathrm{Cat}_{\infty}$ more familiar to low dimensional 
category theory, we recall that there is an embedding $\Delta^{op}\hookrightarrow\mathrm{Fin}_{\ast}$ by assigning to a natural number 
$n$ its  associated ``cut'' \cite[Section 4.1.2]{lurieha}. Pullback of the cocartesian fibration
$\mathrm{Cat}_{\infty}^{\times}\twoheadrightarrow\mathrm{Fin}_{\ast}$ along this embedding yields a cocartesian fibration
$\mathrm{Cat}_{\infty}^{\times}\twoheadrightarrow\Delta^{op}$. This defines the ``planar'' monoidal $\infty$-category 
associated to $\mathrm{Cat}_{\infty}^{\times}$ by forgetting its symmetry. Now, the $\infty$-category $\mathrm{MonCat}_{\infty}$ is 
the full sub-$\infty$-category
$\mathrm{Alg}_{\Delta^{op}}(\mathrm{Cat}_{\infty}^{\times})\subseteq\mathrm{Fun}_{\Delta^{op}}(\Delta^{op},\mathrm{Cat}_{\infty})$ spanned 
by the morphisms of planar $\infty$-operads. Again, the latter are defined as those sections
$\Delta^{op}\rightarrow\mathrm{Cat}_{\infty}^{\times}$ that preserve inert morphisms; or in this case, equivalently, as those functors
$\Delta^{op}\rightarrow\mathrm{Cat}_{\infty}$ that map $[0]$ to the terminal $\infty$-category, and that satisfy the Segal conditions. 
These are, essentially, $\infty$-category objects in $\mathrm{Cat}_{\infty}$ with a single object.
\end{example}

\subsection{Fragments of commutativity as iterated monoidal structures}\label{sec_sub_comm}
As stated above, an $\mathsf{E}_1$-algebra in a symmetric monoidal $\infty$-category $\mathcal{C}^{\otimes}$ is to be thought of as a
homotopy-coherently associative and unital algebra in $\mathcal{C}^{\otimes}$. An extension of a given $\mathsf{E}_1$-algebra $A$ in
$\mathcal{C}^{\otimes}$ to an $\mathsf{E}_m$-algebra for $2\leq m\leq\infty$ is generally understood as a degree of homotopy-coherent 
commutativity, which only reflects full commutativity for $m=\infty$. Indeed, the $\infty$-category of $\mathsf{E}_{\infty}$-algebras in a 
symmetric monoidal $\infty$-category $\mathcal{C}^{\otimes}$ is the limit of a tower
\[\mathrm{Alg}_{\mathsf{E}_{\infty}}(\mathcal{C}^{\otimes})\rightarrow\dots\rightarrow\mathrm{Alg}_{\mathsf{E}_{m+1}}(\mathcal{C}^{\otimes})\rightarrow\mathrm{Alg}_{\mathsf{E}_{m}}(\mathcal{C}^{\otimes})\rightarrow\dots\rightarrow\mathrm{Alg}_{\mathsf{E}_1}(\mathcal{C}^{\otimes})\]
of forgetful functors \cite[Corollary 5.1.1.5]{lurieha}.

By virtue of the Additivity Theorem \cite[Theorem 5.1.2.2]{lurieha}, the $\infty$-operad $\mathsf{E}_m$ is the $m$-fold Boardman-Vogt 
tensor power of the $\infty$-operad $\mathsf{E}_1$. More precisely, the theorem states that the 
canonical algebra $\rho\colon \mathsf{E}_{k_1}\otimes_{\mathrm{BV}}\mathsf{E}_{k_2}\rightarrow \mathsf{E}_{k_1+k_2}$ is an equivalence of
$\infty$-operads for any pair of positive integers $k_1$ and $k_2$, see \cite[Section 5.1.2]{lurieha} for details. Informally, for 
$k_1=k_2=1$ it states that to give two interacting homotopy-coherent unital and associated monoid structures on an object is the same as to 
give a single $\mathsf{E}_2$-commutative monoid structure on the object. This constitutes an $\infty$-categorical version of the classical 
Eckmann-Hilton argument. To see this, in the following we make precise the fact that $\mathsf{E}_2$-algebra structures introduce an 
interaction of the given multiplication of its underlying $\mathsf{E}_1$-algebra structure with itself rather than to introduce a second 
new multiplication. Therefore, we recall that the 
classical Eckmann-Hilton argument essentially shows that to give an $\mathsf{E}_1$-algebra structure on top of an already determined
$\mathsf{E}_1$-algebra $A$ in a symmetric monoidal category $\mathcal{C}^{\otimes}$ is not to give an arbitrary new multiplication subject 
to further coherence laws, but further coherence laws directly on the already provided multiplication (Remark~\ref{rem_eh_2}, 
Remark~\ref{rem_eh_1}). In the general $\infty$-categorical case, we recall from \cite[Section 2.2.5]{lurieha} the symmetric monoidal 
structure on $\mathrm{Fin}_{\ast}$ given by the smash product
\[\wedge\colon\mathrm{Fin}_{\ast}\times \mathrm{Fin}_{\ast}\rightarrow\mathrm{Fin}_{\ast}.\]
The element $\{\langle 1\rangle\}\colon\ast\rightarrow \mathsf{E}_1$ induces two embeddings
\[\xymatrix{
\mathsf{E}_1\ar@<.5ex>[r]^(.4){(1,\{\langle 1\rangle\})}\ar@<-.5ex>[r]_(.4){(\{\langle 1\rangle\},1)}\ar@{->>}[d] & \mathsf{E}_1\times \mathsf{E}_1\ar[d]\ar@{^(->}[r]^(.45){\sim} &  
\mathsf{E}_1\otimes_{\mathrm{BV}}\mathsf{E}_1\ar@{->>}[d] \\
\mathrm{Fin}_{\ast}\ar@<.5ex>[r]^(.4){(1,\{\langle 1\rangle\})}\ar@<-.5ex>[r]_(.4){(\{\langle 1\rangle\},1)} & \mathrm{Fin}_{\ast}\times\mathrm{Fin}_{\ast}\ar[r]_(.6){\wedge} & \mathrm{Fin}_{\ast}
}\]
of $\infty$-categories. The bottom two composite horizontal maps are both the identity on $\mathrm{Fin}_{\ast}$, and so the top two composite horizontal maps give rise to embeddings
\begin{align}\label{equ_eh}
\iota_i\colon \mathsf{E}_1\rightarrow \mathsf{E}_1\otimes_{\mathrm{BV}}\mathsf{E}_1
\end{align}
of $\infty$-operads for $i=0,1$. 
For any other given $\infty$-operad $\mathcal{V}$, the two embeddings (\ref{equ_eh}) induce two projections
\[U,\mathrm{Alg}_{\mathsf{E}_1}(U)\colon\mathrm{Alg}_{\mathsf{E}_1}(\mathrm{Alg}_{\mathsf{E}_1}(\mathcal{V}))\rightarrow\mathrm{Alg}_{\mathsf{E}_1}(\mathcal{V}).\]
The former projection $U$ maps an $\mathsf{E}_1$-algebra $A$ in $\mathrm{Alg}_{\mathsf{E}_1}(\mathcal{V})$ to its underlying, say \emph{vertical}, $\mathsf{E}_1$-algebra $U(A):= A_1\in\mathrm{Alg}_{\mathsf{E}_1}(\mathcal{V})$.
The latter projection $\mathrm{Alg}_{\mathsf{E}_1}(U)$ 
maps an $\mathsf{E}_1$-algebra $A$ in $\mathrm{Alg}_{\mathsf{E}_1}(\mathcal{V})$ to the \emph{horizontal} $\mathsf{E}_1$-algebra underlying 
$A$ given pointwise by $\mathrm{Alg}_{\mathsf{E}_1}(U)(A)_n:= A_{n,1}$. Both $U(A)$ and $\mathrm{Alg}_{\mathsf{E}_1}(U)(A)$ hence equip the 
underlying object (that is, the underlying trivial algebra) $A:=U(A)_{1,1}=\mathrm{Alg}_{\mathsf{E}_1}(U)(A)_{1,1}$ in $\mathcal{V}$ with 
an $\mathsf{E}_1$-algebra structure.

In the following we show that the two $\infty$-operadic embeddings (\ref{equ_eh}) are homotopic over $\mathrm{Fin}_{\ast}$ as a 
consequence of the Additivity Theorem. In fact we show more generally that all $(k+1)$-many inclusions
\begin{align*}
s_i\colon(\mathsf{E}_1)^{\otimes_{\mathrm{BV}}^{k}}\rightarrow (\mathsf{E}_1)^{\otimes_{\mathrm{BV}}^{k+1}}\\
\end{align*}
defined as $1^{\otimes^i}\otimes(\mathsf{E}_0\xrightarrow{s_0}\mathsf{E}_1)\otimes 1^{\otimes^{k-i-1}}$ for $0\leq i\leq k$ (and thus 
omitting $\mathsf{E}_1$ in the $(i+1)$-st component) are coherently equivalent over $\mathrm{Fin}_{\ast}$. By the Additivity Theorem,
the canonical $k$-fold bifunctor $\rho\colon (\mathsf{E}_1)^{\otimes_{\mathrm{BV}}^{k}}\rightarrow \mathsf{E}_{k}$ of $\infty$-operads from 
\cite[Definition 2.2.5.3]{lurieha} is an equivalence of $\infty$-operads for every $0\leq k$. It thereby suffices to show that 
all $(k+1)$-many inclusions $s_i\colon \mathsf{E}_k\rightarrow \mathsf{E}_{k+1}$, $1\leq i\leq k+1$ which insert the identity on the 
interval $(-1,1)$ at stage $1\leq i+1\leq k+1$, are coherently equivalent over $\mathrm{Fin}_{\ast}$. To show this, we note that
$\mathsf{E}_{\bullet}$ can be organised into a co-semi-simplicial $\infty$-operad. Indeed, let $\Delta_s\subset\Delta$ be the wide 
subcategory of $\Delta$ spanned by the surjections. Let
\[\mathsf{E}_{\bullet}\colon\Delta_s^{op}\rightarrow\mathrm{Op}_{\infty}\]
be the diagram given by the canonical embeddings
\begin{align}\label{def_E_bullet}
\xymatrix{
\mathsf{E}_0\ar[r]^{s_0} & \mathsf{E}_1\ar@<.5ex>[r]^{s_0}\ar@<-.5ex>[r]_{s_1} & \mathsf{E}_2\ar@<.5ex>@/^/[r]^{s_0}\ar[r]|{s_1}\ar@<-.5ex>@/_/[r]_{s_2} & \mathsf{E}_3\ar@{-->}[r] & \dots
}
\end{align}
where $s_i\colon \mathsf{E}_k\rightarrow \mathsf{E}_{k+1}$ inserts the identity of the interval $(-1,1)$ at stage $i+1$. 

\begin{proposition}\label{prop_eh}
Let $1\leq k<\infty$ and $0\leq i<k$. Then the two $\infty$-operadic embeddings $s_i,s_{i+1}\colon \mathsf{E}_k\rightarrow \mathsf{E}_{k+1}$ are homotopic 
by way of a homotopy $\phi^i_k\colon \mathsf{E}_k\rightarrow P(\mathsf{E}_{k+1})$ in the model category $\mathrm{Op}_{\infty}$ such that
\begin{enumerate}
\item  $\phi^i_k$ is a homotopy relative to the subobject $s_i\colon \mathsf{E}_{k-1}\rightarrow \mathsf{E}_k$:
\[\xymatrix{
 & \mathsf{E}_{k+1}\ar[r]^c & P(\mathsf{E}_{k+1})\ar@{->>}[d]^{(s,t)} \\
\mathsf{E}_{k-1}\ar[r]^{s_i}\ar[ur]^{s_is_i}& \mathsf{E}_{k}\ar[ur]^{\phi^i_k}\ar@<.5ex>[r]^(.4){s_i}\ar@<-.5ex>[r]_(.4){s_{i+1}} & \mathsf{E}_{k+1}\times \mathsf{E}_{k+1};
}\]
\item for all $i+1<j<k+2$ the diagram 
\[\xymatrix{
\mathsf{E}_{k-1}\ar[r]^{s_i}\ar[d]_{s_{j-2}} & \mathsf{E}_{k}\ar[d]_{s_{j-1}}\ar@<.5ex>@/^/[r]^{s_i}\ar@{}[r]|{\phi^i_k}\ar@<-.5ex>@/_/[r]_{s_{i+1}} & \mathsf{E}_{k+1}\ar[d]^{s_j} \\
\mathsf{E}_{k}\ar[r]_{s_i} & \mathsf{E}_{k+1}\ar@<.5ex>@/^/[r]^{s_i}\ar@{}[r]|{\phi^i_{k+1}}\ar@<-.5ex>@/_/[r]_{s_{i+1}} & \mathsf{E}_{k+2} \\
}\]
commutes, i.e.\ $s_j\ast \phi^i_k= \phi^{i}_{k+1}\ast s_{j-1}$.
\end{enumerate}

\end{proposition}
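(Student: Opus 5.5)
We will construct the homotopy $\phi^i_k$ geometrically, from an explicit rotation of cubes. Then we transport it into the model category $\mathrm{Op}_{\infty}$ through the topological (or simplicial) model of the little cubes operads.

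Recall that $\mathsf{E}_k$ is the operadic nerve of the topological category of operators whose multimorphism spaces are spaces of rectilinear embeddings $\coprod_{n}(-1,1)^k\hookrightarrow(-1,1)^k$. The map $s_i$ sends a rectilinear embedding $f=(f_1,\dots,f_k)$ to the embedding obtained by inserting $\mathrm{id}_{(-1,1)}$ as the $(i+1)$-st coordinate. The maps $s_i$ and $s_{i+1}$ differ only by the transposition $\tau$ of coordinates $i+1$ and $i+2$ in $(-1,1)^{k+1}$, that is, $s_{i+1}(f)=\tau\circ s_i(f)\circ\tau^{-1}$.

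The first step is to join $\tau$ to the identity through embeddings that still act well on rectilinear cubes. The rotation by $\pi/2$ in the $(i+1,i+2)$-plane does not preserve rectilinearity. We therefore use the standard trick behind the Additivity Theorem: the inclusion of rectilinear embeddings into all (framed or merely open) embeddings of cubes is an equivalence of $\infty$-operads. Concretely, we replace $\mathsf{E}_{k+1}$ by the equivalent $\infty$-operad $\mathsf{E}'_{k+1}$ of embeddings of the form $g\circ(\text{rectilinear})\circ g^{-1}$ with $g\in SO(2)$ acting on coordinates $i+1,i+2$. We take this on faith as a consequence of \cite[Section 5.1]{lurieha}, using the contractibility of the relevant spaces of configurations up to rotation. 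A path $\gamma\colon[0,1]\to SO(2)\subset SO(k+1)$ from $\mathrm{id}$ to the rotation $r$ with $r\circ s_i(f)\circ r^{-1}=s_{i+1}(f)$ then gives a map $\mathsf{E}_k\times\Delta^1\to\mathsf{E}'_{k+1}$. The correct choice of $r$ is the rotation taking the $(i+1)$-axis to the $(i+2)$-axis. On the embeddings $s_i(f)$ it agrees with $\tau$ up to a reflection, and the reflection is harmless because the inserted coordinate is the identity of $(-1,1)$, which is symmetric.

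Composing the map $\mathsf{E}_k\times\Delta^1\to\mathsf{E}'_{k+1}$ with an inverse equivalence $\mathsf{E}'_{k+1}\simeq\mathsf{E}_{k+1}$ yields, after lifting against the path-object fibration $(s,t)\colon P(\mathsf{E}_{k+1})\to\mathsf{E}_{k+1}\times\mathsf{E}_{k+1}$, the homotopy $\phi^i_k$. This lift exists because $\mathsf{E}_k$ is cofibrant, and all objects of $\mathrm{Op}_\infty$ are cofibrant.

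Both coherence conditions are checked at the level of the explicit rotation before fibrant replacement. For (1): on the image of $s_i\colon\mathsf{E}_{k-1}\to\mathsf{E}_k$, the embedding $s_is_i(f)$ is the identity in both coordinates $i+1$ and $i+2$. It is therefore invariant under conjugation by the whole circle $SO(2)$ in that plane, so the homotopy is constant there. For (2): the rotation in the $(i+1,i+2)$-plane commutes with inserting an identity coordinate at a position $j>i+1$, since that operation only reindexes later coordinates. This gives $s_j\ast\phi^i_k=\phi^i_{k+1}\ast s_{j-1}$ on the nose in the rotated models.

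The main obstacle is rigidification. The equalities in (1) and (2) must hold strictly after passing to $\mathrm{Op}_\infty$ and after replacing $\mathsf{E}'$ by $\mathsf{E}$. To arrange this, we will choose the path object $P(-)=(-)^{\Delta^1}$ functorially and perform all replacements functorially in $k$ simultaneously, using the co-semi-simplicial diagram (\ref{def_E_bullet}) extended by the rotated models. Then the strict identities in the geometric model are preserved. If a strict inverse equivalence $\mathsf{E}'_{k+1}\to\mathsf{E}_{k+1}$ is unavailable, we will instead pose the relative lifting problem: first define $\phi^i_k$ on the subobject $s_i(\mathsf{E}_{k-1})$ as the constant homotopy, and on $s_{j-1}$-images compatibly with $\phi^i_{k-1}$. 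We then extend using the fact that $(s,t)$ is a fibration and the inclusion of these subobjects is a cofibration, proceeding by induction on $k$.
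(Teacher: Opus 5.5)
There is a genuine gap, in two places.

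\textbf{The auxiliary operad.} As you describe it, $\mathsf{E}'_{k+1}$ is not an operad.
\begin{itemize}
\item A rotation $g$ by an angle outside $\frac{\pi}{2}\mathbb{Z}$ does not preserve $\Box^{k+1}$, so $g\circ e\circ g^{-1}$ is not a self-embedding of the cube.
\item If $g$ is allowed to vary with the operation, the set is not closed under composition. A product $g_1A_1g_1^{-1}g_2A_2g_2^{-1}$ of conjugates of positive diagonal matrices is in general not symmetric, hence not of that form.
\item The claim that rectilinear embeddings include into all open embeddings by an equivalence is false. The unary operations of the latter form a non-contractible space; for smooth embeddings it is $\simeq O(k+1)$.
\end{itemize}
A rotation argument needs a genuinely $O(k+1)$-invariant model, such as framed embeddings of $\mathbb{R}^{k+1}$. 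It also needs a strict operad map from $\mathsf{E}_{k+1}$ into that model which is equivariant for signed coordinate permutations, e.g.\ via an odd homeomorphism $(-1,1)\cong\mathbb{R}$ in each coordinate. A minor point: the endpoint must be the rotation with $e_{i+2}\mapsto e_{i+1}$ and $e_{i+1}\mapsto -e_{i+2}$. The rotation sending $e_{i+1}\mapsto e_{i+2}$ puts the reflection on $f_{i+1}$, not on the inserted identity.

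\textbf{The rigidification.} This is the more serious problem. The statement asks for a map $\phi^i_k\colon\mathsf{E}_k\to P(\mathsf{E}_{k+1})$ with three strict properties: endpoints exactly $s_i,s_{i+1}$; restriction exactly $c\circ s_is_i$ on $\mathsf{E}_{k-1}$; and the strict identities in (2).
\begin{itemize}
\item Transporting back through an inverse equivalence $q$ gives a homotopy from $q\iota s_i$ to $q\iota s_{i+1}$. Splicing in $q\iota\simeq 1$ destroys both (1) and (2).
\item Your fallback fails as stated. The map $(s,t)\colon P(\mathsf{E}_{k+1})\to\mathsf{E}_{k+1}\times\mathsf{E}_{k+1}$ is a fibration but not a trivial fibration, so it only lifts against trivial cofibrations.
\item The inclusion $Q\hookrightarrow\mathsf{E}_k$ of the images of $s_i$ and the $s_{j-1}$ is not a trivial cofibration. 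For instance, $s_0(\mathsf{E}_1)\cup s_1(\mathsf{E}_1)\subset\mathsf{E}_2$ has four contractible components in arity $2$, while $\mathsf{E}_2(2)\simeq S^1$.
\end{itemize}
Extending a prescribed homotopy from $Q$ to $\mathsf{E}_k$ with fixed endpoints is therefore an obstruction problem. You would need a global homotopy $h$ together with a homotopy rel endpoints between $h|_Q$ and the prescribed one; only then does the homotopy extension property apply. Producing that second-order datum is the actual content, and it is missing. It must also be done compatibly in $k$, since (2) couples $\phi^i_k$ with $\phi^i_{k+1}$.

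\textbf{The missing idea.} The paper avoids all of this because you never need to leave the rectilinear model. In coordinates $(i+1,i+2)$, deform $(\mathrm{id},f_{i+1})$ through the diagonal $(f_{i+1},f_{i+1})$ to $(f_{i+1},\mathrm{id})$: squash the rectangles to squares, then stretch them to the orthogonal rectangles.
\begin{itemize}
\item An unmodified copy of every $f_j$ is present at all times, so the boxes stay disjoint and this is a homotopy inside $\mathsf{E}_{k+1}$ itself.
\item (1) holds on the nose, because nothing moves when $f_{i+1}=\mathrm{id}$.
\item (2) holds on the nose, because only coordinates $i+1,i+2$ move.
\end{itemize}
If you want every intermediate stage to be an honest operad map, let the moving copy run through monoid endomorphisms of the affine monoid. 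For example, use $ax+b\mapsto a^{s}x$ for $s\in[0,1]$, followed by $ax+b\mapsto ax+cb$ for $c\in[0,1]$.
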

\begin{proof}
We denote the $k$-dimensional cube $(-1,1)^k$ by $\Box^k$. Let us first give the proof for $k=2$ to illustrate. 
Here, on associated topological categories the embeddings
$s_0,s_{1}\colon \mathsf{E}_1\rightarrow \mathsf{E}_2$ are locally simply induced by the two obvious orthogonal embeddings
\[(f\colon\Box^1\rightarrow\Box^1)\mapsto(f,1_{(-1,1)})\colon\Box^{2}\rightarrow\Box^{2}),\]
and 
\[(f\colon\Box^1\rightarrow\Box^{1})\mapsto(1_{(-1,1)},f)\colon\Box^{2}\rightarrow\Box^{2}).\]
Here each map $f\colon (-1,1)\rightarrow (-1,1)$ is an affine map of the form $x\mapsto ax+b$. These two embeddings are naturally homotopic 
by first turning the horizontal rectangle $(ax+b,y)$ to suitably sized squares by way of the homotopy $(ax+b,(1-t(1-a))y+tb)$, $t\in[0,1]$, 
and then turning those to correspondingly sized vertical rectangles by way of the homotopy $((a-t(a-1))x+(1-t)b,y)$. This fixes 
the common boundary $s_0\colon \mathsf{E}_{0}\rightarrow \mathsf{E}_{1}$ given by the identity $(x,y)\mapsto (1x+0,y)$ as stated in Part (1).
The same construction works for any given finite set $S$ and any rectilinear embedding $f\colon\Box^1\times S\rightarrow\Box^1$ as the images 
are rectilinear at all times and do not intersect.

Generally, on associated topological categories the embeddings $s_i,s_{i+1}\colon \mathsf{E}_k\rightarrow \mathsf{E}_{k+1}$ are locally similarly induced by 
the two obvious orthogonal embeddings of $\Box^k$ in $\Box^{k+1}$ induced by the maps
\[(f_1,\dots,f_k)\colon\Box^k\rightarrow\Box^{k})\mapsto(f_1,\dots,f_i,1_{(-1,1)},f_{i+1},\dots, f_k)\colon\Box^{k+1}\rightarrow\Box^{k+1}),\]
and 
\[(f_1,\dots,f_k)\colon\Box^k\rightarrow\Box^{k})\mapsto(f_1,\dots,f_{i+1},1_{(-1,1)},f_{i+2},\dots, f_k)\colon\Box^{k+1}\rightarrow\Box^{k+1}).\]
Here, each $f_i\colon (-1,1)\rightarrow (-1,1)$ is an affine map of the form $x\mapsto a_ix+b_i$. These two embeddings are naturally 
homotopic again by first turning the horizontal rectangles to suitably sized squares, and then turning those to 
correspondingly sized vertical rectangles. This can be done fixing the common boundary
$s_i\colon \mathsf{E}_{k-1}\rightarrow \mathsf{E}_{k}$ as stated in Part (1), and is easy to be seen to be coherent as in Part (2).
\end{proof}

\begin{remark}\label{rem_eh_twist}
The homotopy $\phi_k^i\colon s_i\simeq s_{i+1}$ in Proposition~\ref{prop_eh} can be visualised as folding the 
(co)face $s_i$ of the $k$-dimensional cube onto the (co)face $s_{i+1}$ along the diagonal hypersurface determined the common axis 
$s_{i+1}s_i=s_is_i$. 
\[
\adjustbox{scale=0.5}{%
\begin{tikzcd}
	&&&&&& {1} &&&&&&&&&&&&&&& {1} \\
	&&&&&&& {} && {} &&&&&&&&&&&&&&& 1 \\
	\\
	&&&&&&&&& {} \\
	{k=2:} && {-1} &&&&&&&& 1 &&&&& {k=3:} && {-1} &&&&&&&& {1} \\
	\\
	\\
	&&& {} &&&&&&&&&&&&&&& {-1} \\
	&&&&&& {-1} &&&&&&&&&&&&&&& {-1}
	\arrow["{s_1}"{pos=0.8}, no head, from=1-7, to=9-7]
	\arrow[curve={height=-24pt}, tail reversed, from=2-8, to=4-10]
	\arrow["{s_0}"{pos=0.2}, no head, from=5-3, to=5-11]
	\arrow["{s_1s_1=s_2s_1}"{pos=0.2}, no head, from=5-18, to=5-26]
	\arrow[dashed, no head, from=8-4, to=2-10]
	\arrow["{s_0s_0=s_1s_0}"'{pos=0.8}, no head, from=8-19, to=2-25]
	\arrow["{s_0s_1=s_2s_0}"'{pos=0.2}, no head, from=9-22, to=1-22]
\end{tikzcd}}\]
The same geometric folding construction can be used to relate the embeddings $s_i,s_j\colon \mathsf{E}_k\rightarrow \mathsf{E}_{k+1}$ for 
all pairs $0\leq i<j\leq k$ directly; this however does generally not yield a relative homotopy as in Proposition~\ref{prop_eh}. For 
instance, folding the embedding $s_0\colon \mathsf{E}_2\rightarrow \mathsf{E}_3$ onto the embedding
$s_2\colon \mathsf{E}_2\rightarrow \mathsf{E}_3$ along the axis $s_2 s_0=s_0 s_1$ gives a homotopy from
$s_0\colon \mathsf{E}_2\rightarrow \mathsf{E}_3$ to $s_2\circ\mathrm{swap}\colon \mathsf{E}_2\rightarrow \mathsf{E}_3$, the latter of which 
maps a pair $(f_1,f_2)$ to the triple $(f_2,f_1,1)$ (instead to $(f_1,f_2,1)$ as $s_2$ does). Proposition~\ref{prop_eh} however does yield 
a composite homotopy $s_0\simeq s_1\simeq s_2$ by folding first along the $s_1s_0=s_0s_0$-axis and then along the $s_2s_1=s_1s_1$-axis.
\end{remark}

\begin{remark}\label{rem_eh_2}
In particular, for every symmetric monoidal $\infty$-category $\mathcal{C}^{\otimes}$ the two forgetful functors
\[\xymatrix{
\mathrm{Alg}_{\mathsf{E}_1}(\mathrm{Alg}_{\mathsf{E}_1}(\mathcal{C}^{\otimes}))\ar@<.5ex>[rr]^U\ar@<-.5ex>[rr]_{\mathrm{Alg}_{\mathsf{E}_1}(U)}\ar@/_/[dr]_U & & \mathrm{Alg}_{\mathsf{E}_1}(\mathcal{C}^{\otimes})\ar@/^/[dl]^U\\
 & \mathcal{C}^{\otimes} & 
}\]
are equivalent over $\mathcal{C}^{\otimes}$ by Proposition~\ref{prop_eh}. This means their underlying vertical and horizontal $\mathsf{E}_1$-algebras 
are equivalent by way of an equivalence that restricts to the identity on the common underlying object. In particular, to give an
$\mathsf{E}_1$-algebra structure on top of an already determined $\mathsf{E}_1$-algebra $A$ in a symmetric monoidal $\infty$-category
$\mathcal{C}^{\otimes}$ is not to give an arbitrary new $\mathsf{E}_1$-algebra structure subject to further coherence laws, but further coherence laws 
directly on the already existing multiplication.
\end{remark}


For later application we record the following technical consequence of Proposition~\ref{prop_eh}. We denote by $L_k(\mathsf{E}_{\bullet})$ 
the $k$-th latching object of the co-semi-simplicial object $\mathsf{E}_{\bullet}$ in (\ref{def_E_bullet}).

\begin{remark}\label{rem_latch}
Let $X\colon\Delta_s^{op}\rightarrow\mathcal{C}$ be a co-semi-simplicial object in a cocomplete category $\mathcal{C}$.
For $0\leq n<\infty$ we may consider the shifted complex $X_{\bullet +n}\colon\Delta^{op}_s\rightarrow\mathcal{C}$ given by
\[\xymatrix{
X_n\ar[r]^{s_0} & X_{n+1}\ar@<.5ex>[r]^{s_0}\ar@<-.5ex>[r]_{s_1} & X_{n+2}\ar@<.5ex>@/^/[r]^{s_0}\ar[r]|{s_1}\ar@<-.5ex>@/_/[r]_{s_2} & X_{n+3}\ar@{-->}[r] & \dots
}\]
It comes together with a natural transformation $s_{\bullet}\colon X_{\bullet +n}\rightarrow X_{\bullet +n+1}$ given at $0\leq k<\infty$ by 
$s_k\colon X_{k+n}\rightarrow X_{k+n+1}$. Then the latching object $L_{k+1}(X)\rightarrow X_{k+1}$ of $X$ can be computed as the induced
co-gap map
\[L_k(X_{\bullet +1})\sqcup_{L_k(X)}X_k\rightarrow (X_{\bullet +1})_k.\]
\end{remark}

\begin{corollary}\label{cor_E_latching}
For every $0\leq k<\infty$ the canonical embedding
\[\xymatrix{
\mathsf{E}_k\ar[d]_{\eta}\ar[dr]^{s_k} & \\
L_{k+1}(\mathsf{E}_{\bullet})\ar[r] & \mathsf{E}_{k+1}
}\]
has a retract over $\mathsf{E}_{k+1}$ in the $\infty$-category of $\infty$-operads.
\end{corollary}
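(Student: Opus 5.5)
We argue by induction on $k$, using Remark~\ref{rem_latch} to express the latching object $L_{k+1}(\mathsf{E}_{\bullet})$ as the co-gap pushout
\[L_k(\mathsf{E}_{\bullet+1})\sqcup_{L_k(\mathsf{E}_{\bullet})}\mathsf{E}_k\longrightarrow\mathsf{E}_{k+1},\]
and building the retraction $r\colon L_{k+1}(\mathsf{E}_{\bullet})\rightarrow\mathsf{E}_k$ out of the homotopies $\phi^i_k$ of Proposition~\ref{prop_eh}. The retraction should send the $i$-th summand $\mathsf{E}_k\xrightarrow{s_i}L_{k+1}(\mathsf{E}_\bullet)$ to $\mathsf{E}_k$ by the identity. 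The required homotopy $s_k\circ r\simeq(L_{k+1}(\mathsf{E}_\bullet)\rightarrow\mathsf{E}_{k+1})$ over $\mathsf{E}_{k+1}$ is then assembled on each summand from the composite homotopy $s_i\simeq s_{i+1}\simeq\dots\simeq s_k$ given by concatenating $\phi^i_k,\phi^{i+1}_k,\dots,\phi^{k-1}_k$. Here we use the folding picture of Remark~\ref{rem_eh_twist}: we deliberately fold through consecutive faces rather than directly, to avoid the swap.

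For the base case $k=0$ the latching object $L_1$ is initial, so the statement reduces to $\mathsf{E}_0\rightarrow\mathsf{E}_1$ factoring through itself, which is trivial. For $k=1$ we have $L_2=\mathsf{E}_1\sqcup_{\mathsf{E}_0}\mathsf{E}_1$. We take $r$ to be the fold map, and the homotopy to be $(\phi^0_1, c\circ s_1)$; this is well defined on the pushout precisely because $\phi^0_1$ is relative to $\mathsf{E}_0$ by Part (1) of Proposition~\ref{prop_eh}. Since we work in the model category $\mathrm{Op}_\infty$ with path object $P(\mathsf{E}_{k+1})$, a homotopy $L_{k+1}\rightarrow P(\mathsf{E}_{k+1})$ whose target endpoint is the canonical map lets us replace the canonical map by $s_k\circ r$ up to homotopy. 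This exhibits $\mathsf{E}_k\xrightarrow{\eta}L_{k+1}\xrightarrow{r}\mathsf{E}_k$ as a retract over $\mathsf{E}_{k+1}$ in the underlying $\infty$-category of operads. Because the latching object is a homotopy colimit, we should take cofibrant replacements, or note that the $s_i$ are monomorphisms and hence cofibrations, so that the strict pushout is homotopically correct.

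For the inductive step we apply Remark~\ref{rem_latch} to the shifted object $\mathsf{E}_{\bullet+1}$, whose coface maps are again the $s_i$, now with indices shifted. By the inductive hypothesis applied to the shift, we obtain a retraction of $L_k(\mathsf{E}_{\bullet+1})$ onto $\mathsf{E}_k$ together with a homotopy to $s_{k-1}\colon\mathsf{E}_k\rightarrow\mathsf{E}_{k+1}$. We then glue this with the summand $\mathsf{E}_k\xrightarrow{s_k}\mathsf{E}_{k+1}$ along $L_k(\mathsf{E}_\bullet)$, using $\phi^{k-1}_k$ to move $s_{k-1}$ to $s_k$.

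The main obstacle is compatibility on the overlap $L_k(\mathsf{E}_\bullet)$. The homotopies produced on both sides must agree after restriction along the coface maps, and this is exactly what Part (2), $s_j\ast\phi^i_k=\phi^i_{k+1}\ast s_{j-1}$, together with Part (1), is designed to guarantee. The bookkeeping of which concatenation of $\phi$'s appears on each face, and checking that the concatenations match strictly (not just up to higher homotopy) on the iterated intersections, is the delicate point. We would handle it by choosing strict path-object models and reparametrizing the concatenations in a fixed normal form indexed by the face.
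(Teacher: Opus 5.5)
Your outline matches the paper's: decompose $L_{k+1}(\mathsf{E}_\bullet)$ via Remark~\ref{rem_latch}, use the retraction of the shifted latching object, glue with the last summand, and fold with $\phi^{k-1}_k$. Summand $i$ then indeed carries the consecutive concatenation $\phi^i_k,\dots,\phi^{k-1}_k$. As written, however, your induction does not close, for two related reasons.

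First, for $k\geq 2$ the retraction cannot be ``the identity on each summand'' as a map out of the strict pushout. By $s_js_i=s_is_{j-1}$, summands $i$ and $j\geq i+2$ meet in a copy of $\mathsf{E}_{k-1}$, embedded via $s_{j-1}$ into the $i$-th copy and via $s_i$ into the $j$-th copy, and $s_{j-1}\neq s_i$ (e.g.\ summands $0$ and $2$ when $k=2$). So the gluing along $L_k(\mathsf{E}_\bullet)$ in your inductive step needs more than $\phi^{k-1}_k$. It also needs the homotopy, for the \emph{unshifted} object, from the latching map $L_k(\mathsf{E}_\bullet)\to\mathsf{E}_k$ to $s_{k-1}\circ r$. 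And it needs this homotopy to be compatible along $L_k(s_\bullet)$ with the data of the shifted object.

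Second, that compatibility depends on how the previous stage was built, so it must be part of the inductive hypothesis. You correctly name it as the main obstacle, but your hypothesis does not contain it, and ``strict path-object models and normal-form reparametrizations'' does not supply it.

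The paper closes the loop by proving a stronger statement simultaneously for all $k$ and all shifts $n$. It constructs a retraction $r^n_k\colon L_{k+1}(\mathsf{E}_{\bullet+n})\to(\mathsf{E}_{\bullet+n})_k$ with a homotopy $h^n_k$ satisfying $h^n_k\ast\eta=1_{s_k}$. It also constructs a $3$-cell $\psi^n_k\colon s_{k+1}\ast h^n_k\simeq h^{n+1}_k\ast L_{k+1}(s_\bullet)$ restricting to the identity along $\eta$. Given $\psi^n_k$, the comparison cube commutes up to homotopy, with the square at the last summand filled by $h^n_k$. This yields a map of homotopy pushouts $\bar r^n_{k+1}$ with $\bar h^n_{k+1}=(h^{n+1}_k,1)$. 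One then sets $r^n_{k+1}=\nabla\circ\bar r^n_{k+1}$ and $h^n_{k+1}=(\phi^k_{k+n+1},1)\ast\bar h^n_{k+1}$. The new $\psi^n_{k+1}$ follows from Proposition~\ref{prop_eh}.2 together with $\psi^n_k$; this is where your intended use of Parts (1) and (2) actually does its work.

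A minor point: with this indexing $L_1(\mathsf{E}_\bullet)=\mathsf{E}_0$ and $\eta$ is the identity, so $L_1$ is not initial.
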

\begin{proof}
We work in the model category $\mathrm{Op}_{\infty}$. We construct simultaneously for all $0\leq k<\infty$ and all $0\leq n<\infty$ 
a homotopy-commutative triangle
\begin{align}\label{diag_E_latching_1}
\begin{gathered}
\xymatrix{
L_{k+1}(\mathsf{E}_{\bullet +n})\ar[rr]^{r_k^n}\ar[dr]_{}  & \ar@{}[d]|{h_k^n} & (\mathsf{E}_{\bullet +n})_k\ar[dl]^{s_k} \\
 & (\mathsf{E}_{\bullet +n})_{k+1} &
}
\end{gathered}
\end{align}
such that $h_k^n\ast\eta=1_{s_k}$ over $(\mathsf{E}_{\bullet +n})_{k+1}$ and such that there is a 3-cell
$\psi_k^n\colon s_{k+1}\ast h_k^n\simeq h_k^{n+1}\ast L_{k+1}(s_{\bullet})$ which restricts along
$\eta\colon (\mathsf{E}_{\bullet +n})_k\rightarrow L_{k+1}(\mathsf{E}_{\bullet +n})$ to the identity on the square $s_{k+1}s_k=s_ks_k$. This in particular 
proves the statement that is to show for $n=0$. We proceed by induction on $k$. If $k=0$ then
$\eta\colon (\mathsf{E}_{\bullet +n})_0\rightarrow L_1(\mathsf{E}_{\bullet +n})$ is the identity (over $(\mathsf{E}_{\bullet +n})_1$) and so we may pick $r_0^n$ the 
identity and $h_0^n$ the identity as well. The diagram
\[\xymatrix{
L_1(\mathsf{E}_{\bullet +n})\ar[rr]^{r_0^n}\ar[dr]_{s_0}\ar[dd]_{s_0} &\ar@{}[d]|{h_0^n} & (\mathsf{E}_{\bullet +n})_0\ar[dl]^{s_0}\ar[dd]^{s_0} \\
 & (\mathsf{E}_{\bullet +n})_1\ar[dd]^(.3){s_1} & \\
L_1(\mathsf{E}_{\bullet +n+1})\ar[rr]|(.51)\hole^(.35){r_0^{n+1}}\ar[dr]_{} & \ar@{}[d]_{h_0^{n+1}} & (\mathsf{E}_{\bullet +n+1})_0\ar[dl]^{s_0} \\
 & (\mathsf{E}_{\bullet +n+1})_1 & \\
}\]
then commutes trivially. Suppose we have shown the statement for a given $k$ and all $n\geq 0$. By way of Remark~\ref{rem_latch} and the 
inductive assumption we obtain a diagram
\[\xymatrix{
 & (\mathsf{E}_{\bullet +n})_k\ar[rr]^{s_k}\ar[dd]|\hole_(.3){s_k}\ar[dl]^{\eta}\ar@{}[dr]|{h_k^{n}} & & (\mathsf{E}_{\bullet +n})_{k+1}\ar[dd]^{s_{k+1}}\ar@{=}[dl] \\
L_{k+1}(\mathsf{E}_{\bullet +n})\ar[dd]_{L_{k+1}(s_{\bullet})}\ar[rr]\ar@/^1pc/[ur]^{r_k^n} & & (\mathsf{E}_{\bullet +n})_{k+1}\ar[dd]^(.3){s_{k+1}} & \\
 & (\mathsf{E}_{\bullet +n+1})_k\ar[rr]|\hole^(.3){s_k}\ar[dl]^{\eta}\ar@{}[dr]|{h_k^{n+1}} & & \mathsf{E}_{k+n+2} \\
L_{k+1}(\mathsf{E}_{\bullet +n+1})\ar[rr]\ar@/^1pc/[ur]^{r_k^{n+1}} & & \mathsf{E}_{k+n+2}.\ar@{=}[ur] &
}\]
that commutes up to homotopy by way of the 3-cell $\psi_k^n$. This gives rise to a morphism together with a homotopy
\begin{align}\label{diag_E_latching_2}
\begin{gathered}
\xymatrix{
L_{k+2}(\mathsf{E}_{\bullet +n})\ar[rr]^{\overline{r}_{k+1}^n}\ar[dr] &\ar@{}[d]|{\overline{h}_{k+1}^n:=(h_k^{n+1},1)} & \mathsf{E}_{k+n+1}\sqcup_{\mathsf{E}_{k+n}}\mathsf{E}_{k+n+1}\ar[dl]^{(s_k,s_{k+1})}\\
 & \mathsf{E}_{k+n+2} & 
}
\end{gathered}
\end{align}
between the two corresponding (homotopy)-pushouts such that the pasting
\[\xymatrix{
\mathsf{E}_{k+n+1}\sqcup_{\mathsf{E}_{k+n}}\mathsf{E}_{k+n+1}\ar[dr]_{(s_k,s_{k+1})}\ar[r]^(.6){(\eta,1)} & L_{k+2}(\mathsf{E}_{\bullet +n})\ar[r]^(.4){\overline{r}_{k+1}^n}\ar[d] \ar@{}[dr]|(.3){\overline{h}_{k+1}^n} & \mathsf{E}_{k+n+1}\sqcup_{\mathsf{E}_{k+n}}\mathsf{E}_{k+n+1}\ar[dl]^{(s_k,s_{k+1})} \\
 & \mathsf{E}_{k+n+2} & 
}\]
is the identity over $\mathsf{E}_{k+n+2}$. By Proposition~\ref{prop_eh} there is a homotopy 
\begin{align}\label{diag_E_latching_3}
\begin{gathered}
\xymatrix{
\mathsf{E}_{k+n+1}\sqcup_{\mathsf{E}_{k+n}}\mathsf{E}_{k+n+1}\ar[rr]^{\nabla}\ar[dr]_{(s_k,s_{k+1})} & \ar@{}[d]|{(\phi_{k+n+1}^k,1)} & \mathsf{E}_{k+n+1}\ar[dl]^{s_{k+1}}\\
 & \mathsf{E}_{k+n+2} & 
}
\end{gathered}
\end{align}
such that the pasting $(\phi_{k+n+1}^k,1)\ast\iota_R\colon s_{k+1}\rightarrow s_{k+1}$ is the identity over $\mathsf{E}_{k+n+2}$. Concatenating the two triangles (\ref{diag_E_latching_2}) and (\ref{diag_E_latching_3}) yields a morphism
\[r_{k+1}^n:=\nabla\circ\overline{r}_{k+1}^n\] and a homotopy
\[h_{k+1}^n:=(\phi_{k+n+1}^k,1)\ast\overline{h}_{k+1}^n\]
as required in (\ref{diag_E_latching_1}). To end the proof we are left to show that
$s_{k+2}\ast h_{k+1}^n\simeq h_{k+1}^{n+1}\ast L_{k+2}(s_{\bullet})$ relative to $(\mathsf{E}_{\bullet +n})_{k+1})$ for all $0\leq n$. This follows
from the definition of $h_{k+1}^n$ together with Proposition~\ref{prop_eh}.2 and the inductive assumption on the existence of $\psi_k^n$.

%
\end{proof}

\subsection{Fragments of associativity and unitality}\label{sec_sub_assoc}
In Section~\ref{sec_sub_comm} we recalled that
\[\mathrm{Alg}_{\mathsf{E}_{\infty}}(\mathcal{C}^{\otimes})\simeq\mathrm{lim}_{n<\infty}\mathrm{Alg}_{\mathsf{E}_{n}}(\mathcal{C}^{\otimes}).\]
Similarly, the $\infty$-category of $\mathsf{E}_1$-algebras in a symmetric monoidal 
$\infty$-category $\mathcal{C}^{\otimes}$ itself can be expressed as the limit of a tower 
\[\mathrm{Alg}_{\mathsf{E}_1}(\mathcal{C}^{\otimes})\rightarrow\dots\rightarrow\mathrm{Alg}_{A_{m+1}}(\mathcal{C}^{\otimes})\rightarrow\mathrm{Alg}_{A_{m}}(\mathcal{C}^{\otimes})\rightarrow\dots\rightarrow\mathrm{Alg}_{A_1}(\mathcal{C}^{\otimes})\]
of forgetful functors. This is \cite[Proposition 4.1.1.9]{lurieha} for the purely associative, non-unital case; the associative and unital 
case is in fact more straightforward. In a vein similar to that of the $\mathsf{E}_k$-hierarchy, $\mathsf{A}_m$-algebras are objects 
together with a multiplication map that satisfies a degree of homotopy-coherent associativity and unitality. In this sense,
$\mathsf{E}_1$-algebras are at times referred to as $\mathsf{A}_{\infty}$-algebras
\cite[Definition 4.1.3.16, Proposition 4.1.3.19]{lurieha}.

\begin{remark}
More precisely, $\mathsf{A}_{\infty}$ is a planar $\infty$-operad, and hence lives in the non-symmetric context; $\mathsf{E}_1$ is the 
symmetrisation of $\mathsf{A}_{\infty}$. It hence does not make formal sense to talk about $\mathsf{E}_1$-algebras in non-symmetric 
monoidal $\infty$-categories.
\end{remark}

To isolate the study of associativity from that of unitality, Lurie introduces the notion of \emph{non-unital} $\mathsf{A}_m$-algebras. We 
therefore recall the $\infty$-category $\mathrm{Alg}_{\mathsf{A}_m}^{\text{nu}}(\mathcal{C}^{\otimes})$ of non-unital
$\mathsf{A}_m$-algebras in a monoidal $\infty$-category $\mathcal{C}^{\otimes}$ \cite[Section 4.1.4]{lurieha}. This is useful because it 
allows to construct $\mathsf{A}_{\infty}$-algebra structures on an object $A\in\mathcal{C}^{\otimes}$ in two consecutive steps --- rather 
than having to do both at each level at the same time. First, one may construct non-unital $\mathsf{A}_m$-structures for all $m\geq 0$ on
$A\in\mathcal{C}^{\otimes}$ that recursively extend one another.\footnote{In fact, we will do so by using unitality in low degrees without 
extending unitality directly to higher degrees itself.}
This gives rise to a non-unital $\mathsf{A}_{\infty}$-algebra structure on $A$ in $\mathcal{C}^{\otimes}$. Second, to show that this
non-unital $\mathsf{A}_{\infty}$-algebra can be extended to an actual $\mathsf{A}_{\infty}$-algebra, it suffices to show that it is
quasi-unital \cite[Theorem 5.4.3.5]{lurieha}. We recall that a non-unital $\mathsf{A}_{\infty}$-algebra $A$ in $\mathcal{C}^{\otimes}$ is 
\emph{quasi-unital} if its derived algebra $A\in h(\mathcal{C}^{\otimes})$ in the monoidal homotopy category of $\mathcal{C}^{\otimes}$ is 
unital \cite[Definition 5.4.3.1]{lurieha}. 

\begin{definition}\label{def_qu_hom}
Let $\mathcal{C}^{\otimes}$ be a monoidal $\infty$-category, and let $m\colon A\otimes A\rightarrow A$
be a non-unital $\mathsf{A}_2$-algebra in $\mathcal{C}^{\otimes}$ with quasi-unit $u\colon I\rightarrow A$. A morphism
$f\colon A\rightarrow B$ of non-unital $\mathsf{A}_2$-algebras in $\mathcal{C}^{\otimes}$ is quasi-unital if $fu\colon I\rightarrow B$ is a 
quasi-unit for $B$. We define	
$\mathrm{Alg}_{\mathsf{A}_2}^{\mathrm{qu}}(\mathcal{C}^{\otimes})\subset\mathrm{Alg}_{\mathsf{A}_2}^{\mathrm{nu}}(\mathcal{C}^{\otimes})$ 
the subcategory of quasi-unital $\mathsf{A}_2$-algebras in $\mathcal{C}^{\otimes}$ spanned by the quasi-unital $\mathsf{A}_2$-algebras and 
quasi-unital $\mathsf{A}_2$-algebra morphisms between them.
\end{definition}

\begin{remark}\label{rem_qu}
By definition, quasi-unitality of a non-unital $\mathsf{A}_{\infty}$-algebra reduces to quasi-unitality of its underlying non-unital
$\mathsf{A}_2$-algebra. The same applies to morphisms between such. This defines a subcategory
\[\xymatrix{
\mathrm{Alg}_{\mathsf{A}_m}^{\mathrm{qu}}(\mathcal{C}^{\otimes})\ar@{^(->}[r]\ar[d]\ar@{}[dr]|(.3){\pbs} & \mathrm{Alg}_{\mathsf{A}_m}^{\mathrm{nu}}(\mathcal{C}^{\otimes})\ar[d]^U \\
\mathrm{Alg}_{\mathsf{A}_2}^{\mathrm{qu}}(\mathcal{C}^{\otimes})\ar@{^(->}[r] & \mathrm{Alg}_{\mathsf{A}_2}^{\mathrm{nu}}(\mathcal{C}^{\otimes})
}\]
for all $2\leq m\leq\infty$. It is easy to see that the inclusion
$\mathrm{Alg}_{\mathsf{A}_2}^{\mathrm{qu}}(\mathcal{C}^{\otimes})\hookrightarrow\mathrm{Alg}_{\mathsf{A}_2}^{\mathrm{nu}}(\mathcal{C}^{\otimes})$ is replete 
and hence a $(-1)$-truncated fibration. Therefore, the inclusion
$\mathrm{Alg}_{\mathsf{A}_m}^{\mathrm{qu}}(\mathcal{C}^{\otimes})\hookrightarrow\mathrm{Alg}_{\mathsf{A}_m}^{\mathrm{nu}}(\mathcal{C}^{\otimes})$ is a $(-1)$-truncated fibration as well.
\end{remark}

We further recall that the $\infty$-category $\mathrm{Alg}_{\mathsf{A}_m}^{\mathrm{nu}}(\mathcal{C}^{\otimes})$ also is the
$\infty$-category of algebras in $\mathcal{C}^{\otimes}$ for an associated (non-unital) $\infty$-operad $\mathsf{A}_m^{\mathrm{nu}}$ 
\cite[Remark 4.1.4.8]{lurieha}. In particular, for any $m\geq 1$ the $\infty$-category
$\mathrm{Alg}_{\mathsf{A}_m}^{\mathrm{nu}}(\mathcal{C}^{\otimes})$ comes equipped with a symmetric monoidal structure such that the 
forgetful functor $U\colon\mathrm{Alg}_{\mathsf{A}_m}^{\mathrm{nu}}(\mathcal{C}^{\otimes})\rightarrow\mathcal{C}^{\otimes}$ that maps an 
algebra $A$ to its underlying object $U(A):=\mathsf{A}_1$ is symmetric monoidal \cite[Example 3.2.4.4]{lurieha}.

\begin{notation}
For convenience, we let $\mathsf{E}_0^{\mathrm{nu}}:=\mathrm{Triv}^{\otimes}$ be the trivial $\infty$-operad 
\cite[Example 2.1.1.20]{lurieha}, and $\mathsf{E}_k^{\mathrm{nu}}:=(\mathsf{A}_{\infty}^{\mathrm{nu}})^{\otimes_{\mathrm{BV}}^k}$ for
$1\leq k<\infty$. Here, $\mathsf{A}_{\infty}^{\mathrm{nu}}$ denotes the $\infty$-operad of non-unital $\mathsf{A}_{\infty}$-algebras of
\cite[Remark 4.1.4.8]{lurieha}. Whenever $\mathcal{C}^{\otimes}$ is a symmetric monoidal $\infty$-category, we set
$\mathrm{Alg}_{\mathsf{E}_{k}}^{\mathrm{nu}}(\mathcal{C}^{\otimes}):=\mathrm{Alg}_{\mathsf{E}_{k}^{\mathrm{nu}}}(\mathcal{C}^{\otimes})$.
We denote by $U_i\colon\mathrm{Alg}_{\mathsf{E}_{k}}^{\mathrm{nu}}(\mathcal{C}^{\otimes})\rightarrow\mathrm{Alg}_{\mathsf{E}_{i}}^{\mathrm{nu}}(\mathcal{C}^{\otimes})$ the obvious forgetful functor that forgets the first $(k-i)$-many components.
\end{notation}

\begin{remark}
Whenever $\mathcal{C}^{\otimes}$ is a symmetric monoidal $\infty$-category, the symmetric monoidal $\infty$-category
$\mathrm{Alg}_{\mathsf{E}_k^{\mathrm{nu}}}(\mathcal{C}^{\otimes})$ is probably not equivalent to the $\infty$-category
$\mathrm{Alg}_{\mathsf{E}_k}^{\mathrm{nu}}(\mathcal{C}^{\otimes})$ of non-unital $\mathsf{E}_k$-algebras in $\mathcal{C}^{\otimes}$ in the 
sense of \cite[Section 4.5.5]{lurieha}. The author however has not verified this.
\end{remark}

\begin{remark}
The ability to reduce (higher dimensional) unitality to (1-dimensional) quasi-unitality appears to be a 
specific property of fully homotopy-coherent associativity (in terms of $\mathsf{A}_{\infty}$ opposed to the finite $\mathsf{A}_m$'s). 
Whether a similar suitable reduction of $\mathsf{A}_m$-structures to quasi-unital $\mathsf{A}_m$-structures holds is not clear to the 
author. That means, although we may know from the start that a given $\mathsf{A}_m$-algebra has a quasi-unit, we still have to work with it 
essentially non-unitally until we know that we may extend this structure to a full (non-unital) $\mathsf{A}_{\infty}$-algebra structure.
\end{remark}

The following lemma will be very useful to move between the unital and non-unital context.

\begin{lemma}\label{lemma_unitff}
Let $\mathcal{C}^{\otimes}$ be a symmetric monoidal $\infty$-category. For all finite integers $k\geq 1$, the following hold.
\begin{enumerate}
\item The forgetful functor
\[\mathrm{Alg}_{\mathsf{E}_k}(\mathcal{C}^{\otimes})\rightarrow\mathrm{Alg}_{\mathsf{E}_k}^{\mathrm{nu}}(\mathcal{C}^{\otimes})\]
is monic. 
\item The forgetful functor
\begin{align}\label{equ_unitff}
\mathrm{Alg}_{\mathsf{E}_1}(\mathrm{Alg}_{\mathsf{E}_k}(\mathcal{C}^{\otimes}))\rightarrow\mathrm{Alg}_{\mathsf{A}_{\infty}}^{\mathrm{nu}}(\mathrm{Alg}_{\mathsf{E}_k}(\mathcal{C}^{\otimes}))
\end{align}
is fully faithful.
\item Let $A$, $B$ be $\mathsf{E}_k$-algebras in $\mathcal{C}^{\otimes}$, and $f\colon A\rightarrow B$ be a 
morphism of underlying non-unital $\mathsf{E}_k$-algebras. Then $f$ lifts to a morphism of $\mathsf{E}_k$-algebras whenever its underlying 
morphism $f\colon U(A)\rightarrow U(B)$ of $\mathsf{E}_1$-algebras in $\mathcal{C}^{\otimes}$ is
quasi-unital.
\item Let $A$ be a non-unital $\mathsf{E}_k$-algebra in $\mathcal{C}^{\otimes}$. Then $A$ lifts to an
$\mathsf{E}_k$-algebra whenever $U_1(A)\in\mathrm{Alg}_{\mathsf{E}_{1}}^{\mathrm{nu}}(\mathcal{C}^{\otimes})$ is quasi-unital, and for 
every $i\leq k$ its $i$-th multiplication $m_i\colon U_i(A)\otimes U_i(A)\rightarrow U_i(A)$ is quasi-unital in
$\mathrm{Alg}_{E_i}^{\mathrm{nu}}(\mathcal{C}^{\otimes})$ and subsequently exhibits a quasi-unit in
$\mathrm{Alg}_{E_i}(\mathcal{C}^{\otimes})$itself.
\end{enumerate}
\end{lemma}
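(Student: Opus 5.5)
The plan is to reduce all four parts to Lurie's theorem that, for $\mathsf{A}_{\infty}$-algebras, the forgetful functor $\mathrm{Alg}_{\mathsf{A}_{\infty}}(\mathcal{D}^{\otimes})\rightarrow\mathrm{Alg}^{\mathrm{nu}}_{\mathsf{A}_{\infty}}(\mathcal{D}^{\otimes})$ is a subcategory inclusion. Its essential image consists of the quasi-unital algebras and quasi-unital morphisms \cite[Theorem 5.4.3.5]{lurieha}. Together with Remark~\ref{rem_qu}, this means the functor is $(-1)$-truncated on mapping spaces, i.e.\ monic. We then iterate this fact along the Additivity Theorem. For every symmetric monoidal $\mathcal{D}^{\otimes}$, Example~\ref{exple_bv} gives the natural equivalences
\[\mathrm{Alg}_{\mathsf{E}_k}(\mathcal{D}^{\otimes})\simeq\mathrm{Alg}_{\mathsf{E}_1}(\mathrm{Alg}_{\mathsf{E}_{k-1}}(\mathcal{D}^{\otimes})),\qquad \mathrm{Alg}^{\mathrm{nu}}_{\mathsf{E}_k}(\mathcal{D}^{\otimes})\simeq\mathrm{Alg}^{\mathrm{nu}}_{\mathsf{A}_{\infty}}(\mathrm{Alg}^{\mathrm{nu}}_{\mathsf{E}_{k-1}}(\mathcal{D}^{\otimes})).\]
The second one holds by the definition $\mathsf{E}_k^{\mathrm{nu}}=(\mathsf{A}_\infty^{\mathrm{nu}})^{\otimes_{\mathrm{BV}}^k}$. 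Under these equivalences, the forgetful functor of Part (1) factors as the composite
\[\mathrm{Alg}_{\mathsf{E}_1}(\mathrm{Alg}_{\mathsf{E}_{k-1}})\rightarrow\mathrm{Alg}^{\mathrm{nu}}_{\mathsf{A}_\infty}(\mathrm{Alg}_{\mathsf{E}_{k-1}})\rightarrow\mathrm{Alg}^{\mathrm{nu}}_{\mathsf{A}_\infty}(\mathrm{Alg}^{\mathrm{nu}}_{\mathsf{E}_{k-1}}).\]

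Part (2) is exactly the first map of this composite with $\mathcal{D}^{\otimes}=\mathrm{Alg}_{\mathsf{E}_k}(\mathcal{C}^{\otimes})$, which is symmetric monoidal by Proposition~\ref{prop_bv}. Lurie's theorem shows this map is monic. The substantive claim is that it is full, i.e.\ that every non-unital map between unital $\mathsf{A}_\infty$-algebras is automatically quasi-unital. This is an Eckmann--Hilton phenomenon in the homotopy category of $\mathrm{Alg}_{\mathsf{E}_k}(\mathcal{C}^{\otimes})$. The unit of an algebra object there is a map from the monoidal unit, which is initial in $\mathrm{Alg}_{\mathsf{E}_k}(\mathcal{C}^{\otimes})$ for $k\geq1$. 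Hence any two maps out of it agree, so $fu_A\simeq u_B$ for every $f$.

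Part (1) then follows by induction on $k$. The second map of the composite is $\mathrm{Alg}^{\mathrm{nu}}_{\mathsf{A}_\infty}$ applied to a symmetric monoidal monomorphism, namely the $k-1$ case. Applying $\mathrm{Alg}^{\mathrm{nu}}_{\mathsf{A}_\infty}$ to a symmetric monoidal functor that is monic yields a monic functor, since mapping spaces of algebras are limits of mapping spaces in the base. Composites of monomorphisms are monomorphisms.

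Parts (3) and (4) follow by the same induction, now tracking essential images rather than just monicity. For (3), a non-unital $\mathsf{E}_k$-map $f$ is a non-unital $\mathsf{A}_\infty$-map between non-unital $\mathsf{E}_{k-1}$-algebras. By induction its underlying maps lift to unital $\mathsf{E}_{k-1}$-maps, and the lowest level uses the given quasi-unitality of $U(f)$. Once $f$ lies in $\mathrm{Alg}^{\mathrm{nu}}_{\mathsf{A}_\infty}(\mathrm{Alg}_{\mathsf{E}_{k-1}})$, Part (2) lifts it. For (4) we proceed similarly. The hypothesis on $U_1(A)$ together with the inductive hypothesis lifts $U_{k-1}(A)$ to a unital $\mathsf{E}_{k-1}$-algebra. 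The assumption that $m_i$ is a quasi-unital morphism makes the multiplication live in $\mathrm{Alg}_{\mathsf{E}_{k-1}}$, using (3) at level $k-1$. The assumed quasi-unit in $\mathrm{Alg}_{\mathsf{E}_{k-1}}(\mathcal{C}^{\otimes})$ then lets Lurie's theorem produce the unital $\mathsf{A}_\infty$-structure.

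The main obstacle I expect is making rigorous the identification of the non-unital iterated algebras with $\mathrm{Alg}^{\mathrm{nu}}_{\mathsf{A}_\infty}(\mathrm{Alg}^{\mathrm{nu}}_{\mathsf{E}_{k-1}})$ compatibly with the forgetful maps. In particular, we must check that the factorisation above commutes with the unital Additivity equivalence. I would first try to verify this directly on operads, by exhibiting the map $\mathsf{E}_k^{\mathrm{nu}}\rightarrow\mathsf{E}_k$ as the BV-tensor of the maps $\mathsf{A}^{\mathrm{nu}}_\infty\rightarrow\mathsf{E}_1$.
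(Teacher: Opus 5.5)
Your proposal is correct and is essentially the paper's own argument. It uses the same ingredients:
\begin{itemize}
\item Lurie's identification of unital algebras with the subcategory of quasi-unital algebras and morphisms (HA Theorem 5.4.3.5, Corollary 5.4.3.6);
\item for Part (1), iteration of this through the Additivity decomposition, using that $\mathrm{Alg}_{\mathcal{O}}(-)$ preserves limits and hence monomorphisms;
\item for fullness in Part (2), initiality of the unit in $\mathrm{Alg}_{\mathsf{E}_k}(\mathcal{C}^{\otimes})$;
\item for Parts (3) and (4), induction.
\end{itemize}

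The only step you leave implicit is one the paper states explicitly, in (4). A non-unital $\mathsf{A}_\infty$-algebra in $\mathrm{Alg}^{\mathrm{nu}}_{\mathsf{E}_{k-1}}(\mathcal{C}^{\otimes})$ lies entirely in the unital subcategory as soon as its underlying object and its multiplication do, because that subcategory is replete and closed under tensor products and composition. This is what licenses your application of Lurie's theorem at that point.
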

\begin{proof}
Part (1) again from a repeated application of \cite[Corollary 5.4.3.6]{lurieha} and the fact that
$\mathrm{Alg}_{\mathcal{O}}(-)$ preserves limits for any $\infty$-operad $\mathcal{O}$.

For Part (2), by the Additivity Theorem, we may without loss of generality assume that $k=1$. The 
functor (\ref{equ_unitff}) is monic by Part (1), and so we are only left to show that it is full. Thus, let $A$, $B$ be
$\mathsf{E}_1$-algebras in $\mathrm{Alg}_{\mathsf{E}_1}(\mathcal{C}^{\otimes})$ and let $f\colon A\rightarrow B$ be a morphism of their 
underlying non-unital $\mathsf{A}_{\infty}$-algebras. Let $u_A\colon 1\rightarrow A$ and $u_B\colon 1\rightarrow B$ be quasi-units of $A$ 
and $B$, respectively. To show that $f$ lifts to a morphism in
$\mathrm{Alg}_{\mathsf{E}_1}(\mathrm{Alg}_{\mathsf{E}_1}(\mathcal{C}^{\otimes}))$, by \cite[Theorem 5.4.3.5]{lurieha} we are to provide an 
equivalence $fu_A\simeq u_B$ in $\mathrm{Alg}_{\mathsf{E}_1}(\mathcal{C}^{\otimes})$. But the unit
$1\in\mathrm{Alg}_{\mathsf{E}_1}(\mathcal{C}^{\otimes})$ is an initial object \cite[Proposition 3.2.1.8]{lurieha}, which provides such an 
equivalence trivially.

Part (3) follows from Part (2). Part (4) is also a straightforward proof by induction using the former parts as well as the fact that 
if both the underlying object $A$ and the multiplication $m\colon A\otimes A\rightarrow A$ of a non-unital $\mathsf{E}_1$-algebra in
$\mathrm{Alg}_{E_1}^{\mathrm{nu}}(\mathcal{C}^{\otimes})$ are quasi-unital, then so is the entire $\mathsf{E}_1$-algebra.
\end{proof}

\subsection{Diagonal reflections of $\mathsf{E}_k$-structures}\label{sec_sub_revmon_infty}

Given a triple $\mathcal{O}$, $\mathcal{V}$ and $\mathcal{W}$ of $\infty$-operads, a 
bifunctor of $\infty$-operads from $(\mathcal{O},\mathcal{V})$ to $\mathcal{W}$ is a morphism
$f\colon\mathcal{O}\times\mathcal{V}\rightarrow\mathcal{W}$ of simplicial sets such that the square
\[\xymatrix{
\mathcal{O}\times\mathcal{V}\ar[r]^f\ar[d] & \mathcal{W}\ar@{->>}[d] \\
\mathrm{Fin}_{\ast}\times\mathrm{Fin}_{\ast}\ar[r]^(.6){\wedge} & \mathrm{Fin}_{\ast}
}\]
commutes, and such that $f$ maps pairs of inert morphisms in $\mathcal{O}\times\mathcal{V}$ to an inert morphism in $\mathcal{W}$ 
\cite[Section 2.2.5]{lurieha}. Here the bottom horizontal functor again denotes the smash product on the category of finite pointed sets. 
The Boardman--Vogt tensor product $\mathcal{O}\otimes_{\mathrm{BV}}\mathcal{V}$ from Example~\ref{exple_bv} is defined as a 
fibrant replacement 
\[\xymatrix{
\mathcal{O}\times\mathcal{V}\ar@{=}[r]\ar[d] & \mathcal{O}\times\mathcal{V}\ar[d]\ar@{^(->}[r]^{\sim} & \mathcal{O}\otimes_{\mathrm{BV}}
\mathcal{V}\ar@{->>}@/^1pc/[dl] \\
\mathrm{Fin}_{\ast}\times\mathrm{Fin}_{\ast}\ar[r]^{\wedge} & \mathrm{Fin}_{\ast}
}\]
of the composite morphism $\mathcal{O}\times\mathcal{V}\rightarrow\mathrm{Fin}_{\ast}\times\mathrm{Fin}_{\ast}\xrightarrow{\wedge}\mathrm{Fin}_{\ast}$ in the model category $\mathrm{Op}_{\infty}$. As such, it corepresents bifunctors of
$\infty$-operads out of the pair $(\mathcal{O},\mathcal{V})$ in the $\infty$-category of $\infty$-operads.
We recall that the smash product on the category of finite pointed sets is symmetric as well, albeit 
not strictly so. Adopting the notation from \cite[Notation 2.0.0.2]{lurieha}, the natural isomorphism
\[\xymatrix{
& \mathrm{Fin}_{\ast}\times\mathrm{Fin}_{\ast}\ar[dr]^{\wedge}\ar@{}[d]|{\Uparrow \sigma} & \\
\mathrm{Fin}_{\ast}\times\mathrm{Fin}_{\ast}\ar[ur]^{\mathrm{swap}}_{\cong}\ar[rr]_{\wedge} & & \mathrm{Fin}_{\ast}
}\]
is pointwise given for a pair $(\langle n\rangle,\langle m\rangle)$ by the composition
\begin{align}\label{def_sigma}
\langle nm\rangle\xleftarrow{\simeq}(\langle n\rangle^{\circ}\times \langle m\rangle^{\circ})_{\ast}\xrightarrow{\mathrm{swap}_{\ast}}(\langle m\rangle^{\circ}\times \langle n\rangle^{\circ})_{\ast}\xrightarrow{\simeq}\langle mn\rangle.
\end{align}

It follows that the Boardman--Vogt tensor product is symmetric as well: Given $\infty$-operads $\mathcal{O}$ and $\mathcal{V}$, the 
idempotent swap isomorphism $\mathrm{swap}\colon\mathcal{O}\times\mathcal{V}\xrightarrow{\simeq}\mathcal{V}\times\mathcal{O}$ induces a 
natural idempotent equivalence
\begin{align*}
(-)^{\rho}\colon\mathcal{O}\otimes_{\mathrm{BV}}\mathcal{V}\xrightarrow{\simeq}\mathcal{V}\otimes_{\mathrm{BV}}\mathcal{O}
\end{align*}
of $\infty$-operads by lifting the natural isomorphism $\sigma$ along
$\mathcal{V}\otimes_{\mathrm{BV}}\mathcal{O}\twoheadrightarrow\mathrm{Fin}_{\ast}$ \cite[Proposition 2.2.5.13]{lurieha}.

In particular, for $\mathcal{V}=\mathcal{O}$ this yields an automorphism
$(-)^{\rho}\colon\mathcal{O}\otimes_{\mathrm{BV}}\mathcal{O}\xrightarrow{\simeq}\mathcal{O}\otimes_{\mathrm{BV}}\mathcal{O}$. For instance, 
in the special case $\mathcal{O}=\mathsf{E}_1$ we obtain a natural idempotent but non-trivial equivalence
\[(-)^{\rho}\colon\mathrm{Alg}_{\mathsf{E}_1}(\mathrm{Alg}_{\mathsf{E}_1}(\mathcal{C}^{\otimes}))\rightarrow\mathrm{Alg}_{\mathsf{E}_1}(\mathrm{Alg}_{\mathsf{E}_1}(\mathcal{C}^{\otimes}))\]
for every symmetric monoidal $\infty$-category $\mathcal{C}^{\otimes}$. By way of the Additivity Theorem, this equivalently
gives rise to a natural idempotent but non-trivial equivalence
\[(-)^{\rho}\colon\mathrm{Alg}_{\mathsf{E}_2}(\mathcal{C}^{\otimes})\rightarrow\mathrm{Alg}_{\mathsf{E}_2}(\mathcal{C}^{\otimes}).\]

\begin{definition}
We will refer to the $\mathsf{E}_2$-algebra $A^{\rho}$ associated to an $\mathsf{E}_2$-algebra $A$ in $\mathcal{C}^{\otimes}$ as the 
\emph{diagonal reflection} of $A$. 

More generally, for every $k\geq 2$ and every element $s\in\Sigma_k$ in the symmetric group of order $k$, the component permutation
$s^{\ast}\colon \mathsf{E}_1^{k}\rightarrow \mathsf{E}_1^{k}$ induces an idempotent non-trivial automorphism
\[(-)^{\rho(s)}\colon\mathrm{Alg}_{\mathsf{E}_k}(\mathcal{C}^{\otimes})\rightarrow\mathrm{Alg}_{\mathsf{E}_k}(\mathcal{C}^{\otimes}).\]
\end{definition}

By definition, the $s$-diagonal reflection $A^{\rho(s)}$ of an $\mathsf{E}_k$-algebra $A$ in a symmetric monoidal $\infty$-category
$\mathcal{C}^{\otimes}$ is the essentially unique $\mathsf{E}_k$-algebra whose underlying $k$-fold bifunctor
$\mathsf{E}_1^k\rightarrow\mathcal{C}^{\otimes}$ is given by the composition
\[\xymatrix{
\mathsf{E}_1^k\ar[r]^{s^{\ast}} & \mathsf{E}_1^k\ar@{^(->}[r]^{\sim} & \mathsf{E}_k\ar[r]^A & \mathcal{C}^{\otimes}. 
}\]

\begin{remark}\label{rem_diagreflgeo}
In geometric terms, the equivalence $(-)^{\rho}\colon \mathsf{E}_2\xrightarrow{\simeq} \mathsf{E}_2$ locally flips the $x$ and the $y$ axis 
of the 2-dimensional unit cube (twice, inside and outside). To see this, let
$\mu\colon (-1,1)^2\rightarrow (-1,1)^2$, $(x,y)\mapsto (y,x)$ be the reflection at the $(x=y)$-axis. Define
$\mu\colon \colon \mathsf{E}_2\xrightarrow{\simeq} \mathsf{E}_2$ --- as a functor of topological 
categories --- to be the identity-on-objects functor that on $\mathsf{E}_2(\langle m\rangle,\langle n\rangle)$ acts by conjugation with
$\mu$. That means,
\[\mu\colon \coprod_{f\colon\langle m\rangle\rightarrow\langle n\rangle}\prod_{1\leq j\leq n}\mathrm{Rect}(\Box^2\times f^{-1}\{j\},\Box^2)\rightarrow \coprod_{f\colon\langle m\rangle\rightarrow\langle n\rangle}\prod_{1\leq j\leq n}\mathrm{Rect}(\Box^2\times f^{-1}\{j\},\Box^2)
\]
is induced in the obvious way by the automorphism
$\mathrm{Rect}(\Box^2\times S,\Box^2)\rightarrow\mathrm{Rect}(\Box^2\times S,\Box^2)$ for any set $S$ that takes a map $g$ to
$\mu g (\mu\times 1)$.
We obtain an outer square
\[\xymatrix{
 & \mathsf{E}_1\times \mathsf{E}_1\ar@{^(->}[r]^{\sim} & \mathsf{E}_1\otimes_{\mathrm{BV}} \mathsf{E}_1\ar@{^(->}[r]^(.7){\sim} & \mathsf{E}_2 \\
\mathsf{E}_1\times \mathsf{E}_1\ar@{^(->}[r]^{\sim}\ar[ur]^{\mathrm{swap}}_(.6){\cong} & \mathsf{E}_1\otimes_{\mathrm{BV}} \mathsf{E}_1\ar@{^(->}[r]^{\sim} & \mathsf{E}_2\ar[ur]_{\mu} & 
}\]
that commutes up to the obvious natural isomorphism of topological categories lifting $\sigma$. That is, the natural isomorphism given 
by the pair $\sigma$ --- rearranging the indices --- and the identity on $(-1,1)^2$. This means that the induced square
\[\xymatrix{
 & \mathsf{E}_1\otimes_{\mathrm{BV}} \mathsf{E}_1\ar@{^(->}[r]^(.6){\sim} & \mathsf{E}_2 \\
\mathsf{E}_1\otimes_{\mathrm{BV}} \mathsf{E}_1\ar@{^(->}[r]^(.6){\sim}\ar[ur]^{(-)^{\rho}}  & \mathsf{E}_2\ar[ur]_{\mu} & 
}\]
commutes up to equivalence by the universal property of the Boardman--Vogt tensor product. 	
\end{remark}

In Section~\ref{sec_sub_graymonalg} we will define the reverse of a braiding on a monoidal bicategory. In 
Section~\ref{sec_sub_translation2} we will see that the $\mathsf{E}_2$-structure induced by the reverse braiding is precisely the diagonal 
reflection of the $\mathsf{E}_2$-structure induced by the braiding itself. In Section~\ref{sec_sub_translation3} we will see something 
similar for syllepses. In this sense, diagonal reflection is a generalisation of reversing low dimensional symmetries (without reversing 
the monoidal structure itself). 

\begin{warning}
We note that the diagonal reflection of an $\mathsf{E}_k$-algebra is different from the reverse of an $\mathsf{E}_k$-algebra defined in
\cite [Construction 5.2.5.18]{lurieha}. The latter assigns to an $\mathsf{E}_1$-algebra in
$\mathrm{Alg}_{\mathsf{E}_{k-1}}(\mathcal{C}^{\otimes})$ the reverse $\mathsf{E}_1$-structure in
$\mathrm{Alg}_{\mathsf{E}_{k-1}}(\mathcal{C}^{\otimes})$ as defined in \cite[Remark 4.1.1.7]{lurieha}. In particular, it assigns to an
$\mathsf{E}_1$-algebra $A$ in $\mathcal{C}^{\otimes}$ with multiplication $a\otimes b$ the reverse
$\mathsf{E}_1$-algebra $A^{\mathrm{rev}}$ with multiplication $b\otimes a$. Diagonal reflection however is not even defined for
$\mathsf{E}_1$-algebras in general symmetric monoidal $\infty$-categories. Nevertheless, $A^{\mathrm{rev}}$ also has a geometric 
presentation: The automorphism $(-)^{\mathrm{rev}}:\mathsf{E}_k\rightarrow\mathsf{E}_k$ is induced by the reflection of the $k$-cube at the 
$x_k$-axis.
\end{warning}

It will be useful to have a concrete model of diagonal reflection in low dimensional cases where such concrete models can be constructed by 
hand. Therefore, we note that diagonal reflection then comes together with a lift
\[\xymatrix{
 & \mathsf{E}_1\times \mathsf{E}_1\ar@{^(->}[rr]^{\sim}\ar[dd]|\hole & & \mathsf{E}_1\otimes_{\mathrm{BV}} \mathsf{E}_1\ar@/^1pc/@{->>}[dddl]\\
\mathsf{E}_1\times \mathsf{E}_1\ar@{^(->}[rr]^(.4){\sim}\ar[ur]^{\mathrm{swap}}_{\cong}\ar[dd]\ar@{}[urrr]|{\bar{\sigma}} & & \mathsf{E}_1\otimes_{\mathrm{BV}} \mathsf{E}_1\ar[ur]_{(-)^{\rho}}\ar@{->>}[dd] & \\
 & \mathrm{Fin}_{\ast}\times\mathrm{Fin}_{\ast}\ar[dr]^{\wedge} & & \\
 \mathrm{Fin}_{\ast}\times\mathrm{Fin}_{\ast}\ar[ur]^{\mathrm{swap}}_{\cong}\ar[rr]_{\wedge}\ar@{}[urr]|{\Uparrow\sigma} & & \mathrm{Fin}_{\ast} &
}\]
of the natural isomorphism $\sigma$. In particular, for every symmetric monoidal $\infty$-category $\mathcal{C}^{\otimes}$ and every
$\mathsf{E}_2$-algebra $A$ in $\mathcal{C}^{\otimes}$, we obtain a natural equivalence
\begin{align}\label{diag_orthogonal_alg}
\begin{gathered}
\xymatrix{
 & \mathsf{E}_1\times \mathsf{E}_1\ar[dd]|\hole\ar[dr]^A & & \\
\mathsf{E}_1\times \mathsf{E}_1\ar[rr]_{A^{\rho}}\ar[ur]^{\mathrm{swap}}_{\cong}\ar[dd]\ar@{}[urr]|{A^{\sigma}} & & \mathcal{C}^{\otimes}\ar@{->>}[dd] & \\
 & \mathrm{Fin}_{\ast}\times\mathrm{Fin}_{\ast}\ar[dr]^{\wedge} & & \\
 \mathrm{Fin}_{\ast}\times\mathrm{Fin}_{\ast}\ar[ur]^{\mathrm{swap}}_{\cong}\ar[rr]_{\wedge}\ar@{}[urr]|{\Uparrow\sigma} & & \mathrm{Fin}_{\ast} &
}
\end{gathered}
\end{align}

The equivalences $A^{\sigma}(\langle n\rangle,\langle m\rangle)\colon (A^{\otimes n})^{\otimes m}\rightarrow (A^{\otimes m})^{\otimes n}$ 
are (essentially) the permutation of the components of the tensor power $A^{\otimes nm}$ according to the formula (\ref{def_sigma}). 

\begin{remark}
The natural isomorphism $\sigma$ is symmetric as  $\sigma(\langle n\rangle,\langle m\rangle)=\sigma(\langle m\rangle,\langle n\rangle)^{-1}$ for all $n,m$. Generally however $\sigma$ is not the identity. Yet, it is whenever either $n=1$ or $m=1$:
\[\sigma(\langle 1\rangle,-)=\sigma(-,\langle 1\rangle)=1.\]
One can use this to construct a lift $\bar{\sigma}$ which is the identity when restricted to the two embeddings $\mathsf{E}_1\hookrightarrow 
\mathsf{E}_1\otimes_{\mathrm{BV}}\mathsf{E}_1$ simply by way of the homotopy lifting property in the model category for $\infty$-operads. 
\end{remark}

\begin{example}[Diagonal reflection in dimension 2]\label{exple_diagrefl2}
Heuristically, diagonal reflection
\[(-)^{\rho}\colon\mathrm{Alg}_{\mathsf{E}_2}(\mathcal{C}^{\otimes})\rightarrow\mathrm{Alg}_{\mathsf{E}_2}(\mathcal{C}^{\otimes})\]
takes an $\mathsf{E}_2$-algebra $A$ in $\mathcal{C}^{\otimes}$, considers its underlying bifunctor, flips it, and 
then turns it into an $\infty$-operad by correcting the introduced error in parametrization over $\mathrm{Fin}_{\ast}$ by swapping the 
inputs along a lift of $\sigma$. Furthermore, diagonal reflection is natural; in particular, to compute a portion of a diagonal reflection, 
it suffices to compute the diagonal reflection of this portion per se. For instance, let $A$ be a non-unital $\mathsf{A}_2$-structure in
$\mathrm{Alg}_{\mathsf{E}_1}(\mathcal{C}^{\otimes})$ of the form
\begin{align}\label{diag_exple_diagrefl2}
\begin{gathered}
\xymatrix{
\ar@{}[d]|{\vdots} & \ar@{}[d]|{\vdots} \\
A^{\otimes 4} \ar@<-.5ex>[d]\ar[d]\ar@<.5ex>[d] & (A^{\otimes 2})^{\otimes 4}\ar[l]\ar@<.5ex>[d]\ar[d]\ar@<-.5ex>[d] & & & \\
A^{\otimes 3} \ar@<-.5ex>[d]_{\rotatebox{-90}{$\scriptstyle m\otimes 1$}}\ar@<.5ex>[d]^{\rotatebox{-90}{$\scriptstyle 1\otimes m$}}\ar@{}[ur]|{\phi} & (A^{\otimes 2})^{\otimes 3}\ar[l]_{(m\sprime, m\sprime, m\sprime)}\ar@<-.5ex>[d]_{\rotatebox{-90}{$\scriptstyle m^{\otimes}\otimes 1$}}\ar@<.5ex>[d]^{\rotatebox{-90}{$\scriptstyle 1\otimes m^{\otimes}$}} \\
A^{\otimes 2}\ar[d]_{m} \ar[d]\ar@{}[ur]|{\omega} & (A^{\otimes 2})^{\otimes 2}\ar[d]\ar[l]_{(m\sprime, m\sprime)}\ar[d]^{m^{\otimes}}\\
 A\ar@{}[ur]|{\chi} & A^{\otimes 2}\ar[l]|{m\sprime},
}
\end{gathered}
\end{align}
where $\chi$, $\omega$, $\phi$ denote the cells filling the due Stasheff polygons. Post-composition with the swap isomorphism yields a 
bifunctor $\mathsf{E}_1\times \mathsf{A}_2^{\mathrm{nu}}\rightarrow\mathcal{C}^{\otimes}$ of the form
\begin{align}\label{diag_exple_diagrefl_flip}
\begin{gathered}
\xymatrix{
A^{\otimes 2}\ar[d]|{m\sprime} & (A^{\otimes 2})^{\otimes 2}\ar[l]_{m^{\otimes}}\ar[d]|{(m\sprime,m\sprime)} & (A^{\otimes 2} )^{\otimes 3}\ar@<.5ex>[l]^{m^{\otimes}\otimes 1}\ar@<-.5ex>[l]_{1\otimes m^{\otimes}}\ar[d]|{(m\sprime,m\sprime,m\sprime)} & (A^{\otimes 2})^{\otimes 4}\ar@<.5ex>[l]\ar[l]\ar@<-.5ex>[l]\ar[d] & \ar@{}[l]|{\dots}\\
A\ar@{}[ur]|{\chi^{-1}} & A^{\otimes 2}\ar[l]|{m}\ar@{}[ur]|{\omega^{-1}} & A^{\otimes 3}\ar@<.5ex>[l]^{m\otimes 1}\ar@<-.5ex>[l]_{1\otimes m}\ar@{}[ur]|{\phi^{-1}} & A^{\otimes 4}\ar@<-.5ex>[l]_{m\otimes 1\otimes 1}\ar[l]|{1\otimes m\otimes 1}\ar@<.5ex>[l]^{1\otimes 1\otimes m} & \ar@{}[l]|{\dots}
}\end{gathered}
\end{align}

Indeed, note that the swap automorphism inverts all cells of dimension $n\geq 2$ if we are to preserve the direction of the cells. More 
precisely, two things are happening. First, in (\ref{diag_exple_diagrefl2}) we consider the 2-cell $\chi$ as a morphism from $m^{\otimes}$ 
to $m$, and in particular as a morphism from the (essentially uniquely determined) composition $m(m\sprime,m\sprime)$ to
$m^{\otimes}m\sprime$ in the corresponding mapping space. In Diagram~(\ref{diag_exple_diagrefl_flip}) we consider the 2-cell $\chi^{-1}$ as 
a morphism from $(m\sprime,m\sprime)$ to $m\sprime$, and in particular as a morphism from the (essentially uniquely determined) composition 
$m^{\otimes}m\sprime$ to $m(m\sprime,m\sprime)$ in the corresponding mapping space.

Diagram (\ref{diag_orthogonal_alg}) now corresponds to an equivalence as follows, where the back face determines the diagonally 
reflected bifunctor $A^{\rho}\colon \mathsf{E}_1\times \mathsf{A}_2\rightarrow\mathcal{C}^{\otimes}$ by definition.
\begin{align}\label{diag_orthogonal_alg_1}
\begin{gathered}
\xymatrix{
 & A^{\otimes 2}\ar@/^1pc/[ddl]|(.2){m\sprime}\ar@{=}[dl]_{A^{\sigma(\langle 1\rangle,\langle 2\rangle)}} & (A^{\otimes 2})^{\otimes 2}\ar[l]_{(m,m)}\ar@/^1pc/[ddl]|(.2){(m\sprime)^{\otimes}}\ar[dl]|{A^{\sigma(\langle 2\rangle,\langle 2\rangle)}} & (A^{\otimes 3} )^{\otimes 2}\ar@<.5ex>[l]^{(m\otimes 1)^2}\ar@<-.5ex>[l]_{(1\otimes m)^2}\ar@/^1pc/[ddl]|(.2){(m\sprime)^{\otimes}}\ar[dl]|{A^{\sigma(\langle 3\rangle,\langle 2\rangle)}} & (A^{\otimes 4})^{\otimes 2}\ar@<.5ex>[l]\ar[l]_{(1\otimes m)^3}\ar@<-.5ex>[l]\ar@/^1pc/[ddl]|(.2){(m\sprime)^{\otimes}}\ar[dl]|{A^{\sigma(\langle 4\rangle,\langle 2\rangle)}}  & \ar@{}[l]|{\dots}\\
A^{\otimes 2}\ar[d]|{m\sprime} \ar[d] & (A^{\otimes 2})^{\otimes 2}\ar[l]_{m^{\otimes}}\ar[d]|{(m\sprime,m\sprime)} & (A^{\otimes 2} )^{\otimes 3}\ar@<.5ex>[l]^{m^{\otimes}\otimes 1}\ar@<-.5ex>[l]_{1\otimes m^{\otimes}}\ar[d]|{(m\sprime,m\sprime,m\sprime)} & (A^{\otimes 2})^{\otimes 4}\ar@<.5ex>[l]\ar[l]\ar@<-.5ex>[l]\ar[d] & \ar@{}[l]|{\dots} & \\
A& A^{\otimes 2}\ar[l]|{m}\ar@{}[ul]|(.4){\chi^{-1}}  & A^{\otimes 3}\ar@<.5ex>[l]^{m\otimes 1}\ar@<-.5ex>[l]_{1\otimes m}\ar@{}[ul]|(.4){\omega^{-1}} & A^{\otimes 4}\ar@<-.5ex>[l]_{m\otimes 1\otimes 1}\ar[l]|{1\otimes m\otimes 1}\ar@<.5ex>[l]^{1\otimes 1\otimes m}\ar@{}[ul]|(.4){\phi^{-1}} & \ar@{}[l]|{\dots} & 
}
\end{gathered}
\end{align}
This gives a formula for the diagonal reflection $A^{\rho}$ as the composition of the top squares given by the $\sigma$-permutations with 
the front squares given by flip of $A$ itself.
Furthermore, the square
\begin{align}\label{diag_orthogonal_alg_nat}
\begin{gathered}
\xymatrix{
\mathrm{Alg}_{\mathsf{E}_1}(\mathrm{Alg}_{\mathsf{E}_1}(\mathcal{C}^{\otimes}))\ar[r]^{(-)^{\rho}}\ar[d]_U & \mathrm{Alg}_{\mathsf{E}_1}(\mathrm{Alg}_{\mathsf{E}_1}(\mathcal{C}^{\otimes}))\ar[d]^{\mathrm{Alg}_{\mathsf{E}_1}(U)} \\
\mathrm{Alg}_{\mathsf{A}_2}(\mathrm{Alg}_{\mathsf{E}_1}(\mathcal{C}^{\otimes}))\ar[r]_{(-)^{\rho}} & \mathrm{Alg}_{\mathsf{E}_1}(\mathrm{Alg}_{\mathsf{A}_2}(\mathcal{C}^{\otimes}))
}
\end{gathered}
\end{align}
commutes by naturality of diagonal reflection. Thus, whenever $A$ is an $\mathsf{E}_2$-algebra in $\mathcal{C}^{\otimes}$, then the 
underlying $\mathsf{E}_1\otimes_{\mathrm{BV}}\mathsf{A}^{\mathrm{nu}}_2$-algebra of its diagonal reflection $A^{\rho}$ is the diagonal 
reflection of the $\mathsf{A}^{\mathrm{nu}}_2\otimes_{\mathrm{BV}}\mathsf{E}_1$-algebra underlying $A$.
\end{example}

\begin{example}[Diagonal reflection in dimension 3]\label{exple_diagrefl3}
Here we have three diagonal reflection operations, each corresponding to the reflection at the hyperplane crossing diagonally through one of 
the three axes in $\Box^3$. Let $\mathcal{C}^{\otimes}$ be a symmetric monoidal $\infty$-category. Then, first, diagonal reflection as 
discussed in Example~\ref{exple_diagrefl2} in particular applies to the symmetric monoidal $\infty$-category
$\mathrm{Alg}_{\mathsf{E}_1}(\mathcal{C}^{\otimes})$. This is induced by the permutation
$(\pi_2,\pi_1,\pi_3)\colon (\mathsf{E}_1)^{3}\rightarrow(\mathsf{E}_1)^{3}$. Geometrically, the associated automorphism
\[\rho_{x_1}\colon \mathsf{E}_3\rightarrow \mathsf{E}_3\]
is induced by reflection $\mu_{x}\colon\Box^3\rightarrow\Box^3$ at the $(x_2=x_3)$-hyperplane following the same recipe as in 
Remark~\ref{rem_diagreflgeo}. Second, diagonal reflection as discussed in Example~\ref{exple_diagrefl2} is symmetric monoidal itself, and hence induces an automorphism
\[(-)^{\rho_{x_3}}:=\mathrm{Alg}_{\mathsf{E}_1}((-)^{\rho})\colon\mathrm{Alg}_{\mathsf{E}_1}(\mathrm{Alg}_{\mathsf{E}_2}(\mathcal{C}^{\otimes}))\rightarrow\mathrm{Alg}_{\mathsf{E}_1}(\mathrm{Alg}_{\mathsf{E}_2}(\mathcal{C}^{\otimes})).\]
This is the diagonal reflection induced by the permutation $(\pi_1,\pi_3,\pi_2)\colon (\mathsf{E}_1)^{3}\rightarrow(\mathsf{E}_1)^{3}$. Geometrically, the 
associated automorphism
\[\rho_z\colon \mathsf{E}_3\rightarrow \mathsf{E}_3\]
is induced by the reflection $\mu_{z}\colon\Box^3\rightarrow\Box^3$ at the $(x_1=x_2)$-hyperplane. The third will not be important for 
later application, but for the sake of completeness one may consider the permutation $(\pi_3,\pi_2,\pi_1)\colon (\mathsf{E}_1)^{3}\rightarrow(\mathsf{E}_1)^{3}$, 
whose associated automorphism
\[\rho_{x_2}\colon \mathsf{E}_3\rightarrow \mathsf{E}_3\]
is geometrically induced by the reflection $\mu_{y}\colon\Box^3\rightarrow\Box^3$ at the $(x_1=x_3)$-hyperplane. 

Suppose we are given an $\mathsf{A}_2^{\mathrm{nu}}\otimes \mathsf{A}_2^{\mathrm{nu}}\otimes \mathsf{A}_2^{\mathrm{nu}}$-algebra in
$\mathcal{C}^{\otimes}$ as follows.
\begin{align}\label{diag_diagrefl3}
\begin{gathered}
\xymatrix{
 & A^{4}\ar[dl]_{m_1^{\otimes}}\ar[dd]|(.7){(m_2,m_2)}|\hole & & A^{8}\ar[dd]^{(m_2^{\otimes},m_2^{\otimes})}\ar[ll]_{m_3^4}\ar[dl]_(.6){(m_1^{\otimes})^{\otimes}} \\
A^2\ar[dd]_{m_2}\ar@{}[dr]|(.6){\Leftarrow\chi_{12}} & & A^4 \ar[dd]_(.3){m_2^{\otimes}}\ar[ll]_(.3){(m_3,m_3)} \\
& A^2\ar[dl]_(.3){m_1}\ar@{}[dr]|{\Downarrow\chi_{13}} & & A^4\ar[dl]^(.3){m_1^{\otimes}}\ar[ll]^(.7){(m_3,m_3)}\\
A & & A^2\ar[ll]^{m_3} & 
}
\end{gathered}
\end{align}
Here, the diagonal bottom multiplication $m_1\colon A^2\rightarrow A$ is the underlying $\mathsf{A}_2^{\mathrm{nu}}$-algebra in
$\mathcal{C}^{\otimes}$, and the left face $\chi_{12}$ is the underlying
$\mathsf{A}_2^{\mathrm{nu}}\otimes \mathsf{A}_2^{\mathrm{nu}}$-algebra in $\mathcal{C}^{\otimes}$. The front face is given by some 2-cell
$\chi_{23}$. The other three faces are given by corresponding pairs of their opposite face. The cube (\ref{diag_diagrefl3}) gives a morphism 
from the right face $(\chi_{12},\chi_{12})$ to the left face $\chi_{12}$. 
%
Whenever the cube (\ref{diag_diagrefl3}) underlies an $\mathsf{E}_3$-algebra, the three faces $\chi_{12}$, $\chi_{23}$ and $\chi_{13}$ are 
mutually equivalent, and in fact can be taken to coincide by Proposition~\ref{prop_eh} and Remark~\ref{rem_algfib}.
It may be helpful to observe that the intuition of diagonal reflection as a folding 
operation translates to this algebraic picture. The faces $\chi_{12}$ and $\chi_{13}$ are equivalent by folding them on top of each other 
along the edge $m_1\colon A^2\rightarrow A$, and so are $\chi_{13}$ and $\chi_{23}$ with respect to their common edge
$m_3\colon A^2\rightarrow A$. Folding the face $\chi_{12}$ directly along $m_2\colon A^2\rightarrow A$ however does yield
$\chi_{23}=\chi_{12}$ only up to a twist of components. This corresponds to the observation that the folding of Remark~\ref{rem_eh_twist} 
introduces a twist of components.
\end{example}

\subsection{Iterating fragments of associative structures in $n$-categories}\label{sec_sub_iniseg}
This section is concerned with a set of statements that allows us to reduce the construction of $\mathsf{E}_m$-algebras in a symmetric 
monoidal $n$-category $\mathcal{C}^{\otimes}$ to a proportionally small amount of data (Propositions~\ref{prop_inisegloc} and 
\ref{prop_inisegfib} as well as Corollaries~\ref{cor_truncmor} and \ref{cor_truncobj}). Crucially, in Section~\ref{sec_translation} we will 
see that this small amount of data recovers precisely the axioms provided by \cite{daystreet} for the various notions of symmetry in
$\mathcal{C}^{\otimes}$ the cartesian monoidal $3$-category of bicategories. To do so, we introduce initial segments of non-unital
$\mathsf{E}_k$-algebra structures. Their combinatorial complexity remedies the fact 
that iterated non-unital $\mathsf{A}_n$-algebra structures generally do not satisfy an Eckmann-Hilton type argument. Although the specific
monoidal $3$-category we want to apply these statements to is cartesian, it doesn't appear that cartesianness renders the definitions 
or proofs any shorter. The results in this section will hence concern general symmetric monoidal $n$-categories. The use of these initial 
segments will be illustrated when applied to cartesian monoidal $n$-categories in Section~\ref{sec_sub_iniseg_app}.

For the rest of this section we fix a finite integer $n\geq -2$ together with a symmetric monoidal $n$-category $\mathcal{C}^{\otimes}$. 
That is, a symmetric monoidal $\infty$-category
$\mathcal{C}^{\otimes}$ whose underlying $\infty$-category $\mathcal{C}$ is an $n$-category in the sense of \cite[Section 2.3.4]{luriehtt}.
It is easy to see that the $\infty$-category $\mathrm{Alg}_{\mathsf{E}_k}(\mathcal{C}^{\otimes})$ of $\mathsf{E}_k$-algebras in
$\mathcal{C}^{\otimes}$ for any $0\leq k\leq \infty$ is again an $n$-category. The same applies to the non-unital versions. Furthermore, by 
the Baez--Dolan--Lurie Stabilization Theorem \cite[Corollary 5.1.1.7]{lurieha} the forgetful functor
\[\mathrm{Alg}_{\mathsf{E}_{\infty}}(\mathcal{C}^{\otimes})\rightarrow\mathrm{Alg}_{\mathsf{E}_{k}}(\mathcal{C}^{\otimes})\]
is an equivalence for all $k>n$.  Similarly, the forgetful functor
\[\mathrm{Alg}_{\mathsf{A}_{\infty}}^{\mathrm{nu}}(\mathcal{C}^{\otimes})\rightarrow\mathrm{Alg}_{\mathsf{A}_{m}}^{\mathrm{nu}}(\mathcal{C}^{\otimes})\]
is an equivalence of $n$-categories for all $m>n+1$ \cite[Corollary 4.1.6.16]{lurieha}. In tandem with the Additivity Theorem, this means 
that to define an $\mathsf{E}_m$-monoidal structure on an object $A$ in $\mathcal{C}^{\otimes}$, it is sufficient to construct $m$-many (at 
most $(n+1)$-many) compatible $\mathsf{A}_{n+2}$-algebra structures on $A$. For $n>1$ this is still a fair amount of data to handle, and in 
fact one can do much better. In a sense, the complexity of associativity laws decreases with increasing fragments of commutativity
(Corollary~\ref{cor_truncobj}). 

\begin{notation}
In the following, $K_m$ denotes the $m$-th Stasheff associahedron as defined in \cite[Definition 4.1.6.1]{lurieha}, and
$\partial K_m$ denotes its boundary \cite[Definition 4.1.6.4]{lurieha}. The push-out $K_m\sqcup_{\partial K_m} K_m$ of the 
boundary inclusion along itself will be denoted by $\Sigma\partial K_m$ following \cite[Construction 4.1.6.12]{lurieha}.
\end{notation}

\begin{lemma}\label{lemma_truncext0}
Let $\mathcal{C}^{\otimes}$ be a symmetric monoidal $n$-category and let $2\leq m<\infty$ be an integer. Then the 
forgetful functor
\[\mathrm{Alg}_{\mathsf{A}_m}^{\mathrm{nu}}(\mathcal{C}^{\otimes})\rightarrow\mathrm{Alg}_{\mathsf{A}_{m-1}}^{\mathrm{nu}}(\mathcal{C}^{\otimes})\]
is locally $(n-m)$-truncated. In particular so is the forgetful functor
$\mathrm{Alg}_{\mathsf{A}_m}^{\mathrm{qu}}(\mathcal{C}^{\otimes})\rightarrow\mathrm{Alg}_{\mathsf{A}_{m-1}}^{\mathrm{qu}}(\mathcal{C}^{\otimes})$.
\end{lemma}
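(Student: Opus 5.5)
We follow the obstruction-theoretic description of non-unital $\mathsf{A}_m$-structures in \cite[Section 4.1.6]{lurieha}. The plan is to exhibit the forgetful functor as a pullback of a functor whose fibres are mapping spaces out of a sphere, and then read off truncatedness from the dimension of that sphere.

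\emph{Step 1: the pullback square.} By \cite[Proposition 4.1.6.15]{lurieha} and its proof (Construction 4.1.6.12 there), the forgetful functor
\[\mathrm{Alg}_{\mathsf{A}_m}^{\mathrm{nu}}(\mathcal{C}^{\otimes})\rightarrow\mathrm{Alg}_{\mathsf{A}_{m-1}}^{\mathrm{nu}}(\mathcal{C}^{\otimes})\]
fits into a homotopy pullback square. Its bottom map sends a non-unital $\mathsf{A}_{m-1}$-algebra $A$ to the map $\partial K_m\rightarrow\mathrm{Map}_{\mathcal{C}}(A^{\otimes m},A)$ assembled from lower operations. Its right vertical map is restriction
\[\mathrm{Fun}(K_m,\mathcal{C}')\rightarrow\mathrm{Fun}(\partial K_m,\mathcal{C}'),\]
for the appropriate $\infty$-category $\mathcal{C}'$ of pairs $(X,\,X^{\otimes m}\to X)$ over $\mathcal{C}$. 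Equivalently, the fibre over $A$ is the space of extensions of a map $\partial K_m\rightarrow\mathrm{Map}(A^{\otimes m},A)$ to $K_m$. We have $K_m\simeq\Delta^{m-2}$ and $\partial K_m\simeq S^{m-3}$.

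\emph{Step 2: truncatedness of restriction.} Local $(n-m)$-truncatedness is stable under pullback. It therefore suffices to show that, on mapping spaces, restriction along $\partial K_m\hookrightarrow K_m$ is $(n-m)$-truncated.
\begin{itemize}
\item The mapping spaces of $\mathcal{C}$ are $(n-1)$-truncated, since $\mathcal{C}$ is an $n$-category.
\item Mapping spaces in these functor categories are ends of mapping spaces of $\mathcal{C}$. Fibres of restriction along a cell attachment of dimension $m-2$ are then iterated loop or path spaces, cut down in dimension by $m-2$.
\item Concretely, the fibres take the form $\Omega^{m-2}$ of an $(n-1)$-truncated space, or fibres of $\mathrm{Map}(D^{m-2},Y)\rightarrow\mathrm{Map}(S^{m-3},Y)$ with $Y$ $(n-1)$-truncated. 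Such spaces are $(n-1-(m-2))=(n-m+1)$-truncated.
\end{itemize}

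\emph{Step 3: fixing the index.} The count in Step 2 gives $(n-m+1)$, which is one more than the claim. The extra degree of truncation should come from the stated form of \cite[Corollary 4.1.6.16]{lurieha}: equivalence for $m>n+1$ means the map must be $(-2)$-truncated once $m=n+2$. To match this I will use the formulation through $\Sigma\partial K_m$. The pullback is taken along $\mathrm{Fun}(\Sigma\partial K_m,-)\rightarrow\mathrm{Fun}(\partial K_m,-)$ relative to the diagonal, so the relevant fibres are based loops of the fibres above. This contributes one further loop, giving local $(n-m)$-truncatedness as stated. I will check the indices against the cases $m=2,3$ with $n=0,1$.

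\emph{Step 4: the quasi-unital case.} This follows from Remark~\ref{rem_qu}. The quasi-unital $\infty$-category is a pullback of the non-unital forgetful functor along the $(-1)$-truncated replete inclusion. Pullbacks of locally $(n-m)$-truncated functors remain locally $(n-m)$-truncated.

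\emph{Main obstacle.} I expect the main difficulty to be the precise bookkeeping in Steps 2 and 3. This means identifying the correct pullback presentation from Construction 4.1.6.12 of \cite{lurieha}, tracking the off-by-one between $\partial K_m$ and $\Sigma\partial K_m$, and confirming that the result is locally, not globally, truncated.
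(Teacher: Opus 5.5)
Your Steps 2--3 contain a genuine gap. Local $(n-m)$-truncatedness is a statement about the maps $\mathrm{Map}_{\mathrm{Alg}^{\mathrm{nu}}_{\mathsf{A}_m}}(A,B)\rightarrow\mathrm{Map}_{\mathrm{Alg}^{\mathrm{nu}}_{\mathsf{A}_{m-1}}}(A,B)$. The count you actually perform in Step 2 is for relative maps $(K_m,\partial K_m)\simeq(D^{m-2},S^{m-3})$ into an $(n-1)$-truncated space, which gives $\Omega^{m-2}$ and hence $(n-m+1)$. That is the count appropriate to the fibres over \emph{objects}, i.e.\ to the space of $\mathsf{A}_m$-structures on a fixed object extending a fixed $\mathsf{A}_{m-1}$-structure. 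Those fibres really are only $(n-m+1)$-truncated (cf.\ Proposition~\ref{prop_inisegfib} for $k=0$).

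So Step 2 does not verify the sufficient condition you set up in Step 1; the number it produces would contradict it. Step 3 then fixes the index by calibrating against \cite[Corollary 4.1.6.16]{lurieha}, that is, against the desired answer. The asserted ``pullback along $\mathrm{Fun}(\Sigma\partial K_m,-)\rightarrow\mathrm{Fun}(\partial K_m,-)$ relative to the diagonal'' is neither derived nor of the right shape.

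The real source of the extra loop is that a morphism carries a homotopy. Even in a functor-category model as in Step 1, a mapping space consists of $K_m$-families of homotopies. The relative extension problem therefore lives on $(K_m\times\Delta^1,\partial(K_m\times\Delta^1))$, and $\partial(K_m\times\Delta^1)\simeq K_m\sqcup_{\partial K_m}K_m=\Sigma\partial K_m\simeq S^{m-2}$.

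This is exactly the ingredient the paper uses, via the morphism-level obstruction theory \cite[Theorem 4.1.6.13]{lurieha}. Take non-unital $\mathsf{A}_m$-algebras $A,B$ and a morphism $f$ of the underlying $\mathsf{A}_{m-1}$-algebras, and set $Y=\mathrm{Map}_{\mathcal{C}}(A^{\otimes m},B)$. The fibre over $f$ is a fibre of the diagonal $Y\rightarrow Y^{\Sigma\partial K_m}$. It is taken at the map $\Sigma\partial K_m\rightarrow Y$ built from the two $K_m$-families $f\circ\mu_A$ and $\mu_B\circ f^{\otimes m}$, glued along the homotopies that $f$ supplies over $\partial K_m$.

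Since $Y$ is $(n-1)$-truncated and $\Sigma\partial K_m$ is $(m-3)$-connected, this fibre is empty or an $(m-1)$-fold loop space of $Y$. It is therefore $((n-1)-(m-3)-2)=(n-m)$-truncated; the paper invokes \cite[Proposition 8.13]{rezkhtytps} for this step. No appeal to Corollary 4.1.6.16 is needed.

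With Steps 1--3 replaced by this argument, your Step 4 goes through for $m\geq 3$. By Remark~\ref{rem_qu}, the quasi-unital forgetful functor is a pullback of the non-unital one along a replete inclusion, and local truncatedness is stable under pullback.
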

\begin{proof}
Let $2\leq m<\infty$ be an integer. We are to show that for any two non-unital $\mathsf{A}_m$-algebras $A,B$ in $\mathcal{C}^{\otimes}$ the 
forgetful functor
\[\mathrm{Alg}_{\mathsf{A}_m}^{\mathrm{nu}}(\mathcal{C}^{\otimes})(A,B)\rightarrow\mathrm{Alg}_{\mathsf{A}_{m-1}}^{\mathrm{nu}}(\mathcal{C}^{\otimes})(A,B)\]
of hom-spaces is $(n-m)$-truncated. The argument is basically that of \cite[Corollary 4.1.6.16]{lurieha}. Let $A,B$ be non-unital 
$\mathsf{A}_m$-algebras in $\mathcal{C}^{\otimes}$. By assumption, $\mathcal{C}$ is a $n$-category. Thus, the hom-space $\mathcal{C}(A,B)$ 
of their underlying objects in $\mathcal{C}$ is $(n-1)$-truncated. Furthermore, the space $\Sigma\partial K_m$ is
$(m-3)$-connected. It follows from \cite[Proposition 8.13]{rezkhtytps} that the diagonal
\begin{align}\label{equlemma_truncext11}
\mathcal{C}(\mathsf{A}_1^{\otimes m},B_1)\rightarrow\mathcal{C}(\mathsf{A}_1^{\otimes m},B_1)^{\Sigma\partial K_m}
\end{align}
is $((n-1)-(m-3)-2)$-truncated. Now, given any morphism
$f\in\mathrm{Alg}_{\mathsf{A}_{m-1}}^{\text{nu}}(\mathcal{C}^{\otimes})(A,B)$, the fiber of the forgetful functor
\begin{align}\label{equlemma_truncext12}
\mathrm{Alg}_{\mathsf{A}_m}^{\text{nu}}(\mathcal{C}^{\otimes})(A,B)\rightarrow\mathrm{Alg}_{\mathsf{A}_{m-1}}^{\text{nu}}(\mathcal{C}^{\otimes})(A,B)
\end{align}
at $f$ is equivalent to a fiber of the map (\ref{equlemma_truncext11}) \cite[Theorem 4.1.6.13]{lurieha}. Hence, all fibers of 
(\ref{equlemma_truncext12}) are $(n-m)$-truncated, which means it is $(n-m)$-truncated itself.
\end{proof}

We now define initial segments of iterated non-unital $\mathsf{A}_m$-monoidal structures in a symmetric monoidal $\infty$-category
$\mathcal{C}^{\otimes}$. Here, we use extensively that the forgetful functor $\mathrm{SMonCat}_{\infty}\rightarrow\mathrm{Cat}_{\infty}$
reflects all limits.

\begin{definition}\label{def_iniseg}
Let $\mathcal{C}^{\otimes}$ be a symmetric monoidal $\infty$-category. Let $J^{(0)}_{\emptyset}(\mathcal{C}^{\otimes}):=\ast$ and 
$V^{(0)}\colon\mathcal{C}^{\otimes}\rightarrow J^{(0)}_{\emptyset}(\mathcal{C}^{\otimes})$ be the essentially uniquely forgetful functor. 
Given $1\leq k<\infty$ and integers $1\leq m_0,\dots,m_{k-1}<\infty$, we recursively define
\begin{align}\label{diag_iniseg_0}
\begin{gathered}
\xymatrix{
\mathrm{Alg}_{\mathsf{E}_k}^{\mathrm{nu}}(\mathcal{C}^{\otimes})\ar@/_1pc/[ddr]_{\mathrm{Alg}_{\mathsf{E}_{1}}^{\mathrm{nu}}(V^{(k-1)})}\ar@/^1pc/[drr]^U\ar[dr]|{V^{(k)}} & & \\
& \mathrm{J}^{(k)}_{\vec{m}}\ar[r]\ar[d]\ar@{}[dr]|(.3){\pbs} & \mathrm{Alg}_{\mathsf{A}_{m_0}}^{\mathrm{nu}}(\mathrm{Alg}_{\mathsf{E}_{k-1}}^{\mathrm{nu}}(\mathcal{C}^{\otimes}))\ar[d]^{\mathrm{Alg}_{\mathsf{A}_{m_0}}^{\mathrm{nu}}(V^{(k-1)})}
\\
& \mathrm{Alg}_{\mathsf{E}_1}^{\mathrm{nu}}(J^{(k-1)}_{(\vec{m}\setminus \{m_0\})}(\mathcal{C}^{\otimes}))\ar[r]_U & \mathrm{Alg}_{\mathsf{A}_{m_0}}^{\mathrm{nu}}(J^{(k-1)}_{\vec{m}\setminus\{m_0\}}(\mathcal{C}^{\otimes})).
}
\end{gathered}
\end{align}
Furthermore, for $0\leq k<\infty$ and integers $1\leq m_0,\dots,m_{k}<\infty$, we define
\begin{align}\label{diag_iniseg_I}
\begin{gathered}
\xymatrix{
\mathrm{Alg}_{\mathsf{A}_{m_{k}+1}}^{\mathrm{nu}}(\mathrm{Alg}_{\mathsf{E}_k}^{\mathrm{nu}}(\mathcal{C}^{\otimes}))\ar@/_1pc/[ddr]_{\mathrm{Alg}_{\mathsf{A}_{m_{k}+1}}^{\mathrm{nu}}(V^{(k)})}\ar@/^1pc/[drr]^U\ar[dr]|{U^{(k)}} & & \\
 & \mathrm{I}^{(k)}_{\vec{m}}\ar[r]\ar[d]\ar@{}[dr]|(.3){\pbs} & \mathrm{Alg}_{\mathsf{A}_{m_{k}}}^{\mathrm{nu}}(\mathrm{Alg}_{\mathsf{E}_k}^{\mathrm{nu}}(\mathcal{C}^{\otimes}))\ar[d]^{\mathrm{Alg}_{\mathsf{A}_{m_{k}}}^{\mathrm{nu}}(V^{(k)})}\\
 & \mathrm{Alg}_{\mathsf{A}_{m_{k}+1}}^{\mathrm{nu}}(J^{(k)}_{\vec{m}\setminus\{m_{k}\}}(\mathcal{C}^{\otimes}))\ar[r]_U & \mathrm{Alg}_{\mathsf{A}_{m_{k}}}^{\mathrm{nu}}(J^{(k)}_{\vec{m}\setminus\{m_{k}\}}(\mathcal{C}^{\otimes})).
}
\end{gathered}
\end{align}
We will refer to $I^{(k)}_{\vec{m}}(\mathcal{C}^{\otimes})$ as the symmetric monoidal $\infty$-category of
\emph{$k$-fold associative initial segments} with parameters $\vec{m}=(m_0,\dots,m_{k})$. 
\end{definition}

To illustrate, the $\infty$-category $I^{(0)}_{m_0}(\mathcal{C}^{\otimes})$ is just
$\mathrm{Alg}_{m_0}^{\mathrm{nu}}(\mathcal{C}^{\otimes})$. The forgetful functor $U^{(0)}$ is the canonical forgetful functor
$U\colon\mathrm{Alg}_{m_0+1}^{\mathrm{nu}}(\mathcal{C}^{\otimes})\rightarrow\mathrm{Alg}_{m_0}^{\mathrm{nu}}(\mathcal{C}^{\otimes})$. 
An object in the $\infty$-category 
\[\xymatrix{
I^{(1)}_{(m_0,m_1)}\ar[r]\ar[d]\ar@{}[dr]|(.3){\pbs} & \mathrm{Alg}_{\mathsf{A}_{m_1}}^{\mathrm{nu}}(\mathrm{Alg}_{\mathsf{E}_1}^{\mathrm{nu}}(\mathcal{C}^{\otimes}))\ar[d]^{\mathrm{Alg}_{\mathsf{A}_{m_1}}^{\mathrm{nu}}(U)} \\
\mathrm{Alg}_{\mathsf{A}_{m_1+1}}^{\mathrm{nu}}(\mathrm{Alg}_{\mathsf{A}_{m_0}}^{\mathrm{nu}}(\mathcal{C}^{\otimes}))\ar[r]_U & \mathrm{Alg}_{\mathsf{A}_{m_1}}^{\mathrm{nu}}(\mathrm{Alg}_{\mathsf{A}_{m_0}}^{\mathrm{nu}}(\mathcal{C}^{\otimes}))
}\]
consists informally of an $\mathsf{A}_{m_1}^{\mathrm{nu}}\otimes_{\mathrm{BV}}\mathsf{A}_{m_0}^{\mathrm{nu}}$-algebra $A$ in $\mathcal{C}^{\otimes}$ together 
with an extension of $A$ to an $\mathsf{A}_{m_1+1}^{\mathrm{nu}}\otimes_{\mathrm{BV}}\mathsf{A}_{m_0}^{\mathrm{nu}}$-algebra in $\mathcal{C}^{\otimes}$, as 
well as an extension of $A$ to an $\mathsf{A}_{m_1}^{\mathrm{nu}}\otimes_{\mathrm{BV}}\mathsf{E}_1^{\mathrm{nu}}$-algebra in $\mathcal{C}^{\otimes}$. The fiber 
of the forgetful functor $U^{(1)}\colon\mathrm{Alg}_{\mathsf{A}_{m_{1}+1}}^{\mathrm{nu}}(\mathrm{Alg}_{\mathsf{E}_1}^{\mathrm{nu}}(\mathcal{C}^{\otimes}))\rightarrow I^{(1)}_{(m_0,m_1)}$ in (\ref{diag_iniseg_I}) over this object consists of an
$\mathsf{A}_{m_1+1}^{\mathrm{nu}}\otimes_{\mathrm{BV}} \mathsf{E}_1^{\mathrm{nu}}$-algebra that extends both of these components at the same time. 

\begin{remark}
The recursive Definition~\ref{def_iniseg} can also be given combinatorially directly for each integer $k$. However, the 
given recursive description is more conducive to the proofs below which proceed by induction. In the special case that all parameters
$m_i$ are minimal, the object $J^{(k)}_{\bar{1}}(\mathcal{C}^{\otimes})$ is a non-unital version of the $\infty$-category of
$L_k(\mathsf{E}_{\bullet})$-algebras in $\mathcal{C}^{\otimes}$, where $L_k(\mathsf{E}_{\bullet})$ is the latching object introduced in 
Section~\ref{sec_sub_comm}. This will be used in Corollary~\ref{cor_truncmor}.
\end{remark}

The following two propositions are a refined version of Schlank--Yanovski's $\infty$-categorical Eckmann--Hilton Argument 
\cite{schlankyanovski_eh} in the case of non-unital partially associative structures.

\begin{proposition}\label{prop_inisegloc}
Let $\mathcal{C}^{\otimes}$ be a symmetric monoidal $n$-category. Let $0\leq k<\infty$ be an integer and $1\leq m_0,\dots,m_{k}<\infty$ be 
a sequence of further integers. Then the forgetful functor
\[U^{(k)}\colon\mathrm{Alg}_{\mathsf{A}_{m_{k}+1}}^{\mathrm{nu}}(\mathrm{Alg}_{\mathsf{E}_k}^{\mathrm{nu}}(\mathcal{C}^{\otimes}))\rightarrow I^{(k)}_{\vec{m}}\]
is locally $\left(n-1-\sum_{i=0}^{k}m_i\right)$-truncated.
\end{proposition}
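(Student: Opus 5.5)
I will argue by induction on $k$. Alongside the stated claim I will prove an auxiliary one: $V^{(k)}\colon\mathrm{Alg}_{\mathsf{E}_k}^{\mathrm{nu}}(\mathcal{C}^{\otimes})\rightarrow J^{(k)}_{\vec{m}}(\mathcal{C}^{\otimes})$ is locally $\left(n-1-\sum_{i<k}m_i\right)$-truncated, or has a comparable bound that can be fed back in. Recall that a functor $F$ is locally $j$-truncated if every induced map of mapping spaces is $j$-truncated. Local truncatedness is stable under pullback of $\infty$-categories, since mapping spaces in a pullback are pullbacks of mapping spaces. It is also preserved by $\mathrm{Alg}_{\mathcal{O}}(-)$ applied to symmetric monoidal functors, because mapping spaces of algebras are limits (ends) of mapping spaces of the underlying objects, and truncated maps are closed under limits. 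These two closure properties carry the induction.

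The base case is $k=0$. Here $I^{(0)}_{m_0}=\mathrm{Alg}^{\mathrm{nu}}_{\mathsf{A}_{m_0}}(\mathcal{C}^{\otimes})$ and $U^{(0)}$ is the forgetful functor from $\mathsf{A}_{m_0+1}$ to $\mathsf{A}_{m_0}$. By Lemma~\ref{lemma_truncext0} this is locally $(n-m_0-1)$-truncated, which is exactly the claimed bound.

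For the inductive step, write $\mathcal{D}:=\mathrm{Alg}_{\mathsf{E}_k}^{\mathrm{nu}}(\mathcal{C}^{\otimes})$ and $\mathcal{J}:=J^{(k)}_{\vec{m}\setminus\{m_k\}}$. Using the pullback square (\ref{diag_iniseg_I}), the fiber of $U^{(k)}$ on mapping spaces is the fiber of the comparison map from $\mathrm{Alg}^{\mathrm{nu}}_{\mathsf{A}_{m_k+1}}(\mathcal{D})(A,B)$ to the pullback of $\mathrm{Alg}^{\mathrm{nu}}_{\mathsf{A}_{m_k}}(\mathcal{D})(A,B)$ and $\mathrm{Alg}^{\mathrm{nu}}_{\mathsf{A}_{m_k+1}}(\mathcal{J})(A,B)$ over $\mathrm{Alg}^{\mathrm{nu}}_{\mathsf{A}_{m_k}}(\mathcal{J})(A,B)$. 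By \cite[Theorem 4.1.6.13]{lurieha}, as in the proof of Lemma~\ref{lemma_truncext0}, the fibers of $\mathsf{A}_{m_k+1}\to\mathsf{A}_{m_k}$ on hom spaces are fibers of diagonals $X\to X^{\Sigma\partial K_{m_k+1}}$, where $X$ is the relevant mapping space. In our situation $X$ is the fiber of the map induced by $V^{(k)}$ on mapping spaces, namely the map $\mathcal{D}(A^{\otimes m_k+1}_1,B_1)\to\mathcal{J}(\dots)$. So the fiber of the comparison map is a fiber of the diagonal $F\to F^{\Sigma\partial K_{m_k+1}}$, where $F$ is a fiber of $V^{(k)}$ on hom spaces. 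Since $\Sigma\partial K_{m_k+1}$ is $(m_k-2)$-connected, \cite[Proposition 8.13]{rezkhtytps} shows that if $F$ is $j$-truncated then this fiber is $(j-m_k)$-truncated. It therefore suffices to show that $V^{(k)}$ is locally $\left(n-1-\sum_{i<k}m_i\right)$-truncated.

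To bound $V^{(k)}$, factor it through the pullback (\ref{diag_iniseg_0}). The map $\mathrm{Alg}_{\mathsf{E}_1}^{\mathrm{nu}}(\mathcal{D}')\to\mathrm{Alg}_{\mathsf{A}_{m_0}}^{\mathrm{nu}}(\mathcal{D}')$, with $\mathcal{D}'=\mathrm{Alg}^{\mathrm{nu}}_{\mathsf{E}_{k-1}}(\mathcal{C}^{\otimes})$, factors as the tower of the maps $\mathsf{A}_{m+1}\to\mathsf{A}_m$. The relative version of this tower, over $\mathrm{Alg}_{\mathsf{E}_1}^{\mathrm{nu}}(V^{(k-1)})$, is handled by the same fiber-of-diagonal argument with the induction hypothesis for $V^{(k-1)}$. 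Each stage $m\ge m_0$ lowers the truncation level by at least $m$, so the composite is controlled by its first stage, which has level $\left(n-1-\sum_{i<k-1}m_{i+1}\right)-m_0$. The reindexing of $\vec{m}$ has to be checked against Definition~\ref{def_iniseg}, where $\vec{m}\setminus\{m_0\}$ feeds $J^{(k-1)}$. Passing to the limit over the tower preserves truncatedness.

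I expect the main obstacle to be the identification of the fiber of $U^{(k)}$ with a fiber of a diagonal $F\to F^{\Sigma\partial K}$ in this relative setting, that is, a relative version of \cite[Theorem 4.1.6.13]{lurieha} for the square (\ref{diag_iniseg_I}). To get it, I would apply Lurie's theorem to the symmetric monoidal functor $V^{(k)}$ viewed as an algebra-level map, and use that the extension-of-$\mathsf{A}_m$-structure fibers are natural in the target category.
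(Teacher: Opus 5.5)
Your proposal is correct and is essentially the paper's own argument. The bound for $U^{(k)}$ comes from the bound for $V^{(k)}$ via the relative form of \cite[Theorem 4.1.6.13]{lurieha} and \cite[Proposition 8.13]{rezkhtytps}, applied to the gap map for the $\Sigma\partial K_{m_k+1}$-cotensor. The bound for $V^{(k)}$ comes from the tower indexed by $p\geq m_0$, whose stages are base changes of $U^{(k-1)}$ with parameters $(m_1,\dots,m_{k-1},p)$; the paper just packages the simultaneous induction as the pair $(U^{(k)},V^{(k+1)})$ rather than $(V^{(k)},U^{(k)})$.

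Your base-case bound $n-1-m_0$ (Lemma~\ref{lemma_truncext0} with $m=m_0+1$) is the right one. It is sharper by one than the $(n-m_0)$ written in the paper's base case, and it is what the inductive bookkeeping actually needs.
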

\begin{proof}
The proof proceeds by induction on the integer $k$. We show simultaneously for every integer $0\leq k<\infty$ and every sequence
$1\leq m_0,\dots,m_{k}<\infty$ of integers that both forgetful functors 
\[U^{(k)}\colon\mathrm{Alg}_{\mathsf{A}_{m_{k}+1}}^{\mathrm{nu}}(\mathrm{Alg}_{\mathsf{E}_k}^{\mathrm{nu}}(\mathcal{C}^{\otimes}))\rightarrow I^{(k)}_{\vec{m}}\]
and 
\[V^{(k+1)}\colon\mathrm{Alg}_{\mathsf{E}_{k+1}}^{\mathrm{nu}}(\mathcal{C}^{\otimes}))\rightarrow J^{(k+1)}_{\vec{m}}\]
are locally $\left(n-1-\sum_{i=0}^{k}m_i\right)$-truncated.

For $k=0$, we recall that the forgetful functor
\[U^{(0)}\colon\mathrm{Alg}_{\mathsf{A}_{m_0+1}}^{\mathrm{nu}}(\mathcal{C}^{\otimes})\rightarrow\mathrm{I}^{(0)}_{m_0}\]
is locally $(n-m_0)$-truncated by Lemma~\ref{lemma_truncext0}. It follows that the forgetful functor
\[V^{(1)}\colon\mathrm{Alg}_{\mathsf{E}_{1}}^{\mathrm{nu}}(\mathcal{C}^{\otimes})\rightarrow\mathrm{J}^{(1)}_{m_0}\]
is the transfinite composition of $(n-m_0)$-truncated morphisms and hence is $(n-m_0)$-truncated itself. 

Let $k\geq 1$, and let us assume that we have shown the forgetful functor
\[V^{(k)}\colon\mathrm{Alg}_{\mathsf{E}_{k}}^{\mathrm{nu}}(\mathcal{C}^{\otimes})\rightarrow\mathrm{J}^{(k)}_{\vec{m}\setminus\{m_k\}}\]
is locally $\left(n-1-\sum_{i=0}^{k-1}m_i\right)$-truncated.
Let $A,B$ be non-unital $\mathsf{A}_{m_{k}+1}$-algebras in $\mathrm{Alg}_{\mathsf{E}_{k}}^{\mathrm{nu}}(\mathcal{C}^{\otimes})$. We are to show that the 
forgetful functor
\[U^{(k)}\colon\mathrm{Alg}_{\mathsf{A}_{m_{k}+1}}^{\mathrm{nu}}(\mathrm{Alg}_{\mathsf{E}_{k}}^{\mathrm{nu}}(\mathcal{C}^{\otimes}))(A,B)\rightarrow I^{(k)}_{\vec{m}}(A,B)\]
of hom-spaces is $\left(n-1-\sum_{i=0}^{k}m_i\right)$-truncated. Let $(f,g)\in I^{(k)}_{\vec{m}}(A,B)$ be an element of the codomain
hom-space. We are to compute the fiber 
\[\xymatrix{
 & \mathrm{Alg}_{\mathsf{A}_{m_{k}+1}}^{\mathrm{nu}}(\mathrm{Alg}_{\mathsf{E}_k}^{\mathrm{nu}}(\mathcal{C}^{\otimes}))(A,B)\ar[rr]^{U^{(k)}}\ar[dd]|\hole & & I^{(k)}_{\vec{m}}(A,B)\ar@/^3pc/[ddll] \\
F(A,B)\ar[ur]\ar[rr]\ar@{}[urr]|(.3){\rotatebox[origin=c]{90}{$\pbs$}}\ar@/_1pc/[dr] & & \ast\ar[ur]^(.4){\{(f,g)\}}\ar@/^1pc/[dl]^(.2){\{f\}} &  \\
 & \mathrm{Alg}_{\mathsf{A}_{m_{k}+1}}^{\mathrm{nu}}(J^{(k)}_{\vec{m}\setminus\{m_{k}\}}(\mathcal{C}^{\otimes})))(A,B). & &
}\]
By way of \cite[Theorem 4.1.6.13]{lurieha}, this fiber is equivalent to the fiber $F(A,B)$ in the following diagram.
\[\xymatrix{
 & & \mathrm{Alg}_{\mathsf{E}_k}^{\mathrm{nu}}(\mathcal{C}^{\otimes})(A^{\otimes m_{k}+1},B)\ar[d]\ar@/^2pc/[drr]^{\Delta} & & \\
 & L(A,B)\ar[d]\ar[ur]\ar@{}[r]|(.2){\pbs} & \mathrm{Alg}_{\mathsf{E}_k}^{\mathrm{nu}}(\mathcal{C}^{\otimes})(A^{\otimes m_{k}+1},B)^{\overline{\Sigma\partial K_{m_{k}+1}}}\ar[dd]|\hole\ar[rr]\ar@{}[ddrr]|(.2){\pbs} & & \mathrm{Alg}_{\mathsf{E}_k}^{\mathrm{nu}}(\mathcal{C}^{\otimes})(A^{\otimes m_{k}+1},B)^{\Sigma\partial K_{m_{k}+1}}\ar[dd]\\
 F(A,B)\ar[d]\ar[ur]\ar@{}[r]|(.3){\pbs}  & \mathrm{Alg}_{\mathsf{A}_{m_{k}+1}}^{\mathrm{nu}}[f]\ar[rr]\ar@/_1pc/[dr]\ar@{}[dr]|(.3){\pbs}\ar[ur] & & \ast\ar[ur]^(.4){\{\beta_g\}}\ar@/_/[dr]^{\{\beta_g\}\simeq\{\beta_f\}} & \\
 \ast\ar[ur]_{\{f\}} & & J^{(k)}_{\vec{m}\setminus\{m_{k}\}}(\mathcal{C}^{\otimes})(A^{\otimes m_{k}+1},B)\ar[rr]_{\Delta} & & J^{(k)}_{\vec{m}\setminus\{m_{k}\}}(\mathcal{C}^{\otimes})(A^{\otimes m_{k}+1},B)^{\Sigma\partial K_{m_{k}+1}}
}\]
Here, the functors $\Delta$ denote the diagonal given by cotensoring with $\Sigma\partial K_{m_{k}+1}\rightarrow\ast$. Thus, the fiber 
product $\mathrm{Alg}_{\mathsf{E}_k}^{\mathrm{nu}}(\mathcal{C}^{\otimes})(A^{\otimes m_{k}+1},B)^{\overline{\Sigma\partial K_{m_{k}+1}}}$ in 
the middle of the diagram is the $\Sigma\partial K_{m_{k}+1}$-cotensor of the forgetful functor
\[V^{(k)}\colon\mathrm{Alg}_{\mathsf{E}_k}^{\mathrm{nu}}(\mathcal{C}^{\otimes})(A^{\otimes m_{k}+1},B)\rightarrow J^{(k)}_{\vec{m}\setminus\{m_{k}\}}(\mathcal{C}^{\otimes})(A^{\otimes m_{k}+1},B)\]
over its base. The space $\mathrm{Alg}_{\mathsf{A}_{m_{k}+1}}^{\mathrm{nu}}[f]$ consists of all possible 
extensions of the non-unital $\mathsf{A}_{m_{k}}$-algebra morphism $f$ in $J^{(k)}_{\vec{m}\setminus\{m_{k}\}}(\mathcal{C}^{\otimes})$ to a
non-unital $\mathsf{A}_{m_{k}+1}$-algebra morphism by \cite[Theorem 4.1.6.13]{lurieha}. For the same reason it follows that the intermediate space 
$L(A,B)$ in the diagram consists of all possible extensions of the non-unital $\mathsf{A}_{m_k}$-algebra morphism $g$ in
$\mathrm{Alg}_{\mathsf{E}_k}^{\mathrm{nu}}(\mathcal{C}^{\otimes})$ to a non-unital $\mathsf{A}_{m_k+1}$-algebra morphism in
$\mathrm{Alg}_{\mathsf{E}_k}^{\mathrm{nu}}(\mathcal{C}^{\otimes})$.
The space $K(A,B)$ consists of those extensions in $L(A,B)$ that return $f$ as the underlying extension as a morphism of underlying
non-unital $\mathsf{A}_{m_k}$-algebras. Now, the space $\Sigma \partial K_{m_k+1}$ is $(m_k-2)$-connected, and the forgetful functors
$\mathrm{Alg}_{\mathsf{E}_k}^{\mathrm{nu}}(\mathcal{C}^{\otimes})(A,B)\rightarrow J^{(k)}_{\vec{m}\setminus\{m_{k}\}}(\mathcal{C}^{\otimes})(A^{\otimes m_1},B)$ are $(n-1-\sum_{i=0}^{k-1}m_i)$-truncated by assumption. By \cite[Proposition 8.13]{rezkhtytps} it follows that the gap 
map
\[\mathrm{Alg}_{\mathsf{E}_1}(\mathcal{C}^{\otimes})(A^{\otimes m_1},B)\rightarrow\mathrm{Alg}_{\mathsf{E}_1}(\mathcal{C}^{\otimes})(A^{\otimes m_1},B)^{\overline{\Sigma\partial K_{m_1}}}\]
is $\left((n-1-\sum_{i=0}^{k-1}m_i)-(m_1-2)-2\right)$-truncated. That means the space $K$ is $\left((n-1-\sum_{i=0}^{k}m_i)\right)$-truncated 
as well.

For the induction to succeed, we are left to show that the forgetful functor
\begin{align}\label{equ_truncext1}
V^{(k+1)}\colon\mathrm{Alg}_{\mathsf{E}_{k+1}}^{\mathrm{nu}}(\mathcal{C}^{\otimes})\rightarrow\mathrm{J}^{(k+1)}_{\vec{m}}
\end{align}
is $\left(n-1-\sum_{i=0}^{k}m_i\right)$-truncated as well.

Therefore, we note that the $\infty$-category $\mathrm{Alg}_{\mathsf{E}_{k+1}}^{\mathrm{nu}}(\mathcal{C})$ is a sequential limit of the tower of forgetful functors
\begin{align}\label{equ_truncext2}
\begin{gathered}
\xymatrix{\mathrm{Alg}_{\mathsf{A}_{p+1}}^{\mathrm{nu}}(\mathrm{Alg}_{\mathsf{E}_{k}}^{\mathrm{nu}}(\mathcal{C}^{\otimes}))\times_{\mathrm{Alg}_{\mathsf{A}_p+1}^{\mathrm{nu}}(J^{(k)}_{\vec{m}\setminus\{m_0\}}(\mathcal{C}^{\otimes}))}\mathrm{Alg}_{\mathsf{E}_1}^{\mathrm{nu}}(J^{(k)}_{\vec{m}\setminus\{m_0\}}(\mathcal{C}^{\otimes}))\ar[d] \\
\mathrm{Alg}_{\mathsf{A}_{p}}^{\mathrm{nu}}(\mathrm{Alg}_{\mathsf{E}_{k}}^{\mathrm{nu}}(\mathcal{C}^{\otimes}))\times_{\mathrm{Alg}_{\mathsf{A}_p}^{\mathrm{nu}}(J^{(k)}_{\vec{m}\setminus\{m_0\}}(\mathcal{C}^{\otimes}))}\mathrm{Alg}_{\mathsf{E}_1}^{\mathrm{nu}}(J^{(k)}_{\vec{m}\setminus\{m_0\}}(\mathcal{C}^{\otimes}))}
\end{gathered}
\end{align}
indexed over integers $m_0\leq p<\infty$. The transfinite composite 
\[\mathrm{Alg}_{\mathsf{E}_{k+1}}^{\mathrm{nu}}(\mathcal{C})\rightarrow\mathrm{Alg}_{\mathsf{A}_{m_0}}^{\mathrm{nu}}(\mathrm{Alg}_{\mathsf{E}_{k}}^{\mathrm{nu}}(\mathcal{C}^{\otimes}))\times_{\mathrm{Alg}_{\mathsf{A}_{m_0}}^{\mathrm{nu}}(J^{(k)}_{\vec{m}\setminus\{m_0\}}(\mathcal{C}^{\otimes}))}\mathrm{Alg}_{\mathsf{E}_1}^{\mathrm{nu}}(J^{(k)}_{\vec{m}\setminus\{m_0\}}(\mathcal{C}^{\otimes}))\]
is precisely the forgetful functor (\ref{equ_truncext1}) by definition (\ref{diag_iniseg_0}). At any given stage $m_0\leq p<\infty$ the forgetful functor (\ref{equ_truncext2}) is equivalent to the functor
\[\xymatrix{\mathrm{Alg}_{\mathsf{A}_{p+1}}^{\mathrm{nu}}(\mathrm{Alg}_{\mathsf{E}_{k}}^{\mathrm{nu}}(\mathcal{C}^{\otimes}))\times_{\mathrm{Alg}_{\mathsf{A}_p+1}^{\mathrm{nu}}(J^{(k)}_{\vec{m}\setminus\{m_0\}}(\mathcal{C}^{\otimes}))}\mathrm{Alg}_{\mathsf{E}_1}^{\mathrm{nu}}(J^{(k)}_{\vec{m}\setminus\{m_0\}}(\mathcal{C}^{\otimes}))\ar[d]^{(U^{(k)},1)} \\
I^{(k)}_{(m_1,\dots,m_k,p)}(\mathcal{C}^{\otimes})\times_{\mathrm{Alg}_{\mathsf{A}_{p+1}}^{\mathrm{nu}}(J^{(k)}_{\vec{m}\setminus\{m_0\}}(\mathcal{C}^{\otimes}))}\mathrm{Alg}_{\mathsf{E}_1}^{\mathrm{nu}}(J^{(k)}_{\vec{m}\setminus\{m_0\}}(\mathcal{C}^{\otimes}))}
\]
which is (the pullback of) a locally $\left(n-1-\sum_{i=1}^{k}m_i-p\right)$-truncated functor as shown above. In particular,
the forgetful functor (\ref{equ_truncext2}) is locally $\left(n-1-\sum_{i=0}^{k}m_i\right)$-truncated for all $m_0\leq p<\infty$. Hence, so 
is the transfinite composition (\ref{equ_truncext1}).
\end{proof}

\begin{proposition}\label{prop_inisegfib}
Let $\mathcal{C}^{\otimes}$ be a symmetric monoidal $n$-category. Let $0\leq k<\infty$ be an integer and $1\leq m_0,\dots,m_k<\infty$ be 
a sequence of further integers. Then the fiber of the forgetful functor
\[U^{(k)}\colon\mathrm{Alg}_{\mathsf{A}_{m_{k}+1}}^{\mathrm{nu}}(\mathrm{Alg}_{\mathsf{E}_k}^{\mathrm{nu}}(\mathcal{C}^{\otimes}))\rightarrow I^{(k)}_{\vec{m}}\]
at any object of $I^{(k)}_{\vec{m}}$ is an $\left(n-\sum_{i=0}^{k}m_i\right)$-truncated space.
\end{proposition}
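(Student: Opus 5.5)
The plan is to run the same induction on $k$ that proves Proposition~\ref{prop_inisegloc}, computing fibers over objects rather than over morphisms. The gain of exactly one truncation degree should come from the standard relation: the fiber of a functor between $\infty$-categories over an object is controlled by the loop spaces of the relevant mapping spaces, so a functor that is locally $(d-1)$-truncated and whose fibers are suitably controlled on objects has $d$-truncated fiber spaces. Concretely, I would identify the fiber of $U^{(k)}$ over an object $(A,g)\in I^{(k)}_{\vec m}$ with a space of extensions. The input is the object-level part of \cite[Theorem 4.1.6.13]{lurieha}: extending a non-unital $\mathsf{A}_{m}$-algebra structure on $A$ to an $\mathsf{A}_{m+1}$-structure amounts to choosing a null-homotopy of the map
\[\partial K_{m+1}\rightarrow\mathcal{D}(A^{\otimes m+1},A)\]
determined by the given lower structure, where $\mathcal{D}$ is the ambient $\infty$-category.

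For $k=0$, take $\mathcal{D}=\mathcal{C}$. The fiber of $\mathrm{Alg}^{\mathrm{nu}}_{\mathsf{A}_{m_0+1}}(\mathcal{C}^{\otimes})\rightarrow\mathrm{Alg}^{\mathrm{nu}}_{\mathsf{A}_{m_0}}(\mathcal{C}^{\otimes})$ is either empty or a torsor for the space of maps from $\Sigma\partial K_{m_0+1}$, which is a sphere of dimension $m_0-1$, into the $(n-1)$-truncated space $\mathcal{C}(A^{\otimes m_0+1},A)$. That is an $(m_0-1)$-fold loop space of an $(n-1)$-truncated space, hence $(n-m_0)$-truncated, as claimed.

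For the inductive step I would copy the big diagram in the proof of Proposition~\ref{prop_inisegloc}, with $B=A$ and with morphisms replaced by structure maps. The fiber over $(f,g)$, where $f$ lives in $\mathrm{Alg}^{\mathrm{nu}}_{\mathsf{A}_{m_k+1}}(J^{(k)})$ and $g$ in $\mathrm{Alg}^{\mathrm{nu}}_{\mathsf{A}_{m_k}}(\mathrm{Alg}^{\mathrm{nu}}_{\mathsf{E}_k})$, is the space of null-homotopies of the boundary map $\partial K_{m_k+1}\rightarrow\mathrm{Alg}^{\mathrm{nu}}_{\mathsf{E}_k}(\mathcal{C}^{\otimes})(A^{\otimes m_k+1},A)$ that lie over the null-homotopy prescribed by $f$ in $J^{(k)}(\mathcal{C}^{\otimes})(A^{\otimes m_k+1},A)$. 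If this fiber is nonempty, it is a torsor over the relative mapping space
\[\mathrm{Map}_{\ast}\bigl(S^{m_k-1},\,\mathrm{fib}(V^{(k)}_{A^{\otimes m_k+1},A})\bigr),\]
that is, the $(m_k-1)$-fold based loops of the fibers of $V^{(k)}$ on mapping spaces. By the inductive claim inside the proof of Proposition~\ref{prop_inisegloc}, $V^{(k)}$ is locally $(n-1-\sum_{i<k}m_i)$-truncated. Looping $m_k-1$ times lowers truncation by $m_k-1$, giving $n-\sum_{i\le k}m_i$. I would phrase this through \cite[Proposition 8.13]{rezkhtytps}, exactly as in the previous proof, and I would check separately that the case of an empty fiber is harmless.

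The main obstacle is justifying that object fibers of $U^{(k)}$ are these relative null-homotopy spaces. Concretely, I need to know that the pullback defining $I^{(k)}_{\vec m}$ commutes with the extension description of \cite[Theorem 4.1.6.13]{lurieha} applied in the symmetric monoidal $\infty$-category $\mathrm{Alg}^{\mathrm{nu}}_{\mathsf{E}_k}(\mathcal{C}^{\otimes})$, and that $V^{(k)}$ is compatible with tensor powers. I would try to deduce this from naturality of Lurie's associahedral obstruction in symmetric monoidal functors, applied to $V^{(k)}$ (which is symmetric monoidal, since limits are computed in $\mathrm{Cat}_\infty$).
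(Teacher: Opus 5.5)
Your proposal is correct and follows essentially the same route as the paper. The paper also fixes the underlying algebra. It uses the naturality of Lurie's associahedral description \cite[Theorem 4.1.6.8]{lurieha} to identify the fiber of $U^{(k)}$ with a fiber of the gap map $X^{K_{m_k+1}}\to X^{\partial K_{m_k+1}}\times_{Y^{\partial K_{m_k+1}}}Y^{K_{m_k+1}}$, where $X\to Y$ is the map on mapping spaces induced by $V^{(k)}$. It then concludes from the local truncatedness of $V^{(k)}$ established in the proof of Proposition~\ref{prop_inisegloc}, together with \cite[Proposition 8.13]{rezkhtytps}.

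Your description of that fiber as $(m_k-1)$-fold based loops of fibers of $V^{(k)}$ just unpacks Rezk's lemma, which also disposes of your empty-fiber caveat. The separate base case $k=0$ is not needed, since $J^{(0)}_{\emptyset}=\ast$ makes it a special case of the general argument.
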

\begin{proof}
Consider the following diagram of forgetful functors.

\begin{align}\label{diag_prop_inisegfib1}
\begin{gathered}
\xymatrix{
\mathrm{Alg}_{\mathsf{A}_{m_{k}+1}}^{\mathrm{nu}}(\mathrm{Alg}_{\mathsf{E}_k}^{\mathrm{nu}}(\mathcal{C}^{\otimes}))\ar[r]^U\ar[d]_{\mathrm{Alg}_{\mathsf{A}_{m_{k}+1}}^{\mathrm{nu}}(V^{(k)})} & \mathrm{Alg}_{\mathsf{A}_{m_{k}}}^{\mathrm{nu}}(\mathrm{Alg}_{\mathsf{E}_k}^{\mathrm{nu}}(\mathcal{C}^{\otimes}))\ar[r]^(.6)U\ar[d]^{\mathrm{Alg}_{\mathsf{A}_{m_{k}}}^{\mathrm{nu}}(V^{(k)})} & \mathrm{Alg}_{\mathsf{E}_{k}}(\mathcal{C}^{\otimes})\ar[d]^{V^{(k)}}\\
\mathrm{Alg}_{\mathsf{A}_{m_{k}+1}}^{\mathrm{nu}}(J^{(k)}_{\vec{m}\setminus\{m_{k}\}}(\mathcal{C}^{\otimes}))\ar[r]_U & \mathrm{Alg}_{\mathsf{A}_{m_{k}}}^{\mathrm{nu}}(J^{(k)}_{\vec{m}\setminus\{m_{k}\}}(\mathcal{C}^{\otimes}))\ar[r]_(.6)U & J^{(k)}_{\vec{m}\setminus\{m_{k}\}}(\mathcal{C}^{\otimes})
}
\end{gathered}
\end{align}
For any given $\mathsf{E}_{k}$-algebra $A$ in $\mathcal{C}^{\otimes}$, the point
\[\xymatrix{
\ast\ar[rr]^(.4){\{A\}}\ar@{=}[d] & & \mathrm{Alg}_{\mathsf{E}_{k}}(\mathcal{C}^{\otimes})\ar[d]^{V^{(k)}}\\
\ast\ar[rr]_(.4){\{V^{(k)}(A)\}} & & J^{(k)}_{\vec{m}\setminus\{m_{k}\}}(\mathcal{C}^{\otimes})
}\]
induces fibers of (\ref{diag_prop_inisegfib1}) (on the top and bottom) as follows.
\[\xymatrix{
\mathrm{Alg}_{\mathsf{A}_{m_k+1}}^{\mathrm{nu}}[A]\ar[r]^U\ar[d]_{\mathrm{Alg}_{\mathsf{A}_{m_k+1}}^{\mathrm{nu}}(V^{(k)})} & \mathrm{Alg}_{\mathsf{A}_{m_k}}^{\mathrm{nu}}[A]\ar[r]\ar[d]^{\mathrm{Alg}_{\mathsf{A}_{m_k}}^{\mathrm{nu}}(V^{(k)})} & \ast\ar@{=}[d]\\
\mathrm{Alg}_{\mathsf{A}_{m_k+1}}^{\mathrm{nu}}[V^{(k)}(A)]\ar[r]_U & \mathrm{Alg}_{\mathsf{A}_{m_k}}^{\mathrm{nu}}[V^{(k)}(A)]\ar[r] & \ast
}\]
The according pullback of the gap map
\[U^{(k)}\colon\mathrm{Alg}_{\mathsf{A}_{m_{k}+1}}^{\mathrm{nu}}(\mathrm{Alg}_{\mathsf{E}_{k}}^{\mathrm{nu}}(\mathcal{C}^{\otimes}))\rightarrow I_{\vec{m}}^{(k)}\]
is the gap map
\begin{align}\label{diag_cor_reciproc2}
\begin{gathered}
\xymatrix{
\mathrm{Alg}_{\mathsf{A}_{m_k+1}}^{\mathrm{nu}}[A]\ar@/^1pc/[drr]^U\ar@/_1pc/[ddr]_{\mathrm{Alg}_{\mathsf{A}_{m_k+1}}^{\mathrm{nu}}(V^{(k)})}\ar[dr] & & \\
& I^{(k)}_{\vec{m}}[A]\ar[r]\ar[d]\ar@{}[dr]|(.3){\pbs}& \mathrm{Alg}_{m_k}^{\mathrm{nu}}[A]\ar[d]^{\mathrm{Alg}_{\mathsf{A}_{m_k}}^{\mathrm{nu}}(V^{(k)})}\\
& \mathrm{Alg}_{\mathsf{A}_{m_k+1}}^{\mathrm{nu}}[V^{(k)}(A)]\ar[r]_U & \mathrm{Alg}_{\mathsf{A}_{m_k}}^{\mathrm{nu}}[V^{(k)}(A)].
}
\end{gathered}
\end{align}
Since every point of $I^{(k)}_{\vec{m}}$ factors through $I^{(k)}_{\vec{m}}[A]$ for some $\mathsf{E}_{k}$-algebra $A$ in $\mathcal{C}^{\otimes}$, 
it suffices to show that the gap map in (\ref{diag_cor_reciproc2}) is fiberwise an $\left(n-\sum_{i=0}^{k}m_i\right)$-truncated space 
for all $\mathsf{E}_{k}$-algebras $A$ in $\mathcal{C}^{\otimes}$. So let $A$ be an $\mathsf{E}_{k}$-algebra in $\mathcal{C}^{\otimes}$.
By \cite[Theorem 4.1.6.8]{lurieha} the outer square of (\ref{diag_cor_reciproc2}) comes with a natural transformation into the 
diagram
\begin{align}\label{diag_cor_reciproc3}
\begin{gathered}
\xymatrix{
\mathrm{Alg}_{\mathsf{E}_{k}}(\mathcal{C}^{\otimes})(A^{\otimes m_k+1},A)^{K_{m_k+1}}\ar[r]\ar[d] & \mathrm{Alg}_{\mathsf{E}_{k}}(\mathcal{C}^{\otimes})(A^{\otimes m_k+1},A)^{\partial K_{m_k+1}}\ar[d]\\
J^{(k)}_{\vec{m}\setminus\{m_k\}}(\mathcal{C}^{\otimes})(V^{(k)}(A)^{\otimes m_k+1},V^{(k)}(A))^{K_{m_k+1}}\ar[r] & J^{(k)}_{\vec{m}\setminus\{m_k\}}(\mathcal{C}^{\otimes})(V^{(k)}(A)^{\otimes m_k+1},V^{(k)}(A))^{\partial K_{m_k+1}}
}
\end{gathered}
\end{align}
whose top and the bottom square are cartesian. It thus suffices to show that the gap map of hom-spaces associated to the square 
(\ref{diag_cor_reciproc3}) is $\left(n-\sum_{i=0}^{k}m_i\right)$-truncated (as its fibers are spaces indeed). But the 
inclusion $\partial K_{m_k+1}\hookrightarrow K_{m_k+1}$ is $(m_k-3)$-connected; the forgetful functor
\[\mathrm{Alg}_{\mathsf{E}_{k}}(\mathcal{C}^{\otimes})(A^{\otimes m_k+1},A)\rightarrow J^{(k)}_{\vec{m}\setminus\{m_k\}}(\mathcal{C}^{\otimes})(V^{(k)}(A)^{\otimes m_k+1},V^{(k)}(A))\]
is $\left(n-1-\sum_{i=0}^{k-1}m_i\right)$-truncated by the proof of Proposition~\ref{prop_inisegloc}.
This implies that the gap map of (\ref{diag_cor_reciproc3}) is $\left(n-\sum_{i=0}^{k}m_i\right)$-truncated again by
\cite[Proposition 8.13]{rezkhtytps}. 
\end{proof}

We will require two more corollaries to apply the above results to the unital context as well. We will extract them by way of 
Definition~\ref{def_iniseg} and the Eckmann-Hilton argument of Proposition~\ref{prop_eh}. Perhaps one can derive them also directly from 
Schlank and Yanovsky's Eckmann-Hilton Argument \cite{schlankyanovski_eh}, however the author fails to see how. We therefore introduce an 
intermediate unital version of particularly simple initial segments.

\begin{definition}\label{def_iniseg_qu}
Let $\mathcal{C}^{\otimes}$ be a symmetric monoidal $\infty$-category. Let $J^{(0)+}_{\emptyset}(\mathcal{C}^{\otimes}):=\ast$ and 
$V^{(0)}\colon\mathcal{C}^{\otimes}\rightarrow J^{(0)+}_{\emptyset}(\mathcal{C}^{\otimes})$ be the essentially uniquely forgetful functor. 
Given $1\leq k<\infty$, let $\vec{1}$ be the sequence $(1,\dots,1)$ of length $k+1$. We recursively define
\begin{align}\label{diag_iniseg_0}
\begin{gathered}
\xymatrix{
\mathrm{Alg}_{\mathsf{E}_k}(\mathcal{C}^{\otimes})\ar@/_1pc/[ddr]_{\mathrm{Alg}_{\mathsf{E}_{1}}^{\mathrm{nu}}(V^{(k-1)})}\ar@/^1pc/[drr]^U\ar[dr]|{V^{(k)}} & & \\
& \mathrm{J}^{(k)+}_{\vec{1}}\ar[r]\ar[d]\ar@{}[dr]|(.3){\pbs} & \mathrm{Alg}_{\mathsf{E}_{k-1}}(\mathcal{C}^{\otimes})\ar[d]^{V^{(k-1)}}
\\
& \mathrm{Alg}_{\mathsf{E}_1}(J^{(k-1)+}_{(\vec{1}\setminus \{1\})}(\mathcal{C}^{\otimes}))\ar[r]_U & J^{(k-1)+}_{\vec{1}\setminus\{1\}}(\mathcal{C}^{\otimes}).
}
\end{gathered}
\end{align}
\end{definition}

\begin{lemma}\label{lemma_iniseg_qu}
Let $\mathcal{C}^{\otimes}$ be a symmetric monoidal $n$-category and $1\leq k<\infty$.
\begin{enumerate}
\item The forgetful functor 
\[V^{(k+1)}\colon\mathrm{Alg}_{\mathsf{E}_{k+1}}(\mathcal{C}^{\otimes})\rightarrow\mathrm{J}^{(k+1)+}_{\vec{1}}\]
is locally $\left(n-k-2\right)$-truncated.
\item The symmetric monoidal $\infty$-categories $J^{(k+1)+}_{\vec{1}}$ and
$\mathrm{Alg}_{L_{k+1}(\mathsf{E}_{\bullet})}(\mathcal{C}^{\otimes})$ are naturally equivalent, where $L_{k+1}(\mathsf{E}_{\bullet})$ is 
the $(k+1)$-st latching object of the co-semi-simplicial $\infty$-operad $\mathsf{E}_{\bullet}$ introduced in Section~\ref{sec_sub_comm}. 
\end{enumerate}
\end{lemma}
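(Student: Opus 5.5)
I would prove both parts by induction on $k$, mirroring the proofs of Proposition~\ref{prop_inisegloc} and Corollary~\ref{cor_E_latching}, and treat Part (2) first because it makes Part (1) essentially formal.

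For Part (2), the plan is to identify the recursive pullback defining $J^{(k+1)+}_{\vec{1}}$ with algebras over a pushout of $\infty$-operads. The functor $\mathrm{Alg}_{(-)}(\mathcal{C}^{\otimes})$ sends homotopy colimits of $\infty$-operads to limits of $\infty$-categories, since it is a mapping object out of cofibrant objects in the model category $\mathrm{Op}_{\infty}$. The forgetful functor $\mathrm{SMonCat}_{\infty}\rightarrow\mathrm{Cat}_{\infty}$ reflects limits, so the identification upgrades to symmetric monoidal structures. By Remark~\ref{rem_latch}, the latching object decomposes as
\[L_{k+1}(\mathsf{E}_{\bullet})\simeq L_k(\mathsf{E}_{\bullet+1})\sqcup_{L_k(\mathsf{E}_{\bullet})}\mathsf{E}_k.\]
Using the Additivity Theorem, I would write $L_k(\mathsf{E}_{\bullet+1})\simeq\mathsf{E}_1\otimes_{\mathrm{BV}}L_k(\mathsf{E}_{\bullet})$. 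This holds because $\mathsf{E}_1\otimes_{\mathrm{BV}}(-)$ preserves colimits, the closed symmetric monoidal structure of \cite[Section 2.2.5]{lurieha} being a left adjoint in each variable. Applying $\mathrm{Alg}_{(-)}(\mathcal{C}^{\otimes})$ and using Example~\ref{exple_bv} then gives
\[\mathrm{Alg}_{L_{k+1}(\mathsf{E}_\bullet)}(\mathcal{C}^{\otimes})\simeq\mathrm{Alg}_{\mathsf{E}_1}(\mathrm{Alg}_{L_k(\mathsf{E}_\bullet)}(\mathcal{C}^{\otimes}))\times_{\mathrm{Alg}_{L_k(\mathsf{E}_\bullet)}(\mathcal{C}^{\otimes})}\mathrm{Alg}_{\mathsf{E}_k}(\mathcal{C}^{\otimes}).\]
By the inductive hypothesis this is exactly the pullback defining $J^{(k+1)+}_{\vec{1}}$. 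The base case is $L_1(\mathsf{E}_\bullet)=\mathsf{E}_0$, matching $J^{(1)+}$.

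The delicate point is to check that the maps in this pullback agree with those in Definition~\ref{def_iniseg_qu}: the $U$ and $V^{(k)}$ there must correspond to the face inclusions $s_i$. Naturality in the shift $\bullet\mapsto\bullet+1$ from Remark~\ref{rem_latch} handles this.

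For Part (1), I would factor $V^{(k+1)}$ as precomposition along the map $L_{k+1}(\mathsf{E}_\bullet)\rightarrow\mathsf{E}_{k+1}$, using Part (2). By Corollary~\ref{cor_E_latching}, the composite $\mathsf{E}_k\xrightarrow{\eta}L_{k+1}(\mathsf{E}_\bullet)\rightarrow\mathsf{E}_{k+1}$ is $s_k$, and $\eta$ has a retraction over $\mathsf{E}_{k+1}$. Hence $V^{(k+1)}$ has a section-type retract relationship with the plain forgetful functor $s_k^*\colon\mathrm{Alg}_{\mathsf{E}_{k+1}}\rightarrow\mathrm{Alg}_{\mathsf{E}_k}$, and the mapping-space fibers of $V^{(k+1)}$ become retracts of fibers computed by the non-unital machinery.

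The truncation bound itself I would obtain by comparing with the non-unital statement. Proposition~\ref{prop_inisegloc} with all $m_i=1$ shows that the non-unital $V^{(k+1)}\colon\mathrm{Alg}^{\mathrm{nu}}_{\mathsf{E}_{k+1}}\rightarrow J^{(k+1)}_{\vec{1}}$ is locally $(n-1-(k+1))=(n-k-2)$-truncated. Lemma~\ref{lemma_unitff} says the unital-to-non-unital forgetful functors are monic, and on mapping spaces they are fully faithful after quasi-unitality. Using this, I would show that the comparison square from unital to non-unital initial segments induces monomorphisms on mapping spaces compatible with the fibers, since units of $\mathsf{E}_i$-algebras are initial objects (as in the proof of Lemma~\ref{lemma_unitff}). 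Then the fibers of the unital $V^{(k+1)}$ on hom-spaces embed as unions of components in those of the non-unital one, and truncatedness is inherited.

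I expect the main obstacle to be this unital/non-unital comparison on fibers, specifically showing that the induced map on fibers of hom-spaces is $(-1)$-truncated rather than merely monic on the total spaces. I would attack it inductively via Lemma~\ref{lemma_unitff}(2)--(3) applied levelwise in the recursive pullback.
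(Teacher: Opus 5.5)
Your Part (2) is the paper's argument: induction on the pushout formula of Remark~\ref{rem_latch}, which $\mathrm{Alg}_{(-)}(\mathcal{C}^{\otimes})$ turns into the recursive pullback of Definition~\ref{def_iniseg_qu}. You have simply written it out in more detail.

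For Part (1) you also have the right architecture. You compare the unital $V^{(k+1)}$ with the non-unital one, which is locally $(n-k-2)$-truncated by Proposition~\ref{prop_inisegloc} with all $m_i=1$. The gap is in the comparison you aim for. You only plan to show that the hom-fibers of the unital $V^{(k+1)}$ embed as unions of components into those of the non-unital one, i.e.\ that the gap map
\[g\colon\mathrm{Alg}_{\mathsf{E}_{k+1}}(\mathcal{C}^{\otimes})\rightarrow\mathrm{Alg}^{\mathrm{nu}}_{\mathsf{E}_{k+1}}(\mathcal{C}^{\otimes})\times_{J^{(k+1)}_{\vec{1}}}J^{(k+1)+}_{\vec{1}}\]
is faithful. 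The unital $V^{(k+1)}$ is $g$ followed by a base change of the non-unital $V^{(k+1)}$. So faithfulness of $g$ only gives local $\max(n-k-2,-1)$-truncatedness. Truncatedness is not inherited by unions of components at level $-2$, because such a union inside a contractible space may be empty. For $k\geq n$ the lemma asserts that $V^{(k+1)}$ is fully faithful, and this case is genuinely needed later. It enters through Corollary~\ref{cor_truncobj} with $k=n$, and through the final contractibility statement for sylleptic monoidal 2-functors between symmetric Gray monoids, which is Corollary~\ref{cor_truncmor} with $n=k=3$.

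What is missing is \emph{fullness} of $g$, and this is the one place where unitality is really used. Suppose you are given a morphism $f$ of underlying non-unital $\mathsf{E}_{k+1}$-algebras together with a lift $f^+$ to $J^{(k+1)+}_{\vec{1}}$. The component of $f^+$ in $\mathrm{Alg}_{\mathsf{E}_k}(\mathcal{C}^{\otimes})$ is a unital morphism. Hence $f$ is quasi-unital and lifts by Lemma~\ref{lemma_unitff}.3. With $g$ fully faithful you get the stated bound for all $n$ and $k$.

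By contrast, the faithfulness you single out as the main obstacle is formal. Both horizontal functors of the comparison square are monic by Lemma~\ref{lemma_unitff}.1, hence so is the gap map.

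Finally, the detour through Corollary~\ref{cor_E_latching} points the wrong way. That retraction exhibits $s_k^{\ast}$ as a retract of $V^{(k+1)}$, so it transports truncatedness from $V^{(k+1)}$ to $s_k^{\ast}$. This is how the paper derives Corollary~\ref{cor_truncmor} from this lemma; it contributes nothing to proving Part (1) itself.
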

\begin{proof}
The forgetful functors
$U\colon\mathrm{Alg}_{\mathsf{E}_k}(\mathcal{C}^{\otimes})\rightarrow\mathrm{Alg}_{\mathsf{E}_k}^{\mathrm{nu}}(\mathcal{C}^{\otimes})$
induce a diagram
\begin{align}\label{diag_iniseg_qu}
\begin{gathered}
\xymatrix{
\mathrm{Alg}_{\mathsf{E}_{k+1}}(\mathcal{C}^{\otimes})\ar[dd]_{V^{(k+1)}}\ar[rr]^U\ar@{-->}[dr] & & \mathrm{Alg}_{\mathsf{E}_{k+1}}^{\mathrm{nu}}(\mathcal{C}^{\otimes})\ar[dd]^{V^{(k+1)}}\\
 & \bullet\ar[ur]\ar[dl]\ar@{}[dr]|(.3){\pbs} & \\
\mathrm{J}^{(k+1)+}_{\vec{1}}\ar[rr]_U & & \mathrm{J}^{(k+1)}_{\vec{1}}.
}
\end{gathered}
\end{align}
The vertical forgetful functor $V^{(k+1)}$ on the right hand side is locally $\left(n-1-\sum_{i=0}^{k}1\right)$-truncated by 
Proposition~\ref{prop_inisegloc}. To prove Part (1), it thus suffices to show that the diagonal gap map
$\mathrm{Alg}_{\mathsf{E}_k}(\mathcal{C}\otimes)\dashrightarrow\bullet$ in (\ref{diag_iniseg_qu}) is fully faithful. Both horizontal 
functors in the outer square of (\ref{diag_iniseg_qu}) are monic by way of Lemma~\ref{lemma_unitff}.1, and hence so is the gap map 
therein. It thus suffices to show that it is full. Therefore, let $A$ and $B$ be $\mathsf{E}_{k+1}$-algebras in $\mathcal{C}^{\otimes}$, 
and suppose we are given a pair of morphisms $f\colon U(A)\rightarrow U(B)$ in
$\mathrm{Alg}_{\mathsf{E}_{k+1}}^{\mathrm{nu}}(\mathcal{C}^{\otimes})$ together with an extension $f^+$ thereof to
$\mathrm{J}^{(k+1)+}_{\vec{1}}$. To lift $f$ along the top vertical forgetful functor $U$ of (\ref{diag_iniseg_qu}), it suffices to show 
that $f$ is quasi-unital (Lemma~\ref{lemma_unitff}.3). This is assured by the projection of $f^+$ to
$\mathrm{Alg}_{\mathsf{E}_{k}}(\mathcal{C}^{\otimes})$.

Part (2) follows fairly directly by induction from the formula for $L_{k}(\mathsf{E}_{\bullet})$ given in Remark~\ref{rem_latch}.
\end{proof}

\begin{corollary}\label{cor_truncmor}
Let $\mathcal{C}^{\otimes}$ be a symmetric monoidal $n$-category. Let $1\leq k<\infty$ be an integer and
$A,B\in\mathrm{Alg}_{\mathsf{E}_{k+1}}(\mathcal{C}^{\otimes})$ be $\mathsf{E}_{k+1}$-algebras in $\mathcal{C}^{\otimes}$. Let $f\colon A\rightarrow B$ be a 
morphism of underlying $\mathsf{E}_k$-algebras. Then the space 
\begin{align}\label{equ_truncmor}
\mathrm{Alg}_{\mathsf{E}_{k+1}}(\mathcal{C}^{\otimes})(A,B)\times_{\mathrm{Alg}_{\mathsf{E}_{k}}(\mathcal{C}^{\otimes})(A,B)}\{f\}
\end{align}
of $\mathsf{E}_{k+1}$-algebra extensions of $f$ is $(n-k-2)$-truncated.
\end{corollary}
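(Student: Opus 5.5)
We reduce the statement to Lemma~\ref{lemma_iniseg_qu} and Corollary~\ref{cor_E_latching}. By Lemma~\ref{lemma_iniseg_qu}.(1), the forgetful functor $V^{(k+1)}\colon\mathrm{Alg}_{\mathsf{E}_{k+1}}(\mathcal{C}^{\otimes})\rightarrow J^{(k+1)+}_{\vec{1}}$ is locally $(n-k-2)$-truncated. In other words, for $A,B$ as in the statement, the map of hom-spaces
\[\mathrm{Alg}_{\mathsf{E}_{k+1}}(\mathcal{C}^{\otimes})(A,B)\rightarrow J^{(k+1)+}_{\vec{1}}(V^{(k+1)}A,V^{(k+1)}B)\]
has $(n-k-2)$-truncated fibers. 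Lemma~\ref{lemma_iniseg_qu}.(2) identifies $J^{(k+1)+}_{\vec{1}}$ with $\mathrm{Alg}_{L_{k+1}(\mathsf{E}_{\bullet})}(\mathcal{C}^{\otimes})$. So it suffices to show that the space of $\mathsf{E}_{k+1}$-extensions of $f$ in (\ref{equ_truncmor}) is a retract of a fiber of the map displayed above. A retract of an $(n-k-2)$-truncated space is again $(n-k-2)$-truncated.

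Corollary~\ref{cor_E_latching} provides the retraction. The restriction functor $\mathrm{Alg}_{\mathsf{E}_{k+1}}(\mathcal{C}^{\otimes})\rightarrow\mathrm{Alg}_{\mathsf{E}_k}(\mathcal{C}^{\otimes})$ along $s_k$ factors as restriction along $L_{k+1}(\mathsf{E}_{\bullet})\rightarrow\mathsf{E}_{k+1}$ followed by restriction along $\eta\colon\mathsf{E}_k\rightarrow L_{k+1}(\mathsf{E}_{\bullet})$. The retraction $r\colon L_{k+1}(\mathsf{E}_{\bullet})\rightarrow\mathsf{E}_k$ is a retraction over $\mathsf{E}_{k+1}$: we have $r\eta\simeq 1$, and $s_k r$ is homotopic to the map $L_{k+1}(\mathsf{E}_\bullet)\rightarrow\mathsf{E}_{k+1}$. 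Hence restriction along $r$ gives a section $r^{\ast}\colon\mathrm{Alg}_{\mathsf{E}_k}(\mathcal{C}^{\otimes})\rightarrow\mathrm{Alg}_{L_{k+1}(\mathsf{E}_{\bullet})}(\mathcal{C}^{\otimes})$ of $\eta^{\ast}$. Moreover, the homotopy $s_kr\simeq(L_{k+1}\rightarrow\mathsf{E}_{k+1})$ identifies $r^{\ast}s_k^{\ast}$ with the restriction to the latching object, compatibly with $\eta^\ast$.

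Consequently, the map $\mathrm{Alg}_{L_{k+1}}(A,B)\rightarrow\mathrm{Alg}_{\mathsf{E}_k}(A,B)$ induced by $\eta^\ast$ is, on the image of the restriction from $\mathsf{E}_{k+1}$, an equivalence. It is split by $r^\ast$, and the identification $r^\ast\eta^\ast\simeq 1$ holds on the relevant objects because they are restricted from $\mathsf{E}_{k+1}$. The point $f$ therefore determines the point $r^{\ast}f$ of $J^{(k+1)+}_{\vec 1}(V^{(k+1)}A,V^{(k+1)}B)$. The fiber of $\mathrm{Alg}_{\mathsf{E}_{k+1}}(A,B)\rightarrow\mathrm{Alg}_{\mathsf{E}_k}(A,B)$ over $f$ is then equivalent to the fiber of $V^{(k+1)}$ over $r^{\ast}f$, or at least a retract of it, by pasting the fiber squares along $\eta^\ast$ and $r^\ast$. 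This gives the $(n-k-2)$-truncation.

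The main obstacle is this last identification. One must check that on hom-spaces between restricted algebras the composite $\eta^{\ast}r^{\ast}$, and not only $r^{\ast}\eta^{\ast}$, is homotopic to the identity, coherently with the maps from $\mathrm{Alg}_{\mathsf{E}_{k+1}}$. If this fails, I would fall back to the weaker retract argument: the fiber over $f$ maps to the fiber over $r^\ast f$ and back via $\eta^{\ast}$, with composite the identity since $r\eta\simeq 1$ relative to $\mathsf{E}_{k+1}$. The 3-cells $\psi^n_k$ from the proof of Corollary~\ref{cor_E_latching} supply the needed coherence.
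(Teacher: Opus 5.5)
This is essentially the paper's argument: Corollary~\ref{cor_E_latching} with Lemma~\ref{lemma_iniseg_qu}.(2) exhibits the restriction $U\colon\mathrm{Alg}_{\mathsf{E}_{k+1}}(\mathcal{C}^{\otimes})\rightarrow\mathrm{Alg}_{\mathsf{E}_k}(\mathcal{C}^{\otimes})$ as a retract of $V^{(k+1)}$ (via $r^{\ast}$ and $\eta^{\ast}$), so the fibers of $U$ on hom-spaces are retracts of those of $V^{(k+1)}$, which are $(n-k-2)$-truncated by Lemma~\ref{lemma_iniseg_qu}.(1). Your ``fallback'' retract argument is already the complete proof, since it only uses $\eta^{\ast}r^{\ast}\simeq 1$, which holds globally because $r\eta\simeq 1$ (your last paragraph has the two composites swapped); the claimed equivalence of fibers would instead require $r^{\ast}\eta^{\ast}\simeq 1$ on hom-spaces, which is neither needed nor provided by Corollary~\ref{cor_E_latching}.
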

\begin{proof}
By Corollary~\ref{cor_E_latching} and Lemma~\ref{lemma_iniseg_qu}.2 it follows that the diagram
\[\xymatrix{
\mathrm{Alg}_{\mathsf{E}_{k+1}}(\mathcal{C}^{\otimes})\ar[d]_{V^{(k+1)}}\ar[dr]^{U} & \\
J^{(k+1)+}_{\vec{1}}\ar[r] & \mathrm{Alg}_{\mathsf{E}_{k}}(\mathcal{C}^{\otimes})  
}\]
exhibits $U$ as a retract of $V^{(k+1)}$. The latter is locally $\left(n-k-2\right)$-truncated by 
Lemma~\ref{lemma_iniseg_qu}.1.
\end{proof}

\begin{corollary}\label{cor_truncobj}
Let $\mathcal{C}^{\otimes}$ be a symmetric monoidal $n$-category. Let $1\leq k<\infty$ be an integer and
$A\in\mathrm{Alg}_{\mathsf{E}_k}(\mathcal{C}^{\otimes})$ be an $\mathsf{E}_k$-algebra in $\mathcal{C}^{\otimes}$. Then the space 
\begin{align}\label{equ_truncobj}
\mathrm{Alg}_{\mathsf{E}_{k+1}}(\mathcal{C}^{\otimes})\times_{\mathrm{Alg}_{\mathsf{E}_{k}}(\mathcal{C}^{\otimes})}\{A\}
\end{align}
of $\mathsf{E}_{k+1}$-algebra extensions of $A$ is $(n-k-1)$-truncated.
\end{corollary}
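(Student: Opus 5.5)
The plan is to deduce Corollary~\ref{cor_truncobj} from Corollary~\ref{cor_truncmor} by the standard passage from morphisms to objects. Recall the general principle: if a functor $p\colon\mathcal{X}\rightarrow\mathcal{Y}$ of $\infty$-categories is locally $m$-truncated, then its fiber over any object $y$ has $m$-truncated mapping spaces. If moreover $p$ is conservative on the relevant part, or the fiber is a space, then that fiber is an $(m+1)$-truncated space. Here $\mathcal{X}=\mathrm{Alg}_{\mathsf{E}_{k+1}}(\mathcal{C}^{\otimes})$, $\mathcal{Y}=\mathrm{Alg}_{\mathsf{E}_{k}}(\mathcal{C}^{\otimes})$, $p=U$, and $m=n-k-2$. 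Corollary~\ref{cor_truncmor} is exactly the statement that $U$ is locally $(n-k-2)$-truncated.

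The steps are as follows.

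First, observe that $U$ is an isofibration (Remark~\ref{rem_algfib}). The space (\ref{equ_truncobj}) is therefore the homotopy fiber of the induced map of maximal subgroupoids, $\mathrm{Alg}_{\mathsf{E}_{k+1}}(\mathcal{C}^{\otimes})^{\simeq}\rightarrow\mathrm{Alg}_{\mathsf{E}_{k}}(\mathcal{C}^{\otimes})^{\simeq}$, over the point $A$.

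Second, compute the path spaces of this fiber. Take two points $(\bar A,\alpha)$ and $(\bar A',\alpha')$ of the fiber. The path space between them is the fiber of
\[\mathrm{Eq}(\bar A,\bar A')\rightarrow \mathrm{Eq}(U\bar A,U\bar A')\]
over the equivalence $\alpha'^{-1}\alpha$. Since $U$ is conservative, an $\mathsf{E}_{k+1}$-map whose underlying $\mathsf{E}_k$-map is an equivalence is itself an equivalence. Hence this fiber coincides with the full space of $\mathsf{E}_{k+1}$-extensions (\ref{equ_truncmor}) of the given $\mathsf{E}_k$-equivalence.

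Third, apply Corollary~\ref{cor_truncmor}: every such path space is $(n-k-2)$-truncated. A space all of whose path spaces are $(n-k-2)$-truncated is $(n-k-1)$-truncated, and this gives the claim.

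The only delicate point is conservativity of $U$, which the second step needs. It follows because equivalences of algebras are detected on underlying objects in $\mathcal{C}$, as is standard for $\mathrm{Alg}_{\mathcal{O}}$, and $U$ commutes with the forgetful functors to $\mathcal{C}$.

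One edge case needs care: the truncation levels for $n-k-1<-2$ (all at least $-2$). If $n-k-2\le -2$, then every path space is contractible, so the fiber is $(-1)$-truncated. This is consistent with the claim, since an $(n-k-1)$-truncated space for $n-k-1\le -1$ is understood as at most $(-1)$-truncated (more strongly, when $n-k-2<-2$ the convention reads $-2$). I would state this convention explicitly.
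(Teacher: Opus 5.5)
For $k\le n$ your argument is correct and is essentially the paper's proof. In both, the mapping spaces of the fiber are identified with the extension spaces of Corollary~\ref{cor_truncmor}, and one then goes up one truncation level. Your reason for the fiber being a space is conservativity of $U$, since equivalences of algebras are detected on underlying objects. This is cleaner than the paper's detour through the fibers of the quasi-unital $\mathsf{A}_m$-algebras.

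The gap is the range $k>n$, which your last paragraph tries to absorb into a convention. There the claimed level $n-k-1$ is at most $-2$. A $(-2)$-truncated space is contractible, not merely $(-1)$-truncated. So for these $k$ the statement asserts in particular that the fiber is non-empty: every $\mathsf{E}_k$-algebra in $\mathcal{C}^{\otimes}$ admits an $\mathsf{E}_{k+1}$-extension. An argument that only controls path spaces of the fiber can at best show that any two extensions are equivalent. It can never produce an extension. Reading \emph{$(n-k-1)$-truncated} as \emph{at most $(-1)$-truncated} weakens the statement instead of proving it.

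The missing input is the Baez--Dolan--Lurie Stabilization Theorem \cite[Corollary 5.1.1.7]{lurieha}, which the paper invokes for exactly this case. For $k>n$ the forgetful functors from $\mathrm{Alg}_{\mathsf{E}_{\infty}}(\mathcal{C}^{\otimes})$ to both $\mathrm{Alg}_{\mathsf{E}_{k+1}}(\mathcal{C}^{\otimes})$ and $\mathrm{Alg}_{\mathsf{E}_{k}}(\mathcal{C}^{\otimes})$ are equivalences. Hence $U$ is an equivalence by two-out-of-three, and its fibers are contractible. With that case split off, your argument handles $k\le n$. There $n-k-1\geq -1$, and passing from $(n-k-2)$-truncated path spaces to an $(n-k-1)$-truncated space is legitimate.
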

\begin{proof}
If $k>n$ this follows from the Baez--Dolan--Lurie Stabilization Theorem. Assume $k\leq n$. The fiber (\ref{equ_truncobj}) is a subcategory 
of the sequential limit of $\infty$-categories
$\mathrm{Alg}_{\mathsf{A}_{m}}^{\mathrm{qu}}(\mathrm{Alg}_{\mathsf{E}_{k}}(\mathcal{C}^{\otimes}))\times_{\mathrm{Alg}_{\mathsf{E}_{k}}(\mathcal{C}^{\otimes})}\{A\}$, each of which is a space; it hence is a space itself. This can be again shown inductively (the total
$\infty$-category of any isofibration into a space whose fibers are spaces is also a space). As $k\leq n$, it suffices to show that the 
space (\ref{equ_truncobj}) is locally $(n-k-2)$-truncated. And indeed, for every pair of objects $A_1,A_2$ in the 
fiber (\ref{equ_truncobj}) their hom-space is the fiber
\[\mathrm{Alg}_{\mathsf{E}_{k+1}}(\mathcal{C}^{\otimes})(A_1,A_2)\times_{\mathrm{Alg}_{\mathsf{E}_{k}}(\mathcal{C}^{\otimes})(A,A)}\{1_A\}.\]
This is $(n-k-2)$-truncated by Corollary~\ref{cor_truncmor}.
\end{proof}
  
\subsection{Higher algebras in low dimensional monoidal categories}\label{sec_sub_iniseg_app}

We end this section with an application of Propositions~\ref{prop_inisegloc} and Propositions~\ref{prop_inisegfib} to low dimensions.

\begin{corollary}
Let $\mathcal{C}^{\otimes}$ be a symmetric monoidal 1-category. Then the category of $\mathsf{E}_2$-algebras in $\mathcal{C}^{\otimes}$ is equivalent to the category of diagrams in $\mathcal{C}$ of the form

\begin{align}\label{diag_eh_1}
\begin{gathered}
\xymatrix{
A\otimes A\otimes A\ar@<-.5ex>[d]_{m\otimes 1}\ar@<.5ex>[d]^{1\otimes m} & & \\
A\otimes A\ar[d]|{m} \ar[d] & (A\otimes A)^{\otimes 2}\ar[d]\ar[l]_{m\sprime\otimes m\sprime}\ar[d]^{m^{\otimes}} & \\
A & A\otimes A\ar[l]|{m\sprime} & A\otimes A\otimes A.\ar@<.5ex>[l]^{m\sprime\otimes 1}\ar@<-.5ex>[l]_{1\otimes m\sprime} & & \\
}
\end{gathered}
\end{align}
such that $m$ and $m\sprime$ have the same (quasi-)unit.
In other words, to give an $\mathsf{E}_{\infty}$-algebra in $\mathcal{C}^{\otimes}$ is the same as to give two monoidal structures on an 
object $A$ in $\mathcal{C}^{\otimes}$ that share the same unit and make the square in (\ref{diag_eh_1}) commute.
\end{corollary}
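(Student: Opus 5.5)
We will build an $\mathsf{E}_2$-algebra in $\mathcal{C}^{\otimes}$ in three layers: an $\mathsf{E}_1$-algebra, then the initial segment $I^{(1)}_{(1,1)}$, then the full extension. Throughout we take $n=1$, so every truncation bound from Section~\ref{sec_sub_iniseg} collapses to contractibility or to a proposition.

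\textbf{Reduction to $\mathsf{E}_1$ in $\mathsf{E}_1$.} By the Additivity Theorem we identify $\mathrm{Alg}_{\mathsf{E}_2}(\mathcal{C}^{\otimes})$ with $\mathrm{Alg}_{\mathsf{E}_1}(\mathrm{Alg}_{\mathsf{E}_1}(\mathcal{C}^{\otimes}))$. By Lemma~\ref{lemma_unitff}.2 the latter embeds fully faithfully into non-unital $\mathsf{A}_\infty$-algebras in $\mathrm{Alg}_{\mathsf{E}_1}(\mathcal{C}^{\otimes})$, with essential image described by quasi-unitality (Lemma~\ref{lemma_unitff}.4). Since $n=1$, the tower $\mathrm{Alg}_{\mathsf{A}_m}^{\mathrm{nu}}$ stabilises at $m=3$ by \cite[Corollary 4.1.6.16]{lurieha}. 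Ordinary monoidal structures in a $1$-category are the usual associative monoids, so the data reduces to:
\begin{itemize}
\item an associative multiplication $m$ with unit on $A$;
\item a second multiplication $m'$ which is a monoid morphism $(A,m)^{\otimes 2}\to(A,m)$, i.e.\ the square in (\ref{diag_eh_1}) commutes;
\item associativity of $m'$, encoded by the pair $m'\otimes 1,1\otimes m'$.
\end{itemize}
The associativity constraint of $m'$ is a homotopy in $\mathrm{Alg}_{\mathsf{E}_1}(\mathcal{C}^{\otimes})$. By Proposition~\ref{prop_inisegfib} with $k=1$ and $m_0=m_1=1$ or $2$, the fiber of extensions is $(1-m_0-m_1)$-truncated, hence empty or contractible. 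So associativity of $m'$ is a \emph{property}, namely equality of the two composites, and no higher coherence has to be supplied. Likewise Proposition~\ref{prop_inisegloc} shows that the forgetful functors are locally $(-1)$- or $(-2)$-truncated. The forgetful map to diagrams is therefore faithful, and it is full on morphisms: a map of underlying objects commuting with $m,m'$ and units lifts uniquely. This gives the equivalence onto the subcategory of diagrams (\ref{diag_eh_1}), once we account for units.

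\textbf{Units.} A quasi-unit of $m'$ taken in $\mathrm{Alg}_{\mathsf{E}_1}(\mathcal{C}^{\otimes})$ must be a monoid map from the unit object, and the unit there is initial. So the quasi-unit of $m'$ is forced to be the unit of $m$, which is exactly the condition ``same unit''. Conversely, if $m$ and $m'$ share a unit, Lemma~\ref{lemma_unitff}.4 lifts the non-unital structure to a unital one. The subtle point is to check that Lemma~\ref{lemma_unitff}.4's condition, that $m'$ itself be quasi-unital as a morphism in $\mathrm{Alg}_{\mathsf{E}_1}$, reduces to the shared-unit condition. We expect this step to be the main obstacle. We plan to handle it by the classical Eckmann--Hilton computation in the homotopy category, which here is $\mathcal{C}$ itself.

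\textbf{The final claim.} The closing sentence, about $\mathsf{E}_\infty$ rather than $\mathsf{E}_2$, follows from Baez--Dolan--Lurie stabilisation: since $2>n=1$, $\mathrm{Alg}_{\mathsf{E}_\infty}\simeq\mathrm{Alg}_{\mathsf{E}_2}$ in a $1$-category. Classically, Eckmann--Hilton further shows $m=m'$ and that it is commutative, consistent with Proposition~\ref{prop_eh}.
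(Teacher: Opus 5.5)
Your proof is correct, but it is organised differently from the paper's. The paper works fully non-unitally. Proposition~\ref{prop_inisegloc} (with $n=1$) identifies $\mathrm{Alg}^{\mathrm{nu}}_{\mathsf{E}_2}(\mathcal{C}^{\otimes})$ with the initial segment $I^{(1)}_{(1,2)}$, i.e.\ with the diagrams (\ref{diag_eh_1}). Only afterwards does the paper cut out the unital algebras with Lemma~\ref{lemma_unitff}.4, noting that its conditions hold exactly when $m$ and $m'$ share a quasi-unit. You instead keep the inner $\mathsf{E}_1$-layer unital throughout. So you are really describing monoid objects in the symmetric monoidal 1-category $\mathrm{Alg}_{\mathsf{E}_1}(\mathcal{C}^{\otimes})$ of ordinary monoids, and you pin down the outer unit by initiality of the unit object.

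In your set-up the truncation bounds are not actually needed. Interchange, associativity of $m'$ and liftability of morphisms are all equations in a 1-category. Incidentally, Proposition~\ref{prop_inisegloc} gives local truncation level $-(m_0+m_1)\le -2$, not $-1$. The step you flag as the main obstacle is also immediate. If $u$ is a two-sided unit for $m'$, then $m'(u\otimes u)=u$, so $m'$ is a quasi-unital morphism. Moreover, $u$ is a quasi-unit for $m'$ in $\mathrm{Alg}_{\mathsf{E}_1}(\mathcal{C}^{\otimes})$, because equalities there are detected in $\mathcal{C}$. No Eckmann--Hilton computation is required.

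Your route buys a short, essentially classical proof with an explicit description of the morphisms as maps preserving $m$, $m'$ and $u$, which the paper leaves implicit. The paper's route buys uniformity. The identical initial-segment argument is what gets reused (``same proof'') for Corollaries~\ref{cor_eh_2} and~\ref{cor_eh_3}. There the inner layer is no longer a 1-category, and Propositions~\ref{prop_inisegloc} and~\ref{prop_inisegfib} do genuine work.
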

\begin{proof}
The forgetful functor
\[\mathrm{Alg}_{\mathsf{E}_1}^{\mathrm{nu}}(\mathcal{C}^{\otimes})\twoheadrightarrow \mathrm{Alg}_{\mathsf{A}_{3}}^{\mathrm{nu}}(\mathcal{C}^{\otimes})\]
is an equivalence. Furthermore, the forgetful functors
\[U^{(1)}\colon\mathrm{Alg}_{\mathsf{A}_2}^{\mathrm{nu}}(\mathrm{Alg}_{\mathsf{A}_{3}}^{\mathrm{nu}}(\mathcal{C}^{\otimes}))\rightarrow I^{(1)}_{(2,1)}\]
and 
\[U^{(1)}\colon\mathrm{Alg}_{\mathsf{E}_2}^{\mathrm{nu}}(\mathcal{C}^{\otimes})\xrightarrow{\simeq}\mathrm{Alg}_{\mathsf{A}_{3}}^{\mathrm{nu}}(\mathrm{Alg}_{\mathsf{A}_{3}}^{\mathrm{nu}}(\mathcal{C}^{\otimes}))\rightarrow I^{(1)}_{(1,2)}\]
are equivalences by Proposition~\ref{prop_inisegloc} as well. The space $I^{(1)}_{(1,2)}$ consists of diagrams of shape (\ref{diag_eh_1}) 
by replacing $I^{(1)}_{(2,1)}$ for
$\mathrm{Alg}_{\mathsf{A}_2}^{\mathrm{nu}}(\mathrm{Alg}_{\mathsf{A}_{3}}^{\mathrm{nu}}(\mathcal{C}^{\otimes}))$ in
Definition~\ref{def_iniseg}.
Thus, every diagram of the form (\ref{diag_eh_1}) gives rise to an essentially unique non-unital $\mathsf{E}_2$-algebra $A$ in
$\mathcal{C}^{\otimes}$. By Lemma~\ref{lemma_unitff}.4, it extends to an $\mathsf{E}_2$-algebra if the the multiplication $m$ has a 
quasi-unit $u$, and the multiplication $m\sprime$ is quasi-unital and exhibits a quasi-unit $u\sprime$ in
$\mathrm{Alg}_{\mathsf{E}_1}(\mathcal{C}^{\otimes})$ itself. These three conditions are satisfied whenever $m$ and $m\sprime$ have the same 
quasi-unit. Vice versa, whenever $A$ extends to an $\mathsf{E}_2$-algebra, these two quasi-units are necessarily the same.
We thus obtain a $(-1)$-truncated fibration
\[U^{(1)}\colon\mathrm{Alg}_{\mathsf{E}_2}(\mathcal{C}^{\otimes})\rightarrow I^{(1)}_{(1,2)}\]
whose image consists of those diagrams (\ref{diag_eh_1}) such that $m$ and $m\sprime$ have the same quasi-unit.
Lastly, the projection
\[\mathrm{Alg}_{\mathsf{E}_{\infty}}(\mathcal{C}^{\otimes})\twoheadrightarrow\mathrm{Alg}_{\mathsf{E}_2}(\mathcal{C}^{\otimes})\]
is an equivalence by the Baez--Dolan--Lurie Stabilization Theorem.
\end{proof}

\begin{remark}\label{rem_eh_1}
The classical Eckmann Hilton argument is essentially the observation that every diagram of the form (\ref{diag_eh_1}) allows to pre-compose 
the square therein with the map $(1,u,u,1)\colon A\otimes A\rightarrow (A\otimes A)^{\otimes 2}$. This gives $m=m\sprime$ and subsequently 
commutativity of $m$ by the square itself.
\end{remark}

In dimensions 2 and 3 we obtain the following reductions.

\begin{corollary}\label{cor_eh_2}
Let $\mathcal{C}^{\otimes}$ be a symmetric monoidal 2-category. Then the 2-category of $\mathsf{E}_2$-algebras in $\mathcal{C}^{\otimes}$ is equivalent to the 2-category of diagrams of the form
\begin{align}\label{diag_eh_2}
\begin{gathered}
\xymatrix{
A\otimes A\otimes A\otimes A\ar@<-.5ex>@/_1pc/[d]_{m\otimes 1\otimes 1}\ar[d]|{1\otimes m\otimes 1}\ar@<.5ex>@/^1pc/[d]^{1\otimes 1\otimes m} & & & \\
A\otimes A\otimes A\ar@<-.5ex>[d]_{m\otimes 1}\ar@<.5ex>[d]^{1\otimes m} & (A\otimes A)^{\otimes 3}\ar[l]_{(m\sprime, m\sprime, m\sprime)}\ar@<-.5ex>[d]_{m^{\otimes}\otimes 1}\ar@<.5ex>[d]^{1\otimes m^{\otimes}} & & \\
A\otimes A\ar[d]|{m} \ar[d]\ar@{}[dr]|{\underset{\chi}{\Rightarrow}} & (A\otimes A)^{\otimes 2}\ar[d]\ar[l]_{(m\sprime, m\sprime)}\ar[d]^{m^{\otimes}} & (A\otimes A\otimes A)^{\otimes 2}\ar@<.5ex>[l]^{(m\sprime\otimes 1)^{\otimes 2}}\ar@<-.5ex>[l]_{(1\otimes m\sprime)^{\otimes 2}}\ar[d]^{m^{\otimes}}  & \\
A & A\otimes A\ar[l]|{m\sprime} & A\otimes A\otimes A\ar@<.5ex>[l]^{m\sprime\otimes 1}\ar@<-.5ex>[l]_{1\otimes m\sprime} & A\otimes A\otimes A\otimes A.\ar@<-.5ex>@/_1pc/[l]_{m\sprime\otimes 1\otimes 1}\ar[l]|{1\otimes m\sprime\otimes 1}\ar@<.5ex>@/^1pc/[l]^{1\otimes 1\otimes m\sprime} \\
}
\end{gathered}
\end{align}
such $m$ has a quasi-unit $u$, $m\sprime$ is quasi-unital, and such that $u$ induces a quasi-unit for $m\sprime$ itself. Here, $\chi$ is an 
invertible 2-cell.
\end{corollary}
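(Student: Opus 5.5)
The plan is to copy the proof of the 1-dimensional corollary above, now with $n=2$. The first step is to truncate the associativity data. By \cite[Corollary 4.1.6.16]{lurieha}, the forgetful functor $\mathrm{Alg}_{\mathsf{A}_\infty}^{\mathrm{nu}}(\mathcal{C}^{\otimes})\rightarrow\mathrm{Alg}_{\mathsf{A}_m}^{\mathrm{nu}}(\mathcal{C}^{\otimes})$ is an equivalence for $m>3$. Hence $\mathrm{Alg}_{\mathsf{E}_1}^{\mathrm{nu}}(\mathcal{C}^{\otimes})\simeq\mathrm{Alg}_{\mathsf{A}_4}^{\mathrm{nu}}(\mathcal{C}^{\otimes})$, and the same holds after applying $\mathrm{Alg}^{\mathrm{nu}}_{\mathsf{A}_4}$ to the symmetric monoidal $2$-category $\mathrm{Alg}_{\mathsf{E}_1}^{\mathrm{nu}}(\mathcal{C}^{\otimes})$. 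Together with the Additivity Theorem this gives
\[\mathrm{Alg}_{\mathsf{E}_2}^{\mathrm{nu}}(\mathcal{C}^{\otimes})\simeq\mathrm{Alg}_{\mathsf{A}_4}^{\mathrm{nu}}(\mathrm{Alg}_{\mathsf{A}_4}^{\mathrm{nu}}(\mathcal{C}^{\otimes})).\]

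The second step is to cut the iterated structure down to the initial segment drawn in (\ref{diag_eh_2}) using Propositions~\ref{prop_inisegloc} and \ref{prop_inisegfib} with $k=1$. For parameters $(m_0,m_1)$ with $m_0+m_1\geq 3$, the functor $U^{(1)}$ is locally $(1-m_0-m_1)$-truncated, so it is fully faithful. Its fibers are $(2-m_0-m_1)$-truncated, so they are contractible once $m_0+m_1\geq 4$ and $(-1)$-truncated when $m_0+m_1=3$. I would climb the tower in stages:
\begin{enumerate}
\item pass from $I^{(1)}_{(3,1)}$ up to $\mathsf{A}_2\otimes\mathsf{A}_4$, and from $I^{(1)}_{(1,3)}$ up to $\mathsf{A}_4\otimes\mathsf{A}_2$;
\item identify the mixed stages with parameters $(2,2)$, $(3,2)$, $(2,3)$ and so on via contractible fibers;
\item conclude that every structure beyond the displayed data is determined up to contractible choice, except at the step with parameters $(1,2)$ or $(2,1)$.
\end{enumerate}
The displayed diagram is exactly the datum of an object of the relevant pullback $I^{(1)}$. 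It consists of an $\mathsf{A}_4^{\mathrm{nu}}$-structure on $m$ (the left column), an $\mathsf{A}_4^{\mathrm{nu}}$-structure on $m\sprime$ (the bottom row), the interchange cell $\chi$ making $m$ an $\mathsf{A}_2^{\mathrm{nu}}$-algebra in $\mathrm{Alg}^{\mathrm{nu}}_{\mathsf{A}_4}$, and the degree-$3$ faces next to $\chi$ whose composites with $\chi$ must agree. In a $2$-category those agreements are conditions rather than extra data.

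The main obstacle is this boundary bookkeeping. In the $n=1$ case one square sufficed, whereas here the fibers at total parameter $3$ are only $(-1)$-truncated. I must therefore check that the conditions they impose are precisely the compatibility of $\chi$ with the associators of $m$ and of $m\sprime$. These are the two hexagon-type equalities expressed by the $\mathsf{A}_3$ squares in the diagram. I would first try to verify this by computing the fibers via \cite[Theorem 4.1.6.13]{lurieha}, as in the proof of Proposition~\ref{prop_inisegfib}. There the fiber is a space of fillers of $\Sigma\partial K_3$-spheres in a $1$-truncated mapping space, that is, equalities of $2$-cells. Unitality must also be handled at this stage. By Lemma~\ref{lemma_unitff}.4 the non-unital $\mathsf{E}_2$-algebra lifts to a unital one, and by Lemma~\ref{lemma_unitff}.1 that lift is essentially unique, precisely when three things hold: $U_1(A)$, namely $m$, has a quasi-unit $u$; $m\sprime$ is quasi-unital as a morphism in $\mathrm{Alg}^{\mathrm{nu}}_{\mathsf{E}_1}$; and $m\sprime$ exhibits a quasi-unit in $\mathrm{Alg}_{\mathsf{E}_1}$, which can be taken to be the one induced by $u$. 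These are exactly the stated unit hypotheses. Conversely, an $\mathsf{E}_2$-algebra satisfies them.

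The last step concerns morphisms. The unital-to-non-unital forgetful functor is monic by Lemma~\ref{lemma_unitff}.1 and full on quasi-unital morphisms by Lemma~\ref{lemma_unitff}.3. Combined with the fully faithful stages above, this gives a fully faithful functor $\mathrm{Alg}_{\mathsf{E}_2}(\mathcal{C}^{\otimes})\rightarrow I^{(1)}$ whose essential image is the full sub-$2$-category of diagrams satisfying the unit conditions. That identification is the claimed equivalence of $2$-categories.
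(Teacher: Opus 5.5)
Your final step fails as stated. Lemma~\ref{lemma_unitff}.1 only gives that $\mathrm{Alg}_{\mathsf{E}_2}(\mathcal{C}^{\otimes})\rightarrow\mathrm{Alg}^{\mathrm{nu}}_{\mathsf{E}_2}(\mathcal{C}^{\otimes})$ is monic. Lemma~\ref{lemma_unitff}.3 lifts only those non-unital morphisms whose underlying (vertical) $\mathsf{E}_1$-morphism is quasi-unital. A morphism in $I^{(1)}$ between diagrams satisfying the unit conditions need not be quasi-unital. For example, in $\mathrm{Set}^{\times}$, which is a symmetric monoidal $2$-category, the map $x\mapsto 0$ is a morphism of non-unital $\mathsf{E}_2$-monoids $(\mathbb{Z},\cdot)\rightarrow(\mathbb{Z},\cdot)$ that does not preserve $1$. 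So $\mathrm{Alg}_{\mathsf{E}_2}(\mathcal{C}^{\otimes})\rightarrow I^{(1)}$ is not fully faithful, and its image is not a full sub-$2$-category. What you actually obtain is what the paper records in the $n=1$ case: a ``$(-1)$-truncated fibration''. It is an equivalence onto the sub-$2$-category of diagrams satisfying the unit conditions, with morphisms those $f$ satisfying $fu\simeq u$. That single condition suffices, because the horizontal unit is then automatic by Lemma~\ref{lemma_unitff}.2 (the unit of $\mathrm{Alg}_{\mathsf{E}_1}(\mathcal{C}^{\otimes})$ is initial). If you read the morphisms of the diagram $2$-category this way, the rest of your argument yields the statement.

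Separately, the ``main obstacle'' you identify comes only from your choice of parameters. Your outline is also inconsistent on this point: step (2) already uses $(2,2)$, yet step (3) still calls for a $(1,2)$ or $(2,1)$ step. The paper's proof is the $n=1$ proof with total parameter $n+2=3$ replaced by $n+2=4$. By Propositions~\ref{prop_inisegloc} and \ref{prop_inisegfib}, $U^{(1)}$ is an equivalence at $(m_0,m_1)=(1,3)$, $(2,2)$ and $(3,1)$. Splicing these three equivalences identifies $\mathrm{Alg}^{\mathrm{nu}}_{\mathsf{E}_2}(\mathcal{C}^{\otimes})\simeq\mathrm{Alg}^{\mathrm{nu}}_{\mathsf{A}_4}(\mathrm{Alg}^{\mathrm{nu}}_{\mathsf{E}_1}(\mathcal{C}^{\otimes}))$ with the union of the $\mathsf{A}_4\otimes\mathsf{A}_1$, $\mathsf{A}_3\otimes\mathsf{A}_2$, $\mathsf{A}_2\otimes\mathsf{A}_3$ and $\mathsf{A}_1\otimes\mathsf{A}_4$ pieces. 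That union is exactly diagram~(\ref{diag_eh_2}). Its entries $(A\otimes A)^{\otimes 3}$ and $(A\otimes A\otimes A)^{\otimes 2}$ are the two mixed pieces, which in a $2$-category are the compatibility equations of $\chi$ with the associators of $m$ and $m\sprime$. So no $(-1)$-truncated fiber ever appears, and nothing has to be identified by hand via \cite[Theorem 4.1.6.13]{lurieha}. Your fallback, stopping at total parameter $3$ and identifying the propositional fibers with those two equations, would reach the same conclusion, but only after that extra computation.
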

\begin{proof}
Same proof.
\end{proof}

\begin{example}\label{exple_eh_2}
Consider the cartesian monoidal $2$-category $\mathrm{Cat}^{\times}$ of categories. The associated $\infty$-category of monoidal 
categories is equivalent to the $\infty$-category of $\mathsf{E}_1$-monoids in $\mathrm{Cat}^{\times}$ (e.g.\ by
\cite[Example 5.1.2.4]{lurieha}, which is essentially an argument analogous to Lemma~\ref{lemma_grmon=E1} below). By the Additivity 
Theorem, it follows that an $\mathsf{E}_2$-algebra in $\mathrm{Cat}^{\times}$ is equivalently given by an $\mathsf{E}_1$-algebra in
$\mathrm{Alg}_{\mathsf{E}_1}(\mathrm{Cat}^{\times})$. By Corollary~\ref{cor_eh_2} such can be reduced to diagrams of the form 
(\ref{diag_eh_2}). By the Eckmann--Hilton argument it follows that $m\sprime\simeq m$. 
Furthermore, fixing the given (vertical) $\mathsf{A}_2^{\mathrm{nu}}\otimes_{\mathrm{BV}}\mathsf{E_1}^{\mathrm{nu}}$-algebra structure, the 
bottom 2-dimensional horizontal $\mathsf{E}_1^{\mathrm{nu}}\otimes \mathsf{A}_2^{\mathrm{nu}}$-algebra is determined by the existence of a 
3-cell in $\mathrm{Cat}^{\times}$, and hence is a mere property. This property is automatic by a diagonal reflection argument that we 
will expound on in detail in Section~\ref{sec_translation}. It follows that the $\infty$-category of $\mathsf{E}_2$-monoids in
$\mathrm{Cat}^{\times}$ is equivalent to the $\infty$-category of quasi-unital $\mathsf{A}_2$-monoids in
$\mathrm{Alg}_{\mathsf{E}_1}(\mathrm{Cat}^{\times})$. This recovers Joyal--Street's equivalence between braided monoidal categories and 
monoidal categories with multiplications \cite[Remark 5.1]{joyalstreet}.
\end{example}

\begin{corollary}\label{cor_eh_3}
Let $\mathcal{C}^{\otimes}$ be a symmetric monoidal 3-category. Then the 3-category of $\mathsf{E}_2$-algebras in $\mathcal{C}^{\otimes}$ is equivalent to the 3-category of diagrams of the form
\begin{align}\label{diag_eh_3}
\begin{gathered}
\xymatrix{
A^{\otimes 5}\ar@<1ex>[d]\ar@<.5ex>[d]\ar[d]\ar@<-.5ex>[d]\ar@<-1ex>[d] & & & & \\
A^{\otimes 4} \ar@<-.5ex>@/_1pc/[d]_{\rotatebox{90}{$\scriptstyle m\otimes 1\otimes 1$}}\ar[d]|{1\otimes m\otimes 1}\ar@<.5ex>@/^1pc/[d]^{\rotatebox{-90}{$\scriptstyle 1\otimes 1\otimes m$}} & (A^{\otimes 2})^{\otimes 4}\ar[l]\ar@<.5ex>[d]\ar[d]\ar@<-.5ex>[d] & & & \\
A^{\otimes 3} \ar@<-.5ex>[d]_{\rotatebox{90}{$\scriptstyle m\otimes 1$}}\ar@<.5ex>[d]^{\rotatebox{-90}{$\scriptstyle 1\otimes m$}}\ar@{}[dr]|{\underset{\omega}{\Rrightarrow}} & (A^{\otimes 2})^{\otimes 3}\ar[l]^{(m\sprime)^{\otimes 3}}\ar@<-.5ex>[d]_{\rotatebox{90}{$\scriptstyle m^{\otimes}\otimes 1$}}\ar@<.5ex>[d]^{\rotatebox{-90}{$\scriptstyle 1\otimes m^{\otimes}$}} & (A^{\otimes 3})^{\otimes 3}\ar@<.5ex>[l]\ar@<-.5ex>[l]\ar@<.5ex>[d]\ar@<-.5ex>[d] & & \\
A^{\otimes 2}\ar[d]|{m}\ar[d]\ar@{}[dr]|{\underset{\chi}{\Rightarrow}} & (A^{\otimes 2})^{\otimes 2}\ar[d]\ar[l]_{(m\sprime)^{\otimes 2}}\ar[d]^{m^{\otimes}}\ar@{}[dr]|{\underset{\psi}{\Rrightarrow}} & (A^{\otimes 3} )^{\otimes 2}\ar@<.5ex>[l]^{(m\sprime\otimes 1)^{\otimes 2}}\ar@<-.5ex>[l]_{(1\otimes m\sprime)^{\otimes 2}}\ar[d]^{m^{\otimes}} & (A^{\otimes 4})^{\otimes 2}\ar@<.5ex>[l]\ar[l]\ar@<-.5ex>[l]\ar[d] & \\
A & A^{\otimes 2}\ar[l]|{m\sprime} & A^{\otimes 3} \ar@<.5ex>[l]^{m\sprime\otimes 1}\ar@<-.5ex>[l]_{1\otimes m\sprime} & A^{\otimes 4} \ar@<-.5ex>@/_1pc/[l]_{m\sprime\otimes 1\otimes 1}\ar[l]|{1\otimes m\sprime\otimes 1}\ar@<.5ex>@/^1pc/[l]^{1\otimes 1\otimes m\sprime} & A^{\otimes 5}.\ar@<1ex>[l]\ar@<.5ex>[l]\ar[l]\ar@<-.5ex>[l]\ar@<-1ex>[l]
}
\end{gathered}
\end{align}
such $m$ has a quasi-unit $u$, $m\sprime$ is quasi-unital, and such that $u$ induces a quasi-unit for $m\sprime$ itself.
Here, $\chi$ is an invertible 2-cell, $\omega$ and $\theta$ are invertible 3-cells, and the unlabelled squares denote equations.
\end{corollary}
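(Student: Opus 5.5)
The plan is to mirror the proofs of the $1$- and $2$-dimensional cases, now with $n=3$, and to choose the initial-segment parameters so that the truncation bounds of Proposition~\ref{prop_inisegloc} and Proposition~\ref{prop_inisegfib} give equivalences exactly beyond the data shown in (\ref{diag_eh_3}).

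First, since $\mathcal{C}$ is a $3$-category, \cite[Corollary 4.1.6.16]{lurieha} makes the forgetful functor $\mathrm{Alg}_{\mathsf{E}_1}^{\mathrm{nu}}(\mathcal{C}^{\otimes})\rightarrow\mathrm{Alg}_{\mathsf{A}_{5}}^{\mathrm{nu}}(\mathcal{C}^{\otimes})$ an equivalence, because $m>n+1=4$. The same holds after replacing $\mathcal{C}^{\otimes}$ by $\mathrm{Alg}_{\mathsf{E}_1}^{\mathrm{nu}}(\mathcal{C}^{\otimes})$, which is again a symmetric monoidal $3$-category. By the Additivity Theorem, $\mathrm{Alg}_{\mathsf{E}_2}^{\mathrm{nu}}(\mathcal{C}^{\otimes})\simeq\mathrm{Alg}_{\mathsf{A}_\infty}^{\mathrm{nu}}(\mathrm{Alg}_{\mathsf{E}_1}^{\mathrm{nu}}(\mathcal{C}^{\otimes}))$.

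Next, I would apply Proposition~\ref{prop_inisegloc} with $k=1$ and $\vec{m}=(m_0,m_1)$. The forgetful functor $U^{(1)}$ is locally $(2-m_0-m_1)$-truncated, and by Proposition~\ref{prop_inisegfib} its fibers are $(3-m_0-m_1)$-truncated spaces. Hence, when $m_0+m_1\geq 5$, the fibers are $(-2)$-truncated, i.e.\ contractible, and $U^{(1)}$ is an equivalence. I would iterate along the staircase in (\ref{diag_eh_3}):
\[
\mathrm{Alg}_{\mathsf{E}_2}^{\mathrm{nu}}\simeq \mathrm{Alg}^{\mathrm{nu}}_{\mathsf{A}_5}(\mathrm{Alg}^{\mathrm{nu}}_{\mathsf{E}_1})\rightarrow I^{(1)}_{(1,4)}\rightarrow\cdots
\]
At each stage I replace the top vertical piece of the pullback defining $I^{(1)}_{(m_0,m_1)}$ by the corresponding initial segment with swapped or smaller parameters, exactly as in the $1$-categorical corollary where $I^{(1)}_{(2,1)}$ was substituted. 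This descends through the pairs $(1,4),(2,3),(3,2),(4,1)$, each time with $m_0+m_1=5$, so every step is an equivalence. The resulting iterated pullback consists of the vertical non-unital $\mathsf{A}_5$-structure (the left column up to $A^{\otimes 5}$), the horizontal $\mathsf{A}_5$-structure (the bottom row), and the mixed pieces $\mathsf{A}_2\otimes\mathsf{A}_4$, $\mathsf{A}_3\otimes\mathsf{A}_3$ and $\mathsf{A}_4\otimes\mathsf{A}_2$. In a $3$-category these amount to the cells $\chi$ (a $2$-cell), $\omega,\psi$ (invertible $3$-cells), and equations for the squares of total degree $\geq 5$, since their fillers live in $(-1)$-truncated spaces. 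Matching this combinatorially with (\ref{diag_eh_3}) is the bookkeeping step.

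Finally, units are handled by Lemma~\ref{lemma_unitff}.4, exactly as in the $1$-categorical case. The non-unital $\mathsf{E}_2$-algebra lifts to an $\mathsf{E}_2$-algebra whenever $m$ has a quasi-unit $u$, $m'$ is quasi-unital, and $u$ yields a quasi-unit of $m'$ in $\mathrm{Alg}_{\mathsf{E}_1}(\mathcal{C}^{\otimes})$. By Lemma~\ref{lemma_unitff}.1, the unital-to-non-unital functor is monic, and morphisms are handled by Lemma~\ref{lemma_unitff}.3. This gives a $(-1)$-truncated functor onto the stated subcategory of diagrams, and therefore the claimed equivalence of $3$-categories.

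The main obstacle is the middle step: identifying the iterated pullback of initial segments precisely with the diagram category (\ref{diag_eh_3}), in particular verifying that the successive substitutions of $I^{(1)}$ for the full $\mathsf{A}_{m_1+1}$-pieces are legitimate in the recursive Definition~\ref{def_iniseg}. I would attempt this by induction on $m_1$, using that pullbacks of equivalences are equivalences.
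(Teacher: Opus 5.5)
Your proposal is correct and follows the paper's approach. The paper's proof is just ``Same proof,'' i.e.\ the $1$-categorical argument run with $n=3$: reduce $\mathsf{E}_1^{\mathrm{nu}}$ to $\mathsf{A}_5^{\mathrm{nu}}$ via \cite[Corollary 4.1.6.16]{lurieha}, then successively replace the iterated algebras by initial segments $I^{(1)}_{(m_0,m_1)}$ with $m_0+m_1=n+2=5$ using Propositions~\ref{prop_inisegloc} and~\ref{prop_inisegfib}, and finally handle units with Lemma~\ref{lemma_unitff}. Your staircase $(1,4),(2,3),(3,2),(4,1)$ is exactly the $n=3$ analogue of the substitutions $I^{(1)}_{(1,2)}$ and $I^{(1)}_{(2,1)}$ used in the $1$-categorical case.
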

\begin{proof}
Same proof.
\end{proof}

Corollary~\ref{cor_eh_3} is the most relevant of the three corollaries, given that it is what we will apply to the cartesian monoidal
3-category $\mathrm{BiCat}^{\times}$ in Section~\ref{sec_translation}. While diagrams of the form (\ref{diag_eh_3}) are still a considerable 
amount of data to manage by hand, we will see that some of its data can be further reduced by way of various semi-strictification results 
available in this case.

\section{The cartesian monoidal $\infty$-category of bicategories}\label{sec_modbicat}

The canonical embedding of category theory in $\infty$-category theory translates many categorified algebraic constructions to
their $\infty$-categorical counterparts. This is the starting point for much of \cite{lurieha}. Thus, the former can be understood as a 
sub-theory of the latter.
In contrast to a 1-category, a general bicategory $\mathcal{C}$ has non-invertible 2-cells and hence does not reduce to an
$(\infty,1)$-categorical structure without further ado. In particular, to equip a general bicategory $\mathcal{C}$ with a monoidal structure 
requires more than to equip its underlying locally groupoidal bicategory with one. However, the homotopy theory of bicategories in its 
entirety does define a cartesian monoidal $\infty$-category $\mathrm{BiCat}^{\times}$ abstractly either way. The goal of this section is thus 
to compare some basic notions of the $\infty$-categorical algebra in $\mathrm{BiCat}^{\times}$ as discussed generally in 
Section~\ref{sec_infty} to the corresponding notions of the bicategorical algebra of \cite{daystreet}. We will do so by way of the 
homotopical algebra provided by Lack's model structure on the category of bicategories. For simplicity, we will work with Gray monoids in 
place of monoidal bicategories, which will be justified along the way.
	
In Section~\ref{sec_sub_graymonalg} we recall the most basic semi-strict algebraic definitions of \cite{daystreet}, and construct reverses 
of braidings and of syllepses on (braided) monoidal 2-categories. In Section~\ref{sec_sub_graymonhtythy} we 
define the homotopy theory $\mathrm{GrMon}$ of Gray monoids by folklore means and discuss some of the notions from 
Section~\ref{sec_sub_graymonalg} in this context.


\subsection{The semi-strict models of 2-dimensional algebra}\label{sec_sub_graymonalg}
We recall that a Gray monoid is a (strict) monoid in the monoidal category
$\mathrm{Gray}:=(2\text{-Cat},\otimes_{\mathrm{Gr}})$ of 2-categories and (strict) 2-functors equipped with the Gray tensor product. As 
such, every Gray monoid is in particular a monoidal bicategory. Vice versa, there is the following fundamental theorem.

\begin{theorem}\label{thm_bito2cat}
Every monoidal bicategory $\mathcal{C}$ is monoidally biequivalent to a Gray monoid $\bar{\mathcal{C}}$. 
\end{theorem}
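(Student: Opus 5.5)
This is the classical coherence theorem for monoidal bicategories, due to Gordon--Power--Street, and I would prove it by the standard two-step strictification.

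\textbf{Step 1: reduce to tricategories.} A monoidal bicategory $\mathcal{C}$ is the same thing as a one-object tricategory $\Sigma\mathcal{C}$. Under this dictionary, monoidal biequivalences correspond to triequivalences between one-object tricategories that are the identity on the object. So the statement reduces to showing that every tricategory is triequivalent to a $\mathrm{Gray}$-category. We then take $\bar{\mathcal{C}}$ to be the endo-hom 2-category of the unique object, made one-object by restricting to the full sub-$\mathrm{Gray}$-category on the image.

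\textbf{Step 2: build the triequivalence by a Yoneda embedding.} Given the tricategory $T=\Sigma\mathcal{C}$, I would construct the $\mathrm{Gray}$-category $\mathrm{Gray}$-valued presheaves
\[
\mathrm{Tricat}(T^{op},\mathrm{Bicat}),
\]
or more precisely a $\mathrm{Gray}$-category of trihomomorphisms into $\mathrm{Gray}$. Alternatively one can first replace the hom-bicategories of $T$ by 2-categories, using the bicategorical coherence theorem $\mathrm{Bicat}\to 2\text{-}\mathrm{Cat}$ (for instance $\mathrm{st}$ via the bicategorical Yoneda lemma). The Yoneda trihomomorphism
\[
y\colon T\to[T^{op},\mathrm{Gray}]
\]
lands in a $\mathrm{Gray}$-category because $\mathrm{Gray}$ is a closed monoidal category. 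Composition in the target is strictly associative on the nose, with interchange holding only up to the Gray comparison 2-cells, and this is exactly the structure of a $\mathrm{Gray}$-category.

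\textbf{Step 3: show $y$ is locally a biequivalence.} This is a tricategorical Yoneda lemma: for representables, evaluation at identities gives biequivalences of hom-bicategories. Restricting to the full image on the single object $\ast$ then yields a one-object $\mathrm{Gray}$-category, that is, a Gray monoid $\bar{\mathcal{C}}:=\mathrm{End}(y\ast)$. The restricted $y$ is a monoidal biequivalence $\mathcal{C}\simeq\bar{\mathcal{C}}$, because it is essentially surjective on objects and locally a biequivalence, and because a trihomomorphism that is bijective on objects and locally biequivalent is a triequivalence.

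\textbf{Main obstacle.} The hard part is Step 3: verifying that the Yoneda map is a coherent trihomomorphism with locally biequivalent components. This requires checking all the tricategorical coherence axioms (pentagonator, 2-dimensional associahedron and unit data), and that bookkeeping is long. I would not redo it. Instead I would cite Gordon--Power--Street, \emph{Coherence for tricategories}, and Gurski's textbook treatment (\emph{Coherence in Three-Dimensional Category Theory}, Theorem 8.1/Corollary 9.?), from which the statement follows directly by the one-object specialization above.
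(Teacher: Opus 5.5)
Your proposal is correct and matches the paper's approach. The paper's proof is a one-line citation of the Gordon--Power--Street coherence theorem for tricategories, specialised to one-object tricategories, and your argument ends in the same citation; your explicit restriction to the full sub-Gray-category on the image of the single object, together with your Yoneda sketch, only spells out the specialisation that the paper leaves implicit.
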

\begin{proof}
This is a popular special case of the coherence theorem for tricategories \cite{gps_coherence}. 
\end{proof}

Theorem~\ref{thm_bito2cat} essentially states that a pseudomonoid in the tricategory of bicategories equipped with the strict cartesian 
monoidal structure is essentially the same as a strict monoid in the tricategory of 2-categories equipped with the pseudo-cartesian 
monoidal structure $\otimes_{\mathrm{Gr}}$. It thus allows us to reduce the proof of Theorem~\ref{thm_main} in the Introduction to a 
theorem about $2$-categories and $2$-functors between them. This is not strictly necessary for the proof, but it simplifies some of the 
involved calculations considerably. Indeed, the basic notions of bicategorical algebra are equivalence-invariant, and so we obtain the 
following corollary.

\begin{corollary}
Let $\mathcal{C}$ be a monoidal bicategory and let $\bar{\mathcal{C}}$ be the associated Gray monoid from 
Theorem~\ref{thm_bito2cat}. Then there is a 1-1 correspondence between
\begin{enumerate}
\item braidings on $\mathcal{C}$ and braidings on $\bar{\mathcal{C}}$;
\item sylleptic braidings on $\mathcal{C}$ and sylleptic braidings on $\bar{\mathcal{C}}$;
\item symmetric braidings on $\mathcal{C}$ and symmetric braidings on $\bar{\mathcal{C}}$.
\end{enumerate}
\end{corollary}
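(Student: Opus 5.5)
The plan is to transport structure along the monoidal biequivalence $F\colon\mathcal{C}\rightarrow\bar{\mathcal{C}}$ of Theorem~\ref{thm_bito2cat}, using that the notions of braiding, syllepsis and symmetry of \cite{daystreet} are defined entirely in terms of the tricategory of monoidal bicategories. In particular, they are stated in terms of pseudonatural equivalences and invertible modifications subject to equations between modifications. Such notions are invariant under monoidal biequivalence. Concretely, I will fix a monoidal biequivalence $F$ together with a pseudo-inverse $G\colon\bar{\mathcal{C}}\rightarrow\mathcal{C}$ and monoidal pseudonatural equivalences $\eta\colon 1\simeq GF$, $\varepsilon\colon FG\simeq 1$. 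Here ``1-1 correspondence'' is to be read up to the natural equivalence of braidings, namely invertible modifications compatible with the axioms.

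For Part (1), given a braiding $\beta$ on $\mathcal{C}$ with components $\beta_{a,b}\colon a\otimes b\rightarrow b\otimes a$, I define a braiding on $\bar{\mathcal{C}}$ by conjugation:
\[\bar\beta_{x,y}:=\varepsilon\circ F(\beta_{Gx,Gy})\circ\varepsilon^{-1},\]
where the $\varepsilon$'s are whiskered with the monoidal constraints $\chi$ of $F$ and $G$. The hexagonator modifications $R,S$ are transported likewise, by pasting $F(R)$ with the coherence cells of $F$ and $\varepsilon$. The four braiding axioms of \cite{daystreet} then hold for $\bar\beta$ because each axiom is an equation of pastings, and $F$, being a monoidal trihomomorphism, preserves pastings up to its coherence data. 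The coherence data cancel in pairs by the coherence theorem for monoidal functors of monoidal bicategories, \cite{gps_coherence}. Transport along $G$ gives the inverse assignment. The composites $\beta\mapsto G(F(\beta))$ recover $\beta$ up to an invertible modification built from $\eta$, which yields a bijection on equivalence classes.

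For Parts (2) and (3), a syllepsis is an invertible modification $v\colon 1\Rrightarrow\beta^{\bullet}\beta$ satisfying two axioms, and symmetry is one further equation on $v$. Both are transported by the same conjugation, using the already transported braiding. The axioms again follow because they are equalities of modifications, and $F$ is locally an equivalence on hom-categories. Equalities of 3-cells are therefore reflected as well as preserved, which makes the correspondence bijective and compatible with the forgetful maps.

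The main obstacle is the bookkeeping in Part (1): checking that the transported hexagonators satisfy the braiding axioms requires that the coherence cells of $F$ and $\varepsilon$ cancel in the large pasting diagrams. I would handle this by arguing abstractly rather than by diagram chasing. Braided structures on $\mathcal{C}$ correspond, via the data of \cite{daystreet}, to suitable lifts of $\mathcal{C}$ in a tricategory of braided monoidal bicategories. Transport of structure along an equivalence in a tricategory is then automatic from the tricategorical coherence theorem \cite{gps_coherence}.
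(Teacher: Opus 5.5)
Your proposal follows the paper's route: the paper derives the corollary in one line from the invariance of braidings, syllepses and symmetries under the monoidal biequivalence of Theorem~\ref{thm_bito2cat}, and your transport of structure along $F$, $G$, $\eta$, $\varepsilon$ unpacks exactly that invariance. Keep the explicit conjugation you give for Part (1) as the actual argument, though, because the closing claim that transport along an equivalence is ``automatic'' from \cite{gps_coherence} is not something that coherence theorem supplies.
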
\qed


\begin{remark}\label{rem_ssvsfw}
We make the following remarks about semi-strict and fully weak 2-dimensional algebraic structures.
\begin{itemize}
\item Gray monoids are semi-strict monoidal bicategories. They are however essentially equivalent to monoidal bicategories by the theorem 
above.
\item The monoidal homomorphisms between Gray monoids of \cite[Definition 2]{daystreet} (i.e.\ the strong monoidal pseudo-functors) are 
precisely the monoidal bifunctors between Gray monoids, and hence fully weak already.
\item The (semi-strict) braidings on a Gray monoid defined in \cite[Definition 12]{daystreet} differ from the fully weak bicategorical 
braidings on a Gray monoid \cite[Section 2.4]{gurski_braid} by the strict extra unital equations they satisfy. The former however are 
equivalent to the latter on any given Gray monoid by \cite[Theorem 27]{gurski_braid}. We will refer to the former simply as braidings, and to 
the latter as fully weak braidings. We will further discuss this choice in Remark~\ref{rem_sstricttoweakbraids}.
\item Syllepses on a braided Gray monoid in \cite[Definition 15]{daystreet} are precisely bicategorical syllepses
\cite[Definition 1.1]{gurskiosorno_sym} thereon (assuming the braiding is semi-strict). 
\item Symmetric syllepses on braided Gray monoids in \cite[Definition 17]{daystreet} are precisely bicategorical symmetric syllepses thereon
\cite[Definition 1.1]{gurskiosorno_sym}.
\item Braided/sylleptic homomorphisms between braided/sylleptic/symmetric Gray monoids in \cite[Definition 14, Definition 16]{daystreet} are 
precisely the fully weak bicategorical notions.
\end{itemize}
The study of the higher category of semi-strict objects with fully weak morphisms between them is a principle ubiquitous in 
3-dimensional monad theory, see e.g.\ \cite{garner_cofmndthy}. This automatically induces a canonical fully faithful embedding from the 
tricategory of Gray monoids and monoidal homomorphisms to the tricategory of monoidal bicategories. 
\end{remark}

\begin{notation}
A monoidal homomorphism between Gray monoids whose underlying pseudo-functor is a 2-functor will be referred to as a monoidal 2-functor.
Given two Gray monoids $\mathcal{C}$ and $\mathcal{D}$, we will often denote a monoidal 2-functor by a capital letter
$F\colon\mathcal{C}\rightarrow\mathcal{D}$, that is, a tuple $F=(f,\chi,\omega,\iota,\zeta,\kappa)$ where
$f\colon\mathcal{C}\rightarrow\mathcal{D}$ is a 2-functor and $(\chi,\omega,\iota,\zeta,\kappa)$ is a monoidal structure in the sense of 
Day--Street thereon. Monoidal pseudo-natural transformations between monoidal 2-functors, and monoidal modifications between monoidal 
pseudo-natural transformations are subsequently defined in \cite[Definition 3]{daystreet}.
\end{notation}

\begin{definition}\label{def_equiv_braids}
Let $\mathcal{C}$ be a Gray monoid. Say two (fully weak) braidings $\beta_1$, $\beta_2$ on $\mathcal{C}$ are equivalent if the identity
$1\colon\mathcal{C}\rightarrow\mathcal{C}$, considered as a monoidal 2-functor, can be enhanced to a braided monoidal 2-functor
$1\colon(\mathcal{C},\beta_1)\rightarrow(\mathcal{C},\beta_2)$. We define the set $\beta(\mathcal{C})$ of equivalence classes of braidings 
on $\mathcal{C}$. Given a braiding $\beta$ on $\mathcal{C}$, we further define the set $\zeta(\mathcal{C},\beta)$ of syllepses on the 
braided Gray monoid $(\mathcal{C},\beta)$.
\end{definition}

In the following we will freely use the definitions of \cite{daystreet} and stick to their notation.

\begin{notation}\label{notation_hinv}
Let $\mathcal{C}$ be a 2-category. Given two invertible morphisms $f,g\colon A\rightarrow B$ and an invertible 2-cell
$\alpha\colon f\rightarrow g$ in $\mathcal{C}$, we denote by $\alpha^{-1_h}\colon f^{-1}\rightarrow  g^{-1}$ the horizontal inverse of
$\alpha$ given by the whiskering $f^{-1}\ast\alpha^{-1}\ast g^{-1}$. Given a 2-cell $\alpha\colon if\rightarrow gh$ between two compositions, 
depicted as a square	
\[\xymatrix{
A\ar[r]^f\ar[d]_h\ar@{}[dr]|{\Downarrow\alpha} & B\ar[d]^i \\
C\ar[r]_g & D,
}\]
we will refer to the analogously induced 2-cell
\[\xymatrix{
B\ar[rr]^{f^{-1}}\ar[d]_i\ar@{}[drr]|{\Downarrow f^{-1}\ast\alpha^{-1}\ast g^{-1}} & & A\ar[d]^h \\
D\ar[rr]_{g^{-1}} & & C,
}\]
as $\alpha^{-1_h}$ as well. Its vertical inverse 
\[\xymatrix{
C\ar[rr]^{g}\ar[d]_{h^{-1}}\ar@{}[drr]|{\Downarrow h^{-1}\ast\alpha^{-1}\ast \iota^{-1}} & & D\ar[d]^{\iota^{-1}} \\
A\ar[rr]_{f} & & B,
}\]
will be denoted by $\alpha^{-1_v}$. Concatenation of these two assignments yields a 2-cell
\[\xymatrix{
D\ar[r]^{g^{-1}}\ar[d]_{\iota^{-1}}\ar@{}[dr]|{\Downarrow \alpha^{-1_{vh}}} & C\ar[d]^{h^{-1}} \\
B\ar[r]_{f^{-1}}\ar[r] & A.
}\]
given by $(if)^{-1}\ast\alpha\ast(gh)^{-1}$. Here, given a horizontally composable pair of 2-cells
\[\xymatrix{
A\ar[r]^f\ar[d]_h\ar@{}[dr]|{\Downarrow\alpha} & B\ar[d]^i\ar[r]^k\ar@{}[dr]|{\Downarrow\beta} & E\ar[d]^m \\
C\ar[r]_g & D\ar[r]_l & F,
}\]
note that $(\beta\ast\alpha)^{-1_{vh}}=\alpha^{-1_{vh}}\ast\beta^{-1_{vh}}$.
\end{notation}

\subsubsection{Braidings on Gray monoids and their reverses}
It is well-known that braidings are dualisable notions. Indeed, every braiding $\beta$ on a monoidal category $\mathcal{C}^{\otimes}$ induces 
a reverse braiding $\beta^{\mathrm{rev}}$ on $\mathcal{C}^{\otimes}$ given pointwise by $\beta^{\mathrm{rev}}_{A,B}:=\beta_{B,A}^{-1}$ 
\cite[Definition 2.1]{joyalstreet}. Analogously, there is the following proposition.

\begin{proposition}\label{prop_defrevbraid}
Let $\mathcal{C}$ be a Gray monoid and $\beta=(\rho,\bar{\omega})$ be a braiding on $\mathcal{C}$. Then the tuple
$\beta^{\mathrm{rev}}:=(\rho^{\mathrm{rev}},\bar{\omega}^{\mathrm{rev}})$ given by
\[\rho^{\mathrm{rev}}_{A,B}:=\rho_{B,A}^{-1}\colon A\otimes B\xrightarrow{\simeq} B\otimes A\]
and
\[\xymatrix{
A\otimes B\otimes C\otimes D\ar[d]_{\rho^{\mathrm{rev}}_{A,B}\otimes 1}\ar[r]^{1\otimes \rho^{\mathrm{rev}}_{C,D}}\ar@{}[dr]|{\Rightarrow \bar{\omega}^{\mathrm{rev}}_{A,B,C,D}} & A\otimes B\otimes D\otimes C\ar[d]^{\rho^{\mathrm{rev}}_{A,B\otimes D}\otimes 1} \\
B\otimes A\otimes C\otimes D\ar[r]_{1\otimes \rho^{\mathrm{rev}}_{A\otimes C,D}} & B\otimes D\otimes A\otimes C
}
\]
defined as the pasting
\begin{align}\label{diag_defrevbraid}
\left(
\begin{gathered}
\xymatrix{
A\otimes B\otimes C\otimes D & & A\otimes B\otimes D\otimes C\ar[ll]_{1\otimes\rho_{D,C}}  \\
 & B\otimes A\otimes D\otimes C\ar[dl]|{1\otimes\rho_{D,C}}\ar[ur]|{\rho_{B,A}\otimes 1}\ar@{}[ul]|{1}\ar@{}[d]|{\Downarrow 
1\otimes\bar{\omega}_{D,A,I,C}}\ar@{}[r]|(.7){\underset{\bar{\omega}_{B,I,D,A}\otimes 1}{\Leftarrow}} & \\
B\otimes A\otimes C\otimes D\ar[uu]^{\rho_{B,A}\otimes 1} & & B\otimes D\otimes A\otimes C\ar[ll]^{1\otimes\rho_{D,A\otimes C}}\ar[uu]_{\rho_{B\otimes D,A}\otimes 1}\ar[ul]|{1\otimes\rho_{D,A}\otimes 1}
}\end{gathered}
\right)^{-1_{vh}}
\end{align}
is again a braiding on $\mathcal{C}$. We refer to $\beta^{\mathrm{rev}}$ as the reverse braiding on $\mathcal{C}$ 
associated to $\beta$.
\end{proposition}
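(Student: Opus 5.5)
We verify the axioms of \cite[Definition 12]{daystreet} for the tuple $(\rho^{\mathrm{rev}},\bar{\omega}^{\mathrm{rev}})$ by reducing each of them to an axiom for $\beta=(\rho,\bar{\omega})$ through the operation $(-)^{-1_{vh}}$ of Notation~\ref{notation_hinv}.

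\textbf{Pseudo-naturality of $\rho^{\mathrm{rev}}$.} Since $\rho$ is a pseudo-natural equivalence $\otimes\Rightarrow\otimes\circ\mathrm{swap}$, we can choose its components $\rho_{A,B}$ to be adjoint equivalences. Pre-whiskering with $\mathrm{swap}$ and then passing to the pointwise pseudo-inverse gives a pseudo-natural equivalence $\otimes\Rightarrow\otimes\circ\mathrm{swap}$ with components $\rho_{B,A}^{-1}$. Its naturality 2-cells are the horizontal inverses $\rho_{f,g}^{-1_h}$, which are exactly what $(-)^{-1_{vh}}$ produces on the naturality squares of $\rho$. The strict unit conditions $\rho_{I,A}=1=\rho_{A,I}$ transfer directly to $\rho^{\mathrm{rev}}$. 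To keep the bookkeeping cleaner, we could alternatively work with Gurski's fully weak braidings and semi-strictify at the end using \cite[Theorem 27]{gurski_braid}, as in Remark~\ref{rem_ssvsfw}.

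\textbf{Well-definedness and modification property of $\bar{\omega}^{\mathrm{rev}}$.} First we check that the pasting (\ref{diag_defrevbraid}) has the right boundary. After applying $(-)^{-1_{vh}}$, its source and target must be the two composites $A\otimes B\otimes C\otimes D\to B\otimes D\otimes A\otimes C$ built from the $\rho^{\mathrm{rev}}$'s. This is a direct inspection, using the Gray monoid interchange for $1\otimes\rho$ versus $\rho\otimes1$ and the unit conventions $\rho_{I,-}=1$ that appear in the indices $\bar{\omega}_{D,A,I,C}$ and $\bar{\omega}_{B,I,D,A}$. Next, $\bar{\omega}^{\mathrm{rev}}$ is a modification because $\bar{\omega}$ is one, and $(-)^{-1_{vh}}$ is compatible with whiskering and pasting by the formula $(\beta\ast\alpha)^{-1_{vh}}=\alpha^{-1_{vh}}\ast\beta^{-1_{vh}}$. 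The unit axioms of \cite{daystreet}, which require $\bar{\omega}$ to be trivial when an argument is $I$, also transfer: when an argument is $I$, both constituent cells in (\ref{diag_defrevbraid}) become identities.

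\textbf{The coherence axioms.} The remaining axioms are the 3-dimensional coherence equations of \cite[Definition 12]{daystreet}: the two hexagon-type axioms relating $\bar{\omega}$ to the tensor structure in each variable, and the Yang--Baxter type compatibility. This is the main obstacle. The plan is to write each axiom for $\bar{\omega}^{\mathrm{rev}}$ as an equality of pastings and substitute the definition (\ref{diag_defrevbraid}) for every occurrence of $\bar{\omega}^{\mathrm{rev}}$. Applying $(-)^{-1_{vh}}$ to both sides then reverses the order of pasting and turns the diagram into one built entirely from $\rho$ and $\bar{\omega}$.

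Under this translation we expect the first hexagon-type axiom for $\beta^{\mathrm{rev}}$ to become the \emph{second} one for $\beta$, specialised at unit-padded indices, and vice versa. This swap mirrors the classical fact that reversing a braiding exchanges its two hexagons. The Yang--Baxter axiom should map to itself with the order of the factors reversed. Establishing this requires showing that the unit-padded cells $\bar{\omega}_{D,A,I,C}$ and $\bar{\omega}_{B,I,D,A}$ recombine into the general $\bar{\omega}$'s. We do this with the axioms of \cite{daystreet} that decompose $\bar{\omega}_{A,B,C,D}$ into $\bar{\omega}_{A,I,C,D}$ and $\bar{\omega}_{A,B,C,I}$ pasted with the interchange cells. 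We will first carry out this decomposition in the case $B=C=I$ as a sanity check, and then argue in general by pasting, keeping track of the interchange 2-cells of the Gray tensor product.
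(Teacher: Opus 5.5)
Your guiding idea is the paper's: reversal exchanges the two families of axioms on $r_{A,B|C}=\bar\omega_{A,I,B,C}$ and $r_{A|B,C}=\bar\omega_{A,B,I,C}$ and fixes the Yang--Baxter compatibility. However, the proposal has a gap exactly where the work lies.

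As the paper uses \cite[Definition 12]{daystreet}, its conditions on $\bar\omega$ are three:
\begin{enumerate}
\item the unit condition $\bar\omega_{-,I,I,-}=1$;
\item the associativity axiom for $\bar\omega$, i.e.\ the monoidal-pseudofunctor axiom for $(\otimes,1\otimes\rho\otimes 1,1\otimes\bar\omega\otimes 1)$;
\item the equation between the two Yang--Baxter cells.
\end{enumerate}
The ``two hexagon-type axioms in each variable'' you list are not these. They are the Baez--Neuchl axioms in Crans's presentation \cite[Definition 2.2]{crans_centers}.

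You never say how associativity of $\bar\omega^{\mathrm{rev}}$ is obtained. Substituting (\ref{diag_defrevbraid}) into that axiom is the ``somewhat tricky'' direct computation the paper deliberately avoids. Your plan to recombine unit-padded cells into general $\bar\omega$'s amounts to re-proving, in both directions, the equivalence of the Day--Street and Baez--Neuchl formulations. That is only gestured at, with a sanity check at $B=C=I$. The paper instead invokes this equivalence \cite[p.~109]{daystreet} and checks Crans's equations: (2.4) for $\beta^{\mathrm{rev}}$ is (2.5) for $\beta$ and vice versa, and (2.6) is self-dual.

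Your decomposition also uses the wrong second factor. $\bar\omega_{A,B,C,D}$ is a pasting of $\bar\omega_{A,I,C,D}=r_{A,C|D}$ and $\bar\omega_{A,B,I,D}=r_{A|B,D}$ with an interchange cell. By contrast, $\bar\omega_{A,B,C,I}$ is an identity, by associativity.

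Your unit argument is wrong as stated. $\bar\omega$ is not trivial whenever some argument is $I$: $\bar\omega_{A,I,C,D}$ and $\bar\omega_{A,B,I,D}$ are precisely the non-trivial braiding cells. For $B=I$, the constituent $1\otimes\bar\omega_{D,A,I,C}$ of (\ref{diag_defrevbraid}) survives.

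What you actually need is $\bar\omega_{I,I,-,-}=1=\bar\omega_{-,-,I,I}$. These do not follow from the unit condition alone. They follow from $\bar\omega_{-,I,I,-}=1$ together with associativity, substituting $I$ for the leftmost (resp.\ rightmost) four variables. They yield the unit condition for $\bar\omega^{\mathrm{rev}}$. More importantly, they yield the identifications
\[
\bar\omega^{\mathrm{rev}}_{A,I,C,D}=(\bar\omega_{D,A,I,C})^{-1_{vh}},\qquad
\bar\omega^{\mathrm{rev}}_{A,B,I,D}=(\bar\omega_{B,I,D,A})^{-1_{vh}},
\]
that is, $r^{\mathrm{rev}}_{A,B|C}=r^{-1_{vh}}_{C|A,B}$ and $r^{\mathrm{rev}}_{A|B,C}=r^{-1_{vh}}_{B,C|A}$.

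Your expected swap of the hexagon-type axioms presupposes exactly these formulas, so they must be derived first. Once they are, the Yang--Baxter equation and Crans's axioms for $\beta^{\mathrm{rev}}$ follow immediately from those for $\beta$.
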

\begin{proof}
From $\bar{\omega}_{-.I,I,-}=1$ and the associativity law for $\bar{\omega}$ it follows that $\bar{\omega}_{I,I,-,-}=1$ (using 
associativity for $1$ substituted in the left most four variables) and $\bar{\omega}_{-,-,I,I}=1$ (using associativity for $1$ substituted 
in the right most four variables) as well. It thus follows from the definition that
$\bar{\omega}^{\mathrm{rev}}_{A,I,C,D}=\bar{\omega}_{D,A,I,C}$ and $\bar{\omega}^{\mathrm{rev}}_{A,B,I,D}=\bar{\omega}_{B,I,D,A}$. Hence, 
the equation relating the two Young--Baxter cells associated to
$r^{\mathrm{rev}}_{A,B|C}:=\bar{\omega}^{\mathrm{rev}}_{A,I,B,C}$ and $r^{\mathrm{rev}}_{A|B,C}:=\bar{\omega}^{\mathrm{rev}}_{A,B,I,C}$
for $\beta^{\mathrm{rev}}$ follows directly from the same equation relating the two
Young--Baxter cells for $\beta$. Direct verification of associativity for $\bar{\omega}^{\mathrm{rev}}$ is somewhat tricky. 
However, according to \cite[p.109]{daystreet}, one may equivalently verify Baez--Neuchl's corresponding axioms for the definition of a 
braiding on a monoidal 2-category --- as more legibly presented by Crans \cite{crans_centers} --- instead.
These axioms are phrased in terms of the cells $r^{\mathrm{rev}}_{A,B|C}=r_{C|A,B}^{{-1_{hv}}}$ and
$r^{\mathrm{rev}}_{A|B,C}=r_{B,C|A}^{{-1_{hv}}}$\footnote{We stick to
Day--Street's notation for consistency; to use the axioms in \cite{crans_centers}, one has to define $\tilde{R}_{A,B|C}:=r_{A,B|C}$ and
$\tilde{R}_{A|B,C}=r_{A|B,C}^{-1}$.}. These follow fairly directly; Equation~(2.4) in \cite[Definition 2.2]{crans_centers} for
$\beta^{\mathrm{rev}}$ follows from Equation~(2.5) in \cite[Definition 2.2]{crans_centers} for $\beta$, and vice versa. Equation~(2.6) in
\cite[Definition 2.2]{crans_centers} is self-dual in this regard.
\end{proof}

%
\begin{remark}
Given a braided Gray monoid $(\mathcal{C},\beta)$, one can show that $\bar{\omega}(I,-,-,-)=1$ and $\bar{\omega}(-,-,-,I)=1$ follow, too, 
by associativity for the tuple $(A,B,C,D,E,F)$ with $A=B=C=1$ and $D=E=F=1$, respectively.
\end{remark}

\begin{remark}
We recall from \cite[Definition 2.1]{joyalstreet} that in the 1-categorical context the reverse braiding of a braiding is again a braiding 
because ``(B2) is just obtained from (B1) by replacing $\rho$ with $\rho^{\mathrm{rev}}$''. Here, (B1) and (B2) are the two
cells $r_{-,-|-}$ and $r_{-|-,-}$. And indeed, computation of the cells $r^{\mathrm{rev}}_{A,B|C}:=r_{C|A,B}^{{-1_{hv}}}$ and 
$r^{\mathrm{rev}}_{A|B,C}:=r_{B,C|A}^{{-1_{hv}}}$ is more intuitive than that of $\bar{\omega}^{\mathrm{rev}}$ in the 2-categorical context 
as well.
\end{remark}

\begin{notation}\label{notation_revbraid}
We recall the natural isomorphism $\sigma$ part of the symmetric monoidal category $\mathrm{Fin}_{\ast}$ from 
Section~\ref{sec_sub_revmon_infty}. Let $\mathcal{C}$ be a Gray monoid and $\beta=(\rho,\bar{\omega})$ be a braiding on $\mathcal{C}$. Its 
associated reverse braiding $\beta^{\mathrm{rev}}$ induces a modification $(1\otimes(\bar{\omega}^{\mathrm{rev}})^{-1}\otimes 1)(\sigma_{(\langle 3\rangle,\langle 2\rangle)}^{\ast}(-))$, which according to 
(\ref{diag_defrevbraid}) is defined by way of the natural 3-cell
\[\xymatrix{
A\otimes B\otimes C\otimes D \otimes E\otimes F\ar[rr]^{1\otimes\rho_{B\otimes C,D}\otimes 1\otimes 1}\ar[dd]_{1\otimes 1\otimes\rho_{C,D\otimes E}\otimes 1}\ar[dr]|{1\otimes 1\otimes\rho_{C,D}\otimes 1\otimes 1}  & & A\otimes D\otimes B\otimes C\otimes E\otimes F\ar[dd]^{1\otimes 1\otimes 1\otimes \rho_{C,E}\otimes 1}  \\
 & A\otimes B\otimes D\otimes C\otimes E\otimes F\ar[ur]|{1\otimes\rho_{B,D}\otimes 1\otimes 1\otimes 1}\ar[dl]|{1\otimes 1\otimes 1\otimes\rho_{C,E}\otimes 1}\ar@{}[dr]|{1}\ar@{}[u]|{\Uparrow 1\otimes\bar{\omega}^{-1}_{B,I,C,D}\otimes 1\otimes 1}\ar@{}[l]|(.7){\underset{1\otimes 1\otimes \bar{\omega}^{-1}_{C,D,I,E}\otimes 1}{\Rightarrow}} & \\
A\otimes B\otimes D\otimes E\otimes C\otimes F\ar[rr]_{1\otimes\rho_{B,D}\otimes 1\otimes 1\otimes 1} & & A\otimes D\otimes B\otimes E\otimes C\otimes F.
}\]
In suggestive analogy to Section~\ref{sec_sub_revmon_infty}, we refer to this cell as the diagonal reflection of
$1\otimes \bar{\omega}^{\mathrm{rev}}\otimes 1$ and denote it by $(1\otimes \bar{\omega}^{\mathrm{rev}}\otimes 1)^{\rho}$.
For later reference, we note that this cell $(1\otimes \bar{\omega}^{\mathrm{rev}}\otimes 1)^{\rho}$ can be defined for any pair of 
invertible cells $(\rho,\bar{\omega})$ underlying a braiding whenever just $\rho_{I,-}=\rho_{-,I}=1$. Neither axiom on $\bar{\omega}$ (associativity or the equation relating the two associated Yang--Baxter cells) is required.
\end{notation}

\begin{remark}\label{rem_braidtomono}
In Example~\ref{exple_eh_2} we discussed the 1-1 correspondence between braidings on a monoidal category $\mathcal{C}$ and monoidal 
structures on its multiplication \cite[Proposition 5.3]{joyalstreet}. Analogously, let $\beta=(\rho,\bar{\omega})$ be a braiding
on a Gray monoid $\mathcal{C}$. Day and Street \cite[Definition 12]{daystreet} observe that each such braiding induces a monoidal structure 
on the multiplication 2-functor $m\colon\mathcal{C}\otimes_{\mathrm{Gr}}\mathcal{C}\rightarrow\mathcal{C}$ of $\mathcal{C}$ with
\begin{itemize}
\item $\chi_{\beta}:=1\otimes \rho\otimes 1$,
\item $\omega_{\beta}:=1\otimes \bar{\omega}\otimes 1$,
\item $\iota\colon I\rightarrow m(I,I)$ is the identity on $I$,
\item $\zeta\colon m(-,I)\simeq 1$, $\kappa\colon m(I,-)\simeq 1$ are both the identity on $\mathcal{C}$,
\end{itemize}
and such that the identity
\begin{align}\label{diag_def_braiding}
\begin{gathered}
\xymatrix{
\mathcal{C}\otimes_{\mathrm{Gr}}\mathcal{C}\otimes_{\mathrm{Gr}}\mathcal{C}\ar[r]^(.6){1\otimes_{\mathrm{Gr}}m}\ar[d]_{m\otimes_{\mathrm{Gr}}1}\ar@{}[dr]|1 & \mathcal{C}\otimes_{\mathrm{Gr}}\mathcal{C}\ar[d]^m \\
\mathcal{C}\otimes_{\mathrm{Gr}}\mathcal{C}\ar[r]_m & \mathcal{C}
}
\end{gathered}
\end{align}
considered as a pseudo-natural transformation of (monoidal) 2-functors becomes itself monoidal specifically when equipped with the 
modifications $1_0=1$ and $1_2:=(1\otimes\bar{\omega}^{\mathrm{rev}}\otimes 1)^{\rho}$. Semi-strictness of the braiding further implies
that $\chi_{I,-},\chi_{-,I}\colon m(-,-)\simeq m(-,-)$ are both the identity on $\mathcal{C}$. It further follows that 
$\omega(-,-,1,1,-,-)= 1$ from the unitality axiom that determines the interaction of $\omega$ with $\zeta$ and $\kappa$.
\end{remark}

%

Analogously, we can reverse braidings on monoidal 2-functors, and subsequently, syllepses on braided Gray monoids.
Therefore, we slightly reorganise the diagrams in the definition of a braiding on a monoidal 2-functor \cite[Definition 14]{daystreet} to 
render them more symmetric. This makes the proofs much easier.

\begin{lemma}
Let $(\mathcal{C,\beta})$ and $(\mathcal{D},\beta)$ be braided Gray monoids and $F\colon\mathcal{C}\rightarrow\mathcal{D}$ be a monoidal
2-functor (or a weak monoidal homomorphism for that matter). Then a braiding $u$ on $F$ is an invertible modification
\[\xymatrix{
f(A)f(B)\ar[r]^{\rho_{f(A),f(B)}}\ar[d]_{\chi_{A,B}}\ar@{}[dr]|{\underset{u_{A,B}}{\Rightarrow}} & f(B)f(A)\ar[d]^{\chi_{B,A}} \\
f(AB)\ar[r]_{f(\rho_{A,B})} & f(BA)
}\]
such that the following two equations hold.
\begin{align}\label{diag_def_braidmor_I}
\begin{gathered}
\adjustbox{scale=0.7}{%
\xymatrix{
 & f(A)f(B)f(C)\ar[rr]^{\rho_{f(A),f(B)f(C)}}\ar[d]^{\rotatebox{-90}{$\scriptstyle 1_{f(A)}$}}_{\rotatebox{-90}{$\scriptstyle \otimes\chi_{B,C}$}}\ar@/_1pc/[dl]_{\chi_{A,B}\otimes 1_{f(C)}}\ar@{}[drr]|{\Uparrow\rho_{1,\chi_{B,C}}} & & f(B)f(C)f(A)\ar[d]^{\rotatebox{-90}{$\scriptstyle \chi_{B,C}$}}_{\rotatebox{-90}{$\scriptstyle \otimes 1_{f(A)}$}}\ar@/^1pc/[dr]^{1_{f(B)}\otimes\chi_{C,A}} & \\
f(AB)f(C)\ar@/_1pc/[dr]_{\chi_{AB,C}}\ar@{}[r]|{\underset{\omega_{A,B,C}}{\Rightarrow}} & f(A)f(BC)\ar[rr]^{\rho_{f(A),f(BC)}}\ar[d]^{\rotatebox{-90}{$\scriptstyle \chi_{A,BC}$}}\ar@{}[drr]|{\Uparrow u_{A,BC}} & & f(BC)f(A)\ar[d]^{\rotatebox{-90}{$\scriptstyle \chi_{BC,A}$}}\ar@{}[r]|{\underset{\omega_{B,C,A}}{\Rightarrow}} & f(B)f(CA)\ar@/^1pc/[dl]^{\chi_{B,CA}}\\
 & f(ABC)\ar[rr]^{f(\rho_{A,BC})}\ar@/_1pc/[dr]_{f(\rho_{A,B}\otimes 1_C)} & & f(BCA) & \\
 & & f(BAC)\ar@/_1pc/[ur]_{f(1_B\otimes\rho_{A,C})}\ar@{}[u]|{\Uparrow f(r_{A|B,C})} & & \\
 & & \rotatebox{90}{$\scriptstyle =$} & & \\
 & f(A)f(B)f(C)\ar[rr]^{\rho_{f(A),f(B)f(C)}}\ar[dr]^(.6){\rotatebox[origin=c]{-22}{\makebox[0pt]{$\scriptstyle \rho_{f(A),f(B)}\otimes 1_{f(C)}$}}}\ar[dl]_{\chi_{A,B}\otimes 1_{f(C)}} & & f(B)f(C)f(A)\ar[dr]^{1_{f(B)}\otimes\chi_{C,A}} & \\
f(AB)f(C)\ar[dd]_{\chi_{AB,C}}\ar[dr]_(.45){\rotatebox[origin=c]{-21}{\makebox[0pt]{$\scriptstyle f(\rho_{A,B})\otimes 1_{f(C)}$}}}\ar@{}[rr]|{\Uparrow u_{A,B}\otimes 1_{f(C)}} & & f(B)f(A)f(C)\ar[dl]_(.55){\rotatebox[origin=c]{23}{\makebox[0pt]{$\scriptstyle \chi_{B,A}\otimes 1_{f(C)}$}}}\ar[dr]^(.55){\rotatebox[origin=c]{-19}{\makebox[0pt]{$\scriptstyle 1_{f(B)\otimes\chi_{A,C}}$}}}\ar[ur]^(.4){\rotatebox[origin=c]{24}{\makebox[0pt]{$\scriptstyle 1_{f(B)\otimes\rho_{f(A),f(C)}}$}}}\ar@{}[rr]|{\Uparrow 1_{f(B)\otimes u_{A,C}}}\ar@{}[u]|(.7){\Uparrow r_{f(A)|f(B),f(C)}} & & f(B)f(CA)\ar[dd]^{\chi_{B,CA}} \\
 & f(BA)f(C)\ar[dr]_{\chi_{BA,C}}\ar@{}[rr]|{\Uparrow\omega_{B,A,C}} & & f(B)f(AC)\ar[dl]^{\chi_{B,AC}}\ar[ur]^(.45){\rotatebox[origin=c]{23}{\makebox[0pt]{$\scriptstyle 1_{f(B)}\otimes f(\rho_{C,A})$}}} & \\
f(ABC)\ar[rr]_{f(\rho_{A,B}\otimes 1_C)}\ar@{}[ur]|{\Uparrow\chi^{-1}_{\rho_{A,B},1_C}} & & f(BAC)\ar[rr]_{f(1_B\otimes\rho_{A,C})} & & f(BCA)\ar@{}[ul]|{\Uparrow\chi^{-1}_{1_B,\rho_{A,C}}}\\
}}
\end{gathered}
\end{align}
\begin{align}\label{diag_def_braidmor_II}
\begin{gathered}
\adjustbox{scale=0.7}{%
\xymatrix{
 & f(A)f(B)f(C)\ar[rr]^{\rho_{f(A)f(B),f(C)}}\ar[d]^{\rotatebox{-90}{$\scriptstyle \chi_{A,B}$}}_{\rotatebox{-90}{$\scriptstyle \otimes 1_{f(C)}$}}\ar@/_1pc/[dl]_{1_{f(A)}\otimes\chi_{B,C}}\ar@{}[drr]|{\Uparrow\rho_{\chi_{A,B},1}} & & f(C)f(A)f(B)\ar[d]^{\rotatebox{-90}{$\scriptstyle 1_{f(C)}$}}_{\rotatebox{-90}{$\scriptstyle \otimes \chi_{A,B}$}}\ar@/^1pc/[dr]^{\chi_{C,A}\otimes 1_{f(B)}} & \\ 
f(A)f(BC)\ar@/_1pc/[dr]_{\chi_{A,BC}}\ar@{}[r]|{\underset{\omega^{-1}_{A,B,C}}{\Rightarrow}} & f(AB)f(C)\ar[rr]^{\rho_{f(AB),f(C)}}\ar[d]^{\rotatebox{-90}{$\scriptstyle \chi_{AB,C}$}}\ar@{}[drr]|{\Uparrow u_{AB,C}} & & f(C)f(AB)\ar[d]^{\rotatebox{-90}{$\scriptstyle \chi_{C,AB}$}}\ar@{}[r]|{\underset{\omega^{-1}_{C,A,B}}{\Rightarrow}} & f(CA)f(B)\ar@/^1pc/[dl]^{\chi_{CA,B}}\\
 & f(ABC)\ar[rr]^{f(\rho_{AB,C})}\ar@/_1pc/[dr]_{f(1_{A}\otimes\rho_{B,C})} & & f(CAB) & \\
 & & f(ACB)\ar@/_1pc/[ur]_{f(\rho_{A,C}\otimes 1_{B})}\ar@{}[u]|{\Uparrow f(r^{-1}_{A,B|C})} & & \\
 & & \rotatebox{90}{$\scriptstyle =$} & & \\
& f(A)f(B)f(C)\ar[rr]^{\rho_{f(A)f(B),f(C)}}\ar[dr]^(.6){\rotatebox[origin=c]{-22}{\makebox[0pt]{$\scriptstyle 1_{f(A)}\otimes\rho_{f(B),f(C)}$}}}\ar[dl]_{1_{f(A)}\otimes\chi_{B,C}} & & f(C)f(A)f(B)\ar[dr]^{\chi_{C,A}\otimes 1_{f(B)}} & \\
f(A)f(BC)\ar[dd]_{\chi_{A,BC}}\ar[dr]_(.45){\rotatebox[origin=c]{-21}{\makebox[0pt]{$\scriptstyle 1_{f(A)}\otimes f(\rho_{B,C})$}}}\ar@{}[rr]|{\Uparrow 1_{f(A)}\otimes u_{B,C}} & & f(A)f(C)f(B)\ar[dl]_(.55){\rotatebox[origin=c]{23}{\makebox[0pt]{$\scriptstyle 1_{f(A)}\otimes \chi_{C,B}$}}}\ar[dr]^(.55){\rotatebox[origin=c]{-19}{\makebox[0pt]{$\scriptstyle \chi_{A,C}\otimes 1_{f(B)}$}}}\ar[ur]^(.4){\rotatebox[origin=c]{24}{\makebox[0pt]{$\scriptstyle \rho_{f(A),f(C)}\otimes 1_{f(B)}$}}}\ar@{}[rr]|{\Uparrow u_{A,C}\otimes 1_{f(B)}}\ar@{}[u]|(.7){\Uparrow r^{-1}_{f(A),f(B)|f(C)}} & & f(CA)f(B)\ar[dd]^{\chi_{CA,B}} \\
 & f(A)f(CB)\ar[dr]_{\chi_{A,CB}}\ar@{}[rr]|{\Uparrow\omega^{-1}_{A,C,B}} & & f(AC)f(B)\ar[dl]^{\chi_{AC,B}}\ar[ur]^(.45){\rotatebox[origin=c]{23}{\makebox[0pt]{$\scriptstyle f(\rho_{A,C})\otimes 1_{f(B)}$}}} & \\
f(ABC)\ar[rr]_{f(1_A\otimes\rho_{B,C})}\ar@{}[ur]|{\Uparrow\chi^{-1}_{1_A,\rho_{B,C}}} & & f(ACB)\ar[rr]_{f(\rho_{A,C}\otimes 1_B)} & & f(CAB)\ar@{}[ul]|{\Uparrow\chi^{-1}_{\rho_{A,C},1_B}}\\
}}
\end{gathered}
\end{align}

\end{lemma}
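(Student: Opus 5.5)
The lemma asserts that Day--Street's two axioms for a braiding on a monoidal homomorphism \cite[Definition 14]{daystreet} are equivalent to the two displayed equations (\ref{diag_def_braidmor_I}) and (\ref{diag_def_braidmor_II}). So the plan is to start from their axioms, with their notation ($\chi$, $\omega$, $u$, and the Yang--Baxter cells $r_{A|B,C}$, $r_{A,B|C}$ of the braiding), and turn each one into the corresponding displayed equation by a sequence of reversible pasting manipulations. Reversibility then gives both directions at once. Each of their two axioms compares two pastings of the same shape with common source $f(A)f(B)f(C)$ and common target $f(BCA)$, respectively $f(CAB)$. The rearrangement consists only of two kinds of move: moving certain cells from one side of the equation to the other by inverting them, and replacing occurrences of $\rho_{f(A),f(B)f(C)}$ by a composite using the cell $r_{f(A)|f(B),f(C)}$. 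Any such move is invertible, so the rewritten equation is equivalent to the original.

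For (\ref{diag_def_braidmor_I}) I would proceed in three steps. First, write Day--Street's first axiom as an equality of 2-cells between the composites
\[\chi_{B,CA}\circ(1\otimes\chi_{C,A})\circ\rho_{f(A),f(B)f(C)} \quad\text{and}\quad f(1_B\otimes\rho_{A,C})\circ f(\rho_{A,B}\otimes 1_C)\circ\chi_{AB,C}\circ(\chi_{A,B}\otimes 1).\]
Second, the top half of (\ref{diag_def_braidmor_I}) uses the pseudo-naturality cell $\rho_{1,\chi_{B,C}}$ of $\rho$ in its second variable. The bottom half instead splits $\rho_{f(A),f(B)f(C)}$ along $r_{f(A)|f(B),f(C)}$ and uses the pseudo-naturality cells $\chi^{-1}_{\rho_{A,B},1_C}$ and $\chi^{-1}_{1_B,\rho_{A,C}}$ of $\chi$. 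In the Gray monoid these are precisely the interchange cells of the Gray tensor product. So I would check, using the semi-strictness of $\mathcal{C}$ and $\mathcal{D}$ (strict associativity of $\otimes$, interchange only up to the Gray cells), that the cells appearing in Day--Street's formulation are exactly these, up to whiskering by identities. Third, the only genuine reorganisation is that one $\omega$ (or $\omega^{-1}$) cell has been moved across the equality sign. Reversing that move is legitimate because $\omega$ is invertible, and the resulting equation is literally their axiom.

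The second equation (\ref{diag_def_braidmor_II}) is handled the same way, using the second Yang--Baxter cell $r_{A,B|C}$. I would observe that it is obtained from the first by the evident symmetry: exchange the roles of the two tensor variables, replace $\omega$ by $\omega^{-1}$, and replace $r_{A|B,C}$ by $r^{-1}_{A,B|C}$. This symmetry is compatible with Notation~\ref{notation_hinv}, so only the first equation needs to be checked in detail.

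The main obstacle is the second step. Matching the Gray-tensor interchange cells with the pseudo-naturality cells of $\chi$ and $\rho$ requires care with the direction of the cells and with the $(-)^{-1_{vh}}$ conventions. If $F$ is only a weak monoidal homomorphism rather than a 2-functor, I would also need extra coherence cells for $f$. I would first attempt the computation in the monoidal 2-functor case. I would then argue that the weak case reduces to it, since every monoidal homomorphism is equivalent to one whose underlying pseudo-functor is strict enough for this computation, and the statement is invariant under such equivalence (Remark~\ref{rem_ssvsfw}).
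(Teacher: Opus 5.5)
Your overall strategy is the paper's. Its entire proof is that each displayed equation is \cite[Definition 14]{daystreet} up to composition with one instance of the pseudo-naturality cell $\chi^{-1}$, and pasting with an invertible cell is trivially reversible. However, the concrete moves you commit to are the wrong ones.

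The move that separates the two formulations is of your first kind: a cell is transferred across the equality sign by inverting it, which amounts to pasting both sides with an invertible cell and changing their common boundary. But the transferred cell is a pseudo-naturality constraint of $\chi$, of the same type as $\chi^{-1}_{\rho_{A,B},1_C}$ and $\chi^{-1}_{1_B,\rho_{A,C}}$. It is not an $\omega$. So your third step, which moves an $\omega$ back, would not return Day--Street's axiom.

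Your second step rests on a false identification. The cell $\chi_{\rho_{A,B},1_C}$ is the constraint of the pseudo-natural equivalence $\chi\colon m\circ(f\otimes_{\mathrm{Gr}}f)\Rightarrow f\circ m$ at the morphism $(\rho_{A,B},1_C)$. It relates $\chi_{BA,C}\circ(f(\rho_{A,B})\otimes 1_{f(C)})$ to $f(\rho_{A,B}\otimes 1_C)\circ\chi_{AB,C}$. It is part of the data of the monoidal structure on $F$, not an interchange cell of the Gray tensor product, and the semi-strictness of $\mathcal{C}$ and $\mathcal{D}$ does not determine it. Moreover, the boundary of the equations has itself been changed by pasting with such a cell. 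So a cell-by-cell matching ``up to whiskering by identities'' cannot succeed.

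The detour for weak monoidal homomorphisms is both unnecessary and, as stated, unsound. Day--Street's Definition 14 is already formulated for monoidal pseudofunctors. The same single pasting therefore works verbatim, with the compositors of $f$ left implicit exactly as in the displayed diagrams. Your reduction would need every monoidal homomorphism to be equivalent to one with a strict underlying functor, and that fails in general. As recalled in Remark~\ref{rem_2bicatequiv}, a pseudo-functor between 2-categories need not be equivalent to a 2-functor unless its domain is cofibrant. Remark~\ref{rem_ssvsfw} also provides no invariance statement of the kind you invoke.
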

\begin{proof}
The statement is exactly \cite[Definition 14]{daystreet} up to composition with a corresponding instance of the natural invertible 2-cell
$\chi^{-1}$ in both cases.
\end{proof}

The same kind of reorganisation of Day--Street's definitions will help with the same kinds of computations further below 
for syllepses. Here, the corresponding reorganisations are precisely given by the definition of syllepses as presented in 
\cite{crans_centers}. 

\begin{lemma}\label{lemma_braid_rev_morphism}
Let $(F,u)\colon(\mathcal{C},\beta)\rightarrow(\mathcal{D},\beta)$ be a braided monoidal 2-functor between braided monoidal Gray monoids. 
Then $(F	,u^{\mathrm{rev}})\colon(\mathcal{C},\beta^{\mathrm{rev}})\rightarrow(\mathcal{D},\beta^{\mathrm{rev}})$ defined by the same 
monoidal structure and $u^{\mathrm{rev}}_{A,B}:=u_{B,A}^{-1_v}$ is a braided monoidal 2-functor as well.
\end{lemma}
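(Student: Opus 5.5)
The plan is to check directly that $u^{\mathrm{rev}}_{A,B}:=u_{B,A}^{-1_v}$ satisfies the two equations (\ref{diag_def_braidmor_I}) and (\ref{diag_def_braidmor_II}) for the reverse braidings. The point is that, in the symmetrised form of the preceding lemma, the two axioms are exchanged by reversal, exactly as the two Yang--Baxter cells are exchanged in the proof of Proposition~\ref{prop_defrevbraid}. First I check that $u^{\mathrm{rev}}$ has the right type. Since $\rho^{\mathrm{rev}}_{f(A),f(B)}=\rho^{-1}_{f(B),f(A)}$ and $f(\rho^{\mathrm{rev}}_{A,B})=f(\rho_{B,A})^{-1}$, the square for $u_{B,A}$, whose horizontal edges are $\rho_{f(B),f(A)}$ and $f(\rho_{B,A})$ and whose vertical edges are $\chi_{B,A}$ and $\chi_{A,B}$, becomes a square of the required shape after inverting its horizontal edges. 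The correct cell is therefore the horizontal inverse in the sense of Notation~\ref{notation_hinv}, up to the convention fixed there, and I will read ``$u_{B,A}^{-1_v}$'' accordingly. Invertibility and modification-naturality of $u^{\mathrm{rev}}$ follow immediately from those of $u$, because whiskering with invertible 1-cells and inverting 2-cells preserve naturality.

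Next I establish the cells that appear in the axioms for $\beta^{\mathrm{rev}}$. From the proof of Proposition~\ref{prop_defrevbraid}, $r^{\mathrm{rev}}_{A|B,C}=r_{B,C|A}^{-1_{hv}}$ and $r^{\mathrm{rev}}_{A,B|C}=r_{C|A,B}^{-1_{hv}}$. The naturality cells of the reverse braiding are likewise $\rho^{\mathrm{rev}}_{1,\chi_{B,C}}$, the horizontally inverted $\rho_{\chi_{B,C},1}$, and similarly on the other side. The $\chi$ and $\omega$ cells are unchanged, since $F$ keeps the same monoidal structure.

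I then take equation (\ref{diag_def_braidmor_II}) for $(F,u)$, instantiated at $(B,C,A)$, and apply the operation $(-)^{-1_{vh}}$ to both sides. By the rule $(\beta\ast\alpha)^{-1_{vh}}=\alpha^{-1_{vh}}\ast\beta^{-1_{vh}}$ from Notation~\ref{notation_hinv}, this operation reverses pasting order and replaces each constituent cell by its $-1_{vh}$-version. Under this operation, $u_{BC,A}$ becomes $u^{\mathrm{rev}}_{A,BC}$ (with the $\chi$-edges converted by the same inversion), $r^{-1}_{B,C|A}$ becomes $r^{\mathrm{rev}}_{A|B,C}$, $\rho_{\chi,1}$ becomes $\rho^{\mathrm{rev}}_{1,\chi}$, and $u_{B,A}\otimes 1$ and $1\otimes u_{C,A}$ become the corresponding $u^{\mathrm{rev}}$ cells. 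The $\omega^{-1}$ and $\chi^{-1}$ cells match up with the $\omega$ and $\chi^{-1}$ cells of (\ref{diag_def_braidmor_I}), once the $\omega$-triangles and $\chi$-triangles are moved back across the inverted edges.

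The result should be precisely (\ref{diag_def_braidmor_I}) for $(F,u^{\mathrm{rev}})$. Symmetrically, (\ref{diag_def_braidmor_I}) for $u$ yields (\ref{diag_def_braidmor_II}) for $u^{\mathrm{rev}}$.

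The main obstacle is the bookkeeping of the $\omega$ and $\chi$ triangles. Inverting edges that involve $\chi$ is not literally available, since $\chi$ is only a pseudo-natural equivalence and its components need not be strictly invertible 1-cells. To handle this, I will avoid inverting $\chi$-edges entirely: I will only invert the $\rho$-edges, which are equivalences in a 2-category with chosen inverses. I will then use the interchange law together with the modification axioms of $\chi$ and $\omega$ to slide those cells past the inverted $\rho$'s, and check that the resulting pasting agrees with the rearranged diagram.
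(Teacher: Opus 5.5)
Your proposal is correct and is essentially the paper's proof. The paper likewise observes that inverting the top and bottom ($\rho$-)edges of the pastings in (\ref{diag_def_braidmor_I}) computed for $u^{\mathrm{rev}}$ gives exactly the pastings in (\ref{diag_def_braidmor_II}) for $u$ at $(B,C,A)$, and vice versa; your reading of $u_{B,A}^{-1_v}$ as inversion of the horizontal $\rho$-edges matches the paper's usage. Do drop the $(-)^{-1_{vh}}$ formulation of your middle paragraph from the start, because it would also invert the $\chi$-edges and turn the diagram upside down. The operation that works is the one in your last paragraph, which inverts only the $\rho$-edges.
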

\begin{proof}
We are to show that $u^{\mathrm{rev}}$ satisfies the two equations  (\ref{diag_def_braidmor_I}) and (\ref{diag_def_braidmor_II}). 
Equivalently, we may show the two equations for the vertical inverses of the respective natural 2-cells (with respect to their top 
and bottom horizontal morphisms). The vertical inverse of the two diagrams in (\ref{diag_def_braidmor_I}) computed for $u^{\mathrm{rev}}$ are 
exactly the two diagrams in (\ref{diag_def_braidmor_II}) computed for $u$ with $(B,C,A)$ substituted for $(A,B,C)$, and vice versa. Thus, 
Equation~(\ref{diag_def_braidmor_I}) for $u^{\mathrm{rev}}$ follows from Equation~(\ref{diag_def_braidmor_II}) for $u$, and vice versa. 
\end{proof}

\begin{lemma}\label{lemma_syll_rev}
Let $(\mathcal{C},\beta,v)$ be a sylleptic Gray monoid. Let $v^{\mathrm{rev}}_{A,B}$ be the 2-cell
\[\xymatrix{
A\otimes B\ar[r]^{\rho_{A,B}} & B\otimes A\ar@{=}[rr]\ar@/_/[dr]_{\rho_{B,A}^{\mathrm{rev}}} &\ar@{}[d]|{\Uparrow v_{B,A}^{-1_h}} & B\otimes A\ar[r]^{\rho^{\mathrm{rev}}_{B,A}} & A\otimes B.\\
 & & A\otimes B\ar@/_/[ur]_{\rho_{A,B}^{\mathrm{rev}}} & &
}\]
Then $(\mathcal{C},\beta^{\mathrm{rev}},v^{\mathrm{rev}})$ is again a sylleptic Gray monoid. We refer to $v^{\mathrm{rev}}$ as the 
reverse syllepsis associated to $\zeta$ on $(\mathcal{C},\beta)$.
\end{lemma}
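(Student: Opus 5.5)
We have to show that $v^{\mathrm{rev}}$ satisfies the axioms of a syllepsis on $(\mathcal{C},\beta^{\mathrm{rev}})$. Following the remark preceding Lemma~\ref{lemma_braid_rev_morphism}, we use the symmetric presentation of syllepses from \cite{crans_centers}. Of the syllepsis axioms, the first is naturality of $v$, i.e.\ that $v$ is an invertible modification $\rho\Rrightarrow(\rho^{\mathrm{rev}})^{\ldots}$. The other two are the compatibility equations of $v$ with the Yang--Baxter-type cells $r_{A|B,C}$ and $r_{A,B|C}$. So in the crans form the task is to check these compatibility equations with $r^{\mathrm{rev}}_{A,B|C}=r_{C|A,B}^{-1_{hv}}$ and $r^{\mathrm{rev}}_{A|B,C}=r_{B,C|A}^{-1_{hv}}$, as computed in the proof of Proposition~\ref{prop_defrevbraid}.

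\textbf{Step 1: target and modification property.} Unwinding the definition, $v^{\mathrm{rev}}_{A,B}$ is a 2-cell from $\rho_{A,B}^{\mathrm{rev}}=\rho_{B,A}^{-1}$ to the inverse of $(\rho^{\mathrm{rev}}_{B,A})^{-1}$, so its type is correct. It is the whiskering of $v_{B,A}^{-1_h}$ by invertible 1-cells, which in essence is the horizontal inverse of $v$ with indices swapped. Modification-naturality in $A$ and $B$ then follows from that of $v$, since whiskering and horizontal inversion preserve pseudo-naturality with respect to the (invertible) components of $\rho$.

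\textbf{Step 2: the two compatibility axioms.} I would argue exactly as in Lemma~\ref{lemma_braid_rev_morphism}. Take the pasting diagram expressing the axiom relating $v^{\mathrm{rev}}$ with $r^{\mathrm{rev}}_{A|B,C}$, and apply the operation $(-)^{-1_{vh}}$ from Notation~\ref{notation_hinv} to both sides of the equation. The rule $(\beta\ast\alpha)^{-1_{vh}}=\alpha^{-1_{vh}}\ast\beta^{-1_{vh}}$ turns each pasting into the reversed pasting of the inverted cells. Under this operation, $r^{\mathrm{rev}}$ cells become original $r$ cells with permuted variables, and $v^{\mathrm{rev}}$ becomes $v$ with swapped arguments. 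I expect the resulting equation to be precisely the other axiom, the one relating $v$ with $r_{A,B|C}$, for $\beta$ under the substitution $(A,B,C)\mapsto(B,C,A)$ (or its cyclic analogue). Symmetrically, the second axiom for $v^{\mathrm{rev}}$ follows from the first for $v$. Unit conditions, such as $v_{I,-}=v_{-,I}=1$ if required, are immediate from $\rho_{I,-}=\rho_{-,I}=1$.

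\textbf{Main obstacle.} The hard part is the bookkeeping in Step 2. One has to confirm that after $(-)^{-1_{vh}}$ the boundary 1-cells and the orientation of every cell match the other axiom on the nose, including the interchange cells of the Gray tensor product, which also get inverted. I would first verify this on the component level by tracking the six boundary composites, and only then match the interior cells.
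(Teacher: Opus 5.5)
Your proposal follows the paper's route. The paper also passes to Crans' presentation of syllepses and observes that the compatibility axiom (4.1) for $(\mathcal{C},\beta^{\mathrm{rev}},v^{\mathrm{rev}})$ follows from (4.2) for $(\mathcal{C},\beta,v)$ and vice versa, which is exactly your Step 2. Two minor corrections to Step 1: as defined, $v^{\mathrm{rev}}_{A,B}$ relates $1_{A\otimes B}$ and $\rho^{\mathrm{rev}}_{B,A}\rho^{\mathrm{rev}}_{A,B}$, so its type is not the one you describe; and any unit condition for $v^{\mathrm{rev}}$ reduces to the corresponding one for $v$, since $v^{\mathrm{rev}}_{A,I}=v^{-1_h}_{I,A}$, rather than following from $\rho_{I,-}=\rho_{-,I}=1$ alone.
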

\begin{proof}
The computation is again easier to perform by way of the equivalent definition of syllepses in
\cite[Definition 4.1]{crans_centers}. Here, Equation~(4.1) in \cite[Definition 4.1]{crans_centers} for
$(\mathcal{C},\beta^{\mathrm{rev}},v^{\mathrm{rev}})$ follows from Equation~(4.2) in \cite[Definition 4.1]{crans_centers} for
$(\mathcal{C},\beta,v)$, and vice versa.
\end{proof}


\begin{remark}\label{rem_syll_rev_hinv}
Let $(\mathcal{C},\beta,v)$ be a sylleptic Gray monoid. The modification $v$ may equivalently be considered as a pseudo-natural 
equivalence $z_{A,B}\colon \rho_{A,B}\rightarrow\rho_{A,B}^{\mathrm{rev}}$ by defining $z_{A,B}:=v_{A,B}\ast\rho_{A,B}^{\mathrm{rev}}$,
see e.g.\ \cite[Section 4]{crans_centers}. Under this equivalence, the reverse syllepsis $v^{\mathrm{rev}}_{A,B}$ corresponds to the 
horizontal inverse $z^{-1_h}_{B,A}$.
\end{remark}

Lastly, for later use let us record that the inverse of a pseudo-natural equivalence that comes equipped with a monoidal structure can also 
be equipped with a canonical inverse monoidal structure.

\begin{lemma}\label{lemma_mnt_inv}
Let $\mathcal{C}$ and $\mathcal{D}$ be Gray monoids, and $F,G\colon\mathcal{C}\rightarrow\mathcal{D}$ be monoidal 2-functors. Let
$(\theta,\theta_2,\theta_0)\colon F\rightarrow G$ be a monoidal pseudo-natural equivalence. Then
$(\theta^{-1},\theta_2^{-1_h},\theta_0^{-1_h})\colon F\rightarrow G$ is also a monoidal pseudo-natural equivalence. It defines an inverse
to $(\theta,\theta_2,\theta_0)$ in the 2-category of monoidal 2-functors from $\mathcal{C}$ to $\mathcal{D}$ \cite[p.107]{daystreet}.
\end{lemma}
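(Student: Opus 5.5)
Note first that the statement should be read with $\theta^{-1}\colon G\rightarrow F$: the inverse transformation goes backwards. The plan is to obtain the monoidal structure on $\theta^{-1}$ by transporting that of $\theta$ through the adjoint-equivalence calculus in the 2-category of pseudo-natural transformations. Concretely, I would first promote $\theta$ to an adjoint equivalence $(\theta,\theta^{-1},\eta,\varepsilon)$ in the 2-category $\mathrm{Hom}(\mathcal{C},\mathcal{D})$ of 2-functors, pseudo-natural transformations and modifications. Here $\theta^{-1}$ is a chosen pseudo-inverse, with components $\theta_A^{-1}$ and naturality 2-cells the mates $(\theta_f)^{-1_h}$ in the sense of Notation~\ref{notation_hinv}.

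Next I would define the structure 2-cells of $\theta^{-1}$ as in the statement. The unit constraint $(\theta^{-1})_0:=\theta_0^{-1_h}$ relates $\theta^{-1}_I\circ\iota_G$ with $\iota_F$. The multiplication constraint $(\theta^{-1})_2:=\theta_2^{-1_h}$ is obtained from the square
\[\chi_G\circ(\theta_A\otimes\theta_B)\Rightarrow\theta_{AB}\circ\chi_F\]
by whiskering with $\theta_{AB}^{-1}$ and $(\theta_A\otimes\theta_B)^{-1}=\theta_A^{-1}\otimes\theta_B^{-1}$, the latter equality holding by Gray functoriality of $\otimes$. These are invertible modifications precisely because $\theta_0,\theta_2$ are.

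Then I would verify the three Day--Street axioms for a monoidal pseudo-natural transformation \cite[Definition 3]{daystreet}: the associativity axiom involving $\omega_F,\omega_G$, and the two unit axioms involving $\zeta,\kappa$. The approach is uniform. Each axiom is an equation of pastings, and applying the operation $(-)^{-1_{vh}}$ (more precisely, horizontal inversion with respect to the $\theta$-edges) to both sides of the corresponding axiom for $\theta$ yields exactly the axiom for $\theta^{-1}$. The structure cells $\omega,\zeta,\kappa$ of $F$ and $G$ remain in place, with $F$ and $G$ swapped. This uses the rule $(\beta\ast\alpha)^{-1_{vh}}=\alpha^{-1_{vh}}\ast\beta^{-1_{vh}}$ recorded in Notation~\ref{notation_hinv}, together with the fact that the Gray interchange cells are compatible with inverting equivalences.

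Finally, I would show that $\theta^{-1}$ is an inverse in the 2-category of monoidal 2-functors by checking that the unit and counit modifications $\eta\colon 1\Rrightarrow\theta^{-1}\theta$ and $\varepsilon\colon\theta\theta^{-1}\Rrightarrow 1$ are monoidal modifications. The composite monoidal structure on $\theta^{-1}\theta$ is the pasting of $\theta_2$ with $\theta_2^{-1_h}$, and this collapses to the identity structure via the triangle identities. Alternatively, the whole statement is the general fact that in a 2-category, here the one of monoidal 2-functors from \cite[p.107]{daystreet}, equivalences whose underlying 1-cell is invertible after forgetting structure are invertible; that is essentially the cited reference.

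The main obstacle will be the associativity axiom. There the pasting contains $\theta$ on several edges at once ($\theta_A\otimes\theta_B\otimes\theta_C$), together with Gray interchange 2-cells $\Sigma_{\theta_A,\theta_B}$, and horizontal inversion must be shown to turn these interchange cells into the corresponding ones for $\theta^{-1}$. I would handle this first by choosing $\theta^{-1}$ with $\eta,\varepsilon$ strictly adjoint, so that the mates calculus applies cleanly and each interchange cell's mate is again an interchange cell.
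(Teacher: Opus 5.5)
Your proposal is correct and follows the same route as the paper, whose proof is simply ``Direct computation'': you verify the Day--Street axioms for $(\theta^{-1},\theta_2^{-1_h},\theta_0^{-1_h})$ by taking mates along the $\theta$-edges of the corresponding axioms for $\theta$, which is exactly that computation organised through the adjoint-equivalence (doctrinal-adjunction) calculus. You are also right that the inverse should be read as $\theta^{-1}\colon G\rightarrow F$.
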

\begin{proof}
Direct computation.
\end{proof}

\subsection{The canonical homotopy theory of Gray monoids}\label{sec_sub_graymonhtythy}
We consider the category $2$-Cat of 2-categories and 2-functors to come equipped with its canonical model structure introduced by Lack 
\cite{lack2catms}. We recall from loc.\ cit.\ that the canonical inclusion $2\text{-Cat}\hookrightarrow\mathrm{BiCat}$ is a right Quillen 
equivalence with regards to the canonical model structure on $\mathrm{BiCat}$ \cite{lackbicatms}. It hence induces an equivalence
\begin{align}\label{equ_2bicatequiv}
\mathrm{Ho}_{\infty}(2\text{-Cat})\xrightarrow{\simeq}\mathrm{Ho}_{\infty}(\mathrm{BiCat})
\end{align}
of underlying $\infty$-categories. That means we can replace bicategories and bifunctors with 2-categories and 2-functors from a homotopy 
theoretical standpoint as well. More precisely, the objects of $\mathrm{Ho}_{\infty}(2\text{-Cat})$ are cofibrant 2-categories $\mathcal{C}$, 
and the arrows $f\colon\mathcal{C}\rightarrow\mathcal{D}$ are 2-functors between such. Given a 2-category $\mathcal{C}$, its path object
$[E,\mathcal{C}]_{\mathrm{Gr}}$ in $2\text{-Cat}$ is given by its Gray exponential with the walking adjoint equivalence $E\in 2\text{-Cat}$ 
\cite{lackbicatms}. It follows that the 2-cells
\[\xymatrix{
 & \mathcal{D}\ar[dr]^g\ar@{}[d]|{\phi} & \\
\mathcal{C}\ar[rr]_h\ar[ur]^f & & \mathcal{E}
}\]
in $\mathrm{Ho}_{\infty}(2\text{-Cat})$ are pseudo-natural equivalences $\phi\colon gf\rightarrow h$, and the 3-cells are accordingly typed 
invertible modifications. All cells in dimension $>3$ are uniquely determined by their boundary as $\mathrm{Ho}_{\infty}(2\text{-Cat})$ 
is a 3-category \cite[Section 2.3.4]{luriehtt}.

\begin{remark}\label{rem_2bicatequiv}
As observed in \cite{lack_2bitricat}, the tricategory of 2-categories, 2-functors, pseudo-natural transformations and modifications 
is not triequivalent to the tricategory of bicategories, pseudo-functors, pseudo-natural transformations and modifications. This is because 
there are pseudo-functors between 2-categories that are not equivalent to a 2-functor (of same domain and codomain). There are however two 
ways (a left and a right way) to resolve this disparity: One may either enlarge $2\text{-Cat}$ by considering all pseudo-functors between 
2-categories, or restrict $2\text{-Cat}$ to the full subtricategory of cofibrant 2-categories instead. The first resolution follows the 
recipe already alluded to in Remark~\ref{rem_ssvsfw}. The second resolution is owed to the fact that pseudo-functors and 2-functors 
coincide whenever their domain is cofibrant. This latter resolution is what implicitly gives rise to the equivalence (\ref{equ_2bicatequiv}) 
of 3-categories. The same ``right'' principle applies to the tricategory of monoidal bicategories and the tricategory of cofibrant Gray 
monoids, which is what implicitly makes many of our arguments below work conceptually.
\end{remark}

We further recall from \cite{lack2catms} that $\mathrm{Gray}=(2\text{-Cat},\otimes_{\mathrm{Gr}})$ is a monoidal model category. Let
$\mathrm{GrMon}$ denote the category of Gray monoids and (strict) Gray monoidal 2-functors. It arises as the category of algebras for 
the free monoid monad on $\mathrm{Gray}$ and hence comes equipped with a (monadic) adjunction as follows.
\begin{align}\label{equ_propgrmmodstr}
\xymatrix{
\mathrm{GrMon}\ar@<-.5ex>[r]_(.55)U & 2\text{-Cat}\ar@<-.5ex>[l]_(.45)F
}
\end{align}

\begin{proposition}
The adjunction (\ref{equ_propgrmmodstr}) allows to lift the Gray tensor product on $2\text{-Cat}$ to a monoidal structure on
$\mathrm{GrMon}$ so that the forgetful functor $U$ becomes symmetric monoidal.
\end{proposition}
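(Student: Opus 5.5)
The Gray tensor product $\otimes_{\mathrm{Gr}}$ on $2\text{-Cat}$ is symmetric monoidal. The free monoid monad $T=UF$ on a symmetric monoidal category carries a canonical lax (in fact strong) monoidal structure. Whenever a monad $T$ on a symmetric monoidal category is equipped with a monoidal (opmonoidal) structure compatible with the symmetry, its category of algebras inherits a monoidal structure for which the forgetful functor is strict symmetric monoidal. This is the standard lifting result for monoidal (bimonoidal/Hopf) monads. So the plan is to exhibit the structure explicitly and verify the axioms, rather than appeal to abstract monad theory.

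First, I will define the tensor of two Gray monoids $(\mathcal{C},m_{\mathcal{C}},e_{\mathcal{C}})$ and $(\mathcal{D},m_{\mathcal{D}},e_{\mathcal{D}})$ as the Gray tensor product $\mathcal{C}\otimes_{\mathrm{Gr}}\mathcal{D}$. Its multiplication is the composite
\[(\mathcal{C}\otimes_{\mathrm{Gr}}\mathcal{D})\otimes_{\mathrm{Gr}}(\mathcal{C}\otimes_{\mathrm{Gr}}\mathcal{D})\xrightarrow{1\otimes\mathrm{sw}\otimes 1}(\mathcal{C}\otimes_{\mathrm{Gr}}\mathcal{C})\otimes_{\mathrm{Gr}}(\mathcal{D}\otimes_{\mathrm{Gr}}\mathcal{D})\xrightarrow{m_{\mathcal{C}}\otimes m_{\mathcal{D}}}\mathcal{C}\otimes_{\mathrm{Gr}}\mathcal{D},\]
and its unit is $e_{\mathcal{C}}\otimes e_{\mathcal{D}}$ (using $1\otimes_{\mathrm{Gr}}1\cong 1$). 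Here $\mathrm{sw}$ is the symmetry of $\mathrm{Gray}$, which is a strict 2-functor. Associativity and unitality of this monoid follow from the coherence theorem for symmetric monoidal categories, applied to the symmetric monoidal 1-category $(2\text{-Cat},\otimes_{\mathrm{Gr}})$. This is exactly the standard fact that monoids in a symmetric monoidal category form a symmetric monoidal category.

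Next, I will check functoriality. Strict monoidal 2-functors $F,G$ yield the strict monoidal 2-functor $F\otimes_{\mathrm{Gr}}G$, by naturality of $\mathrm{sw}$. The associator, unitors and symmetry of $\mathrm{Gray}$, applied to underlying 2-categories, are maps of monoids, again by naturality and the hexagon identities. The unit object is the terminal Gray monoid $1$. All coherence diagrams in $\mathrm{GrMon}$ then hold because they hold after applying the faithful functor $U$, and $U$ preserves everything on the nose by construction. So $U$ is strict symmetric monoidal.

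Finally, I will relate this structure to the adjunction (\ref{equ_propgrmmodstr}). The lifted structure is the one induced by the canonical monoidal structure on $T$: the map
\[T\mathcal{C}\otimes T\mathcal{D}\to T(\mathcal{C}\otimes\mathcal{D})\]
is induced by distributing $\otimes_{\mathrm{Gr}}$ over coproducts, which is possible since $\otimes_{\mathrm{Gr}}$ is closed. With this, $F$ becomes oplax monoidal and the adjunction becomes monoidal. The only genuinely nontrivial point, and the step I expect to be the main obstacle, is making sure that only the symmetry of the 1-categorical Gray tensor product (not any pseudo-structure) is used, so that everything stays strict. I expect the rest to be routine bookkeeping.
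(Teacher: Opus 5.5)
Your explicit construction is correct and is in substance what the paper does. The paper simply invokes \cite[Theorem 3.4.(2).(a)]{rs_monbicat}, i.e.\ the standard fact that monoids in a symmetric monoidal category (here $\mathrm{Gray}$) form a symmetric monoidal category with strict symmetric monoidal forgetful functor; you verify that fact by hand, using the middle-four shuffle multiplication and the faithfulness of $U$.

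Your monad-theoretic side remarks are wrong, though, and should be dropped or fixed:
\begin{itemize}
\item The free-monoid monad $T$ is opmonoidal, not lax or strong monoidal. Its comparison map goes $T(\mathcal{C}\otimes_{\mathrm{Gr}}\mathcal{D})\to T\mathcal{C}\otimes_{\mathrm{Gr}}T\mathcal{D}$. On the $n$-th summand it is the shuffle $(\mathcal{C}\otimes_{\mathrm{Gr}}\mathcal{D})^{\otimes n}\cong\mathcal{C}^{\otimes n}\otimes_{\mathrm{Gr}}\mathcal{D}^{\otimes n}$ followed by inclusion of the $(n,n)$-summand.
\item This map is not invertible, since it misses the summands $\mathcal{C}^{\otimes n}\otimes_{\mathrm{Gr}}\mathcal{D}^{\otimes m}$ with $n\neq m$.
\item It is this opmonoidal structure, not a map $T\mathcal{C}\otimes_{\mathrm{Gr}}T\mathcal{D}\to T(\mathcal{C}\otimes_{\mathrm{Gr}}\mathcal{D})$, that corresponds to lifting the tensor to algebras with $U$ strict monoidal.
\end{itemize}
None of this is needed for your argument.
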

\begin{proof}
This follows for instance directly from \cite[Theorem 3.4.(2).(a)]{rs_monbicat}, as $\mathrm{Gray}$ is a symmetric monoidal category.
\end{proof}

\begin{proposition}\label{propgrmmodstr}
The adjunction (\ref{equ_propgrmmodstr}) allows to transfer the canonical model structure on $2\text{-Cat}$ to a model structure on
$\mathrm{GrMon}$ such that a morphism $f$ in $\mathrm{GrMon}$ is a (trivial) fibration if and only if $U(f)$ is so in $2\text{-Cat}$.
\end{proposition}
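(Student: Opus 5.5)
We plan to invoke the standard transfer theorem for cofibrantly generated model structures along a monadic adjunction (Kan's lifting theorem, in the form of Crans or Hirschhorn 11.3.2). Lack's canonical model structure on $2\text{-Cat}$ is cofibrantly generated, with generating cofibrations $I$ and generating trivial cofibrations $J$ consisting of finitely many maps between finitely presentable 2-categories. We define the transferred structure on $\mathrm{GrMon}$ by declaring $f$ a weak equivalence, respectively fibration, if $U(f)$ is one. The sets $F(I)$ and $F(J)$ then serve as candidate generating (trivial) cofibrations.

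Two hypotheses need to be checked. The first is the small object argument. The category $\mathrm{GrMon}$ is locally finitely presentable, being the category of algebras for a finitary monad on the locally finitely presentable category $2\text{-Cat}$; the free monoid monad $T(X)=\coprod_n X^{\otimes_{\mathrm{Gr}} n}$ is finitary because $\otimes_{\mathrm{Gr}}$ is closed and hence cocontinuous in each variable. Since $U$ preserves filtered colimits, the domains of $F(I)$ and $F(J)$ are small. The second, and main, hypothesis is acyclicity: every relative $F(J)$-cell complex must be sent by $U$ to a weak equivalence.

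For acyclicity we would use the monoid axiom of Schwede--Shipley together with their theorem on model structures for monoids in a monoidal model category. Lack \cite{lack2catms} shows that $\mathrm{Gray}$ is a monoidal model category and that every object of $2\text{-Cat}$ is fibrant. The monoid axiom asks that transfinite composites of pushouts of maps of the form $j\otimes_{\mathrm{Gr}} X$, with $j\in J$ and $X$ arbitrary, be weak equivalences. To verify it:
\begin{enumerate}
\item We reduce to $J$, which may be taken to consist of the inclusion $1\to E$ of a point into the walking adjoint equivalence, together with a 2-cell analogue.
\item For these maps, $j\otimes_{\mathrm{Gr}} X$ is an injective-on-objects biequivalence for every 2-category $X$. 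This can be checked directly from the description of the Gray tensor product with $E$, which is essentially the path-object construction dual to $[E,-]_{\mathrm{Gr}}$.
\item Injective-on-objects biequivalences are trivial cofibrations between fibrant objects, and pushouts and transfinite composites of such stay weak equivalences in Lack's structure.
\end{enumerate}

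Alternatively, we could avoid the monoid axiom and use the path-object argument (Quillen's criterion): every object is fibrant, and a Gray monoid $M$ has the path object $[E,M]_{\mathrm{Gr}}$. This path object is again a Gray monoid, because the monoidal structure is inherited pointwise via the canonical comparison $[E,M]\otimes_{\mathrm{Gr}}[E,M]\to[E,M\otimes_{\mathrm{Gr}}M]$ induced by the diagonal $E\to E\otimes_{\mathrm{Gr}} E$ composed with the projection. We expect the main obstacle to lie here, in the compatibility of the diagonal of $E$ with the Gray tensor, which is not strictly cartesian. If this causes trouble, we fall back on the monoid-axiom route, since that only needs the explicit description of $E\otimes_{\mathrm{Gr}}X$.

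Once acyclicity holds, the lifting theorem gives the model structure on $\mathrm{GrMon}$ with fibrations and weak equivalences created by $U$. Trivial fibrations are then also created by $U$, which is exactly the statement.
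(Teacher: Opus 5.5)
Your architecture is the paper's: transfer along the free-monoid adjunction via Schwede--Shipley's Theorem~4.1, whose one substantive hypothesis is the monoid axiom for $\mathrm{Gray}$. The paper simply cites that axiom from Lack's Theorem~7.7. The problem is your hand verification of it, at step~3.

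First, injective-on-objects biequivalences are not trivial cofibrations in Lack's structure. Cofibrations must, up to retract, freely adjoin objects and 1-cells. For example, the inclusion $1\to I$ of an object into the walking isomorphism is an injective-on-objects biequivalence, but $I$ is not cofibrant, so it is not a cofibration.

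Second, and decisively, this class is not stable under pushout. Let $\mathrm{Cyl}$ be the free-living invertible 2-cell: objects $0,1$, parallel 1-cells $u,v$, and an invertible $\alpha\colon u\Rightarrow v$. Let $q\colon\mathrm{Cyl}\to 2$ identify $u$ with $v$ and $\alpha$ with an identity. Then $q$ is bijective on objects and a biequivalence. Now let $K$ have one object, one 1-cell, and the group $\mathbb{Z}$ of 2-cells. Map $\mathrm{Cyl}\to K$ by sending $u,v$ to the identity and $\alpha$ to a generator. The pushout of $q$ along this map is $K\to 1$, with $1$ the terminal 2-category, and this is not a biequivalence. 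So knowing that each $j\otimes_{\mathrm{Gr}}X$ is an injective-on-objects biequivalence (your step~2) does not yield the monoid axiom.

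The simplest repair is to cite Lack's Theorem~7.7, as the paper does. If you want a hand proof, use a class that contains every $j\otimes_{\mathrm{Gr}}X$ and really is closed under cobase change. One such class consists of 2-functors $i\colon A\to B$ equipped with:
\begin{itemize}
\item a retraction $r$ with $ri=1_A$, and
\item a 2-functor $H\colon E\otimes_{\mathrm{Gr}}B\to B$, that is, an adjoint pseudo-natural equivalence $1_B\simeq ir$, whose restriction to $E\otimes_{\mathrm{Gr}}A$ is $i$ composed with the projection.
\end{itemize}
Both generators of $J$ carry such data, and tensoring with $X$ preserves it. The data glue across pushouts and coproducts because $E\otimes_{\mathrm{Gr}}-$ is a left adjoint. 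Such maps are monic biequivalences, and a transfinite composite of monic biequivalences is a biequivalence because the hom-categories of the colimit are unions.

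Your path-object route, which you set aside, also goes through. The obstacle you anticipated does not arise, because $E$ is a strict comonoid for $\otimes_{\mathrm{Gr}}$. Its diagonal $E\to E\otimes_{\mathrm{Gr}}E$ classifies the composite of the two coordinate adjoint equivalences $(0,0)\to(1,0)\to(1,1)$. This is coassociative and counital on the nose, since composition of adjunctions is strictly associative. Hence $[E,M]_{\mathrm{Gr}}$ is a convolution monoid, using the symmetry of $\otimes_{\mathrm{Gr}}$. Then $M\to[E,M]_{\mathrm{Gr}}\to M\times M$ is a path object in $\mathrm{GrMon}$, and Quillen's criterion applies because every object is fibrant.
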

\begin{proof}
To construct this model structure it suffices to show that the monoidal model 
category $\mathrm{Gray}$ satisfies the monoid axiom \cite[Definition 3.3, Theorem 4.1]{schwedeshipley_mon}.
This is shown in \cite[Theorem 7.7]{lack2catms}.
\end{proof}

\begin{remark}
The model structure on $\mathrm{GrMon}$ from Proposition~\ref{propgrmmodstr} is the single object version of Lack's model structure on
the category of Gray categories \cite{lackgraycatms}. 
\end{remark}

It will be useful to record the following additional homotopical property of the adjunction (\ref{equ_propgrmmodstr}).

\begin{lemma}\label{lemmacofobjects}
The forgetful functor
\[U\colon\mathrm{GrMon}\rightarrow 2\text{-Cat}\]
preserves cofibrant objects, as well as cofibrations between cofibrant objects.
\end{lemma}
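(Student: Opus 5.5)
The plan is to reduce both claims to statements about the free monoid functor $F$ and its cellular constructions, using that the underlying 2-category of a free Gray monoid is a coproduct of Gray tensor powers. For a 2-category $X$, the free Gray monoid has underlying 2-category
\[UF(X)\cong\coprod_{n\geq 0}X^{\otimes_{\mathrm{Gr}} n}.\]
Since $\mathrm{Gray}$ is a monoidal model category (\cite{lack2catms}) and every 2-category is fibrant, the unit $1$ is cofibrant. The pushout-product axiom then shows that $X^{\otimes n}$ is cofibrant whenever $X$ is, and coproducts of cofibrant objects are cofibrant. So $UF$ preserves cofibrant objects. By the same axiom, if $i\colon X\to Y$ is a cofibration, then $i^{\otimes n}$ is a cofibration, and hence so is $UF(i)$.

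For general objects and maps, I will use that the cofibrations in $\mathrm{GrMon}$ are retracts of transfinite composites of pushouts of maps $F(i)$, where $i$ runs over the generating cofibrations of $2\text{-Cat}$. Retracts are preserved by $U$, and $U$ preserves filtered colimits because the free monoid monad on the closed monoidal $\mathrm{Gray}$ is finitary. The key step is therefore the standard Schwede--Shipley filtration of a pushout
\[M\sqcup_{F(X)}F(Y)\]
in monoids along $F(i)$, with $M$ a cofibrant Gray monoid. Its underlying 2-category is a sequential colimit
\[U(M)=P_0\to P_1\to\cdots,\]
in which each $P_{k-1}\to P_k$ is a pushout of a map of the form
\[U(M)^{\otimes}\otimes Q^k_{k-1}(i)\otimes U(M)\otimes\cdots\otimes U(M)\to U(M)\otimes Y^{\otimes}\otimes\cdots,\]
built from interleaved tensors of $U(M)$ with the iterated pushout-product $Q^k_{k-1}(i)\to Y^{\otimes k}$.

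Given $U(M)$ cofibrant, which is the inductive hypothesis along the cell complex, the pushout-product axiom makes each filtration step a cofibration. Hence $U(M)\to U(M\sqcup_{F(X)}F(Y))$ is a cofibration between cofibrant objects, and transfinite induction starting from the initial Gray monoid $UF(\emptyset)=1$ gives that $U$ preserves cofibrant objects. For a cofibration $M\to N$ between cofibrant Gray monoids, $N$ is a retract of a cell complex relative to $M$. The same filtration argument then shows that $U(M)\to U(N)$ is a retract of a transfinite composite of cofibrations, hence a cofibration.

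The main obstacle is that $\mathrm{Gray}$ is not cartesian, and its symmetry must be used carefully: the filtration terms interleave copies of $U(M)$ between copies of $Y$. I expect to handle this through the non-symmetric version of the Schwede--Shipley filtration, which needs only the monoidal (not symmetric) pushout-product axiom and cofibrancy of the unit. Alternatively, I could cite \cite[Lemma 6.2]{schwedeshipley_mon}, or Lack's analogous argument for Gray categories in \cite{lackgraycatms}, specialised to one object.
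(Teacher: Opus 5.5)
Your argument is correct and is essentially the paper's own: the paper simply cites \cite[Theorem 4.1.(3)]{schwedeshipley_mon}, and you are unpacking that theorem's proof via the filtration of \cite[Lemma 6.2]{schwedeshipley_mon}. Two justifications need small fixes.
\begin{itemize}
\item Cofibrancy of the unit $1$ does not follow from $\mathrm{Gray}$ being a monoidal model category, nor from all 2-categories being fibrant. It holds because $\emptyset\to 1$ is a generating cofibration in Lack's model structure \cite{lack2catms}; equivalently, the underlying category of $1$ is free.
\item The claim that $i^{\otimes n}$ is a cofibration whenever $i$ is requires the source of $i$ to be cofibrant. This does hold for the generating cofibrations you actually use.
\end{itemize}
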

\begin{proof}
This follows directly from \cite[Theorem 4.1.(3)]{schwedeshipley_mon}. 
\end{proof}

In particular, the underlying 2-category of the lifted Gray tensor product
$\mathcal{C}\otimes_{\mathrm{Gr}}\mathcal{D}$ (obtained from Proposition~\ref{propgrmmodstr}.1) of two cofibrant Gray monoids is a cofibrant 
2-category. In particular, the powers $\mathcal{C}^{\otimes_{\mathrm{Gr}}^n}$ of a cofibrant Gray monoid are cofibrant 2-categories.

To understand the $\infty$-category $\mathrm{Ho}_{\infty}(\mathrm{GrMon})$ that arises by formal means, we state the following lemma which 
characterises it precisely as the $\infty$-category of $\mathsf{E}_1$-monoids in $\mathrm{Ho}_{\infty}(2\text{-Cat})$. 

\begin{lemma}\label{lemma_grmon=E1}
The forgetful functor $U\colon\mathrm{GrMon}\rightarrow 2\text{-Cat}$ induces a right adjoint
\[\mathrm{Ho}_{\infty}(U)\colon\mathrm{Ho}_{\infty}(\mathrm{GrMon})\rightarrow\mathrm{Ho}_{\infty}(2\text{-Cat})\]
that lifts to an equivalence
\begin{align}\label{equ_grmon=E1}
\mathrm{Ho}_{\infty}(U)\colon\mathrm{Ho}_{\infty}(\mathrm{GrMon})\rightarrow\mathrm{Alg}_{\mathsf{E}_1}(\mathrm{Ho}_{\infty}(2\text{-Cat})^\times).
\end{align}
\end{lemma}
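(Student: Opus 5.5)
The plan is to deduce this from Lurie's general rectification result for associative algebras in monoidal model categories, \cite[Theorem 4.1.8.4]{lurieha}, applied to the monoidal model category $\mathrm{Gray}=(2\text{-Cat},\otimes_{\mathrm{Gr}})$. That theorem says: if $\mathbf{A}$ is a combinatorial monoidal model category satisfying the monoid axiom and a mild extra hypothesis, then the transferred model structure on $\mathrm{Alg}(\mathbf{A})$ has underlying $\infty$-category equivalent to $\mathrm{Alg}_{\mathsf{E}_1}(\mathrm{Ho}_{\infty}(\mathbf{A})^{\otimes})$. The extra hypothesis is either that all objects are cofibrant, or a flatness condition ensuring that the relevant tensor powers are homotopically meaningful.

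The proof then has three steps.

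\begin{enumerate}
\item \textbf{Verify the hypotheses on $\mathrm{Gray}$.} Lack's model structure is combinatorial and monoidal, and it satisfies the monoid axiom \cite[Theorem 7.7]{lack2catms}. By Proposition~\ref{propgrmmodstr}, the transferred model structure on $\mathrm{GrMon}$ is exactly the one to which Lurie's theorem applies. The unit (the terminal 2-category) is cofibrant. By Lemma~\ref{lemmacofobjects}, cofibrant Gray monoids have cofibrant underlying 2-categories, so their Gray tensor powers are cofibrant and are computed correctly in $\mathrm{Ho}_{\infty}(2\text{-Cat})$.

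\item \textbf{Identify the target $\infty$-operad.} Lurie's theorem produces $\mathrm{Alg}_{\mathsf{E}_1}$ of the monoidal $\infty$-category $\mathrm{Ho}_{\infty}(\mathrm{Gray})^{\otimes}$. I must show this is the cartesian structure $\mathrm{Ho}_{\infty}(2\text{-Cat})^{\times}$. For cofibrant $\mathcal{C},\mathcal{D}$, the canonical comparison $\mathcal{C}\otimes_{\mathrm{Gr}}\mathcal{D}\rightarrow\mathcal{C}\times\mathcal{D}$ is a biequivalence, since the Gray tensor product and the cartesian product agree up to biequivalence. The unit is terminal. Hence the symmetric monoidal $\infty$-category induced on the localization is cartesian, by the uniqueness of cartesian structures \cite[Corollary 2.4.1.9]{lurieha}, once the comparison maps are seen to assemble into a lax monoidal functor of $\infty$-categories. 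This yields the equivalence (\ref{equ_grmon=E1}).

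\item \textbf{Identify the right adjoint.} $U$ is right Quillen by construction, so $\mathrm{Ho}_{\infty}(U)$ is a right adjoint. Under the equivalence, it corresponds to the forgetful functor $\mathrm{Alg}_{\mathsf{E}_1}\rightarrow\mathrm{Ho}_{\infty}(2\text{-Cat})$, since Lurie's equivalence is compatible with the forgetful functors.
\end{enumerate}

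The main obstacle is the flatness hypothesis in the first step. In Lack's model structure not every 2-category is cofibrant, so I must check that for cofibrant $X$, the functor $X\otimes_{\mathrm{Gr}}-$ preserves weak equivalences, or at least that the symmetric-power-type conditions Lurie requires hold. My first approach is to use that $\mathrm{Gray}$ is a monoidal model category in which all objects are fibrant. Tensoring with a cofibrant object then preserves weak equivalences between cofibrant objects. Combined with Lemma~\ref{lemmacofobjects}, this means the bar-type constructions in Lurie's proof only ever involve cofibrant objects, which reduces the flatness condition to this cofibrant case.
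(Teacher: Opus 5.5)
Your proposal follows the paper's proof: apply \cite[Theorem 4.1.8.4]{lurieha} to the monoidal model category $\mathrm{Gray}$, and identify the induced monoidal structure on $\mathrm{Ho}_{\infty}(2\text{-Cat})$ as cartesian via the natural pointwise weak equivalence $\otimes_{\mathrm{Gr}}\rightarrow\times$, which the paper cites from \cite{bourkegurski}; $\mathrm{Ho}_{\infty}(U)$ is a right adjoint because $U$ is right Quillen. The one thing to fix is your ``main obstacle'': when not every object is cofibrant, that theorem asks for left properness, cofibrations generated by cofibrations between cofibrant objects, and the monoid axiom, not a flatness or symmetric-power condition (symmetric powers only matter for commutative algebras, and for cofibrant $X$ the functor $X\otimes_{\mathrm{Gr}}-$ preserves weak equivalences between cofibrant objects by Ken Brown's lemma, independent of fibrancy), so rather than appealing to the internals of Lurie's proof you should note that Lack's generating cofibrations are between cofibrant 2-categories and check or cite left properness of Lack's model structure.
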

\begin{proof}
Being the underlying $\infty$-category of a model category, the $\infty$-category $\mathrm{Ho}_{\infty}(2\text{-Cat})$ has all small 
limits. The monoidal structure on $\mathrm{Ho}_{\infty}(2\text{-Cat})$ induced by the monoidal model category $\mathrm{Gray}$ is the 
cartesian product by way of the natural pointwise trivial fibration $\otimes_{\mathrm{Gr}}\rightarrow \times$ in $2\text{-Cat}$ 
\cite{bourkegurski}. Thus, the statement follows directly from the rectification theorem of \cite[Theorem 4.1.8.4]{lurieha} since all 
conditions are satisfied.
\end{proof}

\begin{corollary}
The composition $\mathrm{GrMon}\xrightarrow{U}2\text{-Cat}\xrightarrow{\iota}\mathrm{BiCat}$ induces an equivalence
\[\mathrm{Ho}_{\infty}(\mathrm{GrMon})^{\times}\xrightarrow{\simeq}\mathrm{Alg}_{\mathsf{E}_1}(\mathrm{Ho}_{\infty}(\mathrm{BiCat})^\times).\]
of cartesian monoidal $\infty$-categories.
\end{corollary}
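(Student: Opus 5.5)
The plan is to combine Lemma~\ref{lemma_grmon=E1} with the equivalence (\ref{equ_2bicatequiv}), and then to check that the monoidal structures agree. First, the inclusion $\iota\colon 2\text{-Cat}\hookrightarrow\mathrm{BiCat}$ is a right Quillen equivalence, so it induces an equivalence of underlying $\infty$-categories
\[\mathrm{Ho}_{\infty}(\iota)\colon\mathrm{Ho}_{\infty}(2\text{-Cat})\xrightarrow{\simeq}\mathrm{Ho}_{\infty}(\mathrm{BiCat}).\]
Being a right adjoint, it preserves finite products. By Example~\ref{exple_cartmon} it therefore induces a symmetric monoidal equivalence $\mathrm{Ho}_{\infty}(2\text{-Cat})^{\times}\simeq\mathrm{Ho}_{\infty}(\mathrm{BiCat})^{\times}$ of cartesian monoidal $\infty$-categories. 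Applying $\mathrm{Alg}_{\mathsf{E}_1}(-)$, which preserves equivalences of symmetric monoidal $\infty$-categories and is compatible with the inherited structure of Proposition~\ref{prop_bv}, gives a symmetric monoidal equivalence
\[\mathrm{Alg}_{\mathsf{E}_1}(\mathrm{Ho}_{\infty}(2\text{-Cat})^{\times})\simeq\mathrm{Alg}_{\mathsf{E}_1}(\mathrm{Ho}_{\infty}(\mathrm{BiCat})^{\times}).\]
Precomposing with the equivalence (\ref{equ_grmon=E1}) of Lemma~\ref{lemma_grmon=E1} yields an equivalence of underlying $\infty$-categories. It is induced by $\iota U$, because the lift in Lemma~\ref{lemma_grmon=E1} lies over $\mathrm{Ho}_{\infty}(U)$.

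Second, I have to say what the source $\mathrm{Ho}_{\infty}(\mathrm{GrMon})^{\times}$ means. Lemma~\ref{lemma_grmon=E1} shows that $\mathrm{Ho}_{\infty}(\mathrm{GrMon})$ is equivalent to an $\infty$-category of algebras in a presentable $\infty$-category, so it has all finite products. The forgetful functor $\mathrm{Alg}_{\mathsf{E}_1}(\mathcal{C}^{\times})\rightarrow\mathcal{C}$ preserves and creates limits, so on the target the inherited symmetric monoidal structure of Proposition~\ref{prop_bv} is cartesian. The equivalence of the first step is then an equivalence of $\infty$-categories with finite products and automatically preserves them. By Example~\ref{exple_cartmon} again it upgrades to an equivalence of cartesian symmetric monoidal $\infty$-categories. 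Note that the cartesian structure on the source need not agree with the lifted Gray tensor product on $\mathrm{GrMon}$. If one prefers to read $\mathrm{Ho}_{\infty}(\mathrm{GrMon})^{\times}$ as the structure induced by that lifted tensor product, I would invoke the trivial fibration $\otimes_{\mathrm{Gr}}\rightarrow\times$ of \cite{bourkegurski} on underlying 2-categories, together with Lemma~\ref{lemmacofobjects}. This shows that the lifted tensor product computes the product in $\mathrm{Ho}_{\infty}(\mathrm{GrMon})$ on cofibrant objects.

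The step I expect to be the main obstacle is exactly this identification of monoidal structures: showing that the derived Gray tensor product of cofibrant Gray monoids is a product in $\mathrm{Ho}_{\infty}(\mathrm{GrMon})$, not merely on underlying 2-categories. I would handle it by checking the universal property after applying the conservative, limit-creating functor $\mathrm{Ho}_{\infty}(U)$. The comparison map from the derived Gray product to the product is a map of Gray monoids whose underlying 2-functor is a weak equivalence by \cite{bourkegurski} and Lemma~\ref{lemmacofobjects}, so it is an equivalence in $\mathrm{Ho}_{\infty}(\mathrm{GrMon})$ because weak equivalences there are created by $U$ (Proposition~\ref{propgrmmodstr}). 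Everything else is formal.
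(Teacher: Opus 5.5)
Your proposal is correct and is essentially the argument the paper leaves implicit (the corollary is stated with no proof). You compose the equivalence of Lemma~\ref{lemma_grmon=E1} with $\mathrm{Alg}_{\mathsf{E}_1}$ applied to the product-preserving equivalence (\ref{equ_2bicatequiv}), then observe that an equivalence of $\infty$-categories with finite products is automatically an equivalence of cartesian monoidal structures; your extra check that the lifted Gray tensor product computes the product in $\mathrm{Ho}_{\infty}(\mathrm{GrMon})$ on cofibrant objects is sound but not needed for the statement as written.
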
\qed

To study the 2-dimensional algebra of Section~\ref{sec_sub_graymonalg} $\infty$-categorically, it is crucial to note that fully weak 
braidings, syllepses, and symmetries on a Gray monoid are equivalence invariant notions. In particular, 
given a Gray monoid $\mathcal{C}$, we may without loss of generality consider cofibrant replacements
$\mathbb{L}\mathcal{C}\xdblrightarrow{\sim}\mathcal{C}$ in $\mathrm{GrMon}$ thereof.

\begin{lemma}\label{lemma_mono_cofrepl}
Let $\mathcal{C}$ be a Gray monoid and $p\colon\mathbb{L}\mathcal{C}\xdblrightarrow{\sim}\mathcal{C}$ be a cofibrant replacement 
in $\mathrm{GrMon}$.
\begin{enumerate}
\item Then every (semi-strict/fully weak) braiding $\beta$ on $\mathcal{C}$ lifts to an essentially unique (semi-strict/fully weak) braiding 
$\mathbb{L}\beta$ on $\mathbb{L}\mathcal{C}$ such that $p(\mathbb{L}\beta)=\beta$.
\item Every syllepsis $\zeta$ on a braiding $\beta$ on $\mathcal{C}$ lifts to a unique syllepsis $\mathbb{L}\zeta$ on
$\mathbb{L}\beta$ on $\mathbb{L}(\mathcal{C})$ such that $p(\mathbb{L}\zeta)=\zeta$.
\item A syllepsis $\zeta$ on a braiding $\beta$ on $\mathcal{C}$ is symmetric if and only if the syllepsis $\mathbb{L}\zeta$ on
$\mathbb{L}\beta$ on $\mathbb{L}(\mathcal{C})$ is symmetric.
\end{enumerate}
\end{lemma}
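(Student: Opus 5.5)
The approach is transport of structure along the monoidal 2-functor $p$. Since $p\colon\mathbb{L}\mathcal{C}\rightarrow\mathcal{C}$ is a trivial fibration in $\mathrm{GrMon}$, its underlying 2-functor $U(p)$ is a trivial fibration in Lack's model structure on $2\text{-Cat}$ (Proposition~\ref{propgrmmodstr}). That is, $U(p)$ is surjective on objects, full on 1-cells, and fully faithful on 2-cells. In addition, $p$ is a strict monoidal 2-functor, so $p(A\otimes B)=p(A)\otimes p(B)$ strictly. I would exploit these lifting properties dimension by dimension.

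For Part (1), let $\beta=(\rho,\bar{\omega})$ be a braiding on $\mathcal{C}$. For objects $A,B$ of $\mathbb{L}\mathcal{C}$, fullness on 1-cells gives lifts $\tilde\rho_{A,B}\colon A\otimes B\rightarrow B\otimes A$ of $\rho_{pA,pB}$. In the semi-strict case I choose $\tilde\rho_{I,-}=\tilde\rho_{-,I}=1$, which is possible because $p$ preserves the unit strictly. Pseudo-naturality of $\rho$ consists of 2-cells $\rho_{f,g}$. These lift uniquely because $p$ is locally fully faithful on 2-cells. The 2-cells $\bar{\omega}$ lift uniquely for the same reason, and their invertibility lifts as well. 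All axioms of Day--Street \cite[Definition 12]{daystreet}, and of the fully weak version \cite{gurski_braid}, are equations between pasting composites of 2-cells. Since $p$ is a strict 2-functor that is faithful on 2-cells, each such equation holds in $\mathbb{L}\mathcal{C}$ exactly when its image holds in $\mathcal{C}$, which it does. This gives $\mathbb{L}\beta$ with $p(\mathbb{L}\beta)=\beta$.

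The only freedom in the construction is the choice of the 1-cell lifts $\tilde\rho_{A,B}$. For essential uniqueness, take two such choices $\tilde\rho,\tilde\rho'$. Their images coincide, so the identity 2-cells $p(\tilde\rho_{A,B})=p(\tilde\rho'_{A,B})$ lift uniquely to invertible 2-cells $\tilde\rho_{A,B}\Rightarrow\tilde\rho'_{A,B}$. These 2-cells equip the identity monoidal 2-functor with a braided structure $u$ in the sense of Definition~\ref{def_equiv_braids}. The equations (\ref{diag_def_braidmor_I}) and (\ref{diag_def_braidmor_II}) for $u$ again hold because their images under $p$ are trivially satisfied identities and $p$ is faithful on 2-cells. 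Hence the two lifts define the same class in $\beta(\mathbb{L}\mathcal{C})$.

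Parts (2) and (3) then follow directly. A syllepsis on $\mathbb{L}\beta$ is an invertible modification whose components are 2-cells $v_{A,B}\colon 1\Rightarrow\rho_{B,A}\rho_{A,B}$. Local full faithfulness gives a unique lift $\mathbb{L}\zeta$, and the syllepsis axioms \cite[Definition 15]{daystreet} transfer as before, which gives the uniqueness claimed in Part (2). The symmetry condition \cite[Definition 17]{daystreet} is an equation of 2-cells. It holds in $\mathbb{L}\mathcal{C}$ if and only if its image holds in $\mathcal{C}$, by faithfulness and the fact that $p$ strictly preserves composites. This proves Part (3).

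The main obstacle is Part (1) in the fully weak setting. There the braiding carries additional unit data, adjoint equivalence data, and modifications whose types depend on the chosen 1-cell lifts. One has to check that these too can be lifted coherently, so that every remaining axiom is really an equation of 2-cells. I would first reduce to the semi-strict case using \cite[Theorem 27]{gurski_braid}, and then apply the argument above.
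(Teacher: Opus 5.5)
Your semi-strict argument for Part (1), and your arguments for Parts (2) and (3), are the paper's proof written out in detail. The paper simply notes that $p$ is a strict Gray functor whose underlying 2-functor is a trivial fibration in $\twocat$, i.e.\ surjective on objects, full on 1-cells and fully faithful on 2-cells. All data then lift along these properties, and all axioms transfer by faithfulness on 2-cells. Your treatment of essential uniqueness, by lifting identity 2-cells to a braided structure on the identity monoidal 2-functor, correctly spells out what the paper leaves implicit. The one thing to add there is that the lifted $\tilde\rho_{A,B}$ are again equivalences, because $p$ reflects equivalences: lift a pseudo-inverse by fullness, and the invertible unit and counit by local full faithfulness.

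The step that does not work as written is your last paragraph, the reduction of the fully weak case via \cite[Theorem 27]{gurski_braid}.
\begin{itemize}
\item That theorem replaces $\beta$ by an equivalent semi-strict braiding $\beta_s$. Your construction then gives a lift with $p(\mathbb{L}\beta_s)=\beta_s$, not a fully weak lift with $p(\mathbb{L}\beta)=\beta$ on the nose, which is what the statement asserts.
\item The reduction is also unnecessary, because the fully weak case is no obstacle. All of its data consists of 1-cells and 2-cells: the $\rho_{A,B}$ and their adjoint inverses, the pseudo-naturality 2-cells, the unit and counit of the adjoint equivalence, and the components of the two modifications.
\item Lift the 1-cells by fullness, with arbitrary choices. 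Each required 2-cell then has a boundary in $\mathbb{L}\mathcal{C}$ that the strict monoidal 2-functor $p$ sends to the boundary of the corresponding 2-cell in $\mathcal{C}$. So it has a unique lift by local full faithfulness, whichever 1-cell lifts were chosen.
\item The triangle identities and the braiding axioms are equations between pasting composites of 2-cells, so they hold by faithfulness. Essential uniqueness follows exactly as in your semi-strict argument.
\end{itemize}
So apply your dimension-by-dimension lifting directly in the fully weak case rather than passing through \cite{gurski_braid}.
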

\begin{proof}
This follows directly from the fact that $p$ is a (strict) Gray functor, and that trivial fibrations in $\twocat$ are surjective as well as 
locally surjective and fully faithful.
\end{proof}

It will be useful in the next section to understand data inside the $\infty$-category
$\mathrm{Alg}_{\mathsf{E}_1}(\mathrm{Ho}_{\infty}(2\text{-Cat})^\times)$ directly in terms of its associated bicategorical monoidal structures on
2-categories. Therefore, from Section~\ref{sec_infty} we recall that an $\mathsf{E}_1$-algebra $A$ in a symmetric monoidal $3$-category is uniquely 
determined by its underlying quasi-unital $\mathsf{A}_5$-algebra. That is, an $\mathsf{E}_1$-algebra is essentially a diagram of the form
\begin{align}\label{diag_equ_grmon=E1_exple}
\xymatrix{
I\ar[r]_u & A\ar@/^1pc/[r]|{u\otimes 1}\ar@/_1pc/[r]|{1\otimes u} & A^{\otimes 2}\ar[l]|{m} & A^{\otimes 3}\ar@<.5ex>[l]^{m\otimes 1}\ar@<-.5ex>[l]_{1\otimes m} & A^{\otimes 4}\ar@<-.5ex>@/_1pc/[l]_{m\otimes 1\otimes 1}\ar[l]|(.42){1\otimes m\otimes 1}\ar@<.5ex>@/^1pc/[l]^{1\otimes 1\otimes m} & A^{\otimes 5}\ar@<.75ex>[l]\ar@<.25ex>[l]\ar@<-.25ex>[l]\ar@<-.75ex>[l]
}.
\end{align}
In particular, a Gray monoid $A$ is a monoid in the monoidal 1-category $\mathrm{Gray}$, and so is essentially of the form
\[\xymatrix{
I\ar[r]_u & A\ar@/^1pc/[r]|{u\otimes 1}\ar@/_1pc/[r]|{1\otimes u} & A\otimes_{\mathrm{Gr}}A\ar[l]|{m} & A\otimes_{\mathrm{Gr}}A\otimes_{\mathrm{Gr}}A\ar@<.5ex>[l]^(.6){m\otimes 1}\ar@<-.5ex>[l]_(.6){1\otimes m}
}\]
in $2\text{-Cat}$. This diagram can be trivially extended to a quasi-unital $\mathsf{A}_5$-structure in $\mathrm{Gray}$. By way of the 
equivalence $\otimes_{\mathrm{Gr}}\xrightarrow{\simeq}\times$ in $\mathrm{Ho}_{\infty}(2\text{-Cat})$, the equivalence 
(\ref{equ_grmon=E1}) maps a cofibrant Gray monoid $A$ to its associated diagram (\ref{diag_equ_grmon=E1_exple}) in $\mathrm{Ho}_{\infty}(2\text{-Cat})^{\times}$.
Given two cofibrant Gray monoids $A$ and $B$, a morphism $f\colon U(A)\rightarrow U(B)$ in
$\mathrm{Alg}_{\mathsf{E}_1}(\mathrm{Ho}_{\infty}(2\text{-Cat})^\times)$ is (up to equivalence) uniquely determined by a diagram of the form
\begin{align}\label{diag_grmoncof_mor}
\begin{gathered}
\xymatrix{
A\otimes_{\mathrm{Gr}}A\otimes_{\mathrm{Gr}}A\ar[r]^{(f,f,f)}\ar@<-.5ex>[d]_{1\otimes m}\ar@<.5ex>[d]^{m\otimes 1}\ar@{}[dr]|{\omega} & B\otimes_{\mathrm{Gr}}B\otimes_{\mathrm{Gr}}B\ar@<-.5ex>[d]_{1\otimes m}\ar@<.5ex>[d]^{m\otimes 1}\\
A\otimes_{\mathrm{Gr}}A\ar[r]^{(f,f)}\ar[d]|{\otimes}\ar@{}[dr]|{\chi} & B\otimes_{\mathrm{Gr}}B\ar[d]|{\otimes}\\
A\ar[r]|f\ar@{}[dr]|{\iota} & B \\
\ast\ar[u]^{u}\ar@{=}[r] & \ast\ar[u]^{u}
}
\end{gathered}
\end{align}
which extends to $(f,f,f,f)\colon A^{\otimes_{\mathrm{Gr}} 4}\rightarrow B^{\otimes_{\mathrm{Gr}} 4}$ in the obvious way. This extension is 
propositional and hence witnessed by a set of corresponding equations. Extension to the fifth power (and hence all others) is trivial by way 
of Lemma~\ref{lemma_truncext0}. 

\begin{remark}
Let $\mathcal{C}$ and $\mathcal{D}$ be cofibrant Gray monoids. By the above it is clear that a monoidal 2-functor
$F\colon\mathcal{C}\rightarrow\mathcal{D}$ is a notion that sits inbetween that of an $\mathsf{E}_1$-monoid morphism and that of a
quasi-unital $\mathsf{A}_4$-monoid morphism, and that is equivalent to both. One can make precise that they are the morphisms of
$\mathsf{A}_4$-monoids with distinguished quasi-units.
\end{remark}

The Gray powers $A^{\otimes_{\mathrm{Gr}}n}$ and $B^{\otimes_{\mathrm{Gr}}n}$ are cofibrant 2-categories whenever $A$ and $B$ are so in
$\mathrm{GrMon}$. It follows that the data of a morphism $F\colon U(A)\rightarrow U(B)$ in 
$\mathrm{Alg}_{\mathsf{E}_1}(\mathrm{Ho}_{\infty}(2\text{-Cat})^\times)$ gives rise to the data of a monoidal 2-functor. Vice versa, every 
monoidal 2-functor $F$ can be lifted to a Gray functor $\bar{F}\colon A\rightarrow B$ along the equivalence (\ref{equ_grmon=E1}) such that 
$U(\bar{F})\simeq F$. This is a purely homotopical incarnation of the same such representation result typical and well-known in low 
dimensional monad theory by way of corresponding algebraic cofibrant replacements \cite{garner_cofmndthy}.
In fact, by the above reasoning it follows that monoidal 2-functors between \emph{cofibrant} Gray monoids are
over-determined: As quasi-unitality implies unitality, the higher unitality laws of a monoidal 2-functor can be left implicit. More 
precisely, the natural equivalences $\kappa$ and $\zeta$ that witness left- and right-unitality of the 2-functor, as well as compatibility 
of the associator $\omega$ with $\kappa$ and $\zeta$ can be derived. In fact, the unitality laws can be chosen to be trivial as the following 
sequence of lemmas shows.

\begin{lemma}\label{lemma_cofidunit}
Suppose $\mathcal{C}$ is a Gray monoid such that the inclusion $\ast\rightarrow\mathcal{C}$ is a cofibration of 2-categories (e.g.\ $\mathcal{C}$ is a cofibrant Gray monoid). Then every monoidal 2-functor
\[(f,\chi,\omega,\iota,\zeta,\kappa)\colon\mathcal{C}\rightarrow\mathcal{D}\]
is equivalent to a monoidal 2-functor $(f\sprime,\chi\sprime,\omega\sprime,\iota\sprime,\zeta\sprime,\kappa\sprime)$ with
$\iota\sprime=1_I$.
\end{lemma}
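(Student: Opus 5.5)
The plan is to transport $f$ along a pseudo-natural equivalence that absorbs $\iota$, and to use the cofibration $\ast\rightarrow\mathcal{C}$ to make this transport strict.

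\textbf{Step 1: build a replacement 2-functor.} The unit comparison $\iota\colon I_{\mathcal{D}}\rightarrow f(I_{\mathcal{C}})$ is an equivalence 1-cell in $\mathcal{D}$. By the homotopy theory of Section~\ref{sec_sub_graymonhtythy}, $[E,\mathcal{D}]_{\mathrm{Gr}}$ is a path object, and the evaluation $[E,\mathcal{D}]_{\mathrm{Gr}}\rightarrow\mathcal{D}$ at the target end is a trivial fibration. Consider the commutative square with left edge the cofibration $\ast\rightarrow\mathcal{C}$ (the unit $I_{\mathcal{C}}$), right edge this evaluation, top map $\ast\rightarrow[E,\mathcal{D}]_{\mathrm{Gr}}$ picking out the adjoint equivalence generated by $\iota$, and bottom map $f$. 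It has a lift $\Phi\colon\mathcal{C}\rightarrow[E,\mathcal{D}]_{\mathrm{Gr}}$. Evaluating at the other end gives a 2-functor $f\sprime\colon\mathcal{C}\rightarrow\mathcal{D}$ with $f\sprime(I)=I$ on the nose, together with a pseudo-natural equivalence $\theta\colon f\sprime\Rightarrow f$ with $\theta_I=\iota$.

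\textbf{Step 2: transport the monoidal structure.} Conjugating by $\theta$ and a chosen pseudo-inverse, define
\[\chi\sprime_{A,B}:=\theta_{AB}^{-1}\circ\chi_{A,B}\circ(\theta_A\otimes\theta_B).\]
Define $\omega\sprime,\zeta\sprime,\kappa\sprime$ as the corresponding pastings of $\omega,\zeta,\kappa$ with the pseudo-naturality cells of $\theta$ and the adjoint-equivalence unit and counit. Set
\[\iota\sprime:=\theta_I^{-1}\circ\iota,\]
which is canonically isomorphic to $1_I$ through the counit of the adjoint equivalence $\iota\dashv\iota^{-1}$.

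\textbf{Step 3: make $\iota\sprime$ strictly the identity.} Replace $\iota\sprime$ by $1_I$ and absorb the invertible 2-cell $\iota\sprime\cong 1_I$ into $\zeta\sprime$ and $\kappa\sprime$ by whiskering. Then verify that the Day--Street axioms \cite[Definition 2]{daystreet} still hold, and that $\theta$ together with the evident cells $\theta_2,\theta_0$ becomes a monoidal pseudo-natural equivalence. By Lemma~\ref{lemma_mnt_inv} its inverse is also monoidal, which gives the required equivalence of monoidal 2-functors.

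\textbf{Main obstacle.} I expect the main obstacle to be the coherence checking in Steps 2 and 3: showing that the transported data satisfy the monoidal-functor axioms and that $\theta$ is monoidal. This is the standard fact that monoidal structure transfers along pseudo-natural equivalences. I would first try to obtain it formally rather than by pasting. The idea is to lift $\Phi$ itself along $U$, using the 2-category of monoidal 2-functors and their transformations as in \cite[p.107]{daystreet}, or equivalently to use that the forgetful map to underlying 2-functors is an isofibration in the sense of Remark~\ref{rem_algfib}. If that route fails, the fallback is a direct verification in which every axiom reduces to the original axiom for $(f,\chi,\omega,\iota,\zeta,\kappa)$ together with the triangle identities for $\iota\dashv\iota^{-1}$.
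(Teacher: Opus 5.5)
Your proposal is correct and follows the paper's proof. Both use the same lifting square of the cofibration $\ast\rightarrow\mathcal{C}$ against the trivial fibration $[E,\mathcal{D}]_{\mathrm{Gr}}\rightarrow\mathcal{D}$ to get $f\sprime$ with $f\sprime(I)=I$ and a pseudo-natural equivalence to $f$, then transport the monoidal structure along it. Your Steps 2 and 3 spell out the transport the paper leaves implicit, including the point that the naively transported unit $\theta_I^{-1}\circ\iota$ is only isomorphic to $1_I$: you set $\iota\sprime=1_I$ and absorb the difference into $\zeta\sprime,\kappa\sprime$, or equivalently take $\theta_0=1_{\iota}$.
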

\begin{proof}
The morphism $\iota\colon I\rightarrow f(I)$ is a square of the form
\[\xymatrix{
\ast\ar[r]^(.4){\iota}\ar[d]_{\{I\}} & [E,\mathcal{C}]_{\mathrm{Gr}}\ar@{->>}[d]^{\rotatebox{90}{$\scriptstyle \sim$}} \\
\mathcal{C}\ar[r]_{f} & \mathcal{D} 
}\]
in $2\text{-Cat}$. The left vertical map is a cofibration by Lemma~\ref{lemmacofobjects}. We hence obtain a lift
$l\colon\mathcal{C}\rightarrow[E,\mathcal{D}]_{\mathrm{Gr}}$. The 2-functor $f\sprime:= s l\colon\mathcal{C}\rightarrow\mathcal{D}$ maps the unit $I$ of $\mathcal{C}$ to the unit $I$ of $\mathcal{D}$. The monoidal structure on $f$ now can be transported along $l$ to a monoidal structure $(f\sprime,\chi\sprime,\omega\sprime,1,\zeta\sprime,\kappa\sprime)$ as claimed.
\end{proof}


\begin{lemma}\label{lemma_cofbraidunit}
Suppose $\mathcal{C}$ is a Gray monoid such that the inclusion $\ast\rightarrow\mathcal{C}$ is a cofibration of 2-categories. Then for every  
Gray monoid $\mathcal{D}$ and every monoidal 2-functor of the form
\[(f,\chi,\omega,1,\zeta,\kappa)\colon\mathcal{C}\rightarrow\mathcal{D}\]
there is a 2-cell $\chi\sprime$ and a 3-cell $\omega\sprime$ such that the identity on $f$ extends to an equivalence
$(f,\chi,\omega,\iota,\zeta,\kappa)\simeq(f,\chi\sprime,\omega\sprime,1,1,1)$ of monoidal 2-functors. In particular, $\chi\sprime_{A,I}=\chi\sprime_{I,A}=1_A$ for all $A\in\mathcal{C}$.
\end{lemma}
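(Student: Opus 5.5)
We construct $\chi\sprime$, $\omega\sprime$ explicitly by modifying $\chi$ along the unitors $\zeta,\kappa$, and then show that the identity pseudo-natural transformation on $f$, equipped with suitable modifications, is a monoidal equivalence. Since $\iota=1_I$, the unitors are invertible modifications
\[\zeta_A\colon f(A)\otimes f(I)=f(A)\rightarrow f(A),\qquad \kappa_A\colon f(I)\otimes f(A)=f(A)\rightarrow f(A),\]
or more precisely invertible 2-cells $\zeta_A\colon\chi_{A,I}\Rightarrow 1_{f(A)}$ and $\kappa_A\colon\chi_{I,A}\Rightarrow 1_{f(A)}$ in the Day--Street sense. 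We will not modify $f$ itself. Instead, we aim for a new invertible pseudo-natural transformation $\chi\sprime$ with $\chi\sprime_{A,I}=\chi\sprime_{I,A}=1$, together with an invertible modification $\theta\colon\chi\Rrightarrow\chi\sprime$ restricting to $\zeta$ and $\kappa$ on the two unit slices.

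The key step is to extend the partial 2-cell data $(\zeta,\kappa)$, defined on the sub-2-category $\mathcal{C}\vee\mathcal{C}:=(\mathcal{C}\otimes_{\mathrm{Gr}}\{I\})\cup(\{I\}\otimes_{\mathrm{Gr}}\mathcal{C})$ of $\mathcal{C}\otimes_{\mathrm{Gr}}\mathcal{C}$, to all of $\mathcal{C}\otimes_{\mathrm{Gr}}\mathcal{C}$. On $\{I\}\otimes\{I\}$ the two prescriptions agree by the Day--Street axiom relating $\zeta_I$ and $\kappa_I$ with $\iota=1$. We use the cofibrancy hypothesis for this. Since $\ast\rightarrow\mathcal{C}$ is a cofibration and $\mathrm{Gray}$ is a monoidal model category, the pushout-product inclusion $\mathcal{C}\vee\mathcal{C}\hookrightarrow\mathcal{C}\otimes_{\mathrm{Gr}}\mathcal{C}$ is a cofibration of 2-categories. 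We consider the pseudo-natural transformation $\chi$ as a 2-functor
\[\mathcal{C}\otimes_{\mathrm{Gr}}\mathcal{C}\rightarrow[\mathbf{2},\mathcal{D}]_{\mathrm{ps}}\]
into the 2-category of arrows and pseudo-commutative squares in $\mathcal{D}$. The data $(\zeta,\kappa)$ then gives a lift on $\mathcal{C}\vee\mathcal{C}$ into the path object
\[[E,[\mathbf{2},\mathcal{D}]_{\mathrm{ps}}]_{\mathrm{Gr}}\twoheadrightarrow[\mathbf{2},\mathcal{D}]_{\mathrm{ps}},\]
namely into invertible modifications out of $\chi$ whose target restricts to the identity. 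Evaluation at the source is a trivial fibration, exactly as in Lemma~\ref{lemma_cofidunit}, so a lift exists. Its target end is $\chi\sprime$, and the lift itself is $\theta$. By construction $\chi\sprime$ is the identity on the unit slices, and it still has source $m\circ(f\otimes f)$ and target $f\circ m$, because those endpoints were held fixed.

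Next we set
\[\omega\sprime:=(\theta\ast 1)\circ\omega\circ(\theta^{-1}\ast 1),\]
that is, we transport $\omega$ along $\theta$ whiskered appropriately with $m$ and $f$. The associativity axiom for $\omega\sprime$ follows from that for $\omega$, since conjugation by $\theta$ is functorial and compatible with the Gray interchange. For the unit axioms with $\zeta\sprime=\kappa\sprime=1$, we compute the compatibility of $\omega\sprime$ with trivial unitors. It reduces, after conjugation, to the Day--Street compatibility of $\omega$ with $\zeta,\kappa$, because $\theta$ restricts to $\zeta,\kappa$ on the unit slices. Finally, the identity transformation $1_f$ with $\theta$ as its 2-cell component $(1_f)_2$ and the identity as $(1_f)_0$ satisfies the monoidal transformation axioms. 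These are precisely the defining equations of $\omega\sprime$ and the restriction property of $\theta$. Hence we obtain the claimed equivalence of monoidal 2-functors.

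The main obstacle is the lifting step. We must set up $[\mathbf{2},\mathcal{D}]_{\mathrm{ps}}$ and its path object so that lifts really are modifications with fixed endpoints $m(f\otimes f)$ and $f m$. We also need the pushout-product cofibration to hold for the Gray tensor, which follows from \cite{lack2catms} since $\mathrm{Gray}$ is monoidal model. A secondary risk is checking that the Day--Street unit axiom ensures $\zeta_I=\kappa_I$, so that the partial data on $\mathcal{C}\vee\mathcal{C}$ is well defined.
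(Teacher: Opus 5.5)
Your skeleton is the paper's. The wedge inclusion $(\ast\otimes_{\mathrm{Gr}}\mathcal{C})\sqcup_{\ast}(\mathcal{C}\otimes_{\mathrm{Gr}}\ast)\hookrightarrow\mathcal{C}\otimes_{\mathrm{Gr}}\mathcal{C}$ is a cofibration because $\mathrm{Gray}$ is monoidal model. $\chi$ is encoded as a 2-functor into an arrow 2-category of $\mathcal{D}$; the paper uses $[E,\mathcal{D}]_{\mathrm{Gr}}$ rather than $[\mathbf{2},\mathcal{D}]_{\mathrm{ps}}$, which is immaterial. Then $(\zeta,\kappa)$ is extended to an invertible modification, and $\omega'$ is obtained by conjugating $\omega$.

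However, the lifting step as you set it up does not produce what you claim. Lifting against the absolute evaluation map $[E,X]_{\mathrm{Gr}}\rightarrow X$, with $X=[\mathbf{2},\mathcal{D}]_{\mathrm{ps}}$, constrains the path only on the wedge. Elsewhere its components are equivalences in $X$, i.e.\ pseudo-commutative squares whose two sides are arbitrary equivalences of $\mathcal{D}$. So $\chi'$ is a transformation between two 2-functors that are merely equivalent to $m(f\otimes f)$ and $fm$, and $\theta$ is not a modification $\chi\Rrightarrow\chi'$. Your phrase ``because those endpoints were held fixed'' has nothing in the lifting problem behind it.

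The analogy with Lemma~\ref{lemma_cofidunit} is misleading here. There the endpoint is \emph{supposed} to move (that is how $f'$ arises), whereas here both ends of $\chi$ must stay put. You flag this as the main obstacle but leave it unresolved, and it is precisely the content of the lemma.

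The missing idea is to do the extension fiberwise over $\mathcal{D}\times\mathcal{D}$, and this is where $\iota=1_I$ is really used. With $\iota=1_I$, the components $\zeta_A,\kappa_A$ are 2-cells between parallel 1-cells $f(A)\rightarrow f(A)$. Hence $(s,t)\circ(\zeta,\kappa)$ is the constant homotopy, so $(\zeta,\kappa)$ is a homotopy in the slice model category $2\text{-Cat}_{/\mathcal{D}\times\mathcal{D}}$. In that slice $(s,t)$ is fibrant and the wedge inclusion is still a cofibration, and the paper invokes the homotopy extension property there.

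In your language: replace $[E,X]_{\mathrm{Gr}}$ by the fiberwise path object $[E,X]_{\mathrm{Gr}}\times_{[E,\mathcal{D}\times\mathcal{D}]_{\mathrm{Gr}}}(\mathcal{D}\times\mathcal{D})$ of paths lying over constant paths. Its evaluation map to $X$ is a trivial fibration. It is the base change, along constant paths, of the pullback-power of the trivial cofibration $\{0\}\hookrightarrow E$ against the fibration $(s,t)\colon X\rightarrow\mathcal{D}\times\mathcal{D}$.

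A lift against this map has identity boundary. It is therefore an honest invertible modification $\theta\colon\chi\Rrightarrow\chi'$ of transformations $m(f\otimes f)\Rightarrow fm$, with $\chi'$ trivial on the wedge. From there your conjugation of $\omega$ and the monoidal equivalence $(1_f,\theta,1)$ go through as in the paper.

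The agreement of $\zeta$ and $\kappa$ at $(I,I)$, which you list as secondary, is needed for the map out of the pushout to exist at all. The paper also uses it without comment.
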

\begin{proof}
By virtue of the fact that $\mathrm{Gray}$ is monoidal, it follows that the 2-latching map 
of the nerve $N(\mathcal{C})$ --- i.e.\ the canonical inclusion
\[(\ast\otimes_{\mathrm{Gr}}\mathcal{C})\sqcup_{\ast}(\mathcal{C}\otimes_{\mathrm{Gr}}\ast)\hookrightarrow\mathcal{C}\otimes_{\mathrm{Gr}}\mathcal{C}\]
that maps $A$ to $(I,A)$ or to $(A,I)$, respectively --- is a cofibration in $2\text{-Cat}$. Given a monoidal 2-functor
$(f,\chi,\omega,\iota,\zeta,\kappa)\colon\mathcal{C}\rightarrow\mathcal{D}$, the pseudo-natural 
transformations $\chi$, $\zeta$, and $\kappa$ give rise to the diagram
\[\xymatrix{
(\ast\otimes_{\mathrm{Gr}}\mathcal{C})\sqcup_{\ast}(\mathcal{C}\otimes_{\mathrm{Gr}}\ast)\ar@{^(->}[d]\ar[r]^(.65){(\Delta f,\Delta f)}\ar@{}[dr]|(.35){\simeq(\zeta,\kappa)} & [E,\mathcal{D}]_{\mathrm{Gr}}\ar@{->>}[d]^{(s,t)} \\
\mathcal{C}\otimes_{\mathrm{Gr}}\mathcal{C}\ar[r]_{(m(f,f),f m)}\ar[ur]_{\ulcorner\chi\urcorner} & \mathcal{D}\times\mathcal{D}
}\]
in $2\text{-Cat}$. Here, both the outer square and the lower triangle commute strictly, while the top triangle commutes precisely up to the 
homotopy given by the pair $(\zeta,\kappa)$. This homotopy is in fact a homotopy in the slice $2\text{-Cat}_{/\mathcal{D}\times\mathcal{D}}$ whenever $\iota=1$, because the composition $(s,t)(\zeta,\kappa)$ is constant. This means, we are given a homotopy-commutative triangle
\[\xymatrix{
(\ast\otimes_{\mathrm{Gr}}\mathcal{C})\sqcup_{\ast}(\mathcal{C}\otimes_{\mathrm{Gr}}\ast)\ar[r]^(.65){(\Delta f,\Delta f)}\ar@{^(->}[d]_{\iota} & [E,\mathcal{D}]_{\mathrm{Gr}}\\
\mathcal{C}\otimes_{\mathrm{Gr}}\mathcal{C}\ar[ur]_{\ulcorner\chi\urcorner} & 
}\]
in the model category $2\text{-Cat}_{/\mathcal{D}\times\mathcal{D}}$, where the vertical map on the left hand side is a cofibration and the 
object on right hand side is fibrant. Thus, by the homotopy extension property, we find a homotopy $\Xi\colon \chi\sprime\simeq\chi$ in
$2\text{-Cat}_{/\mathcal{D}\times\mathcal{D}}$ such that $\Xi\iota=(\zeta,\kappa)$ and such that the triangle 
\[\xymatrix{
(\ast\otimes_{\mathrm{Gr}}\mathcal{C})\sqcup_{\ast}(\mathcal{C}\otimes_{\mathrm{Gr}}\ast)\ar[r]^(.65){(\Delta f,\Delta f)}\ar@{^(->}[d]_{\iota} & [E,\mathcal{D}]_{\mathrm{Gr}}\\
\mathcal{C}\otimes_{\mathrm{Gr}}\mathcal{C}\ar[ur]_{\ulcorner\chi\sprime\urcorner} & 
}\]
commutes strictly. One then defines a 3-cell $\omega\sprime$ by pasting the 3-cell $\omega$ with corresponding instances of $\Xi$ at its 
boundaries. A direct computation shows that $(f,\chi\sprime,\omega\sprime,1,1,1)$ is a monoidal 2-functor that is equivalent to 
the monoidal structure $(f,\chi,\omega,\iota,\zeta,\kappa)$ that we had started with.
\end{proof}

\begin{remark}
The same reduction for monoidal functors between monoidal 1-categories can be found already in various parts of \cite{joyalstreet}, albeit by 
not explicitly specified means.
\end{remark}

\begin{corollary}\label{cor_cofbraidunit}
Suppose $\mathcal{C}$ is a Gray monoid such that the inclusion $\ast\rightarrow\mathcal{C}$ is a cofibration of 2-categories. Then for every monoidal structure $(\chi,\omega,1,\zeta,\kappa)$ on the multiplication
\[m\colon\mathcal{C}\otimes_{\mathrm{Gr}}\mathcal{C}\rightarrow\mathcal{C}\]
there is a 2-cell $\chi\sprime$ and a 3-cell $\omega\sprime$ such that
$(m,\chi,\omega,\iota,\zeta,\kappa)\simeq(m,\chi\sprime,\omega\sprime,1,1,1)$ as monoidal 2-functors, and such that furthermore
the pseudo-natural equivalence $\chi\sprime_{-,I,I,-}$ is the identity.
\end{corollary}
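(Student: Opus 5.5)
The plan is to view the multiplication $m\colon\mathcal{C}\otimes_{\mathrm{Gr}}\mathcal{C}\rightarrow\mathcal{C}$ as a monoidal 2-functor out of the Gray monoid $\mathcal{C}\otimes_{\mathrm{Gr}}\mathcal{C}$, and apply Lemma~\ref{lemma_cofbraidunit} to it. Two things must be checked. First, the hypothesis of that lemma: the inclusion $\ast\rightarrow\mathcal{C}\otimes_{\mathrm{Gr}}\mathcal{C}$ of the unit $(I,I)$ must be a cofibration of 2-categories. Second, the resulting normalised 2-cell $\chi\sprime$ must be the identity on the mixed unit inputs $(-,I,I,-)$, not merely on the inputs $((A,B),(I,I))$ and $((I,I),(A,B))$ that Lemma~\ref{lemma_cofbraidunit} controls directly.

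For the first point, I would factor the unit inclusion as $\ast\cong\ast\otimes_{\mathrm{Gr}}\ast\rightarrow\mathcal{C}\otimes_{\mathrm{Gr}}\ast\rightarrow\mathcal{C}\otimes_{\mathrm{Gr}}\mathcal{C}$. Since $\mathrm{Gray}$ is a monoidal model category and $\ast$ is cofibrant, tensoring the cofibration $\ast\rightarrow\mathcal{C}$ with $\mathcal{C}$ or with $\ast$ gives cofibrations between cofibrant objects by the pushout-product axiom. So the composite is a cofibration.

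Applying Lemma~\ref{lemma_cofbraidunit} then gives an equivalence $(m,\chi,\omega,1,\zeta,\kappa)\simeq(m,\chi'',\omega'',1,1,1)$ in which $\chi''$ is the identity whenever one of its two arguments in $\mathcal{C}\otimes_{\mathrm{Gr}}\mathcal{C}$ is $(I,I)$.

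For the second point, I would repeat the homotopy-extension argument from the proof of Lemma~\ref{lemma_cofbraidunit}, now over a larger subobject. Write $L\hookrightarrow(\mathcal{C}\otimes_{\mathrm{Gr}}\mathcal{C})^{\otimes_{\mathrm{Gr}}2}$ for the union of three sub-2-categories:
\begin{itemize}
\item the two copies $(\ast\otimes_{\mathrm{Gr}}\ast\otimes_{\mathrm{Gr}}\mathcal{C}\otimes_{\mathrm{Gr}}\mathcal{C})$ and $(\mathcal{C}\otimes_{\mathrm{Gr}}\mathcal{C}\otimes_{\mathrm{Gr}}\ast\otimes_{\mathrm{Gr}}\ast)$ that were already normalised;
\item the middle copy $\mathcal{C}\otimes_{\mathrm{Gr}}\ast\otimes_{\mathrm{Gr}}\ast\otimes_{\mathrm{Gr}}\mathcal{C}$.
\end{itemize}
This is a latching-type map built by iterated pushout-products of the cofibration $\ast\rightarrow\mathcal{C}$, so it is a cofibration.

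On the middle copy, the component $\chi''_{(A,I),(I,D)}$ is a pseudo-natural self-equivalence of $m(m(A,I),m(I,D))=AD=m(A,D)$, using strict unitality of the Gray monoid. I would trivialise it as follows. From the unit axioms of a monoidal 2-functor (the compatibility of $\omega$ with $\zeta$, $\kappa$, now both identities), $\omega''$ supplies an invertible modification comparing $\chi''_{(A,I),(I,D)}$ with composites of $\chi''$ at $(I,I)$-arguments, which are identities. This modification gives a homotopy in $2\text{-Cat}_{/\mathcal{C}\times\mathcal{C}}$ from $\chi''|_{\text{middle}}$ to the constant map. It agrees with the identity homotopy on the intersections with the other two pieces, because $\omega''$ restricted there is trivial.

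Homotopy extension along $L\hookrightarrow(\mathcal{C}\otimes_{\mathrm{Gr}}\mathcal{C})^{\otimes 2}$ into the fibrant path object $[E,\mathcal{C}]_{\mathrm{Gr}}$ then produces a homotopy $\Xi\colon\chi\sprime\simeq\chi''$ that is strictly trivial on all of $L$. I would then define $\omega\sprime$ by pasting $\omega''$ with instances of $\Xi$ at the boundary, as before. A direct computation shows that $(m,\chi\sprime,\omega\sprime,1,1,1)$ is monoidal and equivalent to $(m,\chi'',\omega'',1,1,1)$, hence to the original structure.

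The main obstacle is the compatibility on the overlaps of the three pieces of $L$. The homotopy must agree with the identity on the intersection of the middle piece with the already normalised ones, i.e. at arguments such as $(I,I,I,D)$. I expect this to follow from $\omega''(-,-,I,I,-,-)=1$, in the spirit of Remark~\ref{rem_braidtomono}, but verifying that this unit coherence really forces the required modification to be trivial there is the delicate step.
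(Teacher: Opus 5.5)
Your first step is fine and is exactly what the paper does. Since $\ast$ is cofibrant, $\mathcal{C}$ is cofibrant, so the unit inclusion $\ast\rightarrow\mathcal{C}\otimes_{\mathrm{Gr}}\mathcal{C}$ is a cofibration because $\mathrm{Gray}$ is a monoidal model category. Lemma~\ref{lemma_cofbraidunit} then normalises $\zeta,\kappa$ to identities.

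The second step has a genuine gap, and it sits at the core of the argument rather than at the overlaps you flag. The unit axioms together with $\omega''$ do not supply an invertible modification from $\chi''_{-,I,I,-}$ to the identity. The only components of $\omega''$ in which $\chi''_{(A,I),(I,D)}$ appears alongside components at $(I,I)$-arguments, such as $\omega''_{(A,I),(I,I),(I,D)}$, compare $\chi''_{(A,I),(I,D)}$ with itself.

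In fact $\chi''_{-,I,I,-}$ is free data that the axioms do not constrain. Let $\theta\colon m\Rightarrow m$ be a pseudo-natural transformation with invertible components that is the identity on $\mathcal{C}\otimes_{\mathrm{Gr}}\ast$ and $\ast\otimes_{\mathrm{Gr}}\mathcal{C}$. Conjugating a normalised structure by $\theta$ keeps $\iota=\zeta=\kappa=1$, but composes the $(-,I,I,-)$-component with $\theta^{-1}$. For example, it turns the trivial component of $\chi_\beta$ from Remark~\ref{rem_braidtomono} into $\theta^{-1}$. Such a $\theta$ need not be isomorphic to $1_m$. This is also why Joyal--Street need the correction term $\chi^{-1}_{B,I,I,A}$ recalled in the paper.

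A homotopy in $2\text{-Cat}_{/\mathcal{C}\times\mathcal{C}}$ fixes the source and target 2-functors of $\chi''$. It can only replace $\chi''$ by a pseudo-natural transformation related to it by an invertible modification. So no homotopy-extension argument over your subobject $L$ can make $\chi\sprime_{-,I,I,-}$ strictly the identity.

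The missing idea is to put the freedom into the underlying 1-cell of the equivalence rather than into a 2-cell. Take $\theta:=\chi''_{-,I,I,-}\colon m\Rightarrow m$ itself as the underlying pseudo-natural equivalence of a monoidal equivalence, with identity $\theta_2$ and $\theta_0$. Concretely, set
\[\chi\sprime_{A,B,C,D}:=(\chi''_{AC,I,I,BD})^{-1}\circ\chi''_{A,B,C,D}\circ(\chi''_{A,I,I,B}\otimes\chi''_{C,I,I,D})\]
and whisker $\omega''$ accordingly. Then
\[\chi\sprime_{A,I,I,D}=(\chi''_{A,I,I,D})^{-1}\circ\chi''_{A,I,I,D}\circ(\chi''_{A,I,I,I}\otimes\chi''_{I,I,I,D})=1,\]
precisely because your first step already gives $\chi''_{A,I,I,I}=\chi''_{I,I,I,D}=1$. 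This transport is the paper's argument, and it needs no further cofibration or homotopy extension.
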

\begin{proof}
By virtue of the fact that $\mathrm{Gray}$ is monoidal, the inclusion $\ast\rightarrow\mathcal{C}\otimes_{\mathrm{Gr}}\mathcal{C}$ of Gray 
monoids is again a cofibration of underlying 2-categories. We thus can apply Lemma~\ref{lemma_cofbraidunit} to equivalently replace
$(\chi,\omega,1,\zeta,\kappa)$ by a monoidal structure of the form $(\chi,\omega,1,1,1)$ on $m$. To prove the second claim, we may transport 
the monoidal structure $(\chi,\omega,1,1,1)$ on $m$ along the pseudo-natural equivalence
$\chi_{-,I,I,-}\colon m\rightarrow m$ as follows. 

Consider the pseudo-natural equivalence $\chi_{A,1,1,B}\colon AB\rightarrow AB$. We define the composite pseudo-natural equivalence
\[\chi\sprime_{A,B,C,D}\colon ABCD\xrightarrow{\chi_{A,1,1,B}\otimes\chi_{C,1,1,D}}ABCD\xrightarrow{\chi_{A,B,C,D}}ACBD\xrightarrow{\chi_{AC,1,1,BD}^{-1}}ACBD,\]
as well as the modification $\omega\sprime_{A,B,C,D,E,F}$ defined as the whiskering
\[(\chi_{A,I,I,B}\otimes\chi_{C,I,I,D}\otimes\chi_{E,I,I,F})\ast\omega_{A,B,C,D,E,F}\ast\chi_{ACE,1,1,BDF}.\]
Then it is straightforward to verify that $(\chi\sprime,\omega\sprime,1,1,1)$ is again a monoidal structure on $m$, and that 
$\chi_{-,1,1,-}\colon m\rightarrow m$ together with the identity square
\[\xymatrix{
ABCD\ar[r]^{\chi\sprime_{A,B,C,D}}\ar[d]_{\chi_{A,1,1,B}\otimes\chi_{C,1,1,D}}\ar@{}[dr]|{1} & ACBD\ar[d]^{\chi_{AC,1,1,BD}} \\
ABCD\ar[r]_{\chi_{A,B,C,D}} & ACBD
}\]
and the identity triangle
\[\xymatrix{
 & I\ar@{=}[dl]\ar@{=}[dr]\ar@{}[d]|{1} & \\
 I\ar@{=}[rr] & & I   
}\]
defines a monoidal equivalence $(m,\chi,\omega,1,1,1)\simeq(m,\chi\sprime,\omega\sprime,1,1,1)$ by construction. 
Furthermore, we have
\[\chi\sprime_{A,1,1,D}=\chi_{A,1,1,D}^{-1}\circ\chi_{A,1,1,D}\circ(\chi_{A,1,1,1}\otimes\chi_{1,1,1,D})=1.\]
Indeed, $\chi_{A,1,1,1}=\chi_{1,1,1,A}=1_A$ because $\zeta_A=\kappa_A=1_{1_{A}}$ again by construction.
\end{proof}

\begin{remark}\label{rem_conj}
The transport argument in the proof of Corollary~\ref{cor_cofbraidunit} that replaces a given monoidal structure on the multiplication
$m$ with pseudo-natural equivalence $\chi$ by one such that $\chi\sprime_{-,I,I,-}=1$ is essentially an algebraic version of the
$\infty$-categorical structure-lifting arguments following Remark~\ref{rem_algfib}. We will apply this sort of argument more often below.
\end{remark}

The same can be shown for monoidal pseudo-natural equivalences; we will not need this however.
We summarise this sections discussion about the basic monoidal structures on 2-categories and basic higher algebraic structures thereon as 
follows. 

\begin{lemma}\label{lemma_halgtrans}
Let $\mathcal{C}$ and $\mathcal{D}$ be cofibrant Gray monoids. Then the forgetful functor
\begin{align}\label{equ_halgtrans}
\mathrm{Alg}_{\mathsf{E}_1}(\mathrm{Ho}_{\infty}(2\text{-Cat})^{\times})(U(\mathcal{C}),U(\mathcal{D}))\twoheadrightarrow\mathrm{Alg}_{\mathsf{A}_4^{\mathrm{qu}}}(\mathrm{Ho}_{\infty}(2\text{-Cat})^{\times})(U(\mathcal{C}),U(\mathcal{D}))
\end{align}
is a trivial fibration. In particular, 
\begin{enumerate}
\item every monoidal 2-functor can essentially uniquely be extended to a morphism of associated
$\mathsf{E}_1$-monoids in $\hotwocat$; vice versa, every morphism of associated $\mathsf{E}_1$-monoids in $\hotwocat$ has an underlying monoidal 2-functor; 
\item every monoidal pseudo-natural equivalence between two monoidal 2-functors $F,G\colon\mathcal{C}\rightarrow\mathcal{D}$ can 
essentially uniquely be extended to a 2-cell of associated $\mathsf{E}_1$-monoid morphisms in $\mathrm{Ho}_{\infty}(2\text{-Cat})$; vice versa, every 2-cell of associated $\mathsf{E}_1$-monoid morphisms in $\mathrm{Ho}_{\infty}(2\text{-Cat})$ has an underlying monoidal
pseudo-natural equivalence;
\item the same goes for invertible monoidal modifications and 3-cells in $\mathrm{Alg}_{\mathsf{E}_1}(\hotwocat)$.
\end{enumerate}
\end{lemma}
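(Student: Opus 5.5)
We prove that (\ref{equ_halgtrans}) is a trivial fibration by showing that the forgetful functor $\mathrm{Alg}_{\mathsf{E}_1}(\hotwocat)\rightarrow\mathrm{Alg}^{\mathrm{qu}}_{\mathsf{A}_4}(\hotwocat)$ (in its quasi-unital form) is fully faithful, and hence an equivalence on hom-spaces. Since $\mathrm{Ho}_{\infty}(2\text{-Cat})$ is a $3$-category, Lemma~\ref{lemma_truncext0} says that the forgetful functor $\mathrm{Alg}^{\mathrm{nu}}_{\mathsf{A}_m}\rightarrow\mathrm{Alg}^{\mathrm{nu}}_{\mathsf{A}_{m-1}}$ is locally $(3-m)$-truncated. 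For $m\geq 5$ this is locally $(-2)$-truncated, i.e.\ fully faithful on hom-spaces, so the transfinite composite $\mathrm{Alg}^{\mathrm{nu}}_{\mathsf{A}_\infty}\rightarrow\mathrm{Alg}^{\mathrm{nu}}_{\mathsf{A}_4}$ is locally an equivalence. By Remark~\ref{rem_qu} the quasi-unital subcategories are pulled back from the $\mathsf{A}_2$ level, so the same holds for $\mathrm{Alg}^{\mathrm{qu}}_{\mathsf{A}_\infty}\rightarrow\mathrm{Alg}^{\mathrm{qu}}_{\mathsf{A}_4}$.

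Next we pass from quasi-unital $\mathsf{A}_\infty$ to unital $\mathsf{E}_1$. We use \cite[Theorem 5.4.3.5]{lurieha} together with Lemma~\ref{lemma_unitff}. By part (1) of that lemma, $\mathrm{Alg}_{\mathsf{E}_1}\rightarrow\mathrm{Alg}^{\mathrm{nu}}_{\mathsf{A}_\infty}$ is monic. By \cite[Corollary 5.4.3.6]{lurieha}, a non-unital morphism between unital algebras lifts exactly when it is quasi-unital, so the functor $\mathrm{Alg}_{\mathsf{E}_1}\rightarrow\mathrm{Alg}^{\mathrm{qu}}_{\mathsf{A}_\infty}$ is fully faithful. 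Composing the two steps, the map (\ref{equ_halgtrans}) is an equivalence of hom-spaces. It is also an isofibration, by Remark~\ref{rem_algfib} applied to the inclusion $\mathsf{A}_4^{\mathrm{nu}}\hookrightarrow\mathsf{A}_\infty^{\mathrm{nu}}$ (and the fact that $\mathsf{A}_\infty$ models $\mathsf{E}_1$). Hence it is a trivial fibration.

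The enumerated consequences follow by reading off the low-dimensional cells of the target hom-space, a $2$-groupoid since $\mathrm{Ho}_{\infty}(2\text{-Cat})$ is a $3$-category. Its objects are diagrams of the shape (\ref{diag_grmoncof_mor}): a map $f$, a $2$-cell $\chi$ over the $\mathsf{A}_2$-square, a $3$-cell $\omega$ over the $\mathsf{A}_3$-cube, an equation at the $\mathsf{A}_4$-level, and a unit $2$-cell $\iota$ coming from quasi-unitality. Because $\mathcal{C},\mathcal{D}$ and their Gray powers are cofibrant (Lemma~\ref{lemmacofobjects}), the cells of $\mathrm{Ho}_{\infty}(2\text{-Cat})$ between them are strict $2$-functors, pseudo-natural equivalences and invertible modifications. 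Under the equivalence $\otimes_{\mathrm{Gr}}\simeq\times$, such a diagram is exactly Day--Street data $(f,\chi,\omega,\iota)$ satisfying the associativity axiom. The remaining unit data $\zeta,\kappa$ can be normalised to identities by Lemmas~\ref{lemma_cofidunit} and \ref{lemma_cofbraidunit}, and a quasi-unit recovers them. This gives (1). The $1$-cells and $2$-cells of the hom-space unfold in the same way into monoidal pseudo-natural equivalences and invertible monoidal modifications, which gives (2) and (3).

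\textbf{Main obstacle.} The difficult step is this identification of hom-space cells with Day--Street's explicit axioms, specifically matching the unit conditions with quasi-unitality. It relies on the cofibrancy normalisations of Lemmas~\ref{lemma_cofidunit} and \ref{lemma_cofbraidunit}; the rest is formal.
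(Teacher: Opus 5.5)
Your proposal is correct and follows essentially the paper's route. The paper's proof cites \cite[Corollary 4.1.6.17]{lurieha}, which you re-derive from Lemma~\ref{lemma_truncext0} with $n=3$, together with \cite[Theorem 5.4.3.5]{lurieha}, which you reach through Lemma~\ref{lemma_unitff}; it then reads off parts (1)--(3) from the cell description around Diagram~(\ref{diag_grmoncof_mor}), just as you do.
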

\begin{proof}
The fact that the map (\ref{equ_halgtrans}) is a trivial fibration follows directly from \cite[Corollary 4.1.6.17]{lurieha} and
\cite[Theorem 5.4.3.5]{lurieha}. For Part 1 it thus suffices to construct corresponding cells of quasi-unital
$\mathsf{A}_4$-monoid structures.
The fact that such 1-cells are precisely captured by the notion of a monoidal 2-functor is discussed in
Diagram~(\ref{diag_grmoncof_mor}). Parts 2 and 3 follow accordingly.
\end{proof}

\section{Proof of the Main Theorem}\label{sec_translation}

In the last section we characterised the various notions of monoidal structure on Gray monoids and 2-functors between 
them in terms of corresponding cells in the $\infty$-category $\mathrm{Ho}_{\infty}(2\text{-Cat})$. In this section we prove 
Theorem~\ref{thm_main} by using this dictionary together with the results of Section~\ref{sec_infty}. 

\begin{notation}
In the following we denote the cartesian monoidal $\infty$-category $\mathrm{Ho}_{\infty}(2\text{-Cat})^{\times}$ simply by
$2\text{-Cat}^{\times}$. In the context of this paper this is unambiguous as the 1-category $2\text{-Cat}$ is only ever considered as a 
symmetric monoidal category when equipped with the Gray tensor product. Furthermore, we will notationally identify a cofibrant Gray monoid 
$A$ with its associated $\mathsf{E}_1$-monoid $U(A)$ in $2\text{-Cat}^{\times}$.
\end{notation}

\subsection{Braidings and $\mathsf{E}_2$-structures}\label{sec_sub_translation2}

In this section we prove the following characterisation of a braiding on a Gray monoid.

\begin{theorem}\label{thm_braid=E_1}
Let $\mathcal{C}$ be a Gray monoid and $\beta(\mathcal{C})$ be the set of equivalence classes of braidings on $\mathcal{C}$.
Then there is a bijection
\[\beta(\mathcal{C})\cong\pi_0\left(\mathrm{Alg}_{\mathsf{E}_2}(2\text{-Cat}^{\times})\times_{\mathrm{Alg}_{\mathsf{E}_1}(2\text{-Cat}^{\times})}\{\mathcal{C}\}\right).\]
This is to say, every braiding on $\mathcal{C}$ induces an essentially unique $\mathsf{E}_1$-monoid structure 
on $\mathcal{C}$ in $\mathrm{Alg}_{\mathsf{E}_1}(2\text{-Cat}^{\times})$, and vice versa. 
\end{theorem}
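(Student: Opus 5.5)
We first reduce to the case that $\mathcal{C}$ is cofibrant in $\mathrm{GrMon}$. By Lemma~\ref{lemma_mono_cofrepl}, braidings on $\mathcal{C}$ and their equivalences lift essentially uniquely along a cofibrant replacement $\mathbb{L}\mathcal{C}\xdblrightarrow{\sim}\mathcal{C}$. The fiber on the right-hand side is invariant under equivalence of $\mathsf{E}_1$-monoids. So we may assume $\mathcal{C}$ is cofibrant, identify it with its $\mathsf{E}_1$-monoid in $2\text{-Cat}^{\times}$ via Lemma~\ref{lemma_grmon=E1}, and use Lemma~\ref{lemma_halgtrans} to translate cells of $\mathsf{E}_1$-monoid morphisms into monoidal 2-functors, monoidal pseudo-natural equivalences and monoidal modifications.

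By the Additivity Theorem, an $\mathsf{E}_2$-structure extending $\mathcal{C}$ is an $\mathsf{E}_1$-algebra in $\mathrm{Alg}_{\mathsf{E}_1}(2\text{-Cat}^{\times})$ with underlying object $\mathcal{C}$. Since $2\text{-Cat}^{\times}$ is a 3-category, Corollary~\ref{cor_eh_3} reduces such a structure to a diagram of shape (\ref{diag_eh_3}) subject to the stated quasi-unit conditions. By Proposition~\ref{prop_eh} and Remark~\ref{rem_eh_2} we may take the horizontal multiplication $m\sprime$ to be the given multiplication $m$ of $\mathcal{C}$, fixed up to equivalence over the underlying object; by Remark~\ref{rem_algfib} we may then fix it strictly. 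The remaining data splits into two parts.
\begin{enumerate}
\item The vertical column, i.e.\ the cells $\chi,\omega$ together with quasi-unitality. By Lemma~\ref{lemma_halgtrans} this is precisely a monoidal structure on the 2-functor $m\colon\mathcal{C}\otimes_{\mathrm{Gr}}\mathcal{C}\rightarrow\mathcal{C}$.
\item The square cell $\psi$ and the equations above it. These say that the identity square (\ref{diag_def_braiding}) is a monoidal pseudo-natural transformation. By the truncation count of Proposition~\ref{prop_inisegfib}, this part is data only in dimension $3$ and is otherwise a mere property.
\end{enumerate}

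The map $\beta(\mathcal{C})\rightarrow\pi_0(\text{fiber})$ is built from Remark~\ref{rem_braidtomono}. A braiding $\beta=(\rho,\bar{\omega})$ yields the monoidal structure $(1\otimes\rho\otimes 1,1\otimes\bar{\omega}\otimes 1,1,1,1)$ on $m$, and Day--Street's Definition 12 supplies the monoidal structure on the identity square with $1_2:=(1\otimes\bar{\omega}^{\mathrm{rev}}\otimes 1)^{\rho}$. Here $1_2$ is the diagonally reflected structure of the reverse braiding (Proposition~\ref{prop_defrevbraid}, Notation~\ref{notation_revbraid}). I will argue that the $\mathsf{E}_1\otimes_{\mathrm{BV}}\mathsf{A}_2$-part obtained this way is exactly what the diagonal-reflection formula (\ref{diag_orthogonal_alg_1}) of Example~\ref{exple_diagrefl2} predicts. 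The vertical structure coming from $\beta^{\mathrm{rev}}$ and the reflected horizontal structure then glue to a full initial segment, which extends to an $\mathsf{E}_2$-monoid by Propositions~\ref{prop_inisegloc} and~\ref{prop_inisegfib}, and this extension is unique up to a contractible choice.

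For the inverse map, take an $\mathsf{E}_2$-extension and consider the monoidal structure on $m$ from part (1). I first normalise it with Corollary~\ref{cor_cofbraidunit}, so that $\iota,\zeta,\kappa$ are identities and $\chi_{-,I,I,-}=1$. I then set $\rho_{A,B}:=\chi_{I,A,B,I}$ and $\bar{\omega}:=\omega$ restricted to unit-padded arguments. The monoidal-functor axioms for $m$, together with the monoidality of the identity square, give exactly the axioms of Day--Street's Definition 12, so $(\rho,\bar{\omega})$ is a braiding. The two constructions are mutually inverse on equivalence classes for the following reason. A braided structure on $1_{\mathcal{C}}$ between two braidings corresponds, via Lemma~\ref{lemma_halgtrans}(2), to a monoidal pseudo-natural equivalence between the associated monoidal structures on $m$ that restricts to the identity on $\mathcal{C}$, i.e.\ to a path in the fiber. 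Corollary~\ref{cor_truncobj} says the fiber is $1$-truncated, so no higher coherence has to be checked beyond these paths.

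The main obstacle is the identification in the forward direction: proving that the modification $1_2=(1\otimes\bar{\omega}^{\mathrm{rev}}\otimes 1)^{\rho}$ is the diagonal reflection of the vertical cell of $\beta^{\mathrm{rev}}$, and hence that the resulting horizontal and vertical halves paste to a genuine initial segment. My first approach is to compare the pasting in Notation~\ref{notation_revbraid} directly with the $\sigma_{(\langle 3\rangle,\langle 2\rangle)}$-permuted front faces in (\ref{diag_orthogonal_alg_1}). I expect this to work because Notation~\ref{notation_revbraid} notes that $(1\otimes\bar{\omega}^{\mathrm{rev}}\otimes 1)^{\rho}$ uses only $\rho_{I,-}=\rho_{-,I}=1$ and no axioms on $\bar{\omega}$, so the comparison should reduce to matching cells rather than verifying coherence equations.
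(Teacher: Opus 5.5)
Your forward direction matches the paper's (Lemma~\ref{lemma_braid=E_1_forth}). The identification you single out as the main obstacle is read off there directly from Example~\ref{exple_diagrefl2} and the definition of $1_2$. The genuine gap is in the converse direction.

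\textbf{The gap.} You treat the horizontal associator $\psi$ as free 3-dimensional data and never show that it is determined by $(\chi,\omega)$. Both halves of your inverse argument need exactly this.

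(a) Day--Street's translation of ``$m$ is monoidal and the identity square is monoidal'' into the braiding axioms is a statement about the specific cells $\chi=1\otimes\rho\otimes 1$, $\omega=1\otimes\bar{\omega}\otimes 1$ and $\psi=(1\otimes\bar{\omega}^{\mathrm{rev}}\otimes 1)^{\rho}$. Corollary~\ref{cor_cofbraidunit}, which moreover presupposes $\iota=1$, only yields $\zeta=\kappa=1$ and $\chi_{-,I,I,-}=1$. It does not give $\omega_{-,I,I,-,-,-}=\omega_{-,-,-,I,I,-}=1$ or $\psi_{-,I,-,-,I,-}=1$, which are what you need to see that $\chi$ and $\omega$ are whiskerings of their unit-padded components. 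It also says nothing about the form of $\psi$. So your claim that the axioms of Definition 12 come out ``exactly'' is unsupported.

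(b) For $\pi_0(\text{fiber})\to\beta(\mathcal{C})\to\pi_0(\text{fiber})$ to be the identity, you need $\mathcal{C}^+\simeq\mathcal{C}^{\beta}$. That is, the $\psi$ of an arbitrary $\mathsf{E}_2$-structure must agree with the cell built from the extracted braiding. Corollary~\ref{cor_truncobj} only controls paths between two given structures. It does not rule out inequivalent $\mathsf{E}_2$-structures with the same $(\chi,\omega)$ but different $\psi$, all sent to the same braiding.

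\textbf{How the paper closes it.} It works in two steps.

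First, Lemma~\ref{lemma_braid=E_1_back_forth} semi-strictifies further. It strictifies the horizontal unitors, working on the diagonal reflection, to obtain $\chi_{I,-,I,-}=\chi_{-,I,-,I}=1$ and $\psi_{-,I,-,-,I,-}=1$. It then conjugates twice (via $\omega^l$ and $\omega^{lr}$) to reach conditions (1)--(4) of Lemma~\ref{lemma_braid=E_1_back}.

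Second, Lemma~\ref{lemma_braid=E_1_back} pins down $\psi$:
\begin{itemize}
\item Claim~\ref{claim_braid=E_1_back} gives $\psi_{-,I,I,I,-,-}=1$ homotopically. Since $\mathsf{E}_2(\langle n\rangle,\langle 1\rangle)\simeq\mathsf{C}_n(\mathbb{R}^2)$ is $1$-truncated, in the monoidal envelope the unit-padded composites built from $\psi_{A,I,I,I,B,C}$ and from the identity $\omega_{A,I,I,B,I,C}$ have the same boundary and must therefore agree.
\item Diagonal reflection then gives $\psi=1\otimes\psi_{I,-,-,-,-,I}\otimes 1$.
\item The 4-cell equation (\ref{diag_braid=E_1_back_EH2}) at $(\mathcal{C}^{\otimes_{\mathrm{Gr}}3})^{\otimes_{\mathrm{Gr}}3}$ expresses $\psi$ through $\omega^{-1_v}$. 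Combined with the associativity of $\omega$ and of $\psi$, this shows $\psi$ is exactly $(1\otimes\bar{\omega}^{\mathrm{rev}}\otimes 1)^{\rho}$.
\end{itemize}
That 4-cell is the piece you filed under ``mere property'', yet it is precisely what determines $\psi$.

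\textbf{A related gap in your treatment of paths.} A path in the fiber is more than the monoidal pseudo-natural equivalence of Lemma~\ref{lemma_halgtrans}(2). It must also be compatible, via a 3-cell, with the two $\psi$'s. The paper supplies this through Lemma~\ref{lemma_braid=E_1_morphisms_forth}, applied to the identity, again by the reverse-and-reflect trick.
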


We further prove the following relativisation.

\begin{proposition}\label{prop_braid=E_1_functors}
Let $(\mathcal{C},\beta)$ and $(\mathcal{D},\beta)$ be braided Gray monoids, and let $F\colon\mathcal{C}\rightarrow\mathcal{D}$ be a monoidal 
2-functor of underlying Gray monoids. Let $\beta(F)$ be the set of braidings on $F$.
Then
\[\beta(F)\simeq\mathrm{Alg}_{\mathsf{E}_2}(2\text{-Cat}^{\times})(\mathcal{C},\mathcal{D})\times_{\mathrm{Alg}_{\mathsf{E}_1}(2\text{-Cat}^{\times})(\mathcal{C},\mathcal{D})}\{F\}.\]
This is to say, every braiding on $F$ induces an essentially unique $\mathsf{E}_1$-monoid structure 
on $F$ in $\mathrm{Alg}_{\mathsf{E}_1}(2\text{-Cat})^{\times}$, and vice versa. In particular, the latter is an $h$-set.
\end{proposition}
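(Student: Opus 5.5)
Without loss of generality I would take $\mathcal{C}$ and $\mathcal{D}$ to be cofibrant Gray monoids. Lemma~\ref{lemma_mono_cofrepl} lets me transport the braidings along cofibrant replacements, and Lemma~\ref{lemma_cofidunit} lets me assume $F=(f,\chi,\omega,1,1,1)$ with strict unit data. Throughout I would use the Additivity Theorem to identify $\mathsf{E}_2$-monoids with $\mathsf{E}_1$-monoids in $\mathrm{Alg}_{\mathsf{E}_1}(2\text{-Cat}^{\times})$.

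The first observation is that the right-hand fiber is $(n-k-2)$-truncated by Corollary~\ref{cor_truncmor}, with $n=3$ and $k=1$. It is therefore $0$-truncated, i.e.\ an $h$-set, and it suffices to give a bijection of sets. To describe the fiber I would follow the proof of Corollary~\ref{cor_truncmor}. There, $U$ is exhibited as a retract of $V^{(2)}$ onto $J^{(2)+}_{\vec{1}}$, which Lemma~\ref{lemma_iniseg_qu} identifies with algebras over the latching object $L_2(\mathsf{E}_\bullet)=\mathsf{E}_1\sqcup_{\mathsf{E}_0}\mathsf{E}_1$. Combining this with Proposition~\ref{prop_inisegloc} for $\vec{m}=(1,1)$ and the truncation bounds, a lift of $F$ to an $\mathsf{E}_2$-morphism should amount to two things. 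The first is a $2$-cell witnessing that $F$ commutes with the multiplications $m$ as $\mathsf{E}_1$-monoid morphisms, in the $\mathsf{A}_2$-direction. The second is the propositional condition that this $2$-cell is compatible with the $\mathsf{A}_3$-cells, i.e.\ with the associators $\omega_\beta$ of the two multiplications; all higher data is automatic because $2\text{-Cat}^{\times}$ is a $3$-category. Concretely, I would look at the cube of shape (\ref{diag_grmoncof_mor}) whose vertical sides are $F\otimes F$ and $F$, whose horizontal sides are the monoidal multiplications $(m,\chi_\beta,\omega_\beta)$ of Remark~\ref{rem_braidtomono}, and whose filler is a monoidal pseudo-natural equivalence. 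By Lemma~\ref{lemma_halgtrans}.2 such a filler is a monoidal modification datum on the square $f\circ m\simeq m\circ(f\otimes f)$.

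Next I would unpack this filler. Its underlying pseudo-natural equivalence is $\chi$ itself, which is forced, up to the contractible choice given by the Eckmann--Hilton homotopy of Proposition~\ref{prop_eh}, by the fact that the vertical and horizontal $\mathsf{E}_1$-structures agree (Remark~\ref{rem_eh_2}). The remaining datum is a $3$-cell $\theta_2$ comparing $\chi\circ(\chi\otimes\chi)\circ(1\otimes\rho\otimes 1)$ with $f(1\otimes\rho\otimes1)\circ\chi$. Restricting along the inclusion $\mathcal{C}^{\otimes 2}\hookrightarrow \mathcal{C}^{\otimes 4}$, $(A,B)\mapsto(I,A,B,I)$, and using the strict units, I would extract the invertible modification $u_{A,B}$ of a braiding on $F$. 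Conversely, I would rebuild $\theta_2$ from $u$ by whiskering with $\chi$ and $\omega$, in the same way that $\chi_\beta=1\otimes\rho\otimes1$ is built from $\rho$. The axioms of a monoidal modification for $\theta_2$, namely compatibility with $\omega$ on the two sides of the cube, should then translate under this restriction exactly into the two equations (\ref{diag_def_braidmor_I}) and (\ref{diag_def_braidmor_II}). I would also check that the extraction and reconstruction are mutually inverse up to the identifications.

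I expect the main obstacle to be this last matching of coherence axioms. The difficulty is that the cube's faces involve both $\omega_\beta=1\otimes\bar\omega\otimes 1$ for the braided multiplication and the transported data $\chi',\omega'$ of Corollary~\ref{cor_cofbraidunit}, so that the monoidal-modification axiom has to be reduced to the two Day--Street equations. My plan is to set three of the six variables equal to $I$, using $\bar\omega(-,I,I,-)=1$ and $\chi_{I,-}=\chi_{-,I}=1$. This splits the single hexagon-type axiom into the two equations, one for each way of inserting units; the symmetric reformulation of those equations given in the lemma preceding Lemma~\ref{lemma_braid_rev_morphism} was set up precisely to make this comparison a direct pasting check. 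If that reduction turns out not to be faithful, the fallback is to argue via the diagonal reflection of Example~\ref{exple_diagrefl2} together with Lemma~\ref{lemma_braid_rev_morphism}: the horizontal axiom would be deduced from the vertical one applied to $u^{\mathrm{rev}}$.
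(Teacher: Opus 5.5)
There are two genuine gaps. Your reduction is the paper's: cofibrant, strictly unital $F$; the $h$-set claim via Corollary~\ref{cor_truncmor}; $u_{A,B}$ as the $(I,A,B,I)$-component of the filler $\theta_2$ on $\chi$; and $\theta_2$ rebuilt from $u$ by Day--Street's formula.

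\textbf{The horizontal block is never constructed.} In $\mathrm{Alg}_{\mathsf{E}_1}(2\text{-Cat}^{\times})$ the $\mathsf{A}_3$-cell of the multiplication is not $\omega_\beta$. It is the identity $m(m\otimes 1)=m(1\otimes m)$ with monoidal structure $1_2=(1\otimes\bar{\omega}^{\mathrm{rev}}\otimes 1)^{\rho}$ (Remark~\ref{rem_braidtomono}). Compatibility of $\theta_2$ with $\omega_\beta$ is just the axiom that $(\chi,\theta_2,1)$ is a monoidal pseudo-natural transformation, i.e.\ your first item. By \cite[p.125]{daystreet} that axiom for $\chi_2^u$ is already equivalent to $u$ being a braiding, so splitting it into (\ref{diag_def_braidmor_I}) and (\ref{diag_def_braidmor_II}) is not where the difficulty lies.

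What your forward direction never establishes is the actual $\mathsf{A}_3$-condition. You must show that the associator $\omega$ of $F$ is a monoidal modification with respect to $\chi_2^u$ and the horizontal associators $(\omega_{\beta^{\mathrm{rev}}})^{\rho}$ of $\mathcal{C}$ and $\mathcal{D}$, i.e.\ that $f$ also respects the horizontal $\mathsf{E}_1\otimes\mathsf{A}_2^{\mathrm{nu}}$-blocks. By Proposition~\ref{prop_inisegloc} with $n=3$, the extension problem remaining after the vertical block is $(-1)$-truncated, so this is a real condition, not automatic.

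The paper obtains it from $u^{\mathrm{rev}}$. Lemma~\ref{lemma_braid_rev_morphism} gives a vertical block for the reversed multiplications. Its diagonal reflection (Example~\ref{exple_diagrefl2}) is a horizontal block for the original ones. The identity $(\chi_2^{\mathrm{rev}})_{A,B,C,D}=(\chi_2^{-1_h})_{A,C,B,D}$ shows the two blocks agree on $\mathsf{A}_2^{\mathrm{nu}}\otimes\mathsf{A}_2^{\mathrm{nu}}$, so they glue. Your fallback names exactly these tools but aims them at splitting the braid equations rather than at this missing block.

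\textbf{The normalisation of $\theta_{2\,A,I,I,B}$ is missing.} The unit laws give $\theta_{2\,I,I,-,-}=\theta_{2\,-,-,I,I}=\theta_{2\,-,I,-,I}=\theta_{2\,I,-,I,-}=1$, but not $\theta_{2\,A,I,I,B}=1$. This automorphism of $\chi_{A,B}$ survives your unit substitutions: setting $B=C=I$ in the monoidality axiom leaves $\theta_{2\,A,D,E,F}$ composed with $\theta_{2\,A,I,I,D}$. Consequently $\theta_2(I,A,B,I)$ is in general not a braiding. It is also not invariant under equivalence in the fiber, and the Day--Street cell rebuilt from it need not return $\theta_2$.

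The paper first conjugates $\theta_2$ by the invertible monoidal modification $\theta_{2\,-,I,I,-}$ to achieve $\theta'_{2\,A,I,I,B}=1$. The extracted braiding is then $\theta_2(I,A,B,I)$ corrected by $\theta_{2\,B,I,I,A}^{-1}$, analogous to the Joyal--Street correction term. Only after this does the monoidality axiom force $\theta_2'=\chi_2^u$ for $u=\theta_2'(I,-,-,I)$. That single identity yields both that $u$ is a braiding (via \cite[p.125]{daystreet}) and that the two constructions are mutually inverse.
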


We break up the two proofs into multiple parts.

\begin{lemma}\label{lemma_braid=E_1_forth}
Let $\mathcal{C}$ be a Gray monoid. Then every braiding $\beta$ on $\mathcal{C}$ induces an $\mathsf{E}_1$-monoid structure
$\mathcal{C}^{\beta}$ on $\mathcal{C}$ in $\mathrm{Alg}_{\mathsf{E}_1}(2\text{-Cat})^{\times}$.
\end{lemma}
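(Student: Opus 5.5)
The plan is to produce, for a cofibrant replacement of $\mathcal{C}$, a diagram of the shape (\ref{diag_eh_3}) in $2\text{-Cat}^{\times}$ satisfying the unit conditions of Corollary~\ref{cor_eh_3}. That corollary then yields a non-unital $\mathsf{E}_2$-structure which extends to a unital one, i.e.\ an $\mathsf{E}_1$-monoid in $\mathrm{Alg}_{\mathsf{E}_1}(2\text{-Cat}^{\times})$ on the given $\mathsf{E}_1$-monoid $\mathcal{C}$.

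\textbf{Reductions.} By Lemma~\ref{lemma_mono_cofrepl} we may replace $\mathcal{C}$ by $\mathbb{L}\mathcal{C}$ and $\beta$ by $\mathbb{L}\beta$, so we assume $\mathcal{C}$ is cofibrant. Then all Gray powers $\mathcal{C}^{\otimes_{\mathrm{Gr}}n}$ are cofibrant 2-categories (Lemma~\ref{lemmacofobjects}), and Lemma~\ref{lemma_halgtrans} applies. Under this dictionary, monoidal 2-functors, monoidal pseudo-natural equivalences and invertible monoidal modifications correspond to $1$-, $2$- and $3$-cells in $\mathrm{Alg}_{\mathsf{E}_1}(2\text{-Cat}^{\times})$.

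\textbf{Step 1: the vertical half.} By Remark~\ref{rem_braidtomono}, $\beta=(\rho,\bar\omega)$ equips the multiplication $m\colon\mathcal{C}\otimes_{\mathrm{Gr}}\mathcal{C}\rightarrow\mathcal{C}$ with a monoidal structure $(\chi_\beta,\omega_\beta,1,1,1)$. The unit $u\colon\ast\rightarrow\mathcal{C}$ is trivially monoidal. We take $m\sprime:=m$. The same remark shows that the strict associativity square (\ref{diag_def_braiding}) becomes a monoidal pseudo-natural transformation with modifications $1_0=1$ and $1_2=(1\otimes\bar\omega^{\mathrm{rev}}\otimes1)^{\rho}$. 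Via Lemma~\ref{lemma_halgtrans}, this supplies the following data:
\begin{itemize}
\item the $2$-cell $\chi:=\chi_\beta$;
\item the $3$-cell $\omega:=\omega_\beta$;
\item the $3$-cell $\psi:=1_2$.
\end{itemize}
The higher strict associativity of the Gray monoid $\mathcal{C}$ gives the remaining (identity) squares. This is the ``$\mathsf{A}_2^{\mathrm{nu}}\otimes\mathsf{E}_1$'' half of (\ref{diag_eh_3}), namely the left column together with the first square row.

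\textbf{Step 2: the horizontal half.} We apply Step~1 to the reverse braiding $\beta^{\mathrm{rev}}$ of Proposition~\ref{prop_defrevbraid}, which gives a second vertical half. We then transport it along the diagonal reflection $(-)^{\rho}$ of Section~\ref{sec_sub_revmon_infty}, using the explicit formula (\ref{diag_orthogonal_alg_1}) of Example~\ref{exple_diagrefl2} and the naturality square (\ref{diag_orthogonal_alg_nat}). The reflection inverts $2$- and $3$-cells and permutes tensor factors by $\sigma$. Reflecting the reversed data therefore produces the bottom row of (\ref{diag_eh_3}) together with a horizontal associativity $3$-cell.

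\textbf{Step 3: pasting the halves.} We glue the two halves along their common boundary: the $2$-cell $\chi$ and the $3$-cell $\psi$ in the corner. This requires two checks.
\begin{enumerate}
\item The reflected $\chi_{\beta^{\mathrm{rev}}}^{-1}$, permuted by $\sigma$, must agree with $\chi_\beta=1\otimes\rho\otimes1$. This amounts to $\rho^{\mathrm{rev}}_{B,A}{}^{-1}=\rho_{A,B}$.
\item The reflected reversed associator must agree with $1_2$. This is exactly how $1_2$ was defined in Notation~\ref{notation_revbraid}, via $(\bar\omega^{\mathrm{rev}})^{-1}$.
\end{enumerate}
The unlabelled squares of (\ref{diag_eh_3}) that remain are equations between $3$-cells. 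In the vertical half they are the Day--Street axioms for monoidal 2-functors and monoidal transformations, which hold by Step~1. In the horizontal half they are the images of those same axioms for $\beta^{\mathrm{rev}}$ under reflection.

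\textbf{Step 4: units.} The unit conditions of Corollary~\ref{cor_eh_3} hold because the braiding is semi-strict. Concretely, $\chi_{I,-}=\chi_{-,I}=1$ and $\iota=\zeta=\kappa=1$, so $u$ is a quasi-unit for $m$, for $m\sprime$, and for $m\sprime$ inside $\mathrm{Alg}_{\mathsf{E}_1}(2\text{-Cat}^{\times})$. Lemma~\ref{lemma_unitff}.4 then supplies the unital extension. Denote the result by $\mathcal{C}^{\beta}$.

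\textbf{Main obstacle.} I expect Step~3 to be the hardest part. Matching the reflected reversed data with the original on the overlap means controlling the lift $\bar\sigma$ from Example~\ref{exple_diagrefl2} and checking that the $3$-cell equations involving $\psi$ agree on both sides. My first attempt would be to choose $\bar\sigma$ trivial on the two embeddings $\mathsf{E}_1\hookrightarrow\mathsf{E}_1\otimes_{\mathrm{BV}}\mathsf{E}_1$, which the remark after (\ref{diag_orthogonal_alg}) allows. The comparison then reduces to the explicit pasting (\ref{diag_defrevbraid}) together with the relations $\bar\omega_{-,I,I,-}=1$ established in the proof of Proposition~\ref{prop_defrevbraid}.
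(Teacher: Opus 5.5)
Your proposal is correct and follows the paper's proof essentially step for step. Both arguments pass to a cofibrant replacement and obtain the vertical half from Remark~\ref{rem_braidtomono}, with $\psi=1_2=(1\otimes\bar{\omega}^{\mathrm{rev}}\otimes 1)^{\rho}$. Both supply the missing bottom-right $4$-cell by diagonally reflecting the corresponding structure for $\beta^{\mathrm{rev}}$, and both assemble the result via Corollary~\ref{cor_eh_3} and get unitality from Lemma~\ref{lemma_unitff}. The only difference is emphasis: the paper treats the agreement on the overlap as immediate from Example~\ref{exple_diagrefl2} and the definition of $1_2$, whereas you single out the choice of the lift $\bar{\sigma}$ as the point needing care.
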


\begin{proof}
Without loss of generality we may assume that $\mathcal{C}$ is cofibrant by Lemma~\ref{lemma_mono_cofrepl}. 
Suppose we are given a braiding $\beta=(\rho,\bar{\omega})$ on $\mathcal{C}$. By Remark~\ref{rem_braidtomono}, this induces a 
monoidal structure on the 2-functor $m\colon\mathcal{C}\otimes_{\mathrm{Gr}}\mathcal{C}\rightarrow\mathcal{C}$ such that the identity
\[\xymatrix{
\mathcal{C}\otimes_{\mathrm{Gr}}\mathcal{C}\otimes_{\mathrm{Gr}}\mathcal{C}\ar[r]^(.6){1\otimes_{\mathrm{Gr}}m}\ar[d]_{m\otimes_{\mathrm{Gr}}1}\ar@{}[dr]|1 & \mathcal{C}\otimes_{\mathrm{Gr}}\mathcal{C}\ar[d]^m \\
\mathcal{C}\otimes_{\mathrm{Gr}}\mathcal{C}\ar[r]_m & \mathcal{C}
}
\]
becomes a monoidal 2-functor via $1_0=\iota=1$ and $1_2=(1\otimes\bar{\omega}^{\mathrm{rev}}\otimes 1)^{\rho}$.
By Lemma~\ref{lemma_halgtrans}, this induces a diagram of solid arrows in $2\text{-Cat}^{\times}$ and corresponding cells between them as 
follows.
\begin{align}\label{diag_braid=E_1_functors_1}
\begin{gathered}
\xymatrix{
\mathcal{C}^{\otimes_{\mathrm{Gr}} 5}\ar@<1ex>[d]\ar@<.5ex>[d]\ar[d]\ar@<-.5ex>[d]\ar@<-1ex>[d] & & & & \\
\mathcal{C}^{\otimes_{\mathrm{Gr}}4}\ar@<-.5ex>@/_1pc/[d]_{m\otimes 1\otimes 1}\ar[d]|{1\otimes m\otimes 1}\ar@<.5ex>@/^1pc/[d]^{1\otimes 1\otimes m} & (\mathcal{C}^{\otimes_{\mathrm{Gr}} 2})^{\otimes_{\mathrm{Gr}} 4}\ar[l]_{(m,m,m,m)}\ar@<.5ex>[d]\ar[d]\ar@<-.5ex>[d] & & & \\
\mathcal{C}^{\otimes_{\mathrm{Gr}}3}\ar@<-.5ex>[d]_{m\otimes 1}\ar@<.5ex>[d]^{1\otimes m}\ar@{}[dr]|{\omega_{\beta}} & (\mathcal{C}^{\otimes_{\mathrm{Gr}}2})^{\otimes_{\mathrm{Gr}} 3}\ar[l]_{(m, m, m)}\ar@<-.5ex>[d]_{m^{\otimes}\otimes 1}\ar@<.5ex>[d]^{1\otimes m^{\otimes}} & (\mathcal{C}^{\otimes_{\mathrm{Gr}} 3})^{\otimes_{\mathrm{Gr}} 3}\ar@<.5ex>[l]^{(m\otimes 1)^{\otimes 3}}\ar@<-.5ex>[l]_{(1\otimes m)^{\otimes 3}}\ar@<.5ex>[d]^{1\otimes  m^{\otimes}}\ar@<-.5ex>[d]_{m^{\otimes}\otimes 1} & & & \\
\mathcal{C}^{\otimes_{\mathrm{Gr}}2}\ar[d]|{m} \ar[d]\ar@{}[dr]|{\chi_{\beta}} & (\mathcal{C}^{\otimes_{\mathrm{Gr}}2})^{\otimes_{\mathrm{Gr}} 2}\ar[d]\ar[l]_{(m, m)}\ar[d]^{m^{\otimes}}\ar@{}[dr]|{1_2} & (\mathcal{C}^{\otimes_{\mathrm{Gr}}3})^{\otimes_{\mathrm{Gr}} 2}\ar@<.5ex>[l]^{(m\otimes 1)^{\otimes 2}}\ar@<-.5ex>[l]_{(1\otimes m)^{\otimes 2}}\ar[d]^{m^{\otimes}}  &  (\mathcal{C}^{\otimes_{\mathrm{Gr}} 4})^{\otimes_{\mathrm{Gr}} 2}\ar@<.5ex>@{-->}[l]\ar@{-->}[l]\ar@<-.5ex>@{-->}[l]\ar@{-->}[d]^{m^{\otimes}} & \\
\mathcal{C} & \mathcal{C}^{\otimes_{\mathrm{Gr}}2}\ar[l]|{m} & \mathcal{C}^{\otimes_{\mathrm{Gr}}3}\ar@<.5ex>[l]^{m\otimes 1}\ar@<-.5ex>[l]_{1\otimes m} & \mathcal{C}^{\otimes_{\mathrm{Gr}}4}\ar@<-.5ex>@/_1pc/[l]_{m\otimes 1\otimes 1}\ar[l]|{1\otimes m\otimes 1}\ar@<.5ex>@/^1pc/[l]^{1\otimes 1\otimes m} & \mathcal{C}^{\otimes_{\mathrm{Gr}} 5}\ar@<1ex>[l]\ar@<.5ex>[l]\ar[l]\ar@<-.5ex>[l]\ar@<-1ex>[l]
}
\end{gathered}
\end{align}
To extend this to a non-unital $\mathsf{E}_1$-structure on $\mathcal{C}$, by Corollary~\ref{cor_eh_3} it suffices to extend the solid part 
of the diagram by a 4-cell in $2\text{-Cat}^{\times}(\mathcal{C}^{\otimes_{\mathrm{Gr}}^8},\mathcal{C})$ which yields the dotted square on 
the bottom right. Therefore, we use that the constructions to this point apply to every braiding $\beta$ on $\mathcal{C}$; in particular, 
they apply to the reverse braiding $\beta^{\mathrm{rev}}$ associated to the fixed braiding $\beta$ in the beginning of the proof. We hence 
obtain a monoidal structure $(\chi_{\beta^{\mathrm{rev}}},\omega_{\beta^{\mathrm{rev}}})$ on the same 2-functor
$m\colon\mathcal{C}\otimes_{\mathrm{Gr}}\mathcal{C}\rightarrow\mathcal{C}$, which corresponds to an
$\mathsf{A}_2^{\mathrm{qu}}\otimes \mathsf{E}_1$-structure 
\[\xymatrix{
 \mathcal{C}^{\otimes_{\mathrm{Gr}} 5}\ar@<1ex>[d]\ar@<.5ex>[d]\ar[d]\ar@<-.5ex>[d]\ar@<-1ex>[d] & \\
 \mathcal{C}^{\otimes_{\mathrm{Gr}}4}\ar@<-.5ex>@/_1pc/[d]_{}\ar[d]|{1\otimes m\otimes 1}\ar@<.5ex>@/^1pc/[d]^{} & (\mathcal{C}^{\otimes_{\mathrm{Gr}} 2})^{\otimes_{\mathrm{Gr}} 4}\ar[l]_{(m,m,m,m)}\ar@<-.5ex>@/_1.5pc/[d]_{}\ar[d]|{1\otimes m^{\otimes}\otimes 1}\ar@<.5ex>@/^1.5pc/[d]^{}  \\
 \mathcal{C}^{\otimes_{\mathrm{Gr}}3}\ar@<-.5ex>[d]_{\rotatebox{-90}{$\scriptstyle m\otimes 1$}}\ar@<.5ex>[d]^{\rotatebox{-90}{$\scriptstyle 1\otimes m$}}\ar@{}[dr]|{\omega_{\beta^{\mathrm{rev}}}} & (\mathcal{C}^{\otimes_{\mathrm{Gr}}2})^{\otimes_{\mathrm{Gr}} 3}\ar[l]_{(m, m, m)}\ar@<-.5ex>[d]_{\rotatebox{90}{$\scriptstyle m^{\otimes}\otimes 1$}}\ar@<.5ex>[d]^{\rotatebox{90}{$\scriptstyle 1\otimes m^{\otimes}$}} \\
 \mathcal{C}^{\otimes_{\mathrm{Gr}}2}\ar[d]_{m} \ar[d]\ar@{}[dr]|{\chi_{\beta^{\mathrm{rev}}}} & (\mathcal{C}^{\otimes_{\mathrm{Gr}}2})^{\otimes_{\mathrm{Gr}} 2}\ar[d]\ar[l]_{(m, m)}\ar[d]^{m^{\otimes}}\\
\mathcal{C} & \mathcal{C}^{\otimes_{\mathrm{Gr}}2}\ar[l]_{m}
}\]
in $2\text{-Cat}^{\times}$. Its diagonal reflection is an
$\mathsf{E}_1\otimes \mathsf{A}_2^{\mathrm{qu}}$-algebra of the form
\begin{align}\label{diag_braid=E_1_functors_2}
\begin{gathered}
\xymatrix{
\mathcal{C}^{\otimes_{\mathrm{Gr}}2}\ar[d]_{m} \ar[d]\ar@{}[dr]|{(\chi_{\beta^{\mathrm{rev}}})^{\rho}} & (\mathcal{C}^{\otimes_{\mathrm{Gr}}2})^{\otimes_{\mathrm{Gr}} 2}\ar[d]\ar[l]_(.55){(m, m)}\ar[d]^{m^{\otimes}}\ar@{}[dr]|{(\omega_{\beta^{\mathrm{rev}}})^{\rho}} & (\mathcal{C}^{\otimes_{\mathrm{Gr}}3})^{\otimes_{\mathrm{Gr}} 2}\ar@<.5ex>[l]^{(m\otimes 1)^{\otimes 2}}\ar@<-.5ex>[l]_{(1\otimes m)^{\otimes 2}}\ar[d]^{m^{\otimes}}  &  (\mathcal{C}^{\otimes_{\mathrm{Gr}} 4})^{\otimes_{\mathrm{Gr}} 2}\ar@<-.5ex>@/_1pc/[l]_{}\ar[l]|{}\ar@<.5ex>@/^1pc/[l]^{}]\ar[d]^{m^{\otimes}} & \\
\mathcal{C} & \mathcal{C}^{\otimes_{\mathrm{Gr}}2}\ar[l]|{m} & \mathcal{C}^{\otimes_{\mathrm{Gr}}3}\ar@<.5ex>[l]^{m\otimes 1}\ar@<-.5ex>[l]_{1\otimes m} & \mathcal{C}^{\otimes_{\mathrm{Gr}}4}\ar@<-.5ex>@/_1pc/[l]_{}\ar[l]|{1\otimes m\otimes 1}\ar@<.5ex>@/^1pc/[l]^{} & \mathcal{C}^{\otimes_{\mathrm{Gr}} 5}\ar@<1ex>[l]\ar@<.5ex>[l]\ar[l]\ar@<-.5ex>[l]\ar@<-1ex>[l]
}
\end{gathered}
\end{align}
which is precisely the underlying corresponding portion of Diagram (\ref{diag_braid=E_1_functors_1}) by way of
Example~\ref{exple_diagrefl2} and the definition of the cell $1_2$. Diagrams (\ref{diag_braid=E_1_functors_1}) and 
(\ref{diag_braid=E_1_functors_2}) together hence yield the desired non-unital $\mathsf{E}_2$-monoid structure $\mathcal{C}^{\beta}$ by 
Corollary~\ref{cor_eh_3}. This structure lifts to an $\mathsf{E}_2$-monoid by Lemma~\ref{lemma_unitff}. Indeed, its underlying non-unital
$\mathsf{E}_1$-monoid --- i.e.\ the vertical $\mathsf{E}_1$-monoid on the very left of (\ref{diag_braid=E_1_functors_1}) --- is given by
$\mathcal{C}$, and hence is unital. Its horizontal multiplication in $\mathrm{Alg}_{\mathsf{E}_1}(\hotwocat)$ is given by the monoidal 2-
functor $(m,\chi_{\beta},\omega_{\beta},1,1,1)$, which also has a unit given by the diagonally reflected unitality laws of the reverse 
braiding.
\end{proof}

\begin{remark}
To elaborate on the function of the reverse braiding in the proof of Lemma~\ref{lemma_braid=E_1_forth} let us highlight that 
primarily two of its properties are needed here. First, we use that the reverse $\beta^{\mathrm{rev}}$ of a braiding $\beta$ is indeed a 
braiding, and second, that the underlying
$\mathsf{A}_2^{\mathrm{nu}}\otimes_{\mathrm{BV}}\mathsf{A}_2^{\mathrm{nu}}$-monoid structure of the diagonal reflection of
$\beta^{\mathrm{rev}}$ is precisely the underlying $\mathsf{A}_2^{\mathrm{nu}}\otimes_{\mathrm{BV}}\mathsf{A}_2^{\mathrm{nu}}$-monoid 
structure of $\beta$ itself. If instead for example we were naively to ask that the underlying
$\mathsf{A}_2^{\mathrm{nu}}\otimes_{\mathrm{BV}}\mathsf{A}_2^{\mathrm{nu}}$-monoid structure of the diagonal reflection of $\beta$ itself 
be equivalent to the underlying $\mathsf{A}_2^{\mathrm{nu}}\otimes_{\mathrm{BV}}\mathsf{A}_2^{\mathrm{nu}}$-monoid structure of $\beta$ 
again, we were to ask for an equivalence
\[\chi_{\beta}\xrightarrow{\simeq}\chi_{\beta}^{-1}(\sigma^{\ast}_{(\langle 2\rangle,\langle 2\rangle})(-)).\]
That is, equivalently, for a natural equivalence $u_{A,B}\colon\rho_{A,B}\rightarrow\rho^{-1}_{B,A}$.
This means we were to ask for the underlying modification of a syllepsis, which generally of course does not exist.
\end{remark}

To extract a braiding from a general $\mathsf{E}_2$-monoid in $\hotwocat$, we perform a series of semi-strictifications.

\begin{lemma}\label{lemma_braid=E_1_back}
Let $\mathcal{C}$ be a cofibrant Gray monoid. Let $\mathcal{C}^+$ be an $\mathsf{E}_1$-monoid structure on $\mathcal{C}$ in
$\mathrm{Alg}_{\mathsf{E}_1}(2\text{-Cat}^{\times})$ whose underlying diagram (\ref{diag_eh_3}) from Corollary~\ref{cor_eh_3} is such that 
the following equations hold:
\begin{enumerate}
\item $U(\mathcal{C}^+)=\mathrm{Alg}_{\mathsf{E}_1}(U)(\mathcal{C}^+)=\mathcal{C}$,
\item $\chi_{I,I,-,-}=\chi_{-,-,I,I}=\chi_{-,I,I,-}=1$,
\item $\omega_{-,-,I,I,-,-}=\omega_{-,I,I,-,-,-}=\omega_{-,-,-,I,I,-}=1$,
\item $\psi_{-,I,-,-,I,-}=1$.
\end{enumerate}
Then the tuple 
\begin{itemize}
\item $\rho_{A,B}:=\chi_{I,A,B,I}$
\item $\bar{\omega}_{A,B,C,D}:=\omega_{I,A,B,C,D,I}$
\end{itemize}
defines a braiding $\beta$ on $\mathcal{C}$ such that the identity on $\mathcal{C}$ extends to an equivalence
$\mathcal{C}^{\beta}\simeq\mathcal{C}^+$ in $\mathrm{Alg}_{\mathsf{E}_1}(\hotwocat)$. Any two equivalent such $\mathsf{E}_1$-monoid 
structures induce equivalent braidings.
\end{lemma}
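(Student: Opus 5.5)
Since $\mathcal{C}$ is cofibrant, Lemma~\ref{lemma_halgtrans} lets us read the data of Diagram~(\ref{diag_eh_3}) strictly in Gray-monoid terms. The cell $\chi$ together with $\omega$ (and trivial unit data $\iota=\zeta=\kappa=1$, forced by hypotheses (1)--(2)) is a monoidal structure $(m,\chi,\omega,1,1,1)$ on the multiplication $m\colon\mathcal{C}\otimes_{\mathrm{Gr}}\mathcal{C}\rightarrow\mathcal{C}$. The cell $\psi$ is the modification making the identity square~(\ref{diag_def_braiding}) monoidal, i.e.\ it plays the role of $1_2$. The plan is to run Remark~\ref{rem_braidtomono} backwards, and then match the result with $\mathcal{C}^{\beta}$ from Lemma~\ref{lemma_braid=E_1_forth}.

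\textbf{Step 1: $\chi$ and $\omega$ are determined by $\rho$ and $\bar{\omega}$.} Using pseudo-naturality of $\chi$ in its four variables and the strict equation $\chi_{-,I,I,-}=1$, factor
\[
\chi_{A,B,C,D}=(1\otimes\chi_{I,B,C,I}\otimes 1)\circ\chi_{A,I,I,D}=1_A\otimes\rho_{B,C}\otimes 1_D .
\]
Similarly, hypothesis (3) and the modification axioms for $\omega$ give $\omega_{A,B,C,D,E,F}=1\otimes\bar{\omega}_{B,C,D,E}\otimes 1$. The unit equations $\chi_{I,I,-,-}=\chi_{-,-,I,I}=1$ then give $\rho_{I,-}=\rho_{-,I}=1$, which are the strict unit conditions of a Day--Street braiding.

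\textbf{Step 2: the braiding axioms.} For $\bar{\omega}$, evaluate the associativity axiom of the monoidal 2-functor $m$ (an axiom on $\omega$ in eight variables) at $A=H=I$. This should give Day--Street's associativity for $\bar{\omega}$.

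The relation between the two Yang--Baxter cells $r_{A,B|C}=\bar{\omega}_{A,I,B,C}$ and $r_{A|B,C}=\bar{\omega}_{A,B,I,C}$ is the main obstacle. It is not contained in the monoidality of $m$ alone. It must come from the $\mathsf{E}_1\otimes\mathsf{E}_1$-coherence, namely the 3-cell $\psi$ and the commuting squares in (\ref{diag_eh_3}). Here I would use Proposition~\ref{prop_eh} and Remark~\ref{rem_algfib} to identify the horizontal $\mathsf{E}_1\otimes\mathsf{A}_2$-part with the diagonal reflection of the vertical one. Unwinding Example~\ref{exple_diagrefl2} and Notation~\ref{notation_revbraid}, $\psi$ is then expressed through $\bar{\omega}^{-1}$ composed with $\sigma$-permutations. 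Normalising with $\psi_{-,I,-,-,I,-}=1$, the component $\psi_{I,A,B,C,D,I}$ should compare two pastings of $\bar{\omega}$. Equating its source and target boundaries yields the Yang--Baxter equation. This computation, which Step~3 also relies on, is where I expect the bulk of the work.

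\textbf{Step 3: equivalence to $\mathcal{C}^{\beta}$ and uniqueness.} By Step~1, the monoidal structure on $m$ underlying $\mathcal{C}^+$ coincides with $(m,\chi_\beta,\omega_\beta,1,1,1)$. By Step~2, $\psi$ agrees with $1_2=(1\otimes\bar{\omega}^{\mathrm{rev}}\otimes 1)^{\rho}$. Hence $\mathcal{C}^+$ and $\mathcal{C}^{\beta}$ have equivalent underlying diagrams (\ref{diag_eh_3}). Corollary~\ref{cor_eh_3}, together with the truncation result Corollary~\ref{cor_truncobj}, then shows that the identity extends to an equivalence $\mathcal{C}^{\beta}\simeq\mathcal{C}^+$.

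For the last claim, an equivalence of two such $\mathsf{E}_1$-structures over the identity of $\mathcal{C}$ gives, via Lemma~\ref{lemma_halgtrans}, a monoidal pseudo-natural equivalence between the two multiplications. Its component $\theta_{I,-,-,I}$, together with its monoidal modification, is exactly a braided structure on $1_{\mathcal{C}}$ in the sense of Definition~\ref{def_equiv_braids}. So the two induced braidings are equivalent.
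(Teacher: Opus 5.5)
Your overall architecture matches the paper's: show $\chi=\chi_\beta$, $\omega=\omega_\beta$ and $\psi=(1\otimes\bar{\omega}^{\mathrm{rev}}\otimes 1)^{\rho}$, then conclude via Corollary~\ref{cor_eh_3}. There is, however, a genuine gap at the step you flag as the main obstacle, the identification of $\psi$. The Yang--Baxter relation and the equivalence $\mathcal{C}^{\beta}\simeq\mathcal{C}^+$ both depend on it.

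\textbf{Why the proposed mechanism fails.} Proposition~\ref{prop_eh} with Remark~\ref{rem_algfib} only compares the two underlying $\mathsf{E}_1$-algebras (vertical and horizontal) of an $\mathsf{E}_2$-algebra. It says nothing about the $\mathsf{A}_2\otimes\mathsf{E}_1$- versus $\mathsf{E}_1\otimes\mathsf{A}_2$-fragments.
\begin{itemize}
\item The horizontal fragment of $\mathcal{C}^+$ is \emph{not} the diagonal reflection of its vertical fragment $(\chi,\omega)$. By Example~\ref{exple_diagrefl2}, the reflection carries $\chi^{-1}$ composed with the $\sigma$-permutation on the shared $\mathsf{A}_2\otimes\mathsf{A}_2$-face. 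That is, it has $\rho^{-1}_{C,B}$ where $\mathcal{C}^+$ has $\rho_{B,C}$.
\item So reflected cells are not even parallel to $\psi$. Matching them would require $\rho_{A,B}\simeq\rho^{-1}_{B,A}$, i.e.\ a syllepsis; this is the paper's remark after Lemma~\ref{lemma_braid=E_1_forth}.
\item Reflecting $1\otimes\bar\omega\otimes 1$ also yields a cell built from $\bar\omega$, not $\bar\omega^{\mathrm{rev}}$. The reverse braiding is a \emph{choice} in the forward construction. Here you cannot assume $\mathcal{C}^+$ arose that way, so $\psi$ must be computed from the data.
\item The Yang--Baxter relation is an equation of pasted 2-cells in $\mathcal{C}$, i.e.\ a 4-cell of $\hotwocat$. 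It cannot come from ``equating the boundaries'' of a component of the 3-cell $\psi$; it comes from a monoidality axiom.
\end{itemize}

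\textbf{What the paper does instead.} It uses the 4-cell of (\ref{diag_braid=E_1_back_1}) at $(\mathcal{C}^{\otimes 3})^{\otimes 3}$, the interchange equation (\ref{diag_braid=E_1_back_EH2}) between $\omega$ and $\psi$, under unit substitutions. This gives $\psi_{A,I,C,D,Y,Z}=\omega^{-1_v}_{A,C,D,I,Y,Z}$ and $\psi_{A,B,F,X,I,Z}=\omega^{-1_v}_{A,B,I,F,X,Z}$. Together with associativity of $\omega$ and horizontal associativity of $\psi$, this yields the formula of Notation~\ref{notation_revbraid}.

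That computation needs $\psi=1\otimes\psi_{I,-,-,-,-,I}\otimes 1$ first, and hypotheses (1)--(4) do not give it. One needs the extra normalisation $\psi_{-,I,I,I,-,-}=1$ (Claim~\ref{claim_braid=E_1_back}), which the paper proves homotopically:
\begin{itemize}
\item By Additivity and Fadell--Neuwirth, $\mathsf{E}_1\otimes_{\mathrm{BV}}\mathsf{E}_1(\langle n\rangle,\langle 1\rangle)\simeq\mathsf{C}_n(\mathbb{R}^2)$ is 1-truncated.
\item Hence, in the monoidal envelope, the unit-whiskered composites of $\omega$ and $\psi$ are parallel 2-cells that must agree.
\item Since $\omega_{-,I,I,-,I,-}=1$ by hypothesis (3), this forces $\psi_{-,I,I,I,-,-}=1$.
\end{itemize}
Diagonal reflection enters only as a symmetry swapping the roles of $\omega$ and $\psi$, so the normal-form argument for $\omega$ can be recycled. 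Your plan has nothing replacing this Claim.

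The paper also never checks Yang--Baxter by hand. Once $\chi,\omega,\psi$ are identified, the existence of $\mathcal{C}^+$ makes $(m,\chi_\beta,\omega_\beta)$ and the identity square (\ref{diag_def_braiding}) monoidal by Lemma~\ref{lemma_halgtrans}. Then \cite[p.117--118]{daystreet} gives the braiding axioms.

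\textbf{Smaller points.}
\begin{itemize}
\item In Step~1, pseudo-naturality of $\chi$ cannot relate components at $(A,B,C,D)$, $(A,I,I,D)$ and $(I,B,C,I)$; your composite is not even well-typed. The factorisation comes from the unit instances of $\omega$ in hypothesis (3).
\item For $\omega=1\otimes\bar\omega\otimes1$ you also need $\omega_{-,I,-,I,-,I}=\omega_{I,-,I,-,I,-}=1$ and associativity of $\omega$.
\item Corollary~\ref{cor_truncobj} plays no role in Step~3.
\end{itemize}
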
	
\begin{proof}
To prove that $\beta:=(\rho,\bar{\omega})$ is a braiding on $\mathcal{C}$ we are to show associativity of $\bar{\omega}$ as well as the 
equation relating the two Young--Baxter cells $r_{-,-|-}=\bar{\omega}_{-,I,-,-}$ and $r_{-|-,-}==\bar{\omega}_{-,-,I,-}$.
Therefore, by way of Corollary~\ref{cor_eh_3} it suffices to show that the induced diagram
\begin{align}\label{diag_lemma_braid=E_1_back}
\begin{gathered}
\xymatrix{
\mathcal{C}^{\otimes_{\mathrm{Gr}}4}\ar@<-.5ex>@/_1pc/[d]_{m\otimes 1\otimes 1}\ar[d]|{1\otimes m\otimes 1}\ar@<.5ex>@/^1pc/[d]^{1\otimes 1\otimes m} & & & \\
\mathcal{C}^{\otimes_{\mathrm{Gr}}3}\ar@<-.5ex>[d]_{m\otimes 1}\ar@<.5ex>[d]^{1\otimes m}\ar@{}[dr]|{\omega_{\beta}} & (\mathcal{C}^{\otimes_{\mathrm{Gr}}2})^{\otimes_{\mathrm{Gr}} 3}\ar[l]_{(m, m, m)}\ar@<-.5ex>[d]_{m^{\otimes}\otimes 1}\ar@<.5ex>[d]^{1\otimes m^{\otimes}} & & & \\
\mathcal{C}^{\otimes_{\mathrm{Gr}}2}\ar[d]|{m} \ar[d]\ar@{}[dr]|{\chi_{\beta}} & (\mathcal{C}^{\otimes_{\mathrm{Gr}}2})^{\otimes_{\mathrm{Gr}} 2}\ar[d]\ar[l]_{(m, m)}\ar[d]^{m^{\otimes}}\ar@{}[dr]|{(\omega_{\beta^{\mathrm{rev}}})^{\rho}} & (\mathcal{C}^{\otimes_{\mathrm{Gr}}3})^{\otimes_{\mathrm{Gr}} 2}\ar@<.5ex>[l]^{(m\otimes 1)^{\otimes 2}}\ar@<-.5ex>[l]_{(1\otimes m)^{\otimes 2}}\ar[d]^{m^{\otimes}} & \\
\mathcal{C} & \mathcal{C}^{\otimes_{\mathrm{Gr}}2}\ar[l]|{m} & \mathcal{C}^{\otimes_{\mathrm{Gr}}3}\ar@<.5ex>[l]^{m\otimes 1}\ar@<-.5ex>[l]_{1\otimes m} & \mathcal{C}^{\otimes_{\mathrm{Gr}}4}\ar@<-.5ex>@/_1pc/[l]_{m\otimes 1\otimes 1}\ar[l]|{1\otimes m\otimes 1}\ar@<.5ex>@/^1pc/[l]^{1\otimes 1\otimes m} 
}
\end{gathered}
\end{align}
is precisely the according portion of the diagram
\begin{align}\label{diag_braid=E_1_back_1}
\begin{gathered}
\xymatrix{
\mathcal{C}^{\otimes 5}\ar@<1ex>[d]\ar@<.5ex>[d]\ar[d]\ar@<-.5ex>[d]\ar@<-1ex>[d] & & & & \\
\mathcal{C}^{\otimes 4} \ar@<-.5ex>@/_1pc/[d]_{\rotatebox{90}{$\scriptstyle m\otimes 1\otimes 1$}}\ar[d]|{1\otimes m\otimes 1}\ar@<.5ex>@/^1pc/[d]^{\rotatebox{-90}{$\scriptstyle 1\otimes 1\otimes m$}} & (\mathcal{C}^{\otimes 2})^{\otimes 4}\ar[l]\ar@<.5ex>[d]\ar[d]\ar@<-.5ex>[d] & & & \\
\mathcal{C}^{\otimes 3} \ar@<-.5ex>[d]_{\rotatebox{90}{$\scriptstyle m\otimes 1$}}\ar@<.5ex>[d]^{\rotatebox{-90}{$\scriptstyle 1\otimes m$}}\ar@{}[dr]|{\omega} & (\mathcal{C}^{\otimes 2})^{\otimes 3}\ar[l]_{m^3}\ar@<-.5ex>[d]_{\rotatebox{90}{$\scriptstyle m^{\otimes}\otimes 1$}}\ar@<.5ex>[d]^{\rotatebox{-90}{$\scriptstyle 1\otimes m^{\otimes}$}} & (\mathcal{C}^{\otimes 3})^{\otimes 3}\ar@<.5ex>[l]\ar@<-.5ex>[l]\ar@<.5ex>[d]\ar@<-.5ex>[d] & & \\
\mathcal{C}^{\otimes 2}\ar[d]|{m}\ar[d]\ar@{}[dr]|{\chi} & (\mathcal{C}^{\otimes 2})^{\otimes 2}\ar[d]\ar[l]_{m^2}\ar[d]^{m^{\otimes}}\ar@{}[dr]|{\psi} & (\mathcal{C}^{\otimes 3} )^{\otimes 2}\ar@<.5ex>[l]^{(m\otimes 1)^{\otimes 2}}\ar@<-.5ex>[l]_{(1\otimes m)^{\otimes 2}}\ar[d]^{m^{\otimes}} & (\mathcal{C}^{\otimes 4})^{\otimes 2}\ar@<.5ex>[l]\ar[l]\ar@<-.5ex>[l]\ar[d] & \\
\mathcal{C} & \mathcal{C}^{\otimes 2}\ar[l]|{m} & \mathcal{C}^{\otimes 3} \ar@<.5ex>[l]^{m\otimes 1}\ar@<-.5ex>[l]_{1\otimes m} & \mathcal{C}^{\otimes 4} \ar@<-.5ex>@/_1pc/[l]_{m\otimes 1\otimes 1}\ar[l]|{1\otimes m\otimes 1}\ar@<.5ex>@/^1pc/[l]^{1\otimes 1\otimes m} & \mathcal{C}^{\otimes 5}\ar@<1ex>[l]\ar@<.5ex>[l]\ar[l]\ar@<-.5ex>[l]\ar@<-1ex>[l]
}
\end{gathered}
\end{align}
associated to the $\mathsf{E}_1$-monoid $\mathcal{C}^+$ on $\mathcal{C}$ that we start with. Given that, 
it follows that Diagram~(\ref{diag_lemma_braid=E_1_back}) can be extended to a full $\mathsf{E}_1$-monoid structure on $\mathcal{C}$ by 
assumption. In particular, the multiplication
$(m,\chi_{\beta},\omega_{\beta})\colon\mathcal{C}\otimes_{\mathrm{Gr}}\mathcal{C}\rightarrow\mathcal{C}$ is monoidal, and the
pseudo-natural equivalence
$(\omega_{\beta^{\mathrm{rev}}})^{\rho}\colon m(m\otimes_{\mathrm{Gr}} 1)\rightarrow m(1\otimes_{\mathrm{Gr}} m)$ is monoidal as well by 
Lemma~\ref{lemma_halgtrans}. By \cite[p.117-118]{daystreet} it follows that $\beta$ is a braiding on $\mathcal{C}$.
Furthermore, the identity on $\mathcal{C}$ automatically extends to an equivalence $\mathcal{C}^{\beta}\simeq\mathcal{C}^+$ by way of 
Corollary~\ref{cor_eh_3}. The fact that this construction maps equivalences of $\mathsf{E}_1$-monoid structures to equivalences of 
braidings is straightforward.

To show equality of (\ref{diag_lemma_braid=E_1_back}) and (\ref{diag_braid=E_1_back_1}), we are to show that
\begin{enumerate}[label=\roman*.]
\item $\chi_{\beta A,B,C,D}:=1_A\otimes\rho_{B,C}\otimes 1_D:=1_A\otimes\chi_{I,B,C,I}\otimes 1_D=\chi_{A,B,C,D}$,
\item $\omega_{\beta A,B,C,D,E,F}:=1_A\otimes\bar{\omega}_{B,C,D,E}\otimes 1_F:=1_A\otimes\omega_{I,B,C,D,E,I}\otimes 1_F=\omega_{A,B,C,D,E,F}$, and
\item $\psi_{\beta A,B,C,D,E,F}:=(\omega_{\beta^{\mathrm{rev}}})^{\rho}_{A,B,C,D,E,F}:=(1\otimes\bar{\omega}^{\mathrm{rev}}\otimes 1)^{\rho}_{A,B,C,D,E,F}=\omega_{A,B,C,D,E,F}$,
\end{enumerate}
where in the last case we recall Notation~\ref{notation_revbraid}. Let us first show that Equations (1)-(4) in the assumption of the lemma 
imply i.\ and ii.
Indeed, $\omega_{-,I,I,-,-,-}=1$ and $\chi_{-,I,I,-}=1$ together directly imply that $\chi_{A,B,C,D}=1_A\otimes\chi_{I,B,C,D}$. 
Analogously, by $\omega_{-,-,-,I,I,-}=1$ we get $\chi_{A,B,C,D}=1_A\otimes\chi_{I,B,C,I}\otimes 1_D$. Similarly, one shows that
\begin{align}\label{equ_braid=E_1_back_1}
\omega=1\otimes\omega_{I,-,-,-,-,-}
\end{align}
holds, too. More precisely, the cell $\omega$ is part of an $\mathsf{E}_1\otimes\mathsf{E}_1$-monoid structure, and so the unit $I$ is also 
a horizontal unit in $\mathrm{Alg}_{\mathsf{E}_1}(\hotwocat)$. That means
\[\omega_{-,I,-,I,-,I}=\omega_{I,-,I,-,I,-}=1\]
hold. From these we compute
\begin{align*}
\omega_{A,I,C,D,E,F} & = \omega_{A,I,C,I,E,DF}\\
& = (\omega_{A,I,C,I,E,I}\otimes 1_{DF})\ast\omega_{A,I,CE,I,I,DF}\\
& = 1\ast 1
\end{align*}
for all $A,C,D,E,F$ in $\mathcal{C}$. Here, the first equation is associativity of $\omega$ for the tuple $(A,I,C,I,I,D,E,F)$, and 
the second equation is associativity again for $(A,I,C,I,E,I,I,DF)$. One more application of associativity of $\omega$, setting $B=C=1$, 
yields (\ref{equ_braid=E_1_back_1}).
Analogously one shows that
\[\omega_{A,B,C,D,1,F}=1\]
holds for all $A,B,C,D,F$ in $\mathcal{C}$ as well. Subsequently, one computes that
\begin{align}\label{equ_braid=E_1_back_11}
\notag \omega_{A,B,C,D,E,F} &= 1_A\otimes \omega_{I,B,C,D,E,F}\\
& =1_A\otimes\omega_{A,B,C,D,E,I}\otimes 1_F
\end{align}
as before. We have thus shown that $\chi=\chi_{\beta}$ and $\omega=\omega_{\beta}$. We are left to show iii.

Therefore, we consider the 4-cell in Diagram~(\ref{diag_braid=E_1_back_1}) that starts at
$(\mathcal{C}^{\otimes_{\mathrm{Gr}}3})^{\otimes_{\mathrm{Gr}}3}$. It is an equation of the following two natural pasted 2-cells.
\begin{align}\label{diag_braid=E_1_back_EH2}
\begin{gathered}
\adjustbox{scale=0.7}{%
\xymatrix{
ABCDEFXYZ\ar@{=}[rr]\ar@{=}[dd]\ar@{}[ddrr]|{=} & & ABCDEFXYZ\ar[rr]^{\chi_{AB,C,DE,F}}_{\otimes 1_{XYZ}}\ar[dd]^{\rotatebox{-90}{$\scriptstyle \chi_{A,BC,D,EF}$}}_{\rotatebox{-90}{$\scriptstyle \otimes 1_{XYZ}$}}\ar@{}[ddrr]|{\Downarrow \psi_{A,B,C,D,E,F}\otimes 1_{XYZ}} & & ABDECFXYZ\ar[rr]^{\chi_{A,B,D,E}}_{\otimes 1_{CFXYZ}}\ar[dd]^{\rotatebox{-90}{$\scriptstyle \chi_{A,B,D,E}$}}_{\rotatebox{-90}{$\scriptstyle \otimes 1_{CFXYZ}$}}\ar@{}[ddrr]|{=} & & ADBECFXYZ\ar[dd]^{\rotatebox{-90}{$\scriptstyle \chi_{ADBE,CF,XY,Z}$}} \\
 && && && \\
ABCDEFXYZ\ar[rr]^{\chi_{A,BC,D,EF}}_{\otimes 1_{XYZ}}\ar[dd]^{\rotatebox{-90}{$\scriptstyle 1_{ABC}$}}_{\rotatebox{-90}{$\scriptstyle \otimes\chi_{D,EF,X,YZ}$}}\ar@{}[ddrr]|{\Downarrow \omega_{A,BC,D,EF,X,YZ}} & & ADBCEFXYZ\ar[rr]^{1_{AD}\otimes\chi_{B,C,E,F}}_{\otimes 1_{XYZ}}\ar[dd]^{\rotatebox{-90}{$\scriptstyle \chi_{AD,BCEF,X,YZ}$}}\ar@{}[ddrr]|{\Downarrow\chi_{(\chi^{(1,m)}_{A,B,C,D,E,F},1_{(X,Y,Z)})}} & & ABDECFXYZ\ar[rr]^{\chi_{ADBE,CF,XY,Z}}\ar[dd]^{\rotatebox{-90}{$\scriptstyle \chi_{AD,BECF,X,YZ}$}}\ar@{}[ddrr]|{\Downarrow\psi_{AD,BE,CF,X,Y,Z}} & & ADBEXYCFZ\ar[dd]^{\rotatebox{-90}{$\scriptstyle \chi_{AD,BE,X,Y}$}}_{\rotatebox{-90}{$\scriptstyle \otimes 1_{CFZ}$}} \\
 && && && \\
ABCDXEFYZ\ar[rr]^{\chi_{A,BC,DX,E,FYZ}}\ar[dd]^{\rotatebox{-90}{$\scriptstyle 1_{ABCDX}$}}_{\rotatebox{-90}{$\scriptstyle \otimes\chi_{E,F,Y,Z}$}}\ar@{}[ddrr]|{\Downarrow\chi^{-1}_{(1_{(A,B,C)},\chi^{(1,m)}_{D,E,F,X,Y,Z})}} & & ADXBCEFYZ\ar[rr]^{1_{ADX}\otimes\chi_{B,C,E,F}}_{\otimes 1_{YZ}}\ar[dd]^{\rotatebox{-90}{$\scriptstyle 1_{ADXBC}$}}_{\rotatebox{-90}{$\scriptstyle \otimes\chi_{E,F,Y,Z}$}}\ar@{}[ddrr]|{\Downarrow 1_{ADX}\otimes\omega_{B,C,E,F,Y,Z}} & & ADXBECFYZ\ar[rr]^{1_{ADX}}_{\otimes\chi_{BE,CF,Y,Z}}\ar[dd]^{\rotatebox{-90}{$\scriptstyle 1_{ADX}$}}_{\rotatebox{-90}{$\scriptstyle \otimes\chi_{BE,CF,Y,Z}$}}\ar@{}[ddrr]|{=} & & ADXBEYCFZ\ar@{=}[dd] \\
 && && && \\
ABCDXEYFZ\ar[rr]_{\chi_{A,BC,DX,EYFZ}} & & ADXBCEYFZ\ar[rr]^{1_{ADX}}_{\otimes\chi_{B,C,EY,FZ}} & & ADXBEYCFZ\ar@{=}[rr] & & ADXBEYCFZ \\
 && && && \\
 & & \ar@{}[rr]|{\rotatebox{90}{$=$}} & & & & \\
 && && && \\
ABCDEFXYZ\ar@{=}[rr]\ar@{=}[dd]\ar@{}[ddrr]|{=} & & ABCDEFXYZ\ar[rr]^{\chi_{AB,C,DE,F}}_{\otimes 1_{XYZ}}\ar[dd]^{\rotatebox{-90}{$\scriptstyle 1_{ABC}$}}_{\rotatebox{-90}{$\scriptstyle \chi_{DE,F,XY,Z}$}}\ar@{}[ddrr]|{\Downarrow \omega_{AB,C,DE,F,XY,Z}}  & & ABDECFXYZ\ar[rr]^{\chi_{A,B,D,E}}_{\otimes 1_{CFXYZ}}\ar[dd]^{\rotatebox{-90}{$\scriptstyle \chi_{ABDE,CF,XY,Z}$}}\ar@{}[ddrr]|{\Downarrow\chi_{(\chi^{(m,1)}_{A,B,C,D,E,F},1_{(X,Y,Z)})}}  & & ADBECFXYZ\ar[dd]^{\rotatebox{-90}{$\scriptstyle \chi_{ADBE,CF,XY,DZ}$}} \\
 && && && \\
ABCDEFXYZ\ar[rr]^{1_{ABC}}_{\otimes \chi_{DE,F,XY,Z}}\ar[dd]^{\rotatebox{-90}{$\scriptstyle 1_{ABC}$}}_{\rotatebox{-90}{$\scriptstyle \otimes\chi_{D,EF,X,YZ}$}}\ar@{}[ddrr]|{\Downarrow 1_{ABC}\otimes\psi_{D,E,F,X,Y,Z}} & & ABCDEXYFZ\ar[rr]^{\chi_{AB,C,DEXY,FZ}}\ar[dd]^{\rotatebox{-90}{$\scriptstyle 1_{ABC}\otimes\chi_{D,E,X,Y}$}}_{\rotatebox{-90}{$\scriptstyle\otimes 1_{FZ}$}}\ar@{}[ddrr]|{\Downarrow\chi^{-1}_{(1_{(A,B,C)},\chi^{(m,1)}_{D,E,F,X,Y,Z})}} & & ABDEXYCFZ\ar[rr]^{\chi_{A,B,D,E}}_{\otimes 1_{XYCFZ}}\ar[dd]^{\rotatebox{-90}{$\scriptstyle 1_{AB}\otimes\chi_{D,E,X,Y}$}}_{\rotatebox{-90}{$\scriptstyle \otimes 1_{CFZ}$}}\ar@{}[ddrr]|{\Downarrow\omega_{A,B,D,E,X,Y}\otimes 1_{CFZ}} & & ADBEXYCFZ\ar[dd]^{\rotatebox{-90}{$\scriptstyle \chi_{AD,BE,X,Y}$}}_{\rotatebox{-90}{$\scriptstyle \otimes 1_{CFZ}$}} \\
 && && && \\
ABCDXEFYZ\ar[rr]^{1_{ABCDX}}_{\otimes\chi_{E,F,Y,Z}}\ar[dd]^{\rotatebox{-90}{$\scriptstyle 1_{ABCDX}$}}_{\rotatebox{-90}{$\scriptstyle \otimes\chi_{E,F,Y,Z}$}}\ar@{}[ddrr]|{=} & & ABCDXEYFZ\ar[rr]^{\chi_{AB,C,DXEY,FZ}}\ar[dd]^{\rotatebox{-90}{$\scriptstyle \chi_{A,BC,DX,EYFZ}$}}\ar@{}[ddrr]|{\Downarrow\psi_{A,B,C,DX,EY,FZ}} & & ABDXEYCFZ\ar[rr]^{\chi_{A,B,DX,EY}}_{\otimes 1_{CFZ}}\ar[dd]^{\rotatebox{-90}{$\scriptstyle \chi_{A,B,DX,EY}$}}_{\rotatebox{-90}{$\scriptstyle \otimes 1_{CFZ}$}}\ar@{}[ddrr]|{=} & & ADXBEYCFZ\ar@{=}[dd] \\
 && && && \\
ABCDXEYFZ\ar[rr]_{\chi_{A,BC,DX,EYFZ}} & & ADXBCEYFZ\ar[rr]^{1_{ADX}}_{\otimes\chi_{B,C,EY,FZ}} & & ADXBEYCFZ\ar@{=}[rr] & & ADXBEYCFZ\\
}}
\end{gathered}
\end{align}
From this, let us show that $\psi=1\otimes\psi_{I,-,-,-,-,I}\otimes 1$ holds as well. We make use of the following claim.
\begin{claim}\label{claim_braid=E_1_back}
Under the given assumptions, the equation
\begin{align}\label{equ_braid=E_1_back_psiextra}
\psi_{-,I,I,I,-,-}=1
\end{align}
holds, too. 
\end{claim}
We defer a proof of the claim to the end and assume that it holds for now. Then, to recycle some of the computations we already did, we 
may work with the diagonal reflection of $\mathcal{C}^+$ instead, and hence assume that $\psi=1\otimes\psi_{I,-,-,-,-,I}\otimes 1$ 
in order to show that $\omega=1\otimes\omega_{I,-,-,-,-,I}\otimes 1$ instead. The two equations $\psi_{-,I,-,-,I,-}=\psi_{-,I,I,I,-,-}=1$ 
then translate to the assumption that
\begin{align}\label{claim_braid=E_1_back_reflected} 
&\omega(-,-,I,I,-,-)=1\text{, and} \\
\notag &\omega(-,I,I,-,I,-)=1.
\end{align}
From these two equations and Diagram~(\ref{diag_braid=E_1_back_EH2}) --- for the $\mathsf{E}_2$-monoid $(\mathcal{C}^+)^{\rho}$ --- it 
follows that
\begin{itemize}
\item $\omega_{A,I,I,F,Y,Z}=\omega_{A,I,I,F,I,YZ}$ and $\omega_{A,B,D,I,I,Z}=\omega_{AB,I,D,I,I,Z}$ by (\ref{diag_braid=E_1_back_EH2}) with $B,C,D,E,X=1$, and with $C,E,F,X,Y=1$, respectively;
\item $\omega_{A,I,E,I,I,Z}=\omega_{A,I,I,E,I,Z}$ by (\ref{diag_braid=E_1_back_EH2}) with $B,C,D,F,X,Y=1$.
\end{itemize}
In particular, by assumption of (\ref{claim_braid=E_1_back_reflected}), all these cells are degenerate. It follows, by way of the 
computations above concerning (\ref{equ_braid=E_1_back_1}) and (\ref{equ_braid=E_1_back_11}), that
\[\omega=1\otimes\omega_{I,-,-,-,-,I}\otimes 1.\]
That means, in turn, that $\psi=1\otimes\psi_{I,-,-,-,-,I}\otimes 1$ and $\omega=1\otimes\omega_{I,-,-,-,-,I}\otimes 1$ in $\mathcal{C}^+$ 
itself as well. Now, for $\mathcal{C}^+$, it consequently follows that
\begin{itemize}
\item $\psi_{A,I,C,D,Y,Z}=\omega_{A,C,D,I,Y,Z}^{-1_v}$ by (\ref{diag_braid=E_1_back_EH2}) with $B,E,F,X=1$, and
\item $\psi_{A,B,F,X,I,Z}=\omega_{A,B,I,F,X,Z}^{-1_v}$ by (\ref{diag_braid=E_1_back_EH2}) with $C,D,E,Y=1$.
\end{itemize}
Furthermore, associativity (with $D=E=1$) of the cell $\omega$ yields that
\[\omega_{A,B,C,F,G,H}=(1_A\otimes\omega_{I,B,C,I,G,I}\otimes 1_{FH})\ast(1_{AC}\otimes\omega_{I,B,I,F,G,I}\otimes 1_H).\]
In turn, (horizontal) associativity of the cell $\psi$ yields that
\[\psi_{A,B,C,D,E,F}=(1_{AD}\otimes\psi_{I,I,C,B,E,I}\otimes 1_F)\ast(1_A\otimes\psi_{I,B,E,D,I,I}\otimes 1_{CF}).\]
Putting these together eventually yields
\[\psi_{A,B,C,D,E,F}=(1_{AB}\otimes\omega^{-1_v}_{I,C,D,I,E,I}\otimes 1_F)\ast(1_A\otimes\omega^{-1_v}_{I,B,I,C,D,I}\otimes 1_{EF}).\]
Thus, the cell $\psi$ is precisely the cell $(1\otimes\bar{\omega}^{\mathrm{rev}}\otimes 1)^{\rho}$ as defined in 
Notation~\ref{notation_revbraid} indeed. That means, $\mathcal{C}^+\simeq(\mathcal{C},\beta)$ as $\mathsf{E}_2$-monoids by way of extending 
the identity on $\mathcal{C}$, as was to show. 
This finishes the proof subject to Claim~\ref{claim_braid=E_1_back}.

\paragraph{Proof of Claim~\ref{claim_braid=E_1_back}}
By definition, the $\mathsf{E}_1$-monoid $\mathcal{C}^+$ in $\mathrm{Alg}_{\mathsf{E}_1}(\hotwocat)$ is an algebra
\[\mathcal{C}^+\colon\mathsf{E}_1\otimes_{\mathrm{BV}}\mathsf{E}_1\rightarrow\hotwocat.\]
By the Additivity Theorem, the comparison map $\mathsf{E}_1\otimes_{\mathrm{BV}}\mathsf{E}_1\rightarrow\mathsf{E}_2$ is an equivalence of
$\infty$-operads. In particular, for every $n\geq 0$, the induced functor
\[\mathsf{E}_1\otimes_{\mathrm{BV}}\mathsf{E}_1(\langle n\rangle,\langle 1\rangle)\rightarrow\mathsf{E}_2(\langle n\rangle,\langle 1\rangle)\]
of $n$-ary operations is an equivalence of spaces. In turn, the space $\mathsf{E}_2(\langle n\rangle,\langle 1\rangle)$ is 
equivalent to the configuration space $\mathsf{C}_n(\mathbb{R}^2)$ of $n$-many points in $\mathbb{R}^2$ \cite[Lemma 5.1.1.3]{lurieha}. The 
latter is $1$-truncated by \cite[Corollary 2.1]{nf_config}. It follows that the space
$\mathsf{E}_1\otimes_{\mathrm{BV}}\mathsf{E}_1(\langle n\rangle,\langle 1\rangle)$ is $1$-truncated as well.
Furthermore, for simplicity, let us consider the monoidal envelope
$y^{\otimes}\colon\mathsf{E}_1\otimes_{\mathrm{BV}}\mathsf{E}_1\rightarrow\mathrm{Env}(\mathsf{E}_1\otimes_{\mathrm{BV}}\mathsf{E}_1)^{\otimes}$ 
\cite[Section 2.2.4]{lurieha}. The functor $y^{\otimes}$ is fully faithful \cite[Remark 2.2.4.10]{lurieha}, and so the envelope
$\mathrm{Env}(\mathsf{E}_1\otimes_{\mathrm{BV}}\mathsf{E}_1)^{\otimes}$ is still locally $1$-truncated on objects in the image of 
$y^{\otimes}$. The algebra $\mathcal{C}^+$ factors through the embedding
$y^{\otimes}\colon\mathsf{E}_1\otimes_{\mathrm{BV}}\mathsf{E}_1\rightarrow\mathrm{Env}(\mathsf{E}_1\otimes_{\mathrm{BV}}\mathsf{E}_1)^{\otimes}$. It hence is a post-composition of the
$\mathsf{E}_1\otimes_{\mathrm{BV}}\mathsf{E}_1$-algebra $y^{\otimes}$ in the symmetric monoidal $\infty$-category
$\mathrm{Env}(\mathsf{E}_1\otimes_{\mathrm{BV}}\mathsf{E}_1)^{\otimes}$. As the embedding
$\mathsf{E}_1\sqcup_{\mathsf{E}_0}\mathsf{E}_1\rightarrow\mathsf{E}_1\otimes_{\mathrm{BV}}\mathsf{E}_1$ also is a cofibration, we may without loss of 
generality assume that the two $\mathsf{E}_1$-algebras $U(y^{\otimes})$ and $\mathrm{Alg}_{\mathsf{E}_1}(y^{\otimes})$ coincide.
Let us consider the following initial segment of this algebra in $\mathrm{Env}(\mathsf{E}_1\otimes_{\mathrm{BV}}\mathsf{E}_1)^{\otimes}$:
\begin{align*}
\begin{gathered}
\xymatrix{
 & & & A\otimes A\otimes A\ar[dl]_{(-,u,u,-,u,-)} & \\
& A\otimes A\otimes A\ar@<-.5ex>[d]_{m\otimes 1}\ar@<.5ex>[d]^{1\otimes m}\ar@{}[dr]|{\underset{\omega}{\Rrightarrow}} & (A\otimes A)^{\otimes 3}\ar[l]_{(m, m, m)}\ar@<-.5ex>[d]_{m^{\otimes}\otimes 1}\ar@<.5ex>[d]^{1\otimes m^{\otimes}} & & A\otimes A\otimes A\ar[dl]^{(-,u,u,u,-,-)}\\
& A\otimes A\ar[d]|{m} \ar[d]\ar@{}[dr]|{\underset{\chi}{\Rightarrow}} & (A\otimes A)^{\otimes 2}\ar[d]\ar[l]_{(m, m)}\ar[d]^{m^{\otimes}} & (A\otimes A\otimes A)^{\otimes 2}\ar@<.5ex>[l]^{(m\otimes 1)^{\otimes 2}}\ar@<-.5ex>[l]_{(1\otimes m)^{\otimes 2}}\ar[d]^{m^{\otimes}} \ar@{}[dl]|{\underset{\psi}{\Rrightarrow}} & \\
I\ar[r]^u & A & A\otimes A\ar[l]|{m} & A\otimes A\otimes A\ar@<.5ex>[l]^{m\otimes 1}\ar@<-.5ex>[l]_{1\otimes m} & \\
}
\end{gathered}
\end{align*}
By pasting the two composite 3-cells $(-,u,u,-,u,-)\ast\omega$ and $(-,u,u,u,-,-)\ast\psi$ with a suitable set of vertical and 
horizontal unitors, they both give rise to a composite 2-cell in the hom-space
$\mathrm{Env}(\mathsf{E}_1\otimes_{\mathrm{BV}}\mathsf{E}_1)^{\otimes}(A^{\otimes 3},A)$ with the same boundary. Since $A$ is in the image of 
$y^{\otimes}$, it follows that these two composite 2-cells are necessarily equivalent. As all unitors are mapped by $\mathcal{C}^+$
to identities in $\hotwocat$, and $\hotwocat$ is a 3-category, we see that their images given by the natural 2-cells
\begin{align}\label{diag_braid=E_1_back_forth_2}
\begin{gathered}
\xymatrix{
ABC\ar@{}[drr]|{\omega_{A,I,I,B,I,C}}\ar@{=}[rr]^{1_A\otimes\chi_{I,B,I,C}}\ar@{=}[d]_{\chi_{A,I,I,B}\otimes 1_C} & & ABC\ar@{=}[d]^{\chi_{A,I,I,BC}} \\
ABC\ar@{=}[rr]_{\chi_{A,B,I,C}} & & ABC 
}
\end{gathered}
&
\begin{gathered}
\xymatrix{
ABC\ar@{}[drr]|{\psi_{A,I,I,I,B,C}}\ar@{=}[rr]^{\chi_{A,I,I,BC}}\ar@{=}[d]_{\chi_{A,I,B,C}} & & ABC\ar@{=}[d]^{1_A\otimes\chi_{I,I,B,C}} \\
ABC\ar@{=}[rr]_{\chi_{A,I,I,B}\otimes 1_C} & & ABC 
}
\end{gathered}
\end{align}
are necessarily the same. Thus, as the left hand side is an identity, so is the right hand side. This proves the claim.

\end{proof}

\begin{remark}
Lemma~\ref{lemma_braid=E_1_back_forth} below will take as input a general $\mathsf{E}_1$-monoid structure on a Gray monoid $\mathcal{C}$, 
and deliver as output an equivalent $\mathsf{E}_1$-monoid structure on $\mathcal{C}$ that is semi-strictified so to satisfy the assumptions 
of Lemma~\ref{lemma_braid=E_1_back}. In particular, it will yield a semi-strictified monoidal structure
$(1\otimes\chi_{I,-,-,I}\otimes 1,1\otimes\omega_{I,-,-,-,-,I}\otimes 1,1,1,1)$ on the multiplication 2-functor
$m\colon\mathcal{C}\otimes_{\mathrm{Gr}}\mathcal{C}\rightarrow\mathcal{C}$, and assure that its underlying monoidal 
structure $(\psi,1)$ on the identity $1\colon m(m\otimes 1)\rightarrow m(1\otimes m)$ satisfies a single additional semi-strict 
unitality law. It follows from Claim~\ref{claim_braid=E_1_back} that this procedure then automatically semi-strictifies the 
associated horizontal associator $\psi$ as well, in the sense that $\psi=1\otimes\psi_{I,-,-,-,-,I}\otimes 1$. The argument we gave is 
entirely homotopical, essentially using that the cells $\psi_{-,I,I,I,-,-}$ and $\omega_{-,I,I,-,I,-}$ occupy the same space of equations. 
This space is contractible, and the latter cell is known to be the identity by construction.
This strategy is at heart essentially that of \cite[Theorem 15]{gurski_braid}. Clearly, one ought to be able to prove this algebraically 
as well; the author miserably fails to prove it by hand however. It thus may be worthwhile to note that the algebra in dimension 2 is quite 
complicated, while the homotopy theory of its configuration spaces is trivial in the relevant dimension. In contrast, in dimension 3 the 
algebra is manageable by hand (see next section), while the homotopy theory of its associated configuration spaces is complicated instead
\cite[p.4228]{gurski_braid}, \cite[Corollary 2.1]{nf_config}. 
\end{remark}

\begin{lemma}\label{lemma_braid=E_1_forth_back}
Let $\mathcal{C}$ be a Gray monoid together with a braiding $\beta$. Let $\mathcal{C}^{\beta}$ be the $\mathsf{E}_1$-monoid
structure on $\mathcal{C}$ in $\mathrm{Alg}_{\mathsf{E}_1}(2\text{-Cat}^{\times})$ from Lemma~\ref{lemma_braid=E_1_forth}. Then
$\mathcal{C}^{\beta}$ satisfies the conditions of Lemma~\ref{lemma_braid=E_1_back}. The braiding on $\mathcal{C}$ obtained from
$\mathcal{C}^{\beta}$ in Lemma~\ref{lemma_braid=E_1_back} is equal to $\beta$.
\end{lemma}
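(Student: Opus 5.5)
We check the four conditions of Lemma~\ref{lemma_braid=E_1_back} one at a time against the explicit data of $\mathcal{C}^{\beta}$ from the proof of Lemma~\ref{lemma_braid=E_1_forth}. That proof constructs $\mathcal{C}^{\beta}$ out of three pieces:
\begin{itemize}
\item the vertical $\mathsf{E}_1$-monoid, which is the Gray monoid $\mathcal{C}$ itself;
\item the monoidal structure $(m,\chi_{\beta},\omega_{\beta},1,1,1)$ of Remark~\ref{rem_braidtomono}, with $\chi_{\beta}=1\otimes\rho\otimes 1$ and $\omega_{\beta}=1\otimes\bar{\omega}\otimes 1$;
\item the horizontal associator $\psi=1_2=(1\otimes\bar{\omega}^{\mathrm{rev}}\otimes 1)^{\rho}$ of Notation~\ref{notation_revbraid}.
\end{itemize}
As in that proof, we may assume $\mathcal{C}$ cofibrant by Lemma~\ref{lemma_mono_cofrepl}. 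We then read off each condition from these formulas.

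Condition (1) holds by construction. The vertical monoid is $\mathcal{C}$. The horizontal monoid is the diagonal reflection of the $\beta^{\mathrm{rev}}$-structure, whose underlying $\mathsf{E}_1$-monoid is again $\mathcal{C}$. We then use that the identification of Example~\ref{exple_diagrefl2} is strict on the edges, because $\sigma(\langle 1\rangle,-)=\sigma(-,\langle 1\rangle)=1$.

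Condition (2) concerns $\chi_{A,B,C,D}=1_A\otimes\rho_{B,C}\otimes 1_D$. Setting the relevant pairs of variables to $I$ reduces it to $\rho_{I,-}$, $\rho_{-,I}$ or $\rho_{I,I}$. These are identities by the semi-strict unit axioms of a Day--Street braiding, which is exactly the observation ``$\chi_{I,-},\chi_{-,I}$ are identities'' recorded in Remark~\ref{rem_braidtomono}.

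Condition (3) reduces, for $\omega=1\otimes\bar{\omega}\otimes 1$, to the vanishing of $\bar{\omega}$ with two adjacent unit inputs:
\begin{itemize}
\item $\bar{\omega}_{-,I,I,-}=1$, which is part of the definition;
\item $\bar{\omega}_{I,I,-,-}=1$ and $\bar{\omega}_{-,-,I,I}=1$, which were derived in the proof of Proposition~\ref{prop_defrevbraid}.
\end{itemize}
The same argument is also spelled out as $\omega(-,-,1,1,-,-)=1$ in Remark~\ref{rem_braidtomono}.

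Condition (4), $\psi_{-,I,-,-,I,-}=1$, requires unwinding the pasting diagram defining $(1\otimes\bar{\omega}^{\mathrm{rev}}\otimes 1)^{\rho}$ in Notation~\ref{notation_revbraid}. With $B=E=I$, that diagram consists of two faces, $\bar{\omega}^{-1}_{I,I,C,D}$ and $\bar{\omega}^{-1}_{C,D,I,I}$, pasted along edges which become $\rho_{I,D}$ and $\rho_{C,I}$, hence identities. Both faces vanish by the facts just listed, so $\psi$ is the identity there. This index bookkeeping, keeping track of which variables of the six-fold cell land where after the $\sigma$-permutation, is the step I expect to be fiddliest; the remaining checks are mechanical.

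It remains to recover $\beta$. Lemma~\ref{lemma_braid=E_1_back} produces the pair
\[
\rho'_{A,B}=\chi_{I,A,B,I}=1_I\otimes\rho_{A,B}\otimes 1_I=\rho_{A,B},
\qquad
\bar{\omega}'_{A,B,C,D}=\omega_{I,A,B,C,D,I}=\bar{\omega}_{A,B,C,D}.
\]
Both equalities are strict because $I$ is a strict unit for $\otimes$ in a Gray monoid. The braiding recovered is therefore literally $\beta$, not merely one equivalent to it.
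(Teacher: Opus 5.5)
Your proposal is correct and follows essentially the same route as the paper. The paper asserts that the four conditions of Lemma~\ref{lemma_braid=E_1_back} hold by definition and that the recovered braiding is $\mathbb{L}\beta$ on the nose; you have simply carried out the index checks it leaves implicit.

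The one step you leave tacit is in your ``assume $\mathcal{C}$ cofibrant'' reduction. The tuple $(\chi_{I,-,-,I},\omega_{I,-,-,-,-,I})$ literally recovers $\mathbb{L}\beta$ on $\mathbb{L}\mathcal{C}$. You then obtain $\beta$ on $\mathcal{C}$ by pushing forward along the Gray functor $p$, using $p(\mathbb{L}\beta)=\beta$ from Lemma~\ref{lemma_mono_cofrepl}; this is exactly the paper's final line.
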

\begin{proof}
Let $\mathcal{C}$ be a Gray monoid together with a braiding $\beta$, and let $\mathbb{L}\beta$ be the associated braiding on a cofibrant 
replacement $\mathbb{L}\mathcal{C}$ of $\mathcal{C}$ by way of Lemma~\ref{lemma_mono_cofrepl}. The $\mathsf{E}_1$-monoid
$\mathcal{C}^{\beta}$ is constructed from $(\mathbb{L}\mathcal{C},\mathbb{L}\beta)$ by definition; it is given by corresponding instances 
of $\chi_{\mathbb{L}\beta}$ and $\omega_{\mathbb{L}\beta}$ in Diagram~(\ref{diag_braid=E_1_functors_1}).
The assumptions of Lemma~\ref{lemma_braid=E_1_back} are satisfied by definition.
The construction of the braiding on $\mathcal{C}$ in the proof of Lemma~\ref{lemma_braid=E_1_back} recovers $\mathbb{L}\beta$ on the nose. 
This recovers $\beta$ on the nose after push-forward along the Gray functor $\mathbb{L}\mathcal{C}\twoheadrightarrow\mathcal{C}$.
\end{proof}

\begin{remark}\label{rem_sstricttoweakbraids}
We have worked entirely with semi-strict braidings to reduce the computational effort significantly. With some more work one can generalise 
Remark~\ref{rem_braidtomono} to show that every fully weak braiding induces a monoidal structure on the multiplication of a Gray 
monoid such that the identity becomes monoidal when equipped with the 3-cell that given by diagonal reflection of the fully weak reverse 
braiding. One can prove Lemma~\ref{lemma_braid=E_1_forth} for fully weak braidings accordingly.
Since Lemma~\ref{lemma_braid=E_1_back} always returns a semi-strict braiding, one can rephrase Lemma~\ref{lemma_braid=E_1_forth_back} to 
show that every fully weak braiding is equivalent to a semi-strict one. This gives an alternative proof of
\cite[Corollary 28]{gurski_braid}, which we have used to justify the reduction to semi-strict braidings for convenience.
\end{remark}

We are left to show that every $\mathsf{E}_1$-monoid structure on a Gray monoid $\mathcal{C}$ is equivalent to one that satisfies the 
extra assumptions of Lemma~\ref{lemma_braid=E_1_back}. By cofibrantly replacing $\mathcal{C}$, we can assume that $\mathcal{C}$ is 
cofibrant without loss of generality.

\begin{lemma}\label{lemma_braid=E_1_back_forth}
Let $\mathcal{C}$ be a cofibrant Gray monoid. Let $\mathcal{C}^+$ be an $\mathsf{E}_1$-monoid structure on $\mathcal{C}$ in
$\mathrm{Alg}_{\mathsf{E}_1}(2\text{-Cat}^{\times})$. Then the identity on $\mathcal{C}$ can be extended to an equivalence between
$\mathcal{C}^+$ and an $\mathsf{E}_1$-monoid structure $\mathcal{C}^-$ on $\mathcal{C}$ such that the latter satisfies Equations (1)-(4) in 
Lemma~\ref{lemma_braid=E_1_forth}.
\end{lemma}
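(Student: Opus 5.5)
We semi-strictify the given $\mathsf{E}_1$-monoid $\mathcal{C}^+$ in stages. At each stage we transport structure along an equivalence whose underlying 2-functor is the identity on $\mathcal{C}$, and then invoke Remark~\ref{rem_algfib} to extend the transported partial data to a full $\mathsf{E}_1$-monoid $\mathcal{C}^-$ in $\mathrm{Alg}_{\mathsf{E}_1}(\hotwocat)$ equivalent to $\mathcal{C}^+$. (The target conditions are Equations (1)--(4) of Lemma~\ref{lemma_braid=E_1_back}.)

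\textbf{Equation (1).} By Proposition~\ref{prop_eh} and Remark~\ref{rem_eh_2}, the vertical and horizontal $\mathsf{E}_1$-monoids $U(\mathcal{C}^+)$ and $\mathrm{Alg}_{\mathsf{E}_1}(U)(\mathcal{C}^+)$ are equivalent relative to the underlying object. Both are equivalent to $\mathcal{C}$ as $\mathsf{E}_1$-monoids. Since $\mathcal{C}$ is cofibrant, Lemma~\ref{lemma_halgtrans} represents these equivalences by monoidal 2-functors with underlying identity. Using the isofibration property of Remark~\ref{rem_algfib} along the cofibration $\mathsf{E}_1\sqcup_{\mathsf{E}_0}\mathsf{E}_1\hookrightarrow\mathsf{E}_1\otimes_{\mathrm{BV}}\mathsf{E}_1$, as in the proof of Claim~\ref{claim_braid=E_1_back}, we replace $\mathcal{C}^+$ by an equivalent structure whose two underlying $\mathsf{E}_1$-monoids are both literally the Gray monoid $\mathcal{C}$.

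\textbf{Equations (2) and (3).} The horizontal multiplication is now a monoidal 2-functor $(m,\chi,\omega,\iota,\zeta,\kappa)$ on $m\colon\mathcal{C}\otimes_{\mathrm{Gr}}\mathcal{C}\rightarrow\mathcal{C}$. Lemma~\ref{lemma_cofidunit} and Corollary~\ref{cor_cofbraidunit} give an equivalent monoidal structure with $\iota=\zeta=\kappa=1$ and $\chi_{-,I,I,-}=1$. The identities $\chi_{I,I,-,-}=\chi_{-,-,I,I}=1$ follow from $\zeta=\kappa=1$ through the unit axioms, using that the unit of $\mathcal{C}\otimes_{\mathrm{Gr}}\mathcal{C}$ is $(I,I)$. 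For the $\omega$-conditions we use the unit axioms for $\omega$ against $\zeta,\kappa$: these give $\omega_{-,-,I,I,-,-}=1$ as in Remark~\ref{rem_braidtomono}. The remaining conditions $\omega_{-,I,I,-,-,-}=\omega_{-,-,-,I,I,-}=1$ should follow from $\chi_{-,I,I,-}=1$ together with the axiom that $\omega$ is a modification between composites of $\chi$. If not, we transport once more along the 2-latching cofibration of $\mathcal{C}^{\otimes_{\mathrm{Gr}}3}$, exactly as in Lemma~\ref{lemma_cofbraidunit}, using the homotopy extension property in $2\text{-Cat}_{/\mathcal{D}}$. Each transport is realised by a monoidal equivalence with underlying identity. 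By Lemma~\ref{lemma_halgtrans} it therefore lifts to an equivalence of $\mathsf{A}_2^{\mathrm{qu}}\otimes\mathsf{E}_1$-data, and Remark~\ref{rem_algfib} extends this to the full $\mathsf{E}_1\otimes\mathsf{E}_1$-structure.

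\textbf{Equation (4), the main obstacle.} The 3-cell $\psi$ makes the identity $m(m\otimes1)=m(1\otimes m)$ a monoidal pseudo-natural transformation, and we must arrange $\psi_{-,I,-,-,I,-}=1$. By Corollary~\ref{cor_truncmor}, the space of such monoidal enhancements over the fixed underlying data is $(3-1-2)=0$-truncated, so it is a set. It therefore suffices to modify $\psi$ by a degenerate 3-cell. To produce one, I would transport the monoidal transformation along an invertible modification $\Theta$ defined on the latching subobject $\{B=E=I\}$ of $(\mathcal{C}^{\otimes_{\mathrm{Gr}}3})^{\otimes_{\mathrm{Gr}}2}$. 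This is the cofibration obtained from Lemma~\ref{lemmacofobjects}. We set $\Theta:=\psi_{-,I,-,-,I,-}^{-1}$ there and extend it by homotopy extension, just as in Lemma~\ref{lemma_cofbraidunit}.

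The delicate point is that modifying $\psi$ this way must not destroy (2) and (3). I expect this to hold because on the locus $B=E=I$ all $\chi$ and $\omega$ appearing in the boundary of $\psi$ are already identities by (2) and (3), so the extended $\Theta$ can be chosen with identity boundary. Checking the monoidal modification axiom for $\Theta$ is the computation I expect to be hardest, and I would verify it directly from Day--Street's axioms at the required arities. Finally, the resulting data is an $\mathsf{E}_1$-monoid $\mathcal{C}^-$ equivalent to $\mathcal{C}^+$ via the identity on $\mathcal{C}$, by Corollary~\ref{cor_eh_3} and Remark~\ref{rem_algfib}.
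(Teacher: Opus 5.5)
There is a genuine gap in your treatment of Equations (3) and (4). Your handling of (1), and of the part of (2) that you derive from $\zeta=\kappa=1$, is fine and matches the paper.

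\textbf{Equation (3).} This does not follow from $\chi_{-,I,I,-}=1$ and the modification axiom. The component $\omega_{A,I,I,D,E,F}$ is a 2-cell from $\chi_{A,D,E,F}\circ(\chi_{A,I,I,D}\otimes 1_{EF})=\chi_{A,D,E,F}$ to $\chi_{A,I,E,DF}\circ(1_A\otimes\chi_{I,D,E,F})$. For a general $\chi$ these are different 1-cells, so $\omega_{A,I,I,D,E,F}$ cannot be an identity.

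Imposing (3) therefore requires replacing $\chi$ itself by a different pseudo-natural equivalence, ultimately one of the form $1\otimes\chi_{I,-,-,I}\otimes 1$. That is exactly how (2)--(3) are exploited in the proof of Lemma~\ref{lemma_braid=E_1_back}. Such a condition relates the value of $\chi$ at $(A,B,C,D)$ to its value at $(I,B,C,I)$. It is not a condition on a subobject, so homotopy extension along a latching cofibration will not produce it.

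The paper instead conjugates the monoidal structure on $m$ twice with monoidal pseudo-natural equivalences $(1,\theta_2,1)$. The first uses $\theta_2\colon 1\otimes\chi_{I,-,-,-}\Rightarrow\chi$, pasted from $\omega^{-1}_{A,I,I,B,C,D}$ and $\omega_{A,I,C,I,I,BD}$. The second uses $\theta_2:=\omega^l_{A,B,C,I,I,D}$. The first pasting only has the right boundary once $\chi_{A,I,C,I}=1$, i.e.\ once $\chi_{-,I,-,I}=1$. That identity is not part of (2), and your proposal never arranges it.

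\textbf{Equation (4).} Corollary~\ref{cor_truncmor} telling you some space is a set does not imply that it suffices to alter $\psi$ by a degenerate cell. Your $\Theta$ is also mistyped. A modification along which the monoidal transformation $1\colon m(m\otimes 1)\Rightarrow m(1\otimes m)$ can be transported is indexed by $\mathcal{C}^{\otimes_{\mathrm{Gr}}3}$, not by a locus in $(\mathcal{C}^{\otimes_{\mathrm{Gr}}3})^{\otimes_{\mathrm{Gr}}2}$. Setting $\Theta:=\psi^{-1}_{-,I,-,-,I,-}$ there simply redefines $\psi$. That breaks its axioms and the equivalence with $\mathcal{C}^+$.

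The component $\psi_{-,I,-,-,I,-}$ is tied by the horizontal unit axioms to the horizontal unitors $\zeta^h\colon\chi_{I,-,I,-}\rightarrow 1$ and $\kappa^h\colon\chi_{-,I,-,I}\rightarrow 1$. This is just as $\omega_{-,-,I,I,-,-}$ is tied to $\zeta,\kappa$. The missing idea is to strictify these horizontal unitors, which necessarily changes $\chi$.

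The paper does this in three steps:
\begin{enumerate}
\item pass to the diagonal reflection $(\mathcal{C}^+)^{\rho}$, where the horizontal unitors become vertical unitors $\zeta^{\rho},\kappa^{\rho}$;
\item conjugate with $(1,\theta_2,1)$ for $\theta_2=\chi^{\rho}\ast((\zeta^{\rho})^{-1}\otimes(\kappa^{\rho})^{-1})$;
\item reflect back.
\end{enumerate}
This yields (4) together with $\chi_{I,-,I,-}=\chi_{-,I,-,I}=1$. That is why the paper establishes (4) \emph{before} (3). It then checks that the $\theta_2$ used for (3) satisfy $\theta_{2\,I,A,I,B}=\theta_{2\,A,I,B,I}=1$, so that (4) survives.
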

\begin{proof}
Let $\mathcal{C}^+$ be an $\mathsf{E}_1$-monoid structure on $\mathcal{C}$. Without loss of generality, we may further assume that 
the underlying horizontal $\mathsf{E}_1$-monoid structure $\mathrm{Alg}_{\mathsf{E}_1}(U)(\mathcal{C}^+)$ is given by the underlying 
vertical $\mathsf{E}_1$-monoid structure $U(\mathcal{C}^+)=\mathcal{C}$ itself as well. 
Thus, by way of Corollary~\ref{cor_eh_3}, the $\mathsf{E}_2$-monoid $\mathcal{C}^+$ is determined by its underlying diagram in $\hotwocat$ 
of the form (\ref{diag_braid=E_1_back_1}).
\begin{align}\label{equ_braid=E_1_back}
\xymatrix{
\mathcal{C} & \mathcal{C}^{\otimes_{\mathrm{Gr}}2}\ar[l]|{m} & \mathcal{C}^{\otimes_{\mathrm{Gr}}3}\ar@<.5ex>[l]^{m\otimes 1}\ar@<-.5ex>[l]_{1\otimes m} 
}
\end{align}
for $m$ the multiplication 2-functor of $\mathcal{C}$ together with the identity associator. By Lemma~\ref{lemma_halgtrans} the cells
$(\chi,\omega)$ are part of a monoidal structure $(\chi,\omega,\iota,\zeta,\kappa)$ on the 2-functor
$m\colon\mathcal{C}\otimes_{\mathrm{Gr}}\mathcal{C}\rightarrow\mathcal{C}$ that represents the multiplication $m$ in
(\ref{equ_braid=E_1_back}) in $\mathrm{Alg}_{\mathsf{E}_1}(\hotwocat)$. 

The morphism $\iota\colon I\rightarrow m(I,I)$ is part of --- and 
hence determined by --- the (unital) horizontal $\mathsf{E}_1$-monoid (\ref{equ_braid=E_1_back}). It follows that $\iota=1_I$. Hence, by 
Lemma~\ref{lemma_cofbraidunit} we can assume that $\zeta=\kappa=1$ and that $\chi_{-,I,I,-}=1$ as well. 
The identity
$1\colon m(m\otimes 1)\xrightarrow{\simeq}m(1\otimes m)$ in (\ref{equ_braid=E_1_back}) is equipped with a monoidal structure given by the
associator 3-cell $\psi$ and a unitor 2-cell $1_0$. The latter is again the identity $1_{1_{I}}$ because it is determined by the horizontal 
(unital)$\mathsf{E}_1$-monoid (\ref{equ_braid=E_1_back}) given by the Gray monoid $\mathcal{C}$ itself. The following further equations arise:
\begin{itemize}
\item $\chi_{I,I,-,-}=\chi_{-,-,I,I}=1$ and $\omega_{-,-,I,I,-,-}=1$, because $\zeta=\kappa=1$;
\item $\omega_{I,I,-,-,-,-}=\omega_{-,-,-,-,I,I}=1$ by $\omega_{-,-,I,I,-,-}=1$ and associativity of $\omega$ \cite[p.104]{daystreet} for $A=B=C=D=1$ and $E=F=G=H=1$, respectively;
\end{itemize}

We first enforce Assumption (4) regarding the horizontal associator $\psi$ by strictifying its associated horizontal unitors as well. 
Therefore, we note that the horizontal $\mathsf{E}_1$-monoid structure of $\mathcal{C}^+$ --- represented by a  diagram of the form 
(\ref{diag_braid=E_1_back_1}) --- contains a pair of horizontal unitors
$\zeta^h\colon\chi_{I,-,I,-}\rightarrow 1$ and $\kappa^h\colon\chi_{-,I,-,I}\rightarrow 1$ in $\mathrm{Alg}_{\mathsf{E}_1}(\hotwocat)$. 
By Lemma~\ref{lemma_halgtrans}, these two equivalences correspond to monoidal pseudo-natural equivalences from the composite monoidal
2-endofunctor $m(-,I)$ (respectively, $m(I,-)$) to the monoidal identity $(1,1,1,1,1,1)$ on $\mathcal{C}$. The former is the monoidal
2-functor $(1,\chi_{I,-,I,-},1,1,1,1)$ (respectively, $(1,\chi_{-,I,-,I},1,1,1,1)$) by $\omega_{I,-,I,-,I,-}=\omega_{-,I,-,I,-,I}=1$ and by 
the strictly unital choice of the monoidal structure on the monoidal 2-functor $m$. In particular, it follows 
that \begin{align}\label{equ_braid=E_1_back_unitors}
\zeta^{h}_{-,I}=\zeta^{h}_{I,-}=\kappa^{h}_{I,-}=\kappa^{h}_{-,I}=1.
\end{align}
We want to show that these two horizontal unitors can be replaced entirely by identities as well. Therefore, for simplicity, we may work 
with the diagonal reflection $(\mathcal{C}^+)^{\rho}$ for the moment. Here, 
\begin{enumerate}[label=\roman*.]
\item we still have $\iota^{\rho}=\iota=1_I$, and
\item $\chi^{\rho}_{-,I,I,-}=\chi^{\rho}_{I,-,I,-}=\chi^{\rho}_{-,I,-,I}=1$, but
\item $\psi^{\rho}_{-,I,-,-,I,-}=1$,
\end{enumerate}
with no assumptions on $\omega^{\rho}$ instead. We want to replace the monoidal structure
$(\chi^{\rho},\omega^{\rho},\zeta^{\rho},\kappa^{\rho},1)$ on $m$ by some monoidal structure 
structure of the form $((\chi^{\rho})^u,(\omega^{\rho})^u,1,1,1)$ along a suitable monoidal pseudo-natural equivalence
$(\theta,\theta_2,\theta_0)$ such that i., ii., and iii.\ still hold. Therefore, we consider the pseudo-natural equivalence
\begin{align*}
\begin{gathered}
\xymatrix{
 & \\
ABCD\ar@/_2pc/@{=}[rrr]\ar@/^2pc/[rrr]^{(\chi^{\rho})^{-1}_{A,B,I,I}\otimes(\chi^{\rho})^{-1}_{I,I,C,D}}\ar@{}[rrr]|{\Downarrow(\zeta^{\rho})^{-1_{h}}_{A,B}\otimes(\kappa^{\rho})^{-1_{h}}_{C,D}} & & & ABCD,\\
&
}
\end	{gathered}
\end{align*}
and note that $\zeta^{\rho}_{-,I}=\zeta^{\rho}_{I,-}=\kappa^{\rho}_{I,-}=\kappa^{\rho}_{-,I}=1$ by (\ref{equ_braid=E_1_back_unitors}). We 
consider the induced pseudo-natural equivalence $\theta\colon m\rightarrow m$ given by the identity $1$, together with the modification
\begin{align}\label{diag_braid=E_1_back_theta2}
\begin{gathered}
\xymatrix{
ABCD\ar[rrrr]^{((\chi^{\rho})^{-1}_{A,B,I,I}\otimes(\chi^{\rho})^{-1}_{I,I,C,D})\ast\chi^{\rho}_{A,B,C,D}}\ar[d]_{1_{AB}\otimes 1_{CD}}\ar@{}[drrrr]|{\Downarrow\theta_2} & & & & ACBD\ar[d]^{1_{ACBD}} \\
ABCD\ar[rrrr]_{\chi^{\rho}_{A,B,C,D}} & & & & ACBD
}
\end{gathered}
\end{align}
for
\[\theta_2:=\chi^{\rho}_{A,B,C,D}\ast\left((\zeta^{\rho})^{-1}_{A,B}\otimes(\kappa^{\rho})^{-1}_{C,D}\right),\]
and $\theta_0$ the identity unitor. Conjugation of $(\chi^{\rho},\omega^{\rho},\zeta^{\rho},\kappa^{\rho},1)$ with $(1,\theta_2,1_I)$ 
yields an equivalent monoidal structure $((\chi^{\rho})^u,(\omega^{\rho})^u,1,1,1)$ with
$(\chi^{\rho})^u_{A,B,C,D}=(\chi_{A,B,I,I}^{-1}\otimes\chi_{I,I,C,D}^{-1})\ast\chi^{\rho}_{A,B,C,D}$. In particular,
$(\omega^{\rho})^u_{-,-,I,I,-,-}=1$ and
\[(\chi^{\rho})^u_{I,I,-,-}=(\chi^{\rho})^u_{-,-,I,I}=(\chi^{\rho})^u_{-,I,I,-}=(\chi^{\rho})^u_{-,I,-,I}=(\chi^{\rho})^u_{I,-,I,-}=1.\]
This further defines a new but equivalent $\mathsf{E}_1$-structure on $\mathcal{C}$ in $\mathrm{Alg}_{\mathsf{E}_1}(\hotwocat)$ equivalent to
$(\mathcal{C}^+)^{\rho}$. Its underlying horizontal associator $(\psi^{\rho})^{u}$ --- defining a monoidal structure on the identity
$1\colon m(m\otimes 1)\rightarrow m(1\otimes m)$ --- is given by conjugation as follows.
\[\xymatrix{
ABCDEF\ar[rr]_{\chi^{\rho}_{A,BC,D,EF}}\ar@/^3pc/[rr]^{\chi^{\rho}_{A,BC,D,EF}}\ar@{}[ddrr]|{\underset{\psi^{\rho}{A,B,C,D,E,F}}{\Rightarrow}}\ar@{}@<2ex>[rr]^{\Uparrow \theta_{2 A,BC,D,EF}}\ar@/^1pc/[dd]^{\rotatebox{90}{$\scriptstyle \chi^{\rho}_{AB,C,DE,F}$}}\ar@/_4pc/[dd]_{\rotatebox{90}{$\scriptstyle \chi^{\rho}_{AB,C,DE,F}$}}\ar@{}@<3ex>[dd]_{\underset{\theta^{-1}_{2 AB,C,DE,F}}{\Rightarrow}} & & ADBCEF\ar@/_1pc/[dd]_{\rotatebox{-90}{$\scriptstyle 1_{AD} \otimes \chi^{\rho}_{B,C,E,F}$}}\ar@/^4pc/[dd]^{\rotatebox{-90}{$\scriptstyle 1_{AD} \otimes \chi^{\rho}_{B,C,E,F}$}}\ar@{}@<-3ex>[dd]^{\underset{1_{AD} \otimes \theta_{2 B,C,E,F}}{\Rightarrow}} \\
 & & \\
ABDECF\ar[rr]^{\chi^{\rho}_{A,B,D,E}\otimes 1_{CF}}\ar@/_3pc/[rr]_{\chi^{\rho}_{A,B,D,E}\otimes 1_{CF}}\ar@{}@<-2ex>[rr]_{\Uparrow \theta^{-1}_{2 A,B,D,E}\otimes 1_{CF}} & & ACEBDF.
 }\]
One computes that $\theta_{2 I,A,I,B}=\theta_{2 A,I,B,I}=1$, and so $(\psi^\rho)^u_{-,I,-,-,I,-}=1$ assuming that
$\psi^\rho_{-,I,-,-,I,-}=1$. Thus, in turn, we obtain an $\mathsf{E}_1$-monoid structure $\mathcal{C}^+$ on $\mathcal{C}$ that is 
equivalent to the one that we started with, and such that 
\begin{itemize}
\item $\chi_{I,I,-,-}=\chi_{-,-,I,I}=1$;
\item $\omega_{-,-,I,I,-,-}=\omega_{I,I,-,-,-,-}=\omega_{-,-,-,-,I,I}=1$;
\item $\psi(-,I,-,-,I,-)=\psi(I,-,-,I,-,-)=\psi(-,-,I,-,-,I)=1$, because $\omega^{\rho}_{-,-,I,I,-,-}=1$ by the above;
\item $\chi_{I,-,I,-}^{-1}\ast \psi(-,I,I,-,I,I)\colon\chi_{I,-,I,-}\simeq 1$ and
$\chi_{-,I,-,I}^{-1}\ast \psi(I,I,-,I,I,-)\colon\chi_{-,I,-,I}\simeq 1$; thus $\chi_{I,-,I,-}=\chi_{-,I,-,I}=1$.
\end{itemize}
%
%
By way of these equations, we may define a series of new (stricter but equivalent) monoidal structures on $m$ to enforce Assumption (3) of Lemma~\ref{lemma_braid=E_1_back} as follows. Consider the pseudo-natural 
equivalence $\theta\colon m\rightarrow m$ given by the identity $1$, together with the modification
\begin{align}\label{diag_braid=E_1_back_theta2}
\begin{gathered}
\xymatrix{
ABCD\ar[rr]^{1_A\otimes \chi_{I,B,C,D}}\ar[d]_{1_{AB}\otimes 1_{CD}}\ar@{}[drr]|{\Downarrow\theta_2} & & ACBD\ar[d]^{1_{ACBD}} \\
ABCD\ar[rr]_{\chi_{A,B,C,D}} & & ACBD
}
\end{gathered}
\end{align}
defined as the following pasting:
\[\xymatrix{
ABCD\ar@{=}[dd]\ar[rrr]^(.4){1_A\otimes \chi_{I,B,C,D}} & & & ACBD\ar@/_3pc/[dd]_{\rotatebox{90}{$\scriptstyle \chi_{A,I,C,BD}$}}\ar@/^3pc/@{=}[dd]\ar@{}[dd]|{\underset{\omega_{A,I,C,I,I,BD}}{\Leftarrow}} \\
\ar@{}[rrr]|(.35){\Downarrow\omega^{-1}_{A,I,I,B,C,D}}  & & & \\
ABCD\ar[rrr]_{\chi_{A,B,C,D}} & & & ACBD.
}\]

Let $\theta_0:=1_{1_I}$. Define the natural 2-cell
\begin{align}\label{diag_equ_braid=E_1_back_l}
\begin{gathered}
\xymatrix{
ABCDEF\ar[rr]\ar[dd]\ar@{}[ddrr]|{\underset{\Rightarrow}{\omega^l_{A,B,C,D,E,F}}} & & ACBDEF\ar[dd] \\
 & & \\
ACBDEF\ar[rr] & & ACEBDF
}
\end{gathered}
\end{align}
by conjugation of $\omega$ with $\theta_2$ (Remark~\ref{rem_conj}). That is to say, we define (\ref{diag_equ_braid=E_1_back_l}) as the 
following pasting diagram:
%

\[\xymatrix{
ABCDEF\ar[rr]_{1_{AB}\otimes\chi_{C,D,E,F}}\ar@/^3pc/[rr]^{1_{ABC}\otimes\chi_{1,D,E,F}}\ar@{}[ddrr]|{\underset{\omega_{A,B,C,D,E,F}}{\Rightarrow}}\ar@{}@<2ex>[rr]^{\Uparrow 1_{AB}\otimes\theta_{2 C,D,E,F}^{-1}}\ar@/^1pc/[dd]^{\rotatebox{90}{$\scriptstyle \chi_{A,B,C,D}\otimes 1_{EF}$}}\ar@/_4pc/[dd]_{\rotatebox{90}{$\scriptstyle 1_A\otimes \chi_{1,B,C,D}\otimes 1_{EF}$}}\ar@{}@<3ex>[dd]_{\underset{\theta_{2 A,B,C,D}\otimes 1_{EF}}{\Rightarrow}} & & ABCEDF\ar@/_1pc/[dd]_{\rotatebox{-90}{$\scriptstyle \chi_{A,B,CE,DF}$}}\ar@/^4pc/[dd]^{\rotatebox{-90}{$\scriptstyle 1_A\otimes \chi_{1,B,CE,DF}$}}\ar@{}@<-1.5ex>[dd]^{\underset{\theta^{-1}_{2 A,B,CE,DF}}{\Rightarrow}} \\
 & & \\
ACBDEF\ar[rr]^{\chi_{AC,BD,E,F}}\ar@/_3pc/[rr]_{1_{AC}\otimes\chi_{1,BD,E,F}}\ar@{}@<-2ex>[rr]_{\Uparrow \theta_{2 AC,BD,E,F}} & & ACEBDF.
}\]
It then follows that $(1\otimes \chi_{I,-,-,-},\omega^l,1,1,1)$ is a monoidal structure on $m$, and that
$(1,\theta_2,\theta_0)$ is a monoidal pseudo-natural equivalence from $(m,\chi,\omega,1,1,1)$ to
$(m,1\otimes \chi_{I,-,-,-},\omega^l,1,1,1)$. 

We do the same one more time, with respect to the pseudo-natural equivalence $\theta\colon m\rightarrow m$ given by the identity $1$, 
together with the modification
\begin{align}\label{diag_braid=E_1_back_theta_lr}
\begin{gathered}
\xymatrix{
ABCD\ar[rr]^{1_A\otimes \chi_{I,B,C,I}\otimes 1_D}\ar[d]_{1_{AB}\otimes 1_{CD}}\ar@{}[drr]|{\Downarrow\omega^l_{A,B,C,I,I,D}} & & ACBD\ar[d]^{1_{ACBD}} \\
ABCD\ar[rr]_{1_A\otimes \chi_{I,B,C,D}} & & ACBD
}
\end{gathered}
\end{align}
for $\theta_2$, and again $\theta_0:=1_{1_I}$. We define the natural 2-cell
\begin{align}\label{diag_equ_braid=E_1_back}
\begin{gathered}
\xymatrix{
ABCDEF\ar[rrr]^{1_A\otimes\chi_{I,B,C,I}\otimes 1_{DEF}}\ar[dd]_{1_{ABC}\otimes\chi_{I,D,E,I}\otimes 1_F}\ar@{}[ddrrr]|{\underset{\Rightarrow}{\omega^{lr}_{A,B,C,D,E,F}}} & & & ACBDEF\ar[dd]^{1_{AC}\otimes\chi_{I,BD,E,I}\otimes 1_{F}} \\
& & & \\
ACBDEF\ar[rrr]_{1_{A}\otimes\chi_{I,B,CE,I}\otimes 1_{DF}} & & & ACEBDF
}
\end{gathered}
\end{align}
by conjugation of $\omega^l$ with $\theta_2$ (Remark~\ref{rem_conj}). 
It then follows again that $(1\otimes \chi_{I,-,-,I}\otimes 1,\omega^{lr},1,1,1)$ is a monoidal structure on $m$, and that
$(1,\theta_2,1)$ is a monoidal pseudo-natural equivalence from $(m,1\otimes \chi_{I,-,-,-},\omega^{l},1,1,1)$
to $(m,1\otimes \chi_{I,-,-,I}\otimes 1,\omega^{lr},1,1,1)$. 

This monoidal structure satisfies Assumption (2) of Lemma~\ref{lemma_braid=E_1_back} by construction. The fact that the associator
$\omega^{lr}$ satisfies Assumption (3) is a straightforward computation; indeed, $\omega^l_{-,-,I,I,-,-}=1$ holds because 
$\omega_{-,-,I,I,-,-}=1$ holds, and $\omega^l_{-,I,I,-,-,-}=1$ holds by construction. The replacement $\omega^{lr}$ then takes care of the 
right hand side. To see that Assumption (4) holds, we note that it held in $\mathcal{C}^+$ by construction. To see that it still holds 
after the two replacements, it suffices to show that the two corresponding monoidal pseudo-natural equivalences $(1,\theta_2,1)$ satisfy
$\theta_{2 I,A,I,B}=\theta_{2 A,I,B,I}=1$. This is straightforward.
Thus, we obtain an equivalent $\mathsf{E}_1$-monoid structure on $\mathcal{C}$ that satisfies all assumptions of 
Lemma~\ref{lemma_braid=E_1_back}.
\end{proof}

\begin{remark}
Theorem~\ref{thm_braid=E_1} can be thought of as an analogon to Joyal--Street's characterisation of braided monoidal 
categories as monoidal categories with a multiplication (Example~\ref{exple_eh_2}). Here it may be worth noting that Joyal and Street's 
definition of the braiding is given by the composition
\[\rho_{A,B}\colon A\otimes B\xrightarrow{\chi_{I,A,B,I}}B\otimes A\xrightarrow{\chi_{B,I,I,A}^{-1}}B\otimes A\]
rather than by the morphism $\chi_{I,A,B,I}$ only. The correctional term $\chi_{B,I,I,A}^{-1}$ is made redundant in the proof of 
Lemma~\ref{lemma_braid=E_1_back} by way of the extra equation $\chi_{-,I,I,-}=1$. This extra equation is imposed by using the 
replacement of a general $\mathsf{E}_1$-monoid structure on a cofibrant Gray monoid $\mathcal{C}$ provided by 
Lemma~\ref{lemma_cofbraidunit}. If one wishes to define the braiding associated to an $\mathsf{E}_1$-monoid structure on a Gray monoid
$\mathcal{C}$ directly in terms of the underlying monoidal structure $(\chi,\omega,\iota,\zeta,\kappa)$ on its multiplication $m$, then 
this correctional term reappears.
\end{remark}

This proves Theorem~\ref{thm_braid=E_1}. The proof of Proposition~\ref{prop_braid=E_1_functors} is very similar in methodology.

\begin{lemma}\label{lemma_braid=E_1_morphisms_forth}
Let $(\mathcal{C},\beta)$ and $(\mathcal{D},\beta)$ be braided Gray monoids, and let $F\colon\mathcal{C}\rightarrow\mathcal{D}$ be a 
monoidal 2-functor of underlying Gray monoids. Let $f\colon\mathcal{C}\rightarrow\mathcal{D}$ be the associated morphism of $\mathsf{E}_1$-
monoids in $\hotwocat$ from Lemma~\ref{lemma_halgtrans}. Let $\mathcal{C}^{+}$ and $\mathcal{D}^{+}$ be the associated $\mathsf{E}_2$-
monoids in $\hotwocat$ from Theorem~\ref{thm_braid=E_1}. Then every braiding on $F$ extends $f$ to a morphism
$f^{+}\colon\mathcal{C}^{+}\rightarrow\mathcal{D}^{+}$ of $\mathsf{E}_2$-monoids.
\end{lemma}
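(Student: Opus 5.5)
The plan is to mirror the proof of Lemma~\ref{lemma_braid=E_1_forth}, one categorical level up: a morphism of $\mathsf{E}_2$-monoids is a morphism of $\mathsf{E}_1$-monoids in $\mathrm{Alg}_{\mathsf{E}_1}(\hotwocat)$. As there, we first pass to cofibrant replacements by Lemma~\ref{lemma_mono_cofrepl}. We use Lemma~\ref{lemma_cofidunit} and Lemma~\ref{lemma_cofbraidunit} to assume $F=(f,\chi^F,\omega^F,1,1,1)$ is strictly unital. By Lemma~\ref{lemma_halgtrans}, $F$ gives the $\mathsf{E}_1$-morphism $f$ on the vertical structure.

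Concretely, $f^+$ must supply three pieces of data. The first is the horizontal compatibility of $f$ with the multiplication $m$, regarded as a monoidal 2-functor $(m,\chi_\beta,\omega_\beta)$. This is a 2-cell in $\mathrm{Alg}_{\mathsf{E}_1}(\hotwocat)$ filling the square $f\circ m\Rightarrow m\circ(f\otimes f)$. Its underlying pseudo-natural equivalence is $\chi^F$. Lemma~\ref{lemma_halgtrans}.2 says it lifts once we show that $\chi^F$ is \emph{monoidal}, as a transformation between the composite monoidal 2-functors $f\circ(m,\chi_\beta,\omega_\beta)$ and $(m,\chi_\beta,\omega_\beta)\circ(F\otimes F)$. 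The monoidal-structure modification on $\chi^F$ is built from the braiding $u$ on $F$, namely $1\otimes u\otimes 1$ suitably pasted with $\omega^F$. The two axioms of a monoidal pseudo-natural transformation (associativity and unit) should reduce, after substituting units in the outer variables, exactly to the two equations (\ref{diag_def_braidmor_I}) and (\ref{diag_def_braidmor_II}). The unit axiom is trivial by strict unitality.

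The second piece is a 3-cell witnessing compatibility of this 2-cell with the horizontal associator $1_2=(1\otimes\bar\omega^{\mathrm{rev}}\otimes1)^\rho$. This is the analogue of the dotted square in (\ref{diag_braid=E_1_functors_1}). Here the same trick as before should work. By Lemma~\ref{lemma_braid_rev_morphism}, $(F,u^{\mathrm{rev}})$ is a braided monoidal 2-functor $(\mathcal{C},\beta^{\mathrm{rev}})\to(\mathcal{D},\beta^{\mathrm{rev}})$. Applying the first step to it yields a morphism of $\mathsf{A}_2^{\mathrm{qu}}\otimes\mathsf{E}_1$-structures, and its diagonal reflection (Example~\ref{exple_diagrefl2}, using naturality of $(-)^\rho$) is an $\mathsf{E}_1\otimes\mathsf{A}_2^{\mathrm{qu}}$-morphism. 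Since $u^{\mathrm{rev}}_{A,B}=u_{B,A}^{-1_v}$ and the $\sigma$-swap inverts cells, the overlap of this reflected morphism with the one from the first step agrees on the nose. The two then paste to the required initial segment.

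The third piece is the higher coherence. The target $\hotwocat$ is a 3-category, so by Corollary~\ref{cor_truncmor} (with $n=3$, $k=1$) the space of $\mathsf{E}_2$-extensions of $f$ is $0$-truncated. A morphism version of the initial-segment argument (Proposition~\ref{prop_inisegloc} applied in the arrow category, giving truncation level $\le -1$ above the data used) shows the remaining 4-cell conditions are mere equations that hold automatically once the previous data agree. Quasi-unitality is then handled by Lemma~\ref{lemma_unitff}.3, since $f$ is unital on the vertical structure.

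The main obstacle is the first step: identifying the monoidal-transformation axioms for $\chi^F$ with Day--Street's two braided-functor axioms, including the correct whiskerings by $\chi^{-1}$. I would do this by evaluating both axioms at arguments of the form $(I,A,B,I)$, and then use the semi-strictness $\chi_\beta=1\otimes\rho\otimes1$ to extend to general arguments.
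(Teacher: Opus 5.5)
This is essentially the paper's proof. You reduce via Proposition~\ref{prop_inisegloc} to a morphism of initial segments, and take the left vertical block from the monoidal structure $(\chi_2,1)$ on $\chi^F$ induced by $u$; the paper simply cites \cite[p.125]{daystreet} for this rather than re-deriving it. You take the bottom horizontal block by diagonally reflecting the corresponding data for $(F,u^{\mathrm{rev}})$, with agreement on the overlap via $(\chi_2^{\mathrm{rev}})_{A,B,C,D}=(\chi_2^{-1_h})_{A,C,B,D}$, and finish with Lemma~\ref{lemma_unitff}. One small correction: the remaining conditions hold automatically because the relevant comparison map is locally $(-2)$-truncated here ($n=3$, $m_0=m_1=2$); $(-1)$-truncation alone would only make them a property that still has to be checked.
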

\begin{proof}
Without loss of generality $\mathcal{C}$ and $\mathcal{D}$ are cofibrant.
By way of Lemma~\ref{prop_inisegloc} to extend $f$ to a morphism of $\mathsf{E}_2$-monoids it suffices to give a morphism from the diagram
\begin{align}\label{diag_braid=E_1_morphisms_forth_1}
\begin{gathered}
\xymatrix{
\mathcal{C}^{\otimes_{\mathrm{Gr}}4}\ar@<-.5ex>@/_1pc/[d]_{m\otimes 1\otimes 1}\ar[d]|{1\otimes m\otimes 1}\ar@<.5ex>@/^1pc/[d]^{1\otimes 1\otimes m} & & & \\
\mathcal{C}^{\otimes_{\mathrm{Gr}}3}\ar@<-.5ex>[d]_{m\otimes 1}\ar@<.5ex>[d]^{1\otimes m}\ar@{}[dr]|{\omega_{\beta}} & (\mathcal{C}^{\otimes_{\mathrm{Gr}}2})^{\otimes_{\mathrm{Gr}} 3}\ar[l]_{(m, m, m)}\ar@<-.5ex>[d]_{m^{\otimes}\otimes 1}\ar@<.5ex>[d]^{1\otimes m^{\otimes}} & & & \\
\mathcal{C}^{\otimes_{\mathrm{Gr}}2}\ar[d]|{m} \ar[d]\ar@{}[dr]|{\chi_{\beta}} & (\mathcal{C}^{\otimes_{\mathrm{Gr}}2})^{\otimes_{\mathrm{Gr}} 2}\ar[d]\ar[l]_{(m, m)}\ar[d]^{m^{\otimes}}\ar@{}[dr]|{(\omega_{\beta^{\mathrm{rev}}})^{\rho}} & (\mathcal{C}^{\otimes_{\mathrm{Gr}}3})^{\otimes_{\mathrm{Gr}} 2}\ar@<.5ex>[l]^{(m\otimes 1)^{\otimes 2}}\ar@<-.5ex>[l]_{(1\otimes m)^{\otimes 2}}\ar[d]^{m^{\otimes}} & \\
\mathcal{C} & \mathcal{C}^{\otimes_{\mathrm{Gr}}2}\ar[l]|{m} & \mathcal{C}^{\otimes_{\mathrm{Gr}}3}\ar@<.5ex>[l]^{m\otimes 1}\ar@<-.5ex>[l]_{1\otimes m} & \mathcal{C}^{\otimes_{\mathrm{Gr}}4}\ar@<-.5ex>@/_1pc/[l]_{m\otimes 1\otimes 1}\ar[l]|{1\otimes m\otimes 1}\ar@<.5ex>@/^1pc/[l]^{1\otimes 1\otimes m} 
}
\end{gathered}
\end{align}
to the diagram
\begin{align}\label{diag_braid=E_1_morphisms_forth_2}
\begin{gathered}
\xymatrix{
\mathcal{D}^{\otimes_{\mathrm{Gr}}4}\ar@<-.5ex>@/_1pc/[d]_{m\otimes 1\otimes 1}\ar[d]|{1\otimes m\otimes 1}\ar@<.5ex>@/^1pc/[d]^{1\otimes 1\otimes m} & & & \\
\mathcal{D}^{\otimes_{\mathrm{Gr}}3}\ar@<-.5ex>[d]_{m\otimes 1}\ar@<.5ex>[d]^{1\otimes m}\ar@{}[dr]|{\omega_{\beta}} & (\mathcal{D}^{\otimes_{\mathrm{Gr}}2})^{\otimes_{\mathrm{Gr}} 3}\ar[l]_{(m, m, m)}\ar@<-.5ex>[d]_{m^{\otimes}\otimes 1}\ar@<.5ex>[d]^{1\otimes m^{\otimes}} & & & \\
\mathcal{D}^{\otimes_{\mathrm{Gr}}2}\ar[d]|{m} \ar[d]\ar@{}[dr]|{\chi_{\beta}} & (\mathcal{D}^{\otimes_{\mathrm{Gr}}2})^{\otimes_{\mathrm{Gr}} 2}\ar[d]\ar[l]_{(m, m)}\ar[d]^{m^{\otimes}}\ar@{}[dr]|{(\omega_{\beta^{\mathrm{rev}}})^{\rho}} & (\mathcal{D}^{\otimes_{\mathrm{Gr}}3})^{\otimes_{\mathrm{Gr}} 2}\ar@<.5ex>[l]^{(m\otimes 1)^{\otimes 2}}\ar@<-.5ex>[l]_{(1\otimes m)^{\otimes 2}}\ar[d]^{m^{\otimes}} & \\
\mathcal{D} & \mathcal{D}^{\otimes_{\mathrm{Gr}}2}\ar[l]|{m} & \mathcal{D}^{\otimes_{\mathrm{Gr}}3}\ar@<.5ex>[l]^{m\otimes 1}\ar@<-.5ex>[l]_{1\otimes m} & \mathcal{D}^{\otimes_{\mathrm{Gr}}4}\ar@<-.5ex>@/_1pc/[l]_{m\otimes 1\otimes 1}\ar[l]|{1\otimes m\otimes 1}\ar@<.5ex>@/^1pc/[l]^{1\otimes 1\otimes m} 
}
\end{gathered}
\end{align}
that on the left vertical axis is given by $f$. As stated in \cite[p.125]{daystreet}, every braiding $u$ on
$F=(f,\chi,\omega,1,1,1)$ induces a monoidal structure $(\chi_2,1)$ on the $2$-cell $\chi$ with respect to the monoidal structure 
on the two multiplications $m_{\beta}$ of $\mathcal{C}$ and $\mathcal{D}$ induced by their respective braiding. By 
Lemma~\ref{lemma_halgtrans} the pair $(\chi_2,1)$ determines a natural equivalence between the two multiplications $m_{\beta}$ as morphisms 
of $\mathsf{E}_1$-monoids. This in turn means we are given a 
morphism of non-unital $\mathsf{A}_2$-monoids in $\mathrm{Alg}_{\mathsf{E}_1}(\hotwocat)$ (the two left vertical blocks in (\ref{diag_braid=E_1_morphisms_forth_1}) and (\ref{diag_braid=E_1_morphisms_forth_2}), respectively). To extend this morphism two the 
entirety of the two diagrams, we use the same trick as in Lemma~\ref{lemma_braid=E_1_forth}: The braided monoidal 2-functor 
$(F,u)\colon(\mathcal{C},\beta)\rightarrow(\mathcal{D},\beta)$ induces a canonical reversely braided monoidal 2-functor
$(F,u^{\mathrm{rev}})\colon(\mathcal{C},\beta^{\mathrm{rev}})\rightarrow(\mathcal{D},\beta^{\mathrm{rev}})$ by 
Lemma~\ref{lemma_braid_rev_morphism}. The induced morphism
$f^{\mathrm{rev}}\colon(\mathcal{C},\chi_{\beta^{\mathrm{rev}}},\omega_{\beta^{\mathrm{rev}}})\rightarrow(\mathcal{D},\chi_{\beta^{\mathrm{rev}}},\omega_{\beta^{\mathrm{rev}}})$ of induced
$\mathsf{A}_2^{\mathrm{nu}}\otimes_{\mathrm{BV}} \mathsf{E}_1$-monoids induces a diagonal reflection
$(f^{\mathrm{rev}})^{\rho}\colon\colon(\mathcal{C},\chi_{\beta^{\mathrm{rev}}},\omega_{\beta^{\mathrm{rev}}})^{\rho}\rightarrow(\mathcal{D},\chi_{\beta^{\mathrm{rev}}},\omega_{\beta^{\mathrm{rev}}})^{\rho}$. By way of the explicit formula for the monoidal structure
$\chi_2^{\mathrm{rev}}$ on $\chi$ induced by $u^{\mathrm{rev}}$ given on \cite[p.125]{daystreet}, it is not hard to compute that
\[(\chi_2^{\mathrm{rev}})_{A,B,C,D}=(\chi_2^{-1_h})_{A,C,B,D}.\]
Here, we consider $\chi_2$ as a natural 2-cell of the form
\[\xymatrix{
f(A)f(B)f(C)f(D)\ar[rr]^{1\otimes\rho_{f(B),f(C)}\otimes 1}\ar[d]_{\chi_{A,B}\otimes \chi_{C,D}}\ar@{}[ddrr]|{\underset{\chi_2}{\Rightarrow}} & & f(A)f(C)f(B)f(D)\ar[d]^{\chi_{A,C}\otimes\chi_{B,D}} \\
f(AB)f(CD)\ar[d]_{\chi_{AB,CD}} & & f(AC)f(BD)\ar[d]^{\chi_{AC,BD}}\\
f(ABCD)\ar[rr]_{f(1\otimes\rho_{B,C}\otimes 1)} & & f(ACBD)
}\]
That means, the horizontal block $(f^{\mathrm{rev}})^{\rho}$ coincides with $f$ on
$\mathsf{A}_2^{\mathrm{nu}}\otimes\mathsf{A}_2^{\mathrm{nu}}$. It hence extends the non-unital $\mathsf{A}_2$-monoid morphism $f$ to a 
morphism from (\ref{diag_braid=E_1_morphisms_forth_1}) to (\ref{diag_braid=E_1_morphisms_forth_2}) by way of Example~\ref{exple_diagrefl2}. 
It hence gives rise to a morphism of non-unital $\mathsf{E}_2$-monoids as stated. Unitality follows again by way of 
Lemma~\ref{lemma_unitff}.
\end{proof}

\begin{lemma}\label{lemma_braid=E_1_morphisms_back}
Let $(\mathcal{C},\beta)$ and $(\mathcal{D},\beta)$ be braided Gray monoids, and let $F\colon\mathcal{C}\rightarrow\mathcal{D}$ be a monoidal 
2-functor. Let $f\colon\mathcal{C}\rightarrow\mathcal{D}$ be the associated morphism of $\mathsf{E}_1$-monoids in $\hotwocat$ 
from Lemma~\ref{lemma_halgtrans}. Suppose $f^+\colon(\mathcal{C}^{+},\beta)\rightarrow(\mathcal{D}^{+},\beta)$ is a morphism of associated
$\mathsf{E}_2$-monoids extending $f$. Then there is a braiding on $f$ with respect to the given braidings on $\mathcal{C}$ and $\mathcal{D}$.
\end{lemma}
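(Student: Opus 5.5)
Without loss of generality $\mathcal{C}$ and $\mathcal{D}$ are cofibrant (Lemma~\ref{lemma_mono_cofrepl}), $F=(f,\chi,\omega,1,1,1)$ is strictly unital (Lemmas~\ref{lemma_cofidunit} and \ref{lemma_cofbraidunit}), and $\mathcal{C}^+=\mathcal{C}^\beta$, $\mathcal{D}^+=\mathcal{D}^\beta$ are the semi-strict $\mathsf{E}_2$-monoids of Lemma~\ref{lemma_braid=E_1_forth}, which satisfy the hypotheses of Lemma~\ref{lemma_braid=E_1_back} by Lemma~\ref{lemma_braid=E_1_forth_back}. We then read off the low-dimensional data of $f^+$ and extract a braiding, reversing Lemma~\ref{lemma_braid=E_1_morphisms_forth}. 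By Corollary~\ref{cor_eh_3} and Proposition~\ref{prop_inisegloc}, $f^+$ is determined by its restriction to the initial segment (\ref{diag_braid=E_1_morphisms_forth_1})$\to$(\ref{diag_braid=E_1_morphisms_forth_2}). In particular, its restriction to the left $\mathsf{A}_2^{\mathrm{nu}}\otimes\mathsf{E}_1$-block is a 2-cell in $\mathrm{Alg}_{\mathsf{E}_1}(\hotwocat)$ between the composites $m_\beta\circ(f,f)$ and $f\circ m_\beta$. By Lemma~\ref{lemma_halgtrans}.2 this is a monoidal pseudo-natural equivalence $(\theta,\theta_2,\theta_0)$ whose underlying 1-cell is (up to equivalence) $\chi$.

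First I would normalise $\theta$. Using the vertical unit of $f^+$ (the identity $1\colon f\to f$ on $\mathsf{E}_1$-structures), the restriction of $\theta$ along $\mathcal{C}\to\mathcal{C}^{\otimes_{\mathrm{Gr}}2}$ at either unit is trivialised. As in Lemma~\ref{lemma_cofbraidunit}, I would use the homotopy extension property along the cofibration $(\ast\otimes_{\mathrm{Gr}}\mathcal{C})\sqcup_\ast(\mathcal{C}\otimes_{\mathrm{Gr}}\ast)\hookrightarrow\mathcal{C}\otimes_{\mathrm{Gr}}\mathcal{C}$ to replace $f^+$ by an equivalent extension with $\theta=\chi$ on the nose and $\theta_0=1$. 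Then $\theta_2$ is a modification filling exactly the hexagon displayed in the proof of Lemma~\ref{lemma_braid=E_1_morphisms_forth}, with boundary $1\otimes\rho\otimes1$ on top. I define
\[u_{A,B}:=(\theta_2)_{I,A,B,I},\]
which is an invertible 2-cell $\chi_{B,A}\circ\rho_{f(A),f(B)}\Rightarrow f(\rho_{A,B})\circ\chi_{A,B}$, using $\chi_{I,-}=\chi_{-,I}=1$.

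Next I would verify the two axioms (\ref{diag_def_braidmor_I}) and (\ref{diag_def_braidmor_II}). These are exactly the monoidality axioms of Day--Street \cite[p.125]{daystreet} for $(\theta,\theta_2)$ relative to $\omega_\beta$, evaluated at tuples with units inserted. More precisely, the axiom of monoidality of $\theta$ with respect to $\omega_\beta=1\otimes\bar\omega\otimes1$, evaluated at $(I,A,B,C,I,I)$ and $(I,I,A,B,C,I)$, yields the two pasting equations involving $r_{A|B,C}$ and $r_{A,B|C}$. This requires first showing, by the same unit-substitution arguments as in the proof of Lemma~\ref{lemma_braid=E_1_back}, that $\theta_2=1\otimes(\theta_2)_{I,-,-,I}\otimes1$ up to the $\chi$-whiskerings. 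This uses the fact that $\theta_2$ is trivial whenever its middle arguments contain $I$.

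I expect the main obstacle to be this decomposition of $\theta_2$, together with showing that the horizontal block of $f^+$, i.e.\ its compatibility with $\psi=(\omega_{\beta^{\mathrm{rev}}})^\rho$, imposes no further conditions on $u$. For the latter I would argue as in Lemma~\ref{lemma_braid=E_1_morphisms_forth} in reverse. Proposition~\ref{prop_inisegfib} and Corollary~\ref{cor_truncmor} (with $n=3$, $k=1$) show that the space of $\mathsf{E}_2$-extensions of $f$ is a set, so the horizontal compatibility is a mere property. Applying the construction of Lemma~\ref{lemma_braid=E_1_morphisms_forth} to $u$ produces some $\mathsf{E}_2$-extension whose vertical block agrees with that of $f^+$, and hence it is equivalent to $f^+$. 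Finally, equivalent $f^+$ give the same $u$ by the normalisation, which gives the bijection in Proposition~\ref{prop_braid=E_1_functors}.
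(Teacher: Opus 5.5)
Your architecture is the paper's: extract from $f^+$ a monoidal structure $(\theta_2,1)$ on the pseudo-natural equivalence $\chi$, put $u_{A,B}:=(\theta_2)_{I,A,B,I}$, show $\theta_2$ is the Day--Street cell $\chi_2^u$ built from $u$, and conclude by \cite[p.125]{daystreet}. The decomposition step you flag as the main obstacle, however, rests on a false claim.

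\textbf{Which components of $\theta_2$ are trivial.} It is not true that $\theta_2$ is trivial whenever a middle argument is $I$. The unit laws kill only the components at $(I,I,-,-)$, $(-,-,I,I)$, $(-,I,-,I)$ and $(I,-,I,-)$.
\begin{itemize}
\item The component $(\theta_2)_{A,B,I,D}$ goes between $\chi_{A,BD}\circ(1\otimes\chi_{B,D})$ and $\chi_{AB,D}\circ(\chi_{A,B}\otimes 1)$. That is the boundary of the associator $\omega_{A,B,D}$ of $F$, so it cannot be an identity.
\item The monoidality axiom of $\theta_2$, specialised at $B=C=I$ and at $D=E=I$, identifies $(\theta_2)_{A,B,I,D}$ with $\omega_{A,B,D}$ and $(\theta_2)_{A,I,C,D}$ with $\omega^{-1}_{A,C,D}$.
\item $(\theta_2)_{A,I,I,B}$ is an automorphism of $\chi_{A,B}$ that nothing forces to be trivial. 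The paper conjugates $\chi_2$ by it before defining $u$; this is an invertible monoidal modification, hence an equivalent extension by Lemma~\ref{lemma_halgtrans}. Without this step, $u$ and every formula above pick up extra factors.
\end{itemize}

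\textbf{Your normalisation.} It does not provide this, and it does not even give $\theta=\chi$. Homotopy extension along $(\ast\otimes_{\mathrm{Gr}}\mathcal{C})\sqcup_\ast(\mathcal{C}\otimes_{\mathrm{Gr}}\ast)\hookrightarrow\mathcal{C}\otimes_{\mathrm{Gr}}\mathcal{C}$ only controls $\theta$ on the wedge; the paper obtains $\theta=\chi$ from Corollary~\ref{cor_E_latching}. One fix: once $\theta$ is trivial on the wedge, the component $(\theta_2)_{(A,I),(I,B)}$ is a 2-cell between $\chi_{A,B}$ and $\theta_{(A,B)}$. Transporting along it gives $\theta=\chi$ and $(\theta_2)_{A,I,I,B}=1$ at once.

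\textbf{The correct decomposition and why it matters.} With these corrections one gets, up to naturality squares of $\chi$,
\[(\theta_2)_{A,B,C,D}=\omega_{A,B,CD}\ast(1\otimes\omega^{-1}_{B,C,D})\ast(1\otimes u_{B,C}\otimes 1)\ast(1\otimes\omega_{C,B,D})\ast\omega^{-1}_{A,C,BD}.\]
This is a conjugation by associators of $F$, not the unit-substitution factorisation of Lemma~\ref{lemma_braid=E_1_back}, and the difference is not cosmetic. The $\omega$-faces in (\ref{diag_def_braidmor_I}) and (\ref{diag_def_braidmor_II}) arise exactly from these non-trivial components. Under your triviality claim, the specialised monoidality equations would not be the braiding axioms.

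\textbf{Your evaluation tuples.} They are degenerate. Since $\omega_\beta=1\otimes\bar\omega\otimes 1$, the tuples $(I,A,B,C,I,I)$ and $(I,I,A,B,C,I)$ produce $\bar\omega_{A,B,C,I}$ and $\bar\omega_{I,A,B,C}$, both identities. The Yang--Baxter cells $r_{A,B|C}=\bar\omega_{A,I,B,C}$ and $r_{A|B,C}=\bar\omega_{A,B,I,C}$ appear instead at $(I,A,I,B,C,I)$ and $(I,A,B,I,C,I)$.

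Finally, your last paragraph (horizontal compatibility, uniqueness) is not needed for the existence statement here. It is the content of Lemma~\ref{lemma_braid=E_1_morphisms_forthback}.
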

\begin{proof}
Without loss of generality we may assume that $\mathcal{C}$ and $\mathcal{D}$ are cofibrant. By Lemma~\ref{lemma_cofbraidunit} we may 
further assume that $F\colon\mathcal{C}\rightarrow\mathcal{D}$ is a monoidal 2-functor of the form $(f,\chi,\omega,1,1,1)$, so it exhibits 
degenerate unitality laws. The morphism $f^+\colon(\mathcal{C},\beta)\rightarrow(\mathcal{D},\beta)$ comes together with a 3-cell $\chi_2$ 
of the form
\begin{align}\label{diag_braid=E_1_morphisms_back0}
\begin{gathered}
\xymatrix{
 & \mathcal{C}^{\otimes_{\mathrm{Gr}} 2}\ar[rr]_{f^{\otimes_{\mathrm{Gr}} 2}}\ar[dd]^(.3){m}|\hole & & \mathcal{D}^{\otimes_{\mathrm{Gr}} 2} \ar[dd]^{m} \\
(\mathcal{C}^{\otimes_{\mathrm{Gr}} 2})^{\otimes_{\mathrm{Gr}} 2}\ar[rr]^(.3){f^{\otimes_{\mathrm{Gr}} 4}}\ar[dd]^{m^{\otimes}}\ar[ur]^{(m,m)} & & (\mathcal{D}^{\otimes_{\mathrm{Gr}} 2})^{\otimes_{\mathrm{Gr}} 2}\ar[dd]^(.3){m^{\otimes}}|(.52)\hole\ar[ur]_{(m,m)} \\
 & \mathcal{C}\ar[rr]_(.3)f & & \mathcal{D} \\
\mathcal{C}^{\otimes_{\mathrm{Gr}} 2}\ar[rr]_{f^{\otimes_{\mathrm{Gr}} 2}}\ar[ur]^{m} & & \mathcal{D}^{\otimes_{\mathrm{Gr}} 2}\ar[ur]_{m} & \\
}
\end{gathered}
\end{align} 
whose back and bottom face are both $\chi$ (by way of Lemma~\ref{cor_E_latching}), whose left and right face are 
the 2-cell $\chi_{\beta}=1\otimes\rho\otimes 1$ associated to the braidings on $\mathcal{C}$ and $\mathcal{D}$, respectively, and whose front 
and top faces are given by the corresponding product of $\chi$. For every tuple $(A,B,C,D)$ of objects in $\mathcal{C}$, this corresponds to 
a natural 2-cell of the form
\begin{align}\label{diag_braid=E_1_morphisms_back}
\begin{gathered}
\xymatrix{
f(A)f(B)f(C)f(D)\ar[d]_{\chi_{A,B}\otimes \chi_{C,D}}\ar[rrr]^{1_{f(A)}\otimes\rho_{f(B),f(C)}\otimes 1_{f(D)}} & & & f(A)f(C)f(B)f(D)\ar[rr]^(.55){\chi_{A,C}\otimes \chi_{B,D}}\ar@{}[d]|{\underset{\chi_{2 A,B,C,D}}{\Rightarrow}} & & f(AC)f(BD)\ar[d]^{\chi_{AC,BD}} \\
f(AB)f(CD)\ar[rrr]_{\chi_{AB,CD}} & & & f(ABCD)\ar[rr]_{f(1_A\otimes\rho_{B,C}\otimes 1_D)} & & f(ACBD).
}
\end{gathered}
\end{align}
It equips the pseudo-natural equivalence
\begin{align}\label{equ_braid=E_1_morphisms_back0}
\begin{gathered}
\xymatrix{
\mathcal{C}\otimes_{\mathrm{Gr}}\mathcal{C}\ar[r]^{f\otimes_{\mathrm{Gr}}f}\ar[d]_{m}\ar@{}[dr]|{\Downarrow \chi_2} & \mathcal{D}\otimes_{\mathrm{Gr}}\mathcal{D}\ar[d]^m
\\
\mathcal{C}\ar[r]_f & \mathcal{D}
}
\end{gathered}
\end{align}
together with the identity $\chi_0:=1_{1_I}\colon 1_I\rightarrow 1_I$ with a monoidal structure in the sense of
\cite[Definition 3]{daystreet}. Before we construct the associated syllepsis, we again apply a semi-strictification procedure to
$(\chi,\chi_2,1)$. Therefore, we transport the monoidal structure $(\chi_2,1)$ along the natural 2-cell
\begin{align}\label{equ_braid=E_1_morphisms_back}
\xymatrix{
f(A)f(B)\ar@/^1pc/[rr]^{\chi_{A,B}}\ar@/_1pc/[rr]_{\chi_{A,B}}\ar@{}[rr]|{\chi_{2 A,I,I,B}} & & f(AB)
}
\end{align}
to define a new natural 2-cell $\chi_2\sprime$ given by the conjugation
\begin{align}\label{diag_braid=E_1_morphisms_back2}
\begin{gathered}
\xymatrix{
f(A)f(B)f(C)f(D)\ar@{}[d]|{\underset{\chi_{2 A,I,I,B}\otimes\chi_{2 C,I,I,D}}{\Rightarrow}}\ar@/^4pc/[d]^{\chi_{A,B}\otimes \chi_{C,D}}\ar@/_4pc/[d]_{\chi_{A,B}\otimes \chi_{C,D}}\ar[rrr]^{1_{f(A)}\otimes\rho_{f(B),f(C)}\otimes 1_{f(D)}} & & & f(A)f(C)f(B)f(D)\ar[rr]^(.55){\chi_{A,C}\otimes \chi_{B,D}}\ar@{}[d]|{\underset{\chi_{2 A,B,C,D}}{\Rightarrow}} & & f(AC)f(BD)\ar@/_3pc/[d]_{\chi_{AC,BD}}\ar@/^3pc/[d]^{\chi_{AC,BD}}\ar@{}[d]|{\underset{\chi_{2 AC,I,I,BD}^{-1}}{\Rightarrow}} \\
f(AB)f(CD)\ar[rrr]_{\chi_{AB,CD}} & & & f(ABCD)\ar[rr]_{f(1_A\otimes\rho_{B,C}\otimes 1_D)} & & f(ACBD).
}
\end{gathered}
\end{align}
Then $(\chi,\chi_2\sprime,1)$ defines a monoidal pseudo-natural equivalence, and (\ref{equ_braid=E_1_morphisms_back}) defines an invertible 
monoidal modification in the sense of \cite[Definition 3]{daystreet} between the monoidal pseudo-natural equivalences $(\chi,\chi_2,1)$ and
$(\chi,\chi_2\sprime,1)$ by construction. By Lemma~\ref{lemma_halgtrans}, the latter can hence be extended (uniquely) to a 3-cell in
$\mathrm{Alg}_{\mathsf{E}_1}(\hotwocat)$.

Now, whenever $A=D=I$, Diagram~(\ref{diag_braid=E_1_morphisms_back2}) is a natural 2-cell
\[\xymatrix{
f(B)f(C)\ar[rr]^{\rho_{f(B),f(C)}}\ar[d]_{\chi_{B,C}} & \ar@{}[d]|{\underset{\chi_2(I,A,B,I)}{\Rightarrow}} & f(C)f(B)\ar[d]^{\chi_{C,B}} \\
f(BC)\ar[rr]_{f(\rho_{B,C})} & & f(CB).
}\]
We claim that $u_{A,B}:=\chi_2(I,A,B,I)$ is a braiding on $F$. To prove the claim we are left to show the two due equations. Therefore, in 
Lemma~\ref{lemma_braid=E_1_morphisms_forthback} we will again see that the 3-cell of the form (\ref{diag_braid=E_1_morphisms_back0}) 
induced from the tuple $(F,u)$ by way of Lemma~\ref{lemma_braid=E_1_morphisms_forth} (that is, the pasted 3-cell
$\chi^{u}_2$ associated to $u$ given on \cite[p.125]{daystreet}) is exactly the natural 2-cell 
(\ref{diag_braid=E_1_morphisms_back2}). That means, $(\chi,\chi^{u}_2,1)$ is a monoidal pseudo-natural equivalence. 
According to \cite[p.125]{daystreet} it follows that $u$ is a braiding on $F$.
\end{proof}

\begin{lemma}\label{lemma_E_2ext_hset}
Let $(\mathcal{C},\beta)$ and $(\mathcal{D},\beta)$ be braided Gray monoids, and let $f\colon\mathcal{C}\rightarrow\mathcal{D}$ be a monoidal 
2-functor of underlying Gray monoids. Then the space
\begin{align}\label{equ_E_2ext_hset}
\mathrm{Alg}_{\mathsf{E}_2}(2\text{-Cat}^{\times})(\mathcal{C},\mathcal{D})\times_{\mathrm{Alg}_{\mathsf{E}_1}(2\text{-Cat}^{\times})(\mathcal{C},\mathcal{D})}\{f\}.
\end{align}
of $\mathsf{E}_2$-monoidal extensions of $f$ is an $h$-set.
\end{lemma}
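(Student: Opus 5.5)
The plan is to deduce the statement directly from Corollary~\ref{cor_truncmor}, applied in the cartesian monoidal $\infty$-category $2\text{-Cat}^{\times}=\mathrm{Ho}_{\infty}(2\text{-Cat})^{\times}$. First I would verify that this is a symmetric monoidal $3$-category in the sense of Section~\ref{sec_sub_iniseg}. Recall from Section~\ref{sec_sub_graymonhtythy} that $\mathrm{Ho}_{\infty}(2\text{-Cat})$ is a $3$-category: the $2$-cells are pseudo-natural equivalences, the $3$-cells are invertible modifications, and all higher cells are determined by their boundaries. The cartesian structure is the one induced by the Gray tensor product, via the trivial fibration $\otimes_{\mathrm{Gr}}\rightarrow\times$, as in the proof of Lemma~\ref{lemma_grmon=E1}. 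So we may take $n=3$.

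Next I would place the data in the setting of Corollary~\ref{cor_truncmor}. Without loss of generality $\mathcal{C}$ and $\mathcal{D}$ are cofibrant, by Lemma~\ref{lemma_mono_cofrepl}. By Theorem~\ref{thm_braid=E_1} (equivalently Lemma~\ref{lemma_braid=E_1_forth}), the braidings on $\mathcal{C}$ and $\mathcal{D}$ give $\mathsf{E}_2$-monoids $\mathcal{C}^{\beta},\mathcal{D}^{\beta}$ in $2\text{-Cat}^{\times}$ whose underlying $\mathsf{E}_1$-monoids are $\mathcal{C}$ and $\mathcal{D}$. By Lemma~\ref{lemma_halgtrans}, the monoidal $2$-functor $f$ gives an essentially unique morphism of the underlying $\mathsf{E}_1$-monoids. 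The space (\ref{equ_E_2ext_hset}) is then exactly the fiber (\ref{equ_truncmor}) with $k=1$, $A=\mathcal{C}^{\beta}$ and $B=\mathcal{D}^{\beta}$.

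Corollary~\ref{cor_truncmor} now says this fiber is $(n-k-2)$-truncated, that is, $(3-1-2)=0$-truncated. Hence it is an $h$-set, as claimed.

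The only delicate point is making sure the hypotheses of Corollary~\ref{cor_truncmor} really hold here. That corollary rests on Corollary~\ref{cor_E_latching} together with Lemma~\ref{lemma_iniseg_qu}, both of which are formal and apply to any symmetric monoidal $n$-category, so the main thing to check is the truncation level $n=3$ of $\mathrm{Ho}_{\infty}(2\text{-Cat})$. This is justified by the path object $[E,-]_{\mathrm{Gr}}$ described in Section~\ref{sec_sub_graymonhtythy}: mapping spaces between cofibrant $2$-categories are nerves of $2$-groupoids of pseudo-natural equivalences and invertible modifications, hence $2$-truncated. There is one further point to check: the fiber must be taken over the point $\{f\}$ of the mapping space, which is independent of the chosen representative of $f$ up to equivalence, and truncatedness of the fiber is invariant under such a change.
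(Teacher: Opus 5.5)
Your proposal is correct and is the same argument as the paper, which proves the lemma in one line by applying Corollary~\ref{cor_truncmor} with $n=3$ and $k=1$. Your extra checks, namely that $2\text{-Cat}^{\times}$ is a $3$-category and that the braidings supply the $\mathsf{E}_2$-monoids $A$ and $B$, are the implicit hypotheses behind that one line.
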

\begin{proof}
This follows directly from Corollary~\ref{cor_truncmor} for $n=3$ and $k=1$.
\end{proof}

\begin{lemma}\label{lemma_braid=E_1_morphisms_forthback}
The two constructions of Lemma~\ref{lemma_braid=E_1_morphisms_forth} and Lemma~\ref{lemma_braid=E_1_morphisms_back} are mutually inverse 
functions of h-sets.
\end{lemma}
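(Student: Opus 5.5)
We show that both composites of the two constructions are identities. Write $\Phi\colon\beta(F)\rightarrow\pi_0(\ref{equ_E_2ext_hset})$ for the construction of Lemma~\ref{lemma_braid=E_1_morphisms_forth}, and $\Psi$ for that of Lemma~\ref{lemma_braid=E_1_morphisms_back}. Throughout, $\mathcal{C}$ and $\mathcal{D}$ are cofibrant, $F=(f,\chi,\omega,1,1,1)$ is semi-strictly unital by Lemma~\ref{lemma_cofbraidunit}, and the $\mathsf{E}_2$-monoids $\mathcal{C}^+,\mathcal{D}^+$ are the semi-strict ones $\mathcal{C}^\beta,\mathcal{D}^\beta$ of Lemma~\ref{lemma_braid=E_1_forth}. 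By Lemma~\ref{lemma_E_2ext_hset}, the space (\ref{equ_E_2ext_hset}) is an $h$-set. Its points are therefore determined by their $\pi_0$-classes, and equality of extensions can be checked on any data that determines them up to equivalence.

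\emph{The composite $\Psi\Phi$.} Start from a braiding $u$ on $F$. The morphism $\Phi(u)$ is built from the Day--Street 3-cell $\chi_2^u$ of \cite[p.125]{daystreet} together with its diagonally reflected reverse. Its underlying cube (\ref{diag_braid=E_1_morphisms_back0}) is therefore exactly $\chi_2^u$. We first check, directly from the pasting formula for $\chi_2^u$ and $\chi_{I,-}=\chi_{-,I}=1$, that $\chi^u_{2\,A,I,I,B}=1$. Consequently the conjugation (\ref{diag_braid=E_1_morphisms_back2}) leaves $\chi_2^u$ unchanged. We then check that $\chi^u_{2\,I,A,B,I}=u_{A,B}$; this uses $\omega_{I,-,-}=\omega_{-,-,I}=1$, which follows from the degenerate unitors of $F$. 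Together these give $\Psi\Phi(u)=u$. They also establish the claim deferred in the proof of Lemma~\ref{lemma_braid=E_1_morphisms_back}, namely that the 3-cell induced by $(F,u)$ is exactly (\ref{diag_braid=E_1_morphisms_back2}).

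\emph{The composite $\Phi\Psi$.} Start from an extension $f^+$ and set $u:=\Psi(f^+)$. By Proposition~\ref{prop_inisegloc} (applied to the $3$-category $2\text{-Cat}^\times$ via Corollary~\ref{cor_eh_3}), an $\mathsf{E}_2$-extension of $f$ is determined up to equivalence by its initial segment: the 3-cell $\chi_2$ on the $\mathsf{A}_2\otimes\mathsf{A}_2$-square, with higher data forced by truncation. By Lemma~\ref{lemma_halgtrans}, $f^+$ is equivalent, via the invertible monoidal modification $\chi_{2\,A,I,I,B}$, to the extension with cell $\chi_2'$. So it suffices to show $\chi_2'=\chi_2^u$ as natural 2-cells. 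The strategy mirrors Lemma~\ref{lemma_braid=E_1_back}:
\begin{enumerate}
\item Use the monoidality axioms for $(\chi,\chi_2',1)$ against the braided associators $\omega_\beta=1\otimes\bar\omega\otimes1$ and $(\omega_{\beta^{\mathrm{rev}}})^\rho$ on both sides, specialized at unit objects. This yields $\chi'_{2\,A,I,I,B}=1$.
\item Conclude that $\chi_2'$ decomposes as a whiskered composite of $\chi$-cells around $u_{B,C}=\chi'_{2\,I,B,C,I}$.
\item Observe that this decomposition is precisely Day--Street's formula for $\chi_2^u$.
\end{enumerate}

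\emph{Expected obstacle.} The main difficulty is step 2 of the $\Phi\Psi$ direction. It is the morphism-level analogue of the unit-strictification computations for $\omega$ and $\psi$ in Lemma~\ref{lemma_braid=E_1_back}, and the pasting bookkeeping may be heavy. I would first try to derive it directly from the two monoidal pseudo-natural transformation axioms with $A=I$ and $D=I$. If that becomes unmanageable, I would fall back on the homotopical argument of Claim~\ref{claim_braid=E_1_back}: the relevant cells occupy a contractible space of equations in the envelope of $\mathsf{E}_1\otimes_{\mathrm{BV}}\mathsf{E}_1$, so a cell known to be an identity forces the other to be one as well. With both composites shown to be identities, the two constructions are mutually inverse functions of $h$-sets.
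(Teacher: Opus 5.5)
Your architecture matches the paper's proof. For $\Psi\Phi$ you evaluate Day--Street's cell $\chi_2^u$ at $(I,A,B,I)$. For $\Phi\Psi$ you compare the conjugated cell $\chi_2'$ of $f^+$ with $\chi_2^{u}$ for $u=\chi'_2(I,-,-,I)$, and conclude by Proposition~\ref{prop_inisegloc}. Your extra check that $\chi^u_{2\,A,I,I,B}=1$, so that the conjugation does nothing on $\Phi(u)$, covers a point the paper leaves implicit. That check also needs $u_{I,I}=1$, which follows from the first braiding axiom at $B=C=I$.

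The gap is that the decisive step~2 is never carried out, and two claims around it are wrong.

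\begin{itemize}
\item $\chi'_{2\,A,I,I,B}=1$ is not a consequence of the monoidality axioms. The unconjugated $\chi_2$ satisfies the same axioms and unit laws, yet its $(A,I,I,B)$-components are generally non-trivial. Indeed, conjugating by an invertible modification $\Gamma\colon\chi\Rrightarrow\chi$ with trivial unit components preserves all the axioms but changes the $(A,I,I,B)$-component by $\Gamma_{A,B}^{-1}$. That is exactly why the paper needs the replacement. The identity holds by construction of the conjugation, given $\chi_{2\,A,I,I,I}=\chi_{2\,I,I,I,B}=1$.
\item The $\Psi\Phi$ computation does not settle the claim deferred in Lemma~\ref{lemma_braid=E_1_morphisms_back}. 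That claim concerns an arbitrary extension $f^+$, and it is precisely your step~2. It is also what shows that $\Psi(f^+)$ is a braiding at all (via \cite[p.125]{daystreet}). So $\Phi\Psi$ is not even defined until step~2 is done.
\end{itemize}

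The paper fills step~2 as follows; write $\chi_2$ for the conjugated cell. Specialising the six-variable monoidality equation for $(\chi,\chi_2)$ at $B=C=I$ gives
\[\chi_{2\,A,D,E,F}=\omega_{A,D,EF}\ast(1_A\otimes\chi_{2\,I,D,E,F})\ast\chi_{2\,A,I,E,DF},\]
and specialising it at $D=E=I$ gives
\[\chi_{2\,A,B,C,F}=\omega^{-1}_{AB,C,F}\ast(\chi_{2\,A,B,C,I}\otimes 1_F)\ast\chi_{2\,AC,B,I,F}.\]
Now use the unit laws $\chi_{2\,I,I,-,-}=\chi_{2\,-,-,I,I}=\chi_{2\,-,I,-,I}=\chi_{2\,I,-,I,-}=\chi_{2\,-,I,I,-}=1$. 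The horizontal ones come from the horizontal unitality of $f^+$, which you never invoke. These laws give $\chi_{2\,A,D,I,F}=\omega_{A,D,F}$ and $\chi_{2\,A,I,C,F}=\omega^{-1}_{A,C,F}$, and hence, up to naturality squares of $\chi$,
\[\chi_{2\,A,B,C,D}=\omega_{A,B,CD}\ast(1_A\otimes\omega^{-1}_{B,C,D})\ast(1_A\otimes\chi_{2\,I,B,C,I}\otimes 1_D)\ast(1_A\otimes\omega_{C,B,D})\ast\omega^{-1}_{A,C,BD}.\]
This is Day--Street's formula. Note that it is built from the associator $\omega$ of $F$, not from $\chi$-cells alone as your step~2 suggests.

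Your fallback does not replace this computation. In Claim~\ref{claim_braid=E_1_back}, the two cells are images of parallel 2-cells in a $1$-truncated mapping space of the envelope of $\mathsf{E}_1\otimes_{\mathrm{BV}}\mathsf{E}_1$, which is why they must agree. Here $\chi'_2$ and $\chi^u_2$ are parallel 3-cells in the $3$-category $2\text{-Cat}^{\times}$, and such cells need not agree. Moreover, $\chi^u_2$ is defined by a pasting formula, not as the image of a universal cell, so the truncation argument has nothing to act on.
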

\begin{proof}
We first can reduce to cofibrant Gray monoids $\mathcal{C}$ and $\mathcal{D}$ and strictly unital monoidal $2$-functors
$F\colon\mathcal{C}\rightarrow\mathcal{D}$. If we start with a braiding $u$ on $F$, then a straightforward computation shows that
$\chi_2^{u}(I,A,B,I)=u(A,B)$. Here one uses that $\chi_{-,I}=\chi_{I,-}=1$ by assumption, and that further
$\omega_{-,I,-}=\omega_{I,-,-}=\omega_{-,-,I}$. 
This proves one direction.

For the other direction, let $f^+\colon(\mathcal{C},\beta)\rightarrow(\mathcal{D},\beta)$ be a morphism of associated $\mathsf{E}_2$-monoids 
extending $F=(f,\chi,\omega,1,1,1)$. Let  $(\chi_2,1)$ be the underlying monoidal structure on the pseudo-natural equivalence $\chi$. It 
follows that
\begin{itemize}
\item $\chi_{2 I,I,A,B}=\chi_{2 A,B,I,I}=\chi_{2 A,I,B,I}=\chi_{2 I,A,I,B}=1$ by the vertical and horizontal unitality laws.
\end{itemize}
By way of the replacement of $\chi_2$ by $\chi_2\sprime$ in the proof of Lemma~\ref{lemma_braid=E_1_morphisms_back} we can further without loss of generality assume that
\begin{itemize}
\item $\chi_{2 A,I,I,B}=1$.
\end{itemize}
The fact that $\chi_2$ is part of a monoidal structure on the pseudo-natural equivalence $\chi$ in (\ref{equ_braid=E_1_morphisms_back0}) 
gives rise to the following equation of natural 2-cells:
\begin{align*}
& \adjustbox{scale=0.7}{%
\xymatrix{
 & & f(A)f(B)f(CE)f(DF)\ar[dr] & & \\
 & f(A)f(B)f(C)f(E)f(D)f(F)\ar[ur]\ar[dr]\ar@{}[rr]|{\Uparrow 1_{f(A)}\otimes \rho^{-1}_{1_{f(B)},\chi_{C,E}}\otimes 1_{\chi_{D,F}}} & & f(A)f(CE)f(B)f(DF)\ar[dr] & \\
f(A)f(B)f(C)f(D)f(E)f(F)\ar[ur]\ar[dr]\ar[ddd]\ar@{}[rr]|{\Uparrow 1_{f(A)}\otimes\bar{\omega}_{f(B),f(C),f(D),f(E)}\otimes 1_{f(E)f(F)}} & & f(A)f(C)f(E)f(B)f(D)f(F)\ar[dr]\ar[ur]\ar@{}[rr]|{\Uparrow \omega_{A,C,E}\otimes\omega_{B,D,F}} & & f(ACE)f(BDF)\ar[ddd] \\
 & f(A)f(C)f(B)f(D)f(E)f(F)\ar[ur]\ar[dr]\ar@{}[rr]|{\Uparrow 1_{\chi_{A,C}}\otimes\rho_{\chi_{B,D},1_{f(E)}}\otimes 1_{f(F)}} & & f(AC)f(E)f(BD)f(F)\ar[ur] & \\
 & & f(AC)f(BD)f(E)f(F)\ar[ur]\ar[d] & & \\
f(AB)f(CD)f(EF)\ar[r]\ar@{}[uurr]|{\underset{(1\otimes\chi_{E,F})\ast(\chi_{2 A,B,C,D}\otimes 1_{f(E)f(F)})}{\Rightarrow}} & f(ABCD)f(EF)\ar[r] & f(ACBD)f(EF)\ar[r] & f(ACBDEF)\ar[r] & f(ACEBDF)\ar@{}[uull]|{\underset{\chi_{2 AC,BD,E,F}}{\Rightarrow}}\\
 & & \rotatebox{90}{$\scriptstyle = $} & & \\
f(A)f(B)f(C)f(D)f(E)f(F)\ar[r]\ar[ddd]\ar@{}[ddrr]|{\underset{(1_{f(A)f(B)}\otimes\chi_{2 C,D,E,F})\ast(\chi_{A,B}\otimes 1)}{\Rightarrow}} & f(A)f(B)f(C)f(E)f(D)f(F)\ar[r] & f(A)f(B)f(CE)f(DF)\ar[r]\ar[d]\ar@{}[ddrr]|{\underset{\chi_{2 A,B,CE,DF}}{\Rightarrow}} & f(A)f(CE)f(B)f(DF)\ar[r] & f(ACE)f(BDF)\ar[ddd] \\
 & & f(AB)f(CEDF)\ar[dr] & & \\
 & f(AB)f(CDEF)\ar[ur]\ar[dr]\ar@{}[rr]|{\Uparrow \chi^{-1}_{1_{AB},1_{C}\otimes\rho_{D,E}\otimes 1_{F}}} & & f(ABCEDF)\ar[dr] & \\
f(AB)f(CD)f(EF)\ar[ur]\ar[dr]\ar@{}[rr]|{\Uparrow \omega_{A,B,C,D,E,F}} & & f(ABCDEF)\ar[dr]\ar[ur]\ar@{}[rr]|{\Uparrow f(1_A\otimes\bar{\omega}_{B,C,D,E}\otimes 1_F)} & & f(ACEBDF) \\
 & f(ABCD)f(EF)\ar[ur]\ar[dr]\ar@{}[rr]|{\Uparrow \chi_{1_A\otimes\rho_{B,C}\otimes 1_D,1_{EF}}} & & f(ACBDEF)\ar[ur] & \\
 & & f(ACBD)f(EF)\ar[ur] & & \\
}} 
\end{align*}
In particular, we obtain
\begin{itemize}
\item $\chi_{2 A,D,E,F}=\omega_{A,D,EF}\ast(1_A\otimes\chi_{2 I,D,E,F})\ast\chi_{2 A,I,E,DF}$ (with $B=C=1$), and so
\item $\chi_{2 A,D,I,F}=\omega_{A,D,F}$;
\item $\chi_{2 A,B,C,F}=\omega^{-1}_{AB,C,F}\ast(\chi_{2 A,B,C,1}\otimes 1_F)\ast\chi_{2 AC,B,I,F}$, and so
\item $\chi_{2 A,I,C,F}=\omega^{-1}_{A,C,F}$.
\end{itemize}
Putting these together yields
\[\chi_{2 A,B,C,D}=\omega_{A,B,CD}\ast(1_A\otimes\omega^{-1}_{B,C,D})\ast(1_A\otimes\chi_{2 I,B,C,I}\otimes 1_D)\ast(1_A\otimes\omega_{C,B,D})\ast\omega^{-1}_{A,C,BD}\]
up to the two non-trivial instances of the naturality squares associated to the pseudo-natural equivalence $\chi$ that we have left 
implicit. This is precisely the cell associated to $u:=\chi_2(I,-,-,I)$ on \cite[p.125]{daystreet}.

That means, the two morphisms $f^{+}$ and $(F,u)$ of $\mathsf{E}_2$-monoids are extensions of the same 3-cell 
(\ref{diag_braid=E_1_morphisms_back0}) in $\mathrm{Alg}_{\mathsf{E}_1}(\hotwocat)$. It follows that the two morphisms
$f^{+},(F,u)\colon(\mathcal{C},\beta)\rightarrow(\mathcal{D},\beta)$ of $\mathsf{E}_2$-monoids are equivalent by way of 
Proposition~\ref{prop_inisegloc}.
\end{proof}

This proves Proposition~\ref{prop_braid=E_1_functors}.

\begin{remark}
Let $\mathcal{C}$ be a Gray monoid. The fiber
\[\mathrm{Alg}_{\mathsf{E}_2}(2\text{-Cat}^{\times})\times_{\mathrm{Alg}_{\mathsf{E}_1}(2\text{-Cat})^{\times}}\{\mathcal{C}\}\]
is an h-groupoid by Corollary~\ref{cor_truncobj}. With some more work one can extract from the above that this groupoid is equivalent to the
groupoid of braidings on $\mathcal{C}$ as given by way of Definition~\ref{def_equiv_braids} as well.
\end{remark}

\subsection{Syllepses and $\mathsf{E}_3$-structures}\label{sec_sub_translation3}

In this section we prove the following characterisation of a syllepsis on a braided Gray monoid.

\begin{theorem}\label{thm_syll=E_2}
Let $(\mathcal{C},\beta)$ be a braided Gray monoid and $\zeta(\mathcal{C},\beta)$ be the set of syllepses on $(\mathcal{C},\beta)$. Then 
there is an equivalence
\[\zeta(\mathcal{C},\beta)\simeq\mathrm{Alg}_{\mathsf{E}_3}(2\text{-Cat}^{\times})\times_{\mathrm{Alg}_{\mathsf{E}_2}(2\text{-Cat}^{\times})}\{(\mathcal{C},\beta)\}\]
of spaces. In particular, the latter is an h-set. This is to say, every syllepsis on $(\mathcal{C},\beta)$ induces an essentially unique 
$\mathsf{E}_1$-monoid structure on $(\mathcal{C},\beta)$ in $\mathrm{Alg}_{\mathsf{E}_2}(\mathrm{Ho}_{\infty}(2\text{-Cat}))^{\times}$, and 
vice versa.
\end{theorem}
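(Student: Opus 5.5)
The plan follows the blueprint of Section~\ref{sec_sub_translation2} one categorical level up. By the Additivity Theorem, an $\mathsf{E}_3$-extension of the $\mathsf{E}_2$-monoid $(\mathcal{C},\beta)$ is an $\mathsf{E}_1$-monoid structure on $(\mathcal{C},\beta)$ inside $\mathrm{Alg}_{\mathsf{E}_2}(\hotwocat)$. By Corollary~\ref{cor_truncobj} with $n=3$ and $k=2$, the fiber in question is $0$-truncated, so it is an h-set. It therefore suffices to produce maps of sets in both directions and show they are mutually inverse.

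After cofibrantly replacing via Lemma~\ref{lemma_mono_cofrepl}, we may assume $\mathcal{C}$ is cofibrant. Proposition~\ref{prop_inisegloc} (with $n=3$) then reduces the data of such an extension to a small initial segment. Since $m$ is already $\mathsf{E}_1$-monoidal via $\chi_\beta$ (Theorem~\ref{thm_braid=E_1}), the only new data is an $\mathsf{E}_2$-structure on the multiplication $m\colon\mathcal{C}\otimes_{\mathrm{Gr}}\mathcal{C}\rightarrow\mathcal{C}$ extending its $\mathsf{E}_1$-structure. By Proposition~\ref{prop_braid=E_1_functors}, such a structure is a braiding on the monoidal 2-functor $(m,\chi_\beta,\omega_\beta)$, with respect to the product braiding on $\mathcal{C}\otimes_{\mathrm{Gr}}\mathcal{C}$ and $\beta$ on $\mathcal{C}$. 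The remaining data are compatibilities: the associator $(\omega_{\beta^{\mathrm{rev}}})^\rho$ and the identity square (\ref{diag_def_braiding}) must become braided monoidal. Each of these is a mere property, because the relevant spaces of extensions of morphisms are $(-1)$-truncated by Corollary~\ref{cor_truncmor}.

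\textbf{Forth.} Following Day--Street, a syllepsis $v$ yields a braiding $u=1\otimes v\otimes 1$ (suitably conjugated) on $m$; this is the 3-dimensional analogue of Remark~\ref{rem_braidtomono}. For the compatibility of the associator, I would repeat the trick of Lemma~\ref{lemma_braid=E_1_forth}. Apply the construction to the reverse syllepsis $v^{\mathrm{rev}}$ on $(\mathcal{C},\beta^{\mathrm{rev}})$ from Lemma~\ref{lemma_syll_rev}. Then take the diagonal reflection $\rho_z$ or $\rho_{x_1}$ of Example~\ref{exple_diagrefl3} and check, as in Example~\ref{exple_diagrefl2}, that on the common $\mathsf{A}_2^{\mathrm{nu}}\otimes\mathsf{A}_2^{\mathrm{nu}}\otimes\mathsf{A}_2^{\mathrm{nu}}$-cube (\ref{diag_diagrefl3}) it recovers the faces $\chi_{12},\chi_{13},\chi_{23}$ induced by $v$. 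Remark~\ref{rem_syll_rev_hinv} identifies $v^{\mathrm{rev}}$ with $z^{-1_h}$, which should make this face comparison a short computation. The two halves then glue along the shared face. Unitality follows from Lemma~\ref{lemma_unitff}.

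\textbf{Back.} Given an extension, I would first semi-strictify it, as in Lemma~\ref{lemma_braid=E_1_back_forth}. Lemma~\ref{lemma_cofbraidunit} makes the braiding $u$ on $m$ strictly unital, and conjugation arguments (Remark~\ref{rem_conj}) give $u_{A,I,I,B}=1$. I then set $v_{A,B}:=u_{I,A,B,I}$. Day--Street's characterisation of syllepses as braidings on $m$ compatible with the multiplication (together with Crans' axioms (4.1) and (4.2)) shows that $v$ is a syllepsis. The Eckmann--Hilton folding of Proposition~\ref{prop_eh} and the cube of Example~\ref{exple_diagrefl3} are what force $u=1\otimes v\otimes 1$.

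Mutual inverseness then follows as in Lemma~\ref{lemma_braid=E_1_morphisms_forthback}. One direction is recovery on the nose. For the other, both extensions restrict to the same data on the initial segment, and Proposition~\ref{prop_inisegloc} together with $0$-truncatedness makes them equal. The main obstacle I expect is the back direction: showing that after semi-strictification the general 3-cell $u$ really has the form $1\otimes v\otimes 1$, i.e.\ that its unit-degenerate components vanish. If the algebra resists, I would fall back on the homotopical argument of Claim~\ref{claim_braid=E_1_back}, comparing cells occupying the same truncated mapping space in the envelope of $\mathsf{E}_1^{\otimes 3}$.
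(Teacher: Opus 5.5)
The gap is in your back direction, at exactly the step you single out as the main obstacle.

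\textbf{The normalisation and extraction are interchanged.} Index the braiding on $(m,\chi_\beta,\omega_\beta)$ as $u_{(A_1,A_2),(B_1,B_2)}$. Flatten it to $u_{A_1,A_2,B_1,B_2}$ exactly as the paper flattens $\chi$, so that $\rho_{A,B}=\chi_{I,A,B,I}$. By the definition of a braiding on a monoidal 2-functor, $u$ relates the two composites
\[
(1\otimes\rho_{B_2,A_1}\otimes 1)\circ\rho_{A_1A_2,B_1B_2}\quad\text{and}\quad(\rho_{A_1,B_1}\otimes\rho_{A_2,B_2})\circ(1\otimes\rho_{A_2,B_1}\otimes 1)
\]
from $A_1A_2B_1B_2$ to $B_1A_1B_2A_2$.

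At $(I,A,B,I)$ both composites are $\rho_{A,B}$, using $\rho_{I,-}=\rho_{-,I}=1$. So $u_{I,A,B,I}$ is an automorphism of $\rho_{A,B}$, and it does not have the type $\rho_{A,B}\Rightarrow\rho_{B,A}^{-1}$ of a syllepsis.

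At $(A,I,I,B)$ the composites are $\rho_{B,A}\circ\rho_{A,B}$ and $1_{AB}$. It is $u_{A,I,I,B}$ that carries the syllepsis. Its boundary is fixed by $\beta$ and $\chi_\beta$, so it can only be an identity if $\rho_{B,A}\circ\rho_{A,B}=1_{AB}$ on the nose.

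Hence your normalisation $u_{A,I,I,B}=1$ and your extraction $v_{A,B}:=u_{I,A,B,I}$ are the wrong way round. The identity $u=1\otimes v\otimes 1$ you plan to prove is also false: it would give $u_{A,I,I,D}=1\otimes v_{I,I}\otimes 1=1$. The syllepsis enters $u$ through the outer pair $A_1,B_2$, conjugated by braiding coherence cells.

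The slip comes from applying the pattern of Lemma~\ref{lemma_braid=E_1_morphisms_back} one level too low. That pattern (trivialise $\chi_{2\,A,I,I,B}$, read off $\chi_{2\,I,A,B,I}$) applies to the cell $\chi_2$ of the morphism $m$. That cell is indexed by four objects of $\mathcal{C}\otimes_{\mathrm{Gr}}\mathcal{C}$, and its $(I,\vec{A},\vec{B},I)$-component is $u$ itself.

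\textbf{The paper's back direction avoids all this.} An $\mathsf{E}_1$-monoid structure on $(\mathcal{C},\beta)$ in $\mathrm{Alg}_{\mathsf{E}_2}(2\text{-Cat}^{\times})$ is an $\mathsf{E}_2$-monoid in $\mathrm{Alg}_{\mathsf{E}_1}(2\text{-Cat}^{\times})$. Proposition~\ref{prop_eh} then gives an equivalence $m'\simeq(m,\chi_\beta,\omega_\beta)$ of monoidal 2-functors between its multiplication and $m$. The $\mathsf{E}_2$-structure of $m'$ transports along this equivalence. Proposition~\ref{prop_braid=E_1_functors} turns the result into a braiding on $(m,\chi_\beta,\omega_\beta)$. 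Day--Street (Section~5) give a bijection $\zeta(\mathcal{C},\beta)\cong\beta(m)$ with no additional compatibility condition. No semi-strictification or component extraction is needed. The round trip follows from essential uniqueness of extensions (Proposition~\ref{prop_inisegfib}) together with $0$-truncatedness.

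\textbf{The forth direction.} This is the delicate direction, and your sketch follows the paper except on two points.
\begin{enumerate}
\item The faces of the cube (\ref{diag_diagrefl3}) are all $\chi_\beta$ (or products of it) and are determined by $\beta$ alone. What you need to compare is the filling 3-cell, i.e.\ the monoidal structure $\chi_2$ on $\chi_\beta$. It is computed from Day--Street's formulas for the braiding on $m$ induced by a syllepsis, and for the monoidal structure on $\chi$ induced by a braiding.
\item Where you leave open ``$\rho_z$ or $\rho_{x_1}$'', the paper uses both, together with an inversion. It inverts the block of $v$ by Lemma~\ref{lemma_mnt_inv} and precomposes with $\mathcal{C}^{\sigma_{(\langle 2\rangle,\langle 2\rangle)}}$. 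It shows the result extends to an $\mathsf{A}_2^{\mathrm{nu}}\otimes\mathsf{E}_1\otimes\mathsf{E}_1$-algebra, by matching its $\mathsf{A}_2^{\mathrm{nu}}\otimes\mathsf{A}_2^{\mathrm{nu}}\otimes\mathsf{A}_2^{\mathrm{nu}}$-part with the $\rho_{x_3}$-reflection of the analogously inverted block of $v^{\mathrm{rev}}$. Only then does it apply $\rho_{x_1}$ to obtain the $\mathsf{E}_1\otimes\mathsf{A}_2^{\mathrm{nu}}\otimes\mathsf{E}_1$-block.
\end{enumerate}
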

\begin{proof}
Let $(\mathcal{C},\beta)$ be a braided Gray monoid, and let $m\colon\mathcal{C}\otimes_{\mathrm{Gr}}\mathcal{C}\rightarrow\mathcal{C}$ be 
equipped with the associated monoidal structure $(\chi_{\beta},\omega_{\beta})$ from Remark~\ref{rem_braidtomono}. Without loss of 
generality the Gray monoid $\mathcal{C}$ is cofibrant. By \cite[Section 5]{daystreet}, the set $\zeta(\mathcal{C},\beta)$ is bijective to 
the set $\beta(m)$ of braidings on $(m,\chi_{\beta},\omega_{\beta})$. By way of Proposition~\ref{prop_braid=E_1_functors} the latter is 
equivalent to the h-set
\begin{align}\label{equ_thm_syll=E_2_1}
\mathrm{Alg}_{\mathsf{E}_2}(2\text{-Cat}^{\times})(\mathcal{C}\times\mathcal{C},\mathcal{C})\times_{\mathrm{Alg}_{\mathsf{E}_1}(2\text{-Cat}^{\times})(\mathcal{C}\times\mathcal{C},\mathcal{C})}\{(m,\chi_{\beta},\omega_{\beta})\}
\end{align}
of $\mathsf{E}_2$-monoid extensions of $(m,\chi_{\beta},\omega_{\beta})$. We show that this h-set is equivalent to the space
\begin{align}\label{equ_thm_syll=E_2_2}
\mathrm{Alg}_{\mathsf{E}_3}(2\text{-Cat}^{\times})\times_{\mathrm{Alg}_{\mathsf{E}_2}(2\text{-Cat})^{\times}}\{(\mathcal{C},\beta)\}.
\end{align}
Therefore, we first note that the latter space is also an h-set by Corollary~\ref{cor_truncobj} indeed. 
Now, for one, every $\mathsf{E}_1$-monoid structure on $(\mathcal{C},\beta)$ with underlying diagram
\[\xymatrix{
(\mathcal{C},\beta) & (\mathcal{C},\beta)^{2}\ar[l]|{m\sprime} & (\mathcal{C},\beta)^{3}\ar@<.5ex>[l]^{m\sprime\times 1}\ar@<-.5ex>[l]_{1\times m\sprime} & (\mathcal{C},\beta)^{4}\ar@<-.5ex>@/_1pc/[l]_{m\times 1\times 1}\ar[l]|{}\ar@<.5ex>@/^1pc/[l]^{1\times 1\times m\sprime} 
}\]
can be considered as an $\mathsf{E}_2$-monoid structure on $\mathcal{C}$ in
$\mathrm{Alg}_{\mathsf{E}_1}(\hotwocat)$ (with underlying $\mathsf{E}_1$-monoid $(\mathcal{C},\beta)$). By Proposition~\ref{prop_eh} 
applied to this $\mathsf{E}_2$-monoid in $\mathrm{Alg}_{\mathsf{E}_1}(\hotwocat)$ it follows (in particular) that
$m\sprime\simeq(m,\chi_{\beta},\omega_{\beta})$ as monoidal 2-functors. The morphism $m\sprime$ is equipped with an extension to a morphism 
of $\mathsf{E}_2$-monoids by assumption, and hence so is $(m,\chi_{\beta},\omega_{\beta})$.

Vice versa, suppose we are given an $\mathsf{E}_2$-monoidal extension of the monoidal 2-functor $(m,\chi_{\beta},\omega_{\beta})$, or 
equivalently, a syllepsis $v$ on $(\mathcal{C},\beta)$.
We want to extend this to an $\mathsf{E}_3$-monoid structure on $\mathcal{C}$. To motivate the construction, we can think of an
$\mathsf{E}_3$-monoid structure $(\mathcal{C},\beta)^{v}\colon\mathsf{E}_1\otimes\mathsf{E}_1\otimes\mathsf{E}_1\rightarrow\mathcal{C}$ 
extending the $\mathsf{E}_2$-monoid $(\mathcal{C},\beta)$ as a 3-dimensional (infinitely long) cube-shaped diagram whose three boundary 
edges
$(\mathcal{C},\beta)^{v}|_{\mathsf{E}_1\otimes\{1\}\otimes\{1\}}$,
$(\mathcal{C},\beta)^{v}|_{\{1\}\otimes\mathsf{E}_1\otimes\{1\}}$ and 
$(\mathcal{C},\beta)^{v}|_{\{1\}\otimes\{1\}\otimes\mathsf{E}_1}$ are each given by the $\mathsf{E}_1$-monoid
$\mathcal{C}$, and whose three boundary surfaces 
$(\mathcal{C},\beta)^{v}|_{\mathsf{E}_1\otimes\mathsf{E}_1\otimes\{1\}}$,
$(\mathcal{C},\beta)^{v}|_{\mathsf{E}_1\otimes\{1\}\otimes\mathsf{E}_1}$, and
$(\mathcal{C},\beta)^{v}|_{\{1\}\otimes\mathsf{E}_1\otimes\mathsf{E}_1}$
are each given by the $\mathsf{E}_1$-monoid $(\mathcal{C},\beta)$ (via Proposition~\ref{prop_eh}). The assumption determines the boundary
of this cube together with a ``filler'' of the block
$(\mathcal{C},\beta)^{v}|_{\mathsf{A}_2^{\mathrm{nu}}\otimes\mathsf{E}_1\otimes\mathsf{E}_1}$. In the following we want to extend this 
block to a filler of the entire boundary $(\mathcal{C},\beta)^{v}|_{L_3(\mathsf{E}_\bullet)}$. We do so in essentially the same way as we 
did in Lemma~\ref{lemma_braid=E_1_forth}: We first suitably reverse the block
$(\mathcal{C},\beta)^{v}|_{\mathsf{A}^{\mathrm{nu}}_2\otimes\mathsf{E}_1\otimes\mathsf{E}_1}$ given by the assumed $\mathsf{E}_2$-monoid 
extension of the monoidal 2-functor $(m,\chi_{\beta},\omega_{\beta})$, then reflect it along a suitable diagonal, and third 
show that the $(\mathcal{C},\beta)^{v}|_{\mathsf{E}_1\otimes\mathsf{A}^{\mathrm{nu}}_2\otimes\mathsf{E}_1}$-block thus obtained coincides 
with the original one on their common fragment
$(\mathcal{C},\beta)^{v}|_{\mathsf{A}^{\mathrm{nu}}_2\otimes\mathsf{A}^{\mathrm{nu}}_2\otimes\mathsf{E}_1}$. This yields an initial segment 
in the sense of Section~\ref{sec_sub_iniseg} that extends to a full $\mathsf{E}_3$-monoid structure by way of 
Proposition~\ref{prop_inisegfib} and Lemma~\ref{lemma_unitff}.

More precisely, the $\mathsf{E}_2$-monoid extension of the monoidal 2-functor $(m,\chi_{\beta},\omega_{\beta})$ defines a 
monoidal structure $\chi_2$ (and without loss of generality $\chi_0=1$) on the pseudo-natural equivalence
\begin{align}\label{diag_syll=E_2_1}
\begin{gathered}
\xymatrix{
\mathcal{C}^{\otimes_{\mathrm{Gr}}2}\ar[d]_{(m,\chi_{\beta},\omega_{\beta})} & \mathcal{C}^{\otimes_{\mathrm{Gr}}4}\ar[d]^{(m,\chi_{\beta},\omega_{\beta})^{\otimes_{\mathrm{Gr}}2}}\ar[l]_{(m^{\otimes},\chi_{\beta}^{\otimes},\omega_{\beta}^{\otimes})} \\
\mathcal{C} & \mathcal{C}^{\otimes_{\mathrm{Gr}}2}\ar[l]^{(m,\chi_{\beta},\omega_{\beta})}\ar@{}[ul]|{\Uparrow\chi_{\beta}}
}
\end{gathered}
\end{align}
that extends to a non-unital $\mathsf{A}_2$-monoid structure on $(\mathcal{C},\beta)$ in $\mathrm{Alg}_{\mathsf{E}_2}(\hotwocat)$. This
$\mathsf{A}_2$-monoid structure is given by a 3-cell
\begin{align}\label{diag_syll=E_2_2}
\begin{gathered}
\xymatrix{
& \mathcal{C}^4\ar[dl]_{m^{\otimes}}\ar[dd]|(.3){m^2}|\hole & & \mathcal{C}^8\ar[ll]_{m^4}\ar[dd]^{(m^{\otimes},m^{\otimes})}\ar[dl]|{(m^{\otimes})^{\otimes}} \\
\mathcal{C}^2\ar[dd]_m  & & \mathcal{C}^4\ar[ll]_(.3){m^2}\ar[dd]^(.3){m^{\otimes}}  & \\
& \mathcal{C}^2\ar[dl]_{m} & & \mathcal{C}^4\ar[dl]^{m^{\otimes}}\ar[ll]_(.7){m^2}|\hole \\
\mathcal{C} & & \mathcal{C}^2\ar[ll]^{m} &
}
\end{gathered}
\end{align}
in $2\text{-Cat}^{\times}$ that extends to the back and to the top in essentially uniquely fashion (these two extensions yield ``the 
block'' referred to by $(\mathcal{C},\beta)^{v}|_{\mathsf{A}_2^{\mathrm{nu}}\otimes\mathsf{E}_1\otimes\mathsf{E}_1}$ above). Its left, 
front and bottom face are all given by $\chi_{\beta}$, the opposite faces are a corresponding product. As a modification in $\mathcal{C}$ 
it is given by a natural 2-cell
\[\xymatrix{
A_1A_2B_1B_2C_1C_2D_1D_2\ar[rr]^{\chi_{\beta}(\vec{A},\vec{B})\otimes\chi_{\beta}(\vec{C},\vec{D})}\ar[d]|{\chi_{\beta}(A_1A_2,B_1B_2,C_1C_2,D_1D_2)} & & A_1B_1A_2B_2C_1D_1C_2D_2\ar[rr]^{\chi_{\beta}(\vec{A}\otimes\vec{B},\vec{C}\otimes\vec{D})}\ar@{}[d]|{\Downarrow\chi_2} & & A_1B_1C_1D_1A_2B_2C_2D_2\ar[d]|{\chi_{\beta}(A_1,B_1,C_1,D_1)\otimes\chi_{\beta}(A_2,B_2,C_2,D_2)}\\
A_1A_2C_1C_2B_1B_2D_1D_2\ar[rr]_{\chi_{\beta}(\vec{A},\vec{C})\otimes\chi_{\beta}(\vec{B},\vec{D})}   & & A_1C_1A_2C_2B_1D_1B_2D_2\ar[rr]_{\chi_{\beta}(\vec{A}\otimes\vec{C},\vec{B}\otimes\vec{D})}  & & A_1C_1B_1D_1A_2C_2B_2D_2,
}\]
or in terms of (\ref{diag_syll=E_2_2}),
\begin{align}\label{diag_syll=E_2_2_formula}
\begin{gathered}
\xymatrix{
A_1A_2B_1B_2C_1C_2D_1D_2\ar[rr]^{\text{back}}\ar[d]_{\text{left}} & & A_1B_1A_2B_2C_1D_1C_2D_2\ar[rr]^{\text{bottom}}\ar@{}[d]|{\Downarrow\chi_2} & & A_1B_1C_1D_1A_2B_2C_2D_2\ar[d]^{\text{right}}\\
A_1A_2C_1C_2B_1B_2D_1D_2\ar[rr]_{\text{top}} & & A_1C_1A_2C_2B_1D_1B_2D_2\ar[rr]_{\text{front}}  & & A_1C_1B_1D_1A_2C_2B_2D_2.
}
\end{gathered}
\end{align}
To extend this to an $\mathsf{E}_1$-monoid structure on $(\mathcal{C},\beta)$ in $\mathrm{Alg}_{\mathsf{E}_2}(\hotwocat)$ --- and hence to 
an element of the h-set 
(\ref{equ_thm_syll=E_2_2}) --- it suffices to extend it to an element of $I^{(2)}_{(2,1,2)}(2\text{-Cat}^{\times})$ by 
Proposition~\ref{prop_inisegfib} (and Lemma~\ref{lemma_unitff}). That means, we are given the
$(\mathsf{A}_2^{\mathrm{nu}}\otimes \mathsf{E}_1\otimes \mathsf{E}_1$)-algebra $\mathsf{A}_{(2,\infty,\infty)}:=(m,\chi_{\beta},\omega_{\beta},(\chi_{\beta})_2)$ in
$2\text{-Cat}^{\times}$, and are to construct an $(\mathsf{E}_1\otimes \mathsf{A}_2^{\mathrm{nu}}\otimes \mathsf{E}_1)$-algebra $\mathsf{A}_{(\infty,2,\infty)}$ such that 
\begin{enumerate}
\item $\mathsf{A}_{(2,\infty,\infty)}=\mathsf{A}_{(\infty,2,\infty)}$ on $\mathsf{A}_2^{\mathrm{nu}}\otimes \mathsf{A}_2^{\mathrm{nu}}\otimes \mathsf{E}_1^{\mathrm{nu}}$, and such that
\item $\mathsf{A}_{(\infty,2,\infty)}=(m,\chi_{\beta},\omega_{\beta})$ on $\mathsf{E}_1^{\mathrm{nu}}\otimes \mathsf{E}_1^{\mathrm{nu}}\otimes \mathsf{E}_{0}^{\mathrm{nu}}$.\footnote{It would be enough to have matching equivalences in (1) and (2), but we can provide equalities.}
\end{enumerate}
We do so by constructing another $(\mathsf{A}_2^{\mathrm{nu}}\otimes \mathsf{E}_1\otimes \mathsf{E}_1$)-algebra $\mathsf{A}_{(2,\infty,\infty)}^{\mathrm{rev}}$ in
$2\text{-Cat}^{\times}$ whose (2-dimensional) diagonal reflection 
\begin{align}\label{equ_syll=E_2}
(-)^{\rho_{x_1}}\colon\mathrm{Alg}_{\mathsf{A}_2^{\mathrm{nu}}\otimes \mathsf{E}_1}(\mathrm{Alg}_{\mathsf{E}_1}(2\text{-Cat}^{\times}))\rightarrow\mathrm{Alg}_{\mathsf{E}_1\otimes \mathsf{A}_2^{\mathrm{nu}}}(\mathrm{Alg}_{\mathsf{E}_1}(2\text{-Cat}^{\times}))
\end{align}
computed in $\mathrm{Alg}_{\mathsf{E}_1}(2\text{-Cat}^{\times})$ will give the desired algebra $\mathsf{A}_{(\infty,2,\infty)}$ (Example~\ref{exple_diagrefl3}). Towards 
this end, the monoidal pseudo-natural equivalence (\ref{diag_syll=E_2_1}) may be inverted to yield a pseudo-natural equivalence 
\begin{align}\label{diag_syll=E_2_3}
\begin{gathered}
\xymatrix{
\mathcal{C}^{\otimes_{\mathrm{Gr}}2}\ar[d]_{(m,\chi_{\beta},\omega_{\beta})}\ar@{}[drr]|{\Downarrow\chi_{\beta}^{-1}} & & \mathcal{C}^{\otimes_{\mathrm{Gr}}4}\ar[d]^{(m,\chi_{\beta},\omega_{\beta})^2}\ar[ll]_{(m^{\otimes},\chi_{\beta}^{\otimes},\omega_{\beta}^{\otimes})} \\
\mathcal{C} & & \mathcal{C}^{\otimes_{\mathrm{Gr}}2}\ar[ll]^{(m,\chi_{\beta},\omega_{\beta})}
}
\end{gathered}
\end{align}
with monoidal structure $(\chi_{\beta})_2^{-1_h}$ by Lemma~\ref{lemma_mnt_inv}. In terms of the natural 2-cell (\ref{diag_syll=E_2_2_formula}) it computes the horizontal inverse when
inverting the ``front'' and ``back'' arrows. The isomorphism
$\mathcal{C}^{\sigma_{(\langle 2\rangle,\langle 2\rangle)}}:=1\otimes_{\mathrm{Gr}}\mathrm{swap}\otimes_{\mathrm{Gr}} 1\colon C^{\otimes_{\mathrm{Gr}^4}}\rightarrow C^{\otimes_{\mathrm{Gr}^4}}$
is Gray monoidal, and so we may pre-compose it with (\ref{diag_syll=E_2_3}) to obtain a pseudo-natural equivalence 
\begin{align}\label{diag_syll=E_2_4}
\begin{gathered}
\xymatrix{
\mathcal{C}^{\otimes_{\mathrm{Gr}}2}\ar[d]_{(m,\chi_{\beta},\omega_{\beta})}\ar@{}[drr]|{\Downarrow\chi_{\beta^{\mathrm{rev}}}} & & \mathcal{C}^{\otimes_{\mathrm{Gr}}4}\ar[d]^{(m^{\otimes},\chi_{\beta}^{\otimes},\omega_{\beta}^{\otimes})}\ar[ll]_{(m,\chi_{\beta},\omega_{\beta})^2} \\
\mathcal{C} & & \mathcal{C}^{\otimes_{\mathrm{Gr}}2}\ar[ll]^{(m,\chi_{\beta},\omega_{\beta})}
}
\end{gathered}
\end{align}
with monoidal structure given by $\mathcal{C}^{\sigma_{(\langle 2\rangle,\langle 2\rangle)}}\ast(\chi_{\beta})_2^{-1_h}$.
This forms an $\mathsf{A}_2^{\mathrm{nu}}\otimes \mathsf{A}_2^{\mathrm{nu}}\otimes \mathsf{E}_1$-monoid in $2\text{-Cat}^{\times}$. We claim 
that that this extends (essentially uniquely) to a $\mathsf{A}_2^{\mathrm{nu}}\otimes \mathsf{E}_1\otimes \mathsf{E}_1$-monoid, and hence to 
a morphism from $(\mathcal{C},\beta)^2$ to $(\mathcal{C},\beta)$ in $\mathrm{Alg}_{\mathsf{E}_2}(2\text{-Cat}^{\times})$.
If the claim holds, then diagonal reflection of this $(\mathsf{A}_2^{\mathrm{nu}}\otimes \mathsf{E}_1\otimes \mathsf{E}_1)$-algebra under 
(\ref{equ_syll=E_2}) gives an $(\mathsf{E}_1\otimes \mathsf{A}_2^{\mathrm{nu}}\otimes \mathsf{E}_1)$-algebra that satisfies (1) and (2) as 
desired. This finishes the proof.

To show the claim, by Lemma~\ref{lemma_syll_rev} we may construct a sylleptic Gray monoid
$(\mathcal{C},\beta^{\mathrm{rev}},v^{\mathrm{rev}})$. This corresponds to a braiding $u_{v^{\mathrm{rev}}}$ on the monoidal
2-functor $(m,\chi_{\beta^{\mathrm{rev}}},\omega_{\beta^{\mathrm{rev}}})\colon\mathcal{C}\times\mathcal{C}\rightarrow\mathcal{C}$ respective the 
braiding $\beta^{\mathrm{rev}}$ on $\mathcal{C}$. Subsequently we obtain a monoidal pseudo-natural equivalence
\begin{align}\label{diag_syll=E_2_5}
\begin{gathered}
\xymatrix{
\mathcal{C}^{\otimes_{\mathrm{Gr}}2}\ar[d]_{(m,\chi_{\beta^{\mathrm{rev}}},\omega_{\beta^{\mathrm{rev}}})} & & \mathcal{C}^{\otimes_{\mathrm{Gr}}4}\ar[d]^{(m,\chi_{\beta^{\mathrm{rev}}},\omega_{\beta^{\mathrm{rev}}})^2}\ar[ll]_{(m^{\otimes},\chi_{\beta^{\mathrm{rev}}}^{\otimes},\omega_{\beta^{\mathrm{rev}}}^{\otimes})} \\
\mathcal{C} & & \mathcal{C}^{\otimes_{\mathrm{Gr}}2}\ar[ll]^{(m,\chi_{\beta^{\mathrm{rev}}},\omega_{\beta^{\mathrm{rev}}})}.\ar@{}[ull]|{\Uparrow (\chi_{\beta^{\mathrm{rev}}},(\chi_{\beta^{\mathrm{rev}}})_2)}
}
\end{gathered}
\end{align}
Again by Lemma~\ref{lemma_mnt_inv} this monoidal pseudo-natural equivalence may be inverted and subsequently be pre-composed with the Gray 
monoidal isomorphism $\mathcal{C}^{\sigma_{(\langle 2\rangle,\langle 2\rangle)}}\colon C^{\otimes_{\mathrm{Gr}^4}}\rightarrow C^{\otimes_{\mathrm{Gr}^4}}$ to yield a monoidal pseudo-natural equivalence 
\begin{align}\label{diag_syll=E_2_6}
\begin{gathered}
\xymatrix{
\mathcal{C}^{\otimes_{\mathrm{Gr}}2}\ar[d]_{(m,\chi_{\beta^{\mathrm{rev}}},\omega_{\beta^{\mathrm{rev}}})}\ar@{}[drr]|{\Downarrow} & & \mathcal{C}^{\otimes_{\mathrm{Gr}}4}\ar[d]^{(m^{\otimes},\chi_{\beta^{\mathrm{rev}}}^{\otimes},\omega_{\beta^{\mathrm{rev}}}^{\otimes})}\ar[ll]_{(m,\chi_{\beta^{\mathrm{rev}}},\omega_{\beta^{\mathrm{rev}}})^2} \\
\mathcal{C} & & \mathcal{C}^{\otimes_{\mathrm{Gr}}2}\ar[ll]^{(m,\chi_{\beta^{\mathrm{rev}}},\omega_{\beta^{\mathrm{rev}}})}
}
\end{gathered}
\end{align}
given by $(\chi_{\beta},\mathcal{C}^{\sigma_{(\langle 2\rangle,\langle 2\rangle)}}\ast(\chi_{\beta^{\mathrm{rev}}})_2^{-1_h})$.
This defines an $\mathsf{A}_{2}^{\mathrm{nu}}\otimes \mathsf{A}_{2}^{\mathrm{nu}}$-algebra in
$\mathrm{Alg}_{\mathsf{E}_1}(2\text{-Cat}^{\times})$. Its diagonal reflection respective
\begin{align}\label{equ_syll=E_2_2}
(-)^{\rho_{x_3}}=\mathrm{Alg}_{\mathsf{A}_2^{\mathrm{nu}}}((-)^{\rho})\colon\mathrm{Alg}_{\mathsf{A}_2^{\mathrm{nu}}\otimes \mathsf{A}_2^{\mathrm{nu}}\otimes \mathsf{E}_1}(2\text{-Cat}^{\times}))\rightarrow\mathrm{Alg}_{\mathsf{A}_2^{\mathrm{nu}}\otimes \mathsf{E}_1\otimes \mathsf{A}_2^{\mathrm{nu}}}(2\text{-Cat}^{\times}))
\end{align}
gives rise to a $(\mathsf{A}_2^{\mathrm{nu}}\otimes \mathsf{E}_1\otimes \mathsf{A}_2^{\mathrm{nu}})$-monoid. By way of the formula for the 
braiding on $(m,\chi_{\beta},\omega_{\beta})$ associated to the corresponding syllepsis $v$ given on \cite[p.129]{daystreet}, and 
subsequently by the formula for the monoidal structure on $\chi_{\beta}$ associated to this braiding given on \cite[p.125]{daystreet}, one 
computes that the underlying $(\mathsf{A}_2^{\mathrm{nu}}\otimes \mathsf{A}_2^{\mathrm{nu}}\otimes \mathsf{A}_2^{\mathrm{nu}})$-monoid of
the diagonal reflection (\ref{equ_syll=E_2_2}) of (\ref{diag_syll=E_2_6}) is precisely the 3-cell underlying 
the monoidal pseudo-natural equivalence~(\ref{diag_syll=E_2_4}). Together they hence form a morphism in
$I^{(1)}_{(3,2)}(2\text{-Cat}^{\times})$ from $(\mathcal{C},\beta)^2$ to $(\mathcal{C},\beta)$. By way of Proposition~\ref{prop_inisegloc} we 
obtain an (essentially unique) extension to a morphism of $\mathsf{E}_2$-monoids, which is to say we obtain an
$\mathsf{A}_2^{\mathrm{nu}}$-monoid in $\mathrm{Alg}_{\mathsf{E}_2}(2\text{-Cat}^{\times})$.

We have thus constructed a map from the h-set (\ref{equ_thm_syll=E_2_1}) to the h-set (\ref{equ_thm_syll=E_2_2}) and vice versa. It is 
straightforward to show that the composition
$(\ref{equ_thm_syll=E_2_1}) \rightarrow(\ref{equ_thm_syll=E_2_2})\rightarrow(\ref{equ_thm_syll=E_2_1})$ is the identity.
The other direction acts (essentially) by restriction of a given $\mathsf{E}_3$-monoid structure on $\mathcal{C}$ to its underlying
$\mathsf{E}_2$-monoid structure on $m$, and subsequent extension of that restriction. However, such extensions are essentially unique 
whenever they exist by way of Proposition~\ref{prop_inisegfib}.
\end{proof}

We further prove the following relativisation.

\begin{proposition}\label{prop_syll=E_2_functors}
Let $(\mathcal{C},\beta,v)$ and $(\mathcal{D},\beta,v)$ be sylleptic Gray monoids, and let
$(F,u)\colon(\mathcal{C},\beta)\rightarrow(\mathcal{D},\beta)$ be a braided monoidal 2-functor 
of underlying braided Gray monoids. Then the space
\begin{align}\label{equ_syll=E_2_functors} 
\mathrm{Alg}_{\mathsf{E}_3}(2\text{-Cat}^{\times})(\mathcal{C},\mathcal{D})\times_{\mathrm{Alg}_{\mathsf{E}_2}(2\text{-Cat}^{\times})(\mathcal{C},\mathcal{D})}\{(F,u)\}
\end{align}
is $(-1)$-truncated. It is non-empty if and only if $(F,u)$ is sylleptic.
\end{proposition}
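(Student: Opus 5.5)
The $(-1)$-truncatedness follows directly from Corollary~\ref{cor_truncmor} applied with $n=3$ and $k=2$. Here $2\text{-Cat}^{\times}$ is a symmetric monoidal $3$-category, and $(F,u)$ is a morphism of underlying $\mathsf{E}_2$-monoids between the $\mathsf{E}_3$-monoids $(\mathcal{C},\beta,v)$ and $(\mathcal{D},\beta,v)$ given by Theorem~\ref{thm_syll=E_2}. The space (\ref{equ_syll=E_2_functors}) is therefore $(3-2-2)=(-1)$-truncated. What remains is the characterisation of non-emptiness. By Lemma~\ref{lemma_mono_cofrepl}, Lemma~\ref{lemma_cofidunit} and Lemma~\ref{lemma_cofbraidunit} I would first assume that $\mathcal{C}$ and $\mathcal{D}$ are cofibrant and that $F=(f,\chi,\omega,1,1,1)$ is strictly unital, with $\chi_{2\,I,-,-,I}=u$ as in the proof of Lemma~\ref{lemma_braid=E_1_morphisms_forthback}.

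For the \emph{forward direction} (sylleptic implies non-empty), I would follow the pattern of the proof of Theorem~\ref{thm_syll=E_2}. By Proposition~\ref{prop_inisegloc} (with $k=2$), extending $(F,u)$ to an $\mathsf{E}_3$-morphism reduces to extending it to a morphism in the initial segment $I^{(2)}$. Concretely, I must produce a morphism between the $\mathsf{A}_2^{\mathrm{nu}}\otimes\mathsf{E}_1\otimes\mathsf{E}_1$-blocks that were built from the syllepses in Theorem~\ref{thm_syll=E_2}, together with a compatible morphism of the diagonally reflected $\mathsf{E}_1\otimes\mathsf{A}_2^{\mathrm{nu}}\otimes\mathsf{E}_1$-blocks. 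These blocks are the $\mathsf{E}_2$-structures on the monoidal $2$-functors $(m,\chi_\beta,\omega_\beta)$ that correspond to the braidings $u_v$ on $m$ (Day--Street, \cite[Section 5]{daystreet}). A morphism between them is a $4$-cell in $2\text{-Cat}^{\times}$ whose faces are built from $u$, $\chi_\beta$ and the monoidal structure $(\chi_2,1)$ on $\chi$. Since $2\text{-Cat}^{\times}$ is a $3$-category, such a $4$-cell is an equation of natural $2$-cells. Unwinding it via the formulas on \cite[p.125, p.129]{daystreet}, this equation should be exactly the defining equation of a sylleptic braided $2$-functor, namely the compatibility of $u_{A,B}$, $u_{B,A}$ with $v$ \cite[Definition 16]{daystreet}.

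For the reflected block, I would use the reverse syllepsis (Lemma~\ref{lemma_syll_rev}) and the reversed braided functor $(F,u^{\mathrm{rev}})$ (Lemma~\ref{lemma_braid_rev_morphism}). I would then check that $(F,u^{\mathrm{rev}})$ is sylleptic with respect to $v^{\mathrm{rev}}$: its sylleptic equation should be the horizontal inverse of the one for $(F,u)$, with the arguments swapped. Diagonally reflecting under $(-)^{\rho_{x_1}}$ (Example~\ref{exple_diagrefl3}) then gives the second component. Both components agree on $\mathsf{A}_2^{\mathrm{nu}}\otimes\mathsf{A}_2^{\mathrm{nu}}\otimes\mathsf{E}_1$, because both restrict to the $\mathsf{E}_2$-morphism $(F,u)$, exactly as in Lemma~\ref{lemma_braid=E_1_morphisms_forth}. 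Unitality then follows from Lemma~\ref{lemma_unitff}.3.

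For the \emph{converse} (non-empty implies sylleptic), suppose we are given an $\mathsf{E}_3$-extension of $(F,u)$. Its restriction to the block $\mathsf{A}_2^{\mathrm{nu}}\otimes\mathsf{E}_1\otimes\mathsf{E}_1$ provides the $4$-cell described above, that is, the equation in $\mathcal{D}$. Setting the outer variables to $I$, using the strict unit laws $\chi_{2\,I,I,-,-}=\chi_{2\,-,-,I,I}=1$ together with the semi-strictness of $\chi_\beta$, should reduce this equation to the sylleptic axiom for $u$.

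I expect the main obstacle to be the explicit identification of this $4$-cell with the Day--Street sylleptic equation. This requires composing the $p.125$ formula for $\chi_2^u$ with the $p.129$ formula for the braiding $u_v$ on $m$, and checking that all auxiliary cells ($\omega$, $\bar\omega$, and naturality squares of $\chi$) cancel. I would first try to reduce to the variables $(I,A,B,I)$ using the unitality equalities, before attempting the general computation.
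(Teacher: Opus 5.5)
Your argument is viable, but it reaches the characterisation by a different and heavier route than the paper. (For the truncation, Corollary~\ref{cor_truncmor} with $n=3$, $k=2$ is the right reference. The paper's proof cites Corollary~\ref{cor_truncobj}, but it is the morphism version that gives $(-1)$.)

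For non-emptiness, the paper never works with the $I^{(2)}$-blocks in $2\text{-Cat}^{\times}$. It uses no reverse syllepsis, no $(F,u^{\mathrm{rev}})$ and no diagonal reflection. Instead it regards the $\mathsf{E}_3$-monoids as $\mathsf{E}_2$-monoids in the $3$-category $\mathrm{Alg}_{\mathsf{E}_1}(2\text{-Cat}^{\times})$, whose vertical and horizontal $\mathsf{E}_1$-edges are both given by the $\mathsf{E}_2$-morphism $f^u$ of Proposition~\ref{prop_braid=E_1_functors}. Proposition~\ref{prop_inisegloc} then leaves a single cube filler in $\mathrm{Alg}_{\mathsf{E}_1}(2\text{-Cat}^{\times})$, with faces $(\chi,\chi_2,1)$ and $(\chi_\beta,\chi_2^v,1)$. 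By Lemma~\ref{lemma_halgtrans} such a filler is an invertible monoidal modification with underlying modification $\chi_2$, so it exists iff $\chi_2$ is monoidal. The paper then uses \cite[p.125--126]{daystreet}, plus a short computation, to identify this with $(\chi,\chi_2,1)$ being braided, i.e.\ with $(F,u)$ being sylleptic. Both directions come out at once as an equivalence of spaces.

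Your $4$-cell in $2\text{-Cat}^{\times}$ is the same condition, since monoidality of a modification is one equation of modifications. But you intend to match it with \cite[Definition 16]{daystreet} by unwinding the formulas of \cite[p.125, p.129]{daystreet}, which is precisely the step you flag as the main obstacle. Your converse, specialising variables to $I$, is unproblematic. The forward direction is where the work lies: you must recover the full many-variable equation from the two-variable sylleptic axiom, and this is exactly what the Day--Street correspondence supplies for the paper.

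What your route buys is an explicit, formula-level verification modelled on Theorem~\ref{thm_syll=E_2} and Lemma~\ref{lemma_braid=E_1_morphisms_forth}. What the paper's route buys is brevity: the reflected block is absorbed into the horizontal edge already being an $\mathsf{E}_2$-morphism, and the identification is delegated to a structural statement rather than a computation.
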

\begin{proof}
We first note that the space is $(-1)$-truncated by Corollary~\ref{cor_truncobj}.
Now, let $F=(f,\chi,\omega)$ denote the monoidal structure underlying $(F,u)$. Let $\chi^{fm}$ and $\chi^{m(f,f)}$ denote the 
corresponding pseudo-natural equivalences associated to the induced monoidal structures on the compositions $fm$ and $m(f,f)$, 
respectively. Then the natural 2-cell $u$ induces a natural 2-cell
\[\xymatrix{
f(A)f(B)f(C)f(D)\ar[rr]^(.55){\chi^{m(f,f)}_{A,B,C,D}}\ar[d]_{\chi_{A,B}\otimes \chi_{C,D}}\ar@{}[drr]|{\underset{\chi_2}{\Rightarrow}} & & f(AC)f(BD)\ar[d]^{\chi_{AC,BD}}\\
f(AB)f(CD)\ar[rr]_{\chi^{fm}_{A,B,C,D}} & & f(ACBD)
}\]
such that $u$ is a braiding for $F$ if and only if $(\chi_2,1)$ is a monoidal structure on the pseudo-natural equivalence $\chi$ 
\cite[p.125]{daystreet}. One computes that the braided monoidal 2-functor $(F,u)$ is sylleptic if and only if the 
monoidal pseudo-natural equivalence $(\chi,\chi_2,1)$ is braided in the sense of \cite[Definition 14]{daystreet}. This in turn holds if and 
only if the modification $\chi_2$ is monoidal by \cite[p.126]{daystreet}.

By Lemma~\ref{lemma_halgtrans}, the monoidal 2-functor $F\colon\mathcal{C}\rightarrow\mathcal{D}$ extends essentially uniquely 
to a morphism $f\colon\mathcal{C}\rightarrow\mathcal{D}$ of associated $\mathsf{E}_1$-monoids in $\hotwocat$. Subsequently, in 
Proposition~\ref{prop_braid=E_1_functors}, we have seen that the braided monoidal 2-functor
$(F,u)\colon(\mathcal{C},\beta)\rightarrow(\mathcal{C},\beta)$ extends essentially uniquely to a morphism $f^{u}$ of 
associated $\mathsf{E}_2$-monoids in $\hotwocat$ by way of the associated monoidal pseudo-natural equivalence $(\chi,\chi_2,1)$.
Let $m_{\beta}\colon\mathcal{C}\times\mathcal{C}\rightarrow\mathcal{C}$ and
$m_{\beta}\colon\mathcal{D}\times\mathcal{D}\rightarrow\mathcal{D}$ denote the two multiplications of $\mathcal{C}$ and $\mathcal{D}$ in
$\mathrm{Alg}_{\mathsf{E}_1}(\hotwocat)$ induced by their respective braidings $\beta$. Let $\chi^{v}_2$ denote the natural 2-cell 
induced by the syllepsis $v$ such that the pseudo-natural equivalence $(\chi_{\beta},(\chi^v)_2,1)$ becomes monoidal.
Then, by way of Proposition~\ref{prop_inisegloc}, the space (\ref{equ_syll=E_2_functors}) of extensions of $(F,u)$ to a morphism of
$\mathsf{E}_3$-monoids is equivalent to the space of morphisms from the diagram
\begin{align}
\begin{gathered} 
\xymatrix{
\mathcal{C}^{\otimes_{\mathrm{Gr}}3}\ar@<-.5ex>[d]_{m_{\beta}\otimes 1}\ar@<.5ex>[d]^{1\otimes m_{\beta}} & & & \\
\mathcal{C}^{\otimes_{\mathrm{Gr}}2}\ar[d]_{m_{\beta}} \ar[d]\ar@{}[dr]|{\chi^v_2} & (\mathcal{C}^{\otimes_{\mathrm{Gr}}2})^{\otimes_{\mathrm{Gr}} 2}\ar[d]\ar[l]_(.55){(m_{\beta}, m_{\beta})}\ar[d]^{m_{\beta}^{\otimes}} & \\
\mathcal{C} & \mathcal{C}^{\otimes_{\mathrm{Gr}}2}\ar[l]|{m_{\beta}} & \mathcal{C}^{\otimes_{\mathrm{Gr}}3}\ar@<.5ex>[l]^{m_{\beta}\otimes 1}\ar@<-.5ex>[l]_{1\otimes m_{\beta}}  
}
\end{gathered}
\end{align}
to the diagram
\begin{align}
\begin{gathered}
\xymatrix{
\mathcal{D}^{\otimes_{\mathrm{Gr}}3}\ar@<-.5ex>[d]_{m_{\beta}\otimes 1}\ar@<.5ex>[d]^{1\otimes m_{\beta}} & & & \\
\mathcal{D}^{\otimes_{\mathrm{Gr}}2}\ar[d]|{m_{\beta}} \ar[d]\ar@{}[dr]|{\chi^v_2} & (\mathcal{D}^{\otimes_{\mathrm{Gr}}2})^{\otimes_{\mathrm{Gr}} 2}\ar[d]\ar[l]_(.55){(m_{\beta}, m_{\beta})}\ar[d]^{m_{\beta}^{\otimes}} & \\
\mathcal{D} & \mathcal{D}^{\otimes_{\mathrm{Gr}}2}\ar[l]|{m_{\beta}} & \mathcal{D}^{\otimes_{\mathrm{Gr}}3}\ar@<.5ex>[l]^{m_{\beta}\otimes 1}\ar@<-.5ex>[l]_{1\otimes m_{\beta}} & 
}
\end{gathered}
\end{align}
in $\mathrm{Alg}_{\mathsf{E}_1}(\hotwocat)$ that on the left row and bottom column is given by $f^{u}$. The space of such extensions is hence the 
space of fillers of the cube
\begin{align}\label{diag_syll=E_2_functors}
\begin{gathered}
\xymatrix{
 & \mathcal{C}^{\otimes_{\mathrm{Gr}} 2}\ar[rr]^{f^{\otimes_{\mathrm{Gr}} 2}}\ar[dd]^(.3){m_{\beta}}|\hole & & \mathcal{D}^{\otimes_{\mathrm{Gr}} 2} \ar[dd]^{m_{\beta}} \\
(\mathcal{C}^{\otimes_{\mathrm{Gr}} 2})^{\otimes_{\mathrm{Gr}} 2}\ar[rr]^(.4){f^{\otimes_{\mathrm{Gr}} 4}}\ar[dd]_{m_{\beta}^{\otimes}}\ar[ur]^{(m_{\beta},m_{\beta})} & & (\mathcal{D}^{\otimes_{\mathrm{Gr}} 2})^{\otimes_{\mathrm{Gr}} 2}\ar[dd]^(.3){m_{\beta}^{\otimes}}\ar[ur]_{(m_{\beta},m_{\beta})} \\
 & \mathcal{C}\ar[rr]_(.3)f|\hole & & \mathcal{D} \\
\mathcal{C}^{\otimes_{\mathrm{Gr}} 2}\ar[rr]_{f^{\otimes_{\mathrm{Gr}} 2}}\ar[ur]^{m_{\beta}} & & \mathcal{D}^{\otimes_{\mathrm{Gr}} 2}\ar[ur]_{m_{\beta}} & \\
}
\end{gathered}
\end{align} 
in $\mathrm{Alg}_{\mathsf{E}_1}(\hotwocat)$, where the bottom and back face are given by the monoidal pseudo-natural transformation
$(\chi,\chi_2,1)$, the top and front face are given by the its corresponding products, and the left and right face are given by the two 
respective instances of the monoidal pseudo-natural transformation $(\chi_{\beta},\chi^v_2,1)$. 
Thus, a 3-cell of the form (\ref{diag_syll=E_2_functors}) corresponds under Lemma~\ref{lemma_halgtrans} precisely to a proof that its 
underlying modification --- given by its bottom face $\chi_2$ --- is monoidal. 
That means the fiber (\ref{equ_syll=E_2_functors}) is inhabited if and only if $(f,u)$ is sylleptic.
\end{proof}

\subsection{Symmetries and $\mathsf{E}_{\infty}$-structures}

In this section we prove the following characterisation of a symmetry on a sylleptic Gray monoid.

\begin{theorem}
Let $(\mathcal{C},\beta,v)$ be a sylleptic Gray monoid. Then the space
\[\mathrm{Alg}_{\mathsf{E}_{\infty}}(2\text{-Cat}^{\times})\times_{\mathrm{Alg}_{\mathsf{E}_3}(2\text{-Cat})^{\times}}\{(\mathcal{C},\beta,v)\}\]
is $(-1)$-truncated. It is inhabited if and only if $(\mathcal{C},\beta,v)$ is symmetric.
\end{theorem}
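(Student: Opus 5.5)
Truncation comes first. By the Baez--Dolan--Lurie Stabilization Theorem, since $2\text{-Cat}^{\times}$ is a symmetric monoidal $3$-category, the forgetful functor $\mathrm{Alg}_{\mathsf{E}_{\infty}}(2\text{-Cat}^{\times})\rightarrow\mathrm{Alg}_{\mathsf{E}_4}(2\text{-Cat}^{\times})$ is an equivalence. So the space in question is the fiber of $\mathrm{Alg}_{\mathsf{E}_4}\rightarrow\mathrm{Alg}_{\mathsf{E}_3}$ over $(\mathcal{C},\beta,v)$. By Corollary~\ref{cor_truncobj} with $n=3$ and $k=3$, this fiber is $(3-3-1)=(-1)$-truncated. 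Existence of an $\mathsf{E}_4$-extension is therefore a mere property, and it remains to identify that property with symmetry of $v$.

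Next I reduce to the level of the multiplication, as in the proof of Theorem~\ref{thm_syll=E_2}. The plan is to show that an $\mathsf{E}_1$-monoid structure on $(\mathcal{C},\beta,v)$ in $\mathrm{Alg}_{\mathsf{E}_3}(2\text{-Cat}^{\times})$ exists if and only if the braided monoidal 2-functor $(m,\chi_{\beta},\omega_{\beta},u_v)$ extends to a morphism of $\mathsf{E}_3$-monoids $(\mathcal{C},\beta,v)^{2}\rightarrow(\mathcal{C},\beta,v)$.
\begin{itemize}
\item \emph{Necessity.} Proposition~\ref{prop_eh}, applied to the $\mathsf{E}_2$-monoid in $\mathrm{Alg}_{\mathsf{E}_2}(2\text{-Cat}^{\times})$ given by such an extension, identifies its multiplication with $(m,\chi_{\beta},\omega_{\beta},u_v)$. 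Hence the latter inherits an $\mathsf{E}_3$-structure.
\item \emph{Sufficiency.} Given such an $\mathsf{E}_3$-morphism structure on $m$, I assemble an initial segment in $I^{(3)}_{(2,1,1,2)}$, or the analogous $(-1)$-level piece. I take the given $\mathsf{A}^{\mathrm{nu}}_2\otimes\mathsf{E}_1^{\otimes 3}$ block, reverse it (replacing $v$ by $v^{\mathrm{rev}}$ via Lemma~\ref{lemma_syll_rev} together with the syllepsis-reversal of the morphism), and diagonally reflect it along the relevant hyperplane as in Example~\ref{exple_diagrefl3}. At this level there are no new cells to match, only equations. Proposition~\ref{prop_inisegfib}, with target truncation $3-\sum m_i<-1$, then forces the extension to exist once the boundary data agree. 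Lemma~\ref{lemma_unitff} supplies unitality.
\end{itemize}

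Then Proposition~\ref{prop_syll=E_2_functors}, applied to the braided monoidal 2-functor $(m,\chi_{\beta},\omega_{\beta},u_v)\colon(\mathcal{C},\beta)^{2}\rightarrow(\mathcal{C},\beta)$, says this extension exists if and only if $(m,u_v)$ is sylleptic with respect to $v$ on source and target. The final step invokes Day--Street \cite[Section 5]{daystreet}: the braiding $u_v$ on the multiplication is sylleptic exactly when $v$ is a symmetric syllepsis. Concretely, the sylleptic-functor axiom for $u_v$ evaluated at $(I,A,B,I)$ reduces, using the semi-strict unit laws from Lemma~\ref{lemma_braid=E_1_back_forth} and Corollary~\ref{cor_cofbraidunit}, to the equation $v_{A,B}=(v_{B,A})^{-1_h}$ whiskered appropriately, which is the symmetry axiom. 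The converse direction is a direct pasting check.

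The main obstacle is this last computation: the Day--Street formula for $u_v$ on \cite[p.129]{daystreet} has to be made to interact correctly with the sylleptic functor axiom. I would first try Crans' reorganised presentation \cite{crans_centers}, where $v$ is a pseudo-natural equivalence $z\colon\rho\rightarrow\rho^{\mathrm{rev}}$ (Remark~\ref{rem_syll_rev_hinv}). In that form symmetry reads $z_{A,B}=z_{B,A}^{-1_h}$, which is expected to match the sylleptic condition on $u_v$ after the unit strictifications. A secondary concern is checking that the reversal-and-reflection in the sufficiency step really agrees on the common $\mathsf{A}_2^{\otimes 3}\otimes\mathsf{E}_1$ fragment. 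Since the target is $(-1)$-truncated, this only requires existence of the relevant equalities, not coherence data.
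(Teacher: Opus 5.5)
Your truncation argument (Baez--Dolan--Lurie plus Corollary~\ref{cor_truncobj} with $n=k=3$) matches the paper, as do the reduction to extending $(m,\chi_\beta,\omega_\beta,u_v)$ to a morphism of $\mathsf{E}_3$-monoids, the necessity direction via Proposition~\ref{prop_eh}, and the final appeal to Proposition~\ref{prop_syll=E_2_functors}. The equivalence ``$v$ symmetric iff $(m,u_v)$ sylleptic'' is simply quoted from \cite[Section 5]{daystreet}, so the computation you call the main obstacle is not needed.

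The genuine gap is in your sufficiency step. Copying the reverse-and-reflect construction of Theorem~\ref{thm_syll=E_2} one level up requires three things you do not supply.
\begin{itemize}
\item You need a reversal statement for \emph{sylleptic} monoidal 2-functors. The paper does not provide one (Lemma~\ref{lemma_braid_rev_morphism} covers braided functors only), and you do not prove it.
\item You need the reversed-and-reflected block to agree with the given one on their common $\mathsf{A}_2^{\mathrm{nu}}\otimes\mathsf{A}_2^{\mathrm{nu}}\otimes\mathsf{E}_1\otimes\mathsf{E}_1$ fragment. This is not automatic. The $(-1)$-truncation of the target only makes an extension unique once you already have a point of the initial segment; it does not produce that point. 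The agreement is an identity between 3-cells (monoidal structures on the interchange 2-cell), and it has to be verified by a Day--Street computation of the kind carried out in the proof of Theorem~\ref{thm_syll=E_2}.
\item You need the remaining latching-level data demanded by $I^{(3)}_{(2,1,1,2)}$, and you never say where they come from.
\end{itemize}

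The missing idea is that nothing has to be constructed at this level. An $\mathsf{E}_3$-morphism structure on $m$ is already a non-unital $\mathsf{A}_2$-monoid on $(\mathcal{C},\beta,v)$ in $\mathrm{Alg}_{\mathsf{E}_3}(2\text{-Cat}^{\times})$. To obtain a point of the \emph{minimal} initial segment $I^{(3)}_{(1,1,1,2)}$, the only additional input is associativity data for the new direction at the latching level. By Corollary~\ref{cor_E_latching} (the retraction of $L_4(\mathsf{E}_{\bullet})$ onto $\mathsf{E}_3$ over $\mathsf{E}_4$), these data are pulled back from the given $\mathsf{E}_3$-structure itself. Since $\sum m_i=5=n+2$, the bounds of Propositions~\ref{prop_inisegloc} and~\ref{prop_inisegfib} make $\mathrm{Alg}^{\mathrm{nu}}_{\mathsf{A}_{m+1}}(\mathrm{Alg}_{\mathsf{E}_3}(2\text{-Cat}^{\times}))\rightarrow I^{(3)}_{(1,1,1,m)}$ an equivalence for all $m\geq 2$; the paper phrases this via Lemma~\ref{lemma_iniseg_qu}. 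So the multiplication extends to an $\mathsf{E}_4$-structure, and Lemma~\ref{lemma_unitff} handles units.

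Reversal and reflection were needed in Theorem~\ref{thm_syll=E_2} only because there the minimal segment $I^{(2)}_{(1,1,2)}$ has merely $(-1)$-truncated fibres, which forced the extra $\mathsf{E}_1\otimes\mathsf{A}_2^{\mathrm{nu}}\otimes\mathsf{E}_1$ block. One level up, the minimal segment already has contractible fibres.
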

\begin{proof}
Let $(\mathcal{C},\beta,v)$ be a sylleptic Gray monoid, and let
$m\colon\mathcal{C}\otimes_{\mathrm{Gr}}\mathcal{C}\rightarrow\mathcal{C}$ be equipped with its associated braided monoidal structure
$u_v$ \cite[Section 5]{daystreet}. Without loss of generality the Gray monoid $\mathcal{C}$ is cofibrant. Again by
\cite[Section 5]{daystreet}, the syllepsis $v$ on $(\mathcal{C},\beta)$ is symmetric if and only if the braided monoidal 2-functor
$(m,u_v)$ is sylleptic. By way of Proposition~\ref{prop_syll=E_2_functors} the latter holds if and only if the h-proposition
\begin{align}\label{equ_thm_sym=E_3_1}
\mathrm{Alg}_{\mathsf{E}_3}(2\text{-Cat}^{\times})(\mathcal{C}\times\mathcal{C},\mathcal{C})\times_{\mathrm{Alg}_{\mathsf{E}_2}(2\text{-Cat}^{\times})(\mathcal{C}\times\mathcal{C},\mathcal{C})}\{(m,u_v)\}
\end{align}
of $\mathsf{E}_3$-monoid extensions of $(m,u_v)$ is inhabited. We are left to show that this h-proposition is equivalent to the space
\begin{align}\label{equ_thm_sym=E_3_2}
\mathrm{Alg}_{\mathsf{E}_4}(2\text{-Cat}^{\times})\times_{\mathrm{Alg}_{\mathsf{E}_3}(2\text{-Cat})^{\times}}\{(\mathcal{C},\beta,v)\}.
\end{align}
The latter fiber is an h-proposition again by Corollary~\ref{cor_truncobj}.
The implication $(\ref{equ_thm_sym=E_3_1})\rightarrow(\ref{equ_thm_sym=E_3_2})$ is given by the fact that any point in
$(\ref{equ_thm_sym=E_3_1})$ is a non-unital $\mathsf{A}_2$-monoid structure on $(\mathcal{C},\beta,v)$ in
$\mathrm{Alg}_{\mathsf{E}_3}(\hotwocat)$. By way of Corollary~\ref{cor_E_latching} this gives an element of
$I^{(3}_{(1,1,1,2)}(2\text{-Cat}^{\times})$. The forgetful functors
\[\mathrm{Alg}_{\mathsf{A}_{m+1}}^{\mathrm{nu}}(\mathrm{Alg}_{\mathsf{E}_3}(\mathcal{C}^{\otimes}))\rightarrow I^{(3)}_{(1,1,1,m)}(2\text{-Cat}^{\times})\]
are equivalences for all $m\geq 2$ by Lemma~\ref{lemma_iniseg_qu}. We 
hence obtain an extension of any given point in $(\ref{equ_thm_sym=E_3_1})$ to $(\ref{equ_thm_sym=E_3_2})$ (Lemma~\ref{lemma_unitff} again 
takes care of unitality).
The other direction follows directly from Proposition~\ref{prop_eh}. Lastly, the projection
\[\mathrm{Alg}_{\mathsf{E}_{\infty}}(2\text{-Cat}^{\times})\rightarrow\mathrm{Alg}_{\mathsf{E}_4}(2\text{-Cat}^{\times})\]
is a trivial fibration by the Baez--Dolan--Lurie Stabilization Theorem \cite[Corollary 5.1.1.7]{lurieha}.
\end{proof}

\begin{proposition}
Let $(\mathcal{C},\beta,v)$ and $(\mathcal{D},\beta,v)$ be symmetric Gray monoids, and let
$(F,u)\colon(\mathcal{C},\beta,v)\rightarrow(\mathcal{D},\beta,v)$ be a sylleptic monoidal 2-functor of underlying sylleptic 
braided Gray monoids. Then the space
\[\mathrm{Alg}_{\mathsf{E}_{\infty}}(2\text{-Cat}^{\times})(\mathcal{C},\mathcal{D})\times_{\mathrm{Alg}_{\mathsf{E}_3}(2\text{-Cat}^{\times})(\mathcal{C},\mathcal{D})}\{(F,u)\}.\]
is contractible.
\end{proposition}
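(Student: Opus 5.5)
The plan is to use the truncation bound for morphisms, Corollary~\ref{cor_truncmor}, together with the Baez--Dolan--Lurie Stabilization Theorem. The projection $\mathrm{Alg}_{\mathsf{E}_{\infty}}(2\text{-Cat}^{\times})\rightarrow\mathrm{Alg}_{\mathsf{E}_4}(2\text{-Cat}^{\times})$ is an equivalence because $2\text{-Cat}^{\times}$ is a 3-category and $4>3$. So the fiber in question is equivalent to the space of $\mathsf{E}_4$-monoid extensions of the $\mathsf{E}_3$-morphism underlying $(F,u)$. Here $\mathcal{C}$ and $\mathcal{D}$ are considered as $\mathsf{E}_4$-monoids via the preceding theorem, which applies because they are symmetric.

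Next, Corollary~\ref{cor_truncmor} with $n=3$ and $k=3$ shows that this space is $(3-3-2)=(-2)$-truncated, that is, contractible. The argument that sylleptic $(F,u)$ corresponds to an $\mathsf{E}_3$-morphism is Proposition~\ref{prop_syll=E_2_functors}.

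The corollary applies to any morphism $f$ of underlying $\mathsf{E}_k$-algebras between two $\mathsf{E}_{k+1}$-algebras. So no symmetric-specific condition on $(F,u)$ needs to be checked: every sylleptic monoidal 2-functor between symmetric Gray monoids is automatically symmetric, in a unique way.

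The main point requiring care is bookkeeping. First, the fiber over $\mathrm{Alg}_{\mathsf{E}_3}$-morphisms has to be identified with the one over $\mathsf{E}_4$. Second, the $\mathsf{E}_4$-structures on $\mathcal{C}$ and $\mathcal{D}$ must be genuine lifts of the given $\mathsf{E}_3$-structures, which follows from the inhabitedness statement of the preceding theorem. Third, if one wishes to double-check Corollary~\ref{cor_truncmor} at the edge case $k>n$, one can instead argue directly. By stabilization, $\mathrm{Alg}_{\mathsf{E}_4}\rightarrow\mathrm{Alg}_{\mathsf{E}_3}$ is itself an equivalence onto its image, since the fibers over objects are $(-1)$-truncated by Corollary~\ref{cor_truncobj}. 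It is therefore fully faithful, and so its fibers on mapping spaces are contractible.
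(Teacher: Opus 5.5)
Your main argument is correct and is exactly the paper's proof, which reads ``Immediate by Corollary~\ref{cor_truncmor}'': stabilization identifies the $\mathsf{E}_\infty$-fiber with the $\mathsf{E}_4$-fiber, and Corollary~\ref{cor_truncmor} with $n=k=3$ makes it $(-2)$-truncated.

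You should, however, drop your optional ``direct'' check, for two reasons. First, stabilization says nothing about $\mathrm{Alg}_{\mathsf{E}_4}\rightarrow\mathrm{Alg}_{\mathsf{E}_3}$ for a $3$-category, since it needs the lower index to exceed $n$. Second, $(-1)$-truncated fibers over objects do not imply full faithfulness; the inclusion of a maximal subgroupoid has contractible object fibers but is not full. Full faithfulness of $\mathrm{Alg}_{\mathsf{E}_4}\rightarrow\mathrm{Alg}_{\mathsf{E}_3}$ here is precisely the content of Corollary~\ref{cor_truncmor}, so that check is circular. Note also that $k=n=3$, so there is no $k>n$ edge case to worry about.
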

\begin{proof}
Immediate by Corollary~\ref{cor_truncmor}.
\end{proof}

\section*{Conclusive remarks}

The reader may note that the proofs regarding the equivalence of fully algebraic structures and their corresponding homotopical structures on 
a bicategory become less long and less complex with increasing complexity of the structures they are about. This is because at each 
stage we fix a low dimensional structure first and only let the higher dimensional structure vary over it fiberwise. The collections of these 
higher dimensional structures over a \emph{fixed} lower dimensional algebraic base become more and more truncated and as such become easier 
to manage. This is directly reflected by the underlying fact that the complexity of the definitions of Gray monoidal structures over
2-categories, braidings over Gray monoidal structures, syllepses over braidings, and symmetries over syllepses become less long and complex 
with each step. This is in contrast to the approach taken in \cite{gurski_braid} and \cite{gurskiosorno_sym}, which essentially considers 
$\mathsf{E}_n$-monoids in the 3-category of bicategories directly. This adds a new dimension at each step rather than to take one away, and
the geometry of the involved configuration spaces becomes more complicated rather than simpler.

\bibliographystyle{amsalpha}
\bibliography{BSBib}

\newcommand{\noopsort}[1]{}
\providecommand{\bysame}{\leavevmode\hbox to3em{\hrulefill}\thinspace}
\providecommand{\MR}{\relax\ifhmode\unskip\space\fi MR }
\providecommand{\MRhref}[2]{%
  \href{http://www.ams.org/mathscinet-getitem?mr=#1}{#2}
}
\providecommand{\href}[2]{#2}
\begin{thebibliography}{GFH24}

\bibitem[BD98]{baezdolan_cat}
J.~Baez and J.~Dolan, \emph{Categorification}, Contemp. Math. \textbf{230}
  (1998), 1–36.

\bibitem[BG17]{bourkegurski}
J.~Bourke and N.~Gurski, \emph{The {G}ray tensor product via factorisation},
  Applied Categorical Structures (2017).

\bibitem[BN96]{baezneuchl}
J.~Baez and M.~Neuchl, \emph{Higher-dimensional algebra {I}: {B}raided monoidal
  2-categories}, Adv. in Math. \textbf{121} (1996), 196--244.

\bibitem[Cra98]{crans_centers}
S.~E. Crans, \emph{Generalized centers of braided and sylleptic monoidal
  2-categories}, Adv. in Math. \textbf{136} (1998), 183--223.

\bibitem[DS97]{daystreet}
B.~Day and R.~Street, \emph{Monoidal bicategories and hopf algebroids},
  Advances in Mathematics \textbf{129} (1997), 99--157.

\bibitem[FN62]{nf_config}
E.~Fadell and L.~Neuwirth, \emph{Configuration spaces}, Mathematica
  Scandinavica \textbf{10} (1962), 111--118.

\bibitem[Gar10]{garner_cofmndthy}
R.~Garner, \emph{Homomorphisms of higher categories}, {A}dvances in
  {M}athematics \textbf{224} (2010), 2269--2311.

\bibitem[GFH24]{fioregambinohyland}
N.~Gambino, M.~Fiore, and M.~Hyland, \emph{Monoidal bicategories, differential
  linear logic, and analytic functors}, arXiv:2405.05774, 2024.

\bibitem[GO13]{gurskiosorno_sym}
N.~Gurski and A.~M. Osorno, \emph{Infinite loop spaces, and coherence for
  symmetric monoidal bicategories}, Advances in Mathematics \textbf{246}
  (2013), 1--32.

\bibitem[GPS95]{gps_coherence}
R.~Gordon, A.~J. Power, and R.~Street, \emph{Coherence for tricategories},
  Memoirs of the AMS \textbf{117} (1995), no.~558, 1--85.

\bibitem[Gur11]{gurski_braid}
N.~Gurski, \emph{Loop spaces, and coherence for monoidal and braided monoidal
  bicategories}, Advances in Mathematics \textbf{226} (2011), 4225--4265.

\bibitem[Hau23]{haugseng_hanotes}
R.~Haugseng, \emph{An allegedly somewhat friendly introduction to
  $\infty$-operads}, \url{https://runegha.folk.ntnu.no/iopd.pdf}, 2023, Notes
  from a series of 7 lectures in Sevilla (October 2022).

\bibitem[JS93]{joyalstreet}
A.~Joyal and R.~Street, \emph{Braided tensor categories}, Advances in
  Mathematics \textbf{102} (1993), 20--78.

\bibitem[JY21]{johnsonyau}
N.~Johnson and D.~Yau, \emph{2-{D}imensional {C}ategories}, Oxford University
  Press, 2021.

\bibitem[KV94a]{kapranovvoevodskyI}
M.~Kapranov and V.~Voevodsky, \emph{2-{C}ategories and {Z}amolodchikov
  tetrahedra}, Proc. Symp. Pure Math. \textbf{56} (1994), 177--260.

\bibitem[KV94b]{kapranovvoevodskyII}
\bysame, \emph{Braided monoidal 2-categories and {M}anin--{S}chechtman higher
  braid groups}, J. Pure Appl. Algebra \textbf{92} (1994), 241--267.

\bibitem[Lac02]{lack2catms}
S.~Lack, \emph{A {Q}uillen model structure for 2-categories}, K-Theory
  \textbf{26} (2002), no.~2, 171--205.

\bibitem[Lac04]{lackbicatms}
\bysame, \emph{A {Q}uillen model structure for bicategories}, K-Theory
  \textbf{33} (2004), no.~3, 185–197.

\bibitem[Lac07]{lack_2bitricat}
\bysame, \emph{Bicat is not equivalent to {G}ray}, Theory and {A}pplications of
  {C}ategories \textbf{18} (2007), no.~21, 1--3.

\bibitem[Lac11]{lackgraycatms}
\bysame, \emph{A {Q}uillen model structure for {G}ray-categories}, Journal of
  {K}-Theory \textbf{8} (2011), no.~2, 183--221.

\bibitem[Lur09]{luriehtt}
J.~Lurie, \emph{Higher topos theory}, Annals of Mathematics Studies, no. 170,
  Princeton University Press, 2009.

\bibitem[Lur17]{lurieha}
\bysame, \emph{Higher algebra},
  \url{http://www.math.harvard.edu/~lurie/papers/HA.pdf}, 2017, Last update 18
  September 2017.

\bibitem[May72]{may_loop}
J.~P. May, \emph{The geometry of iterated loop spaces}, Lecture Notes in
  Mathematics, no. 271, Springer-Verlag, Berlin, New York, 1972.

\bibitem[Rez10]{rezkhtytps}
C.~Rezk, \emph{Toposes and homotopy toposes (version 0.15)},
  \url{https://www.researchgate.net/publication/255654755_Toposes_and_homotopy_toposes_version_015},
  2010.

\bibitem[SP09]{schommerpries_thesis}
C.~J. Schommer-Pries, \emph{The classification of two-dimensional extended
  topological field theories}, Ph.D. thesis, University of California,
  Berkeley, Berkeley, CA 94720-3840, United States, 2009.

\bibitem[SS00]{schwedeshipley_mon}
S.~Schwede and B.~Shipley, \emph{Algebras and modules in monoidal model
  categories}, Proc. London Math. Soc. \textbf{80} (2000), no.~3, 491--511.

\bibitem[Ste26]{rs_monbicat}
R.~Stenzel, \emph{Symmetry shifting for monoidal bicategories}, to appear,
  2026.

\bibitem[SY19]{schlankyanovski_eh}
T.~M. Schlank and L.~Yanovski, \emph{The $\infty$–categorical
  {E}ckmann--{H}ilton argument}, Algebraic \& Geometric Topology \textbf{19}
  (2019), no.~6, 3119--3170.

\end{thebibliography}
\end{document}